\numberwithin{equation}{section}
\newtheorem{thm}{Theorem}[section]
\newtheorem{prop}[thm]{Proposition}
\newtheorem{lem}[thm]{Lemma}
\newtheorem{cor}[thm]{Corollary}
\theoremstyle{definition}
\newtheorem{defn}[thm]{Definition}
\newtheorem{rem}[thm]{Remark}
\newtheorem{example}[thm]{Example}
\newcommand{\be}{\mathbf{e}}
\newcommand{\bk}{\mathbf{k}}
\newcommand{\bl}{\mathbf{l}}
\newcommand{\bm}{\mathbf{m}}
\newcommand{\shp}{\mathcyr{sh}}
\begin{document}

\title[A $q$-analogue of symmetric multiple zeta value]
{A $q$-analogue of symmetric multiple zeta value}
\author{Yoshihiro Takeyama}
\address{Department of Mathematics,
  Institute of Pure and Applied Sciences,
  University of Tsukuba, Tsukuba, Ibaraki 305-8571, Japan}
\email{takeyama@math.tsukuba.ac.jp}
\thanks{This work was supported by JSPS KAKENHI Grant Number 22K03243. \\
{}\quad
This is a pre-print of an article published in The Ramanujan Journal.
The final authenticated version is available online at:
\texttt{https://doi.org/10.1007/s11139-023-00755-9}.
}

\begin{abstract}
  We construct a $q$-analogue of
  truncated version of symmetric multiple zeta values
  which satisfies the double shuffle relation.
  Using it, we define a $q$-analogue of symmetric multiple zeta values
  and see that it satisfies many of the same relations as
  symmetric multiple zeta values,
  which are the inverse relation and a part of
  the double shuffle relation and the Ohno-type relation.
\end{abstract}
\maketitle

\setcounter{section}{0}
\setcounter{equation}{0}


\section{Introduction}

The \textit{multiple zeta value} (MZV) is the real value defined by
\begin{align}
  \zeta(\bk)=\sum_{0<m_{1}<\cdots <m_{r}}
  \frac{1}{m_{1}^{k_{1}} \cdots m_{r}^{k_{r}}}
  \label{eq:intro-MZV}
\end{align}
for a tuple of positive integers $\bk=(k_{1}, \ldots , k_{r})$ with $k_{r}\ge 2$.
We set $\zeta(\varnothing)=1$ and regard it as a MZV.
We denote by $\mathcal{Z}$ the $\mathbb{Q}$-linear subspace of $\mathbb{R}$
spanned by the MZVs.
It is known that $\mathcal{Z}$ forms a $\mathbb{Q}$-algebra
with respect to the usual multiplication on $\mathbb{R}$.

Kaneko and Zagier introduced two kinds of variants of MZVs called
finite multiple zeta values and symmetric multiple zeta values
(see, e.g., \cite{Kaneko}).
The symmetric multiple zeta value (SMZV) is defined as an element of the quotient
$\mathcal{Z}/\zeta(2)\mathcal{Z}$.
The purpose of this paper is to construct a $q$-analogue
of the SMZVs which shares many of the relations among them.

The SMZV is defined as follows.
Although the infinite sum \eqref{eq:intro-MZV} diverges if $k_{r}=1$,
we have two kinds of its regularization,
which are called the harmonic regularized MZV $\zeta^{\ast}(\bk)$
and the shuffle regularized MZV $\zeta^{\shp}(\bk)$
(see Section \ref{subsec:SMZV} for the details).
If $k_{r}\ge 2$, they are equal to the MZV $\zeta(\bk)$.
Using them we set
\begin{align*}
  \zeta^{\mathcal{S}, \bullet}(k_{1}, \ldots , k_{r})=
  \sum_{i=0}^{r}(-1)^{k_{i+1}+\cdots +k_{r}}
  \zeta^{\bullet}(k_{1}, k_{2}, \ldots , k_{i})
  \zeta^{\bullet}(k_{r}, k_{r-1}, \ldots , k_{i+1})
\end{align*}
for $\bullet \in \{\ast, \shp\}$.
It is known that the difference $\zeta^{\ast}(\bk)-\zeta^{\shp}(\bk)$ belongs to
the ideal $\zeta(2)\mathcal{Z}$ for any tuple $\bk$ of positive integers.
The SMZV $\zeta^{\mathcal{S}}(\bk)$ is defined by
\begin{align*}
  \zeta^{\mathcal{S}}(\bk)=\zeta^{\mathcal{S}, \bullet}(\bk) \qquad
  \hbox{mod} \,\, \zeta(2)\mathcal{Z}
\end{align*}
as an element of the quotient $\mathcal{Z}/\zeta(2)\mathcal{Z}$.

Now we fix a complex parameter $q$ satisfying $0<|q|<1$
and define the $q$-integer $[n]$ for $n\ge 1$ by
$[n]=(1-q^{n})/(1-q)$.
There are various models of a $q$-analogue of the MZV
(see, e.g., \cite[Chapter 12]{ZhaoBook}).
Most of them are of the following form:
\begin{align}
  \sum_{0<m_{1}<\cdots <m_{r}}
  \frac{P_{1}(q^{m_{1}}) \cdots P_{r}(q^{m_{r}})}
  {[m_{1}]^{k_{1}} \cdots [m_{r}]^{k_{r}}},
  \label{eq:intro-qMZV}
\end{align}
where $P_{j}(x)$ is a polynomial
whose degree is less than or equal to $k_{j}$ for $1\le j \le r$.
If $P_{r}(0)=0$, the infinite sum \eqref{eq:intro-qMZV} is absolutely convergent.
In this paper we call a value of the form \eqref{eq:intro-qMZV}
a $q$-analogue of the MZV ($q$MZV) without specifying the model.

To construct a $q$-analogue of SMZVs,
it would be natural to consider two kinds of regularizations of
the sum \eqref{eq:intro-qMZV} with $P_{r}(0)\not=0$
which turn into the harmonic regularized MZV and the shuffle regularized MZV
in the limit as $q \to 1$.
However, there does not seem to exist any standard definition
of a shuffle regularization of the sum \eqref{eq:intro-qMZV}.

To avoid the difficulty, we construct a $q$-analogue of SMZVs
in a different approach.
In \cite{OSY}, Ono, Seki and Yamamoto introduced
two kinds of truncations of the
$t$-adic symmetric multiple zeta value
($t$-adic SMZV),
which is an element of the formal power series ring
$(\mathcal{Z}/\zeta(2)\mathcal{Z})[[t]]$
whose constant term is equal to the SMZV.
As a corollary, we have
the following expression for $\zeta^{\mathcal{S}, \bullet}(\bk)$.
Set
\begin{align}
  \zeta_{M}^{\mathcal{S}, \ast}(k_{1}, \ldots , k_{r})
   & =\sum_{i=0}^{r}(-1)^{k_{i+1}+\cdots +k_{r}}
  \sum_{\substack{0<m_{1}<\cdots <m_{i}<M        \\ 0<m_{r}<\cdots <m_{i+1}<M}}
  \frac{1}{m_{1}^{k_{1}} \cdots m_{r}^{k_{r}}},
  \label{eq:intro-DS1}                           \\
  \zeta_{M}^{\mathcal{S}, \shp}(k_{1}, \ldots , k_{r})
   & =\sum_{i=0}^{r}(-1)^{k_{i+1}+\cdots +k_{r}}
  \sum_{\substack{0<m_{1}<\cdots <m_{i}          \\ 0<m_{r}<\cdots <m_{i+1} \\ m_{i}+m_{i+1}<M}}
  \frac{1}{m_{1}^{k_{1}} \cdots m_{r}^{k_{r}}}.
  \label{eq:intro-DS2}
\end{align}
Then we have
$\zeta^{\mathcal{S}, \bullet}(\bk)=\lim_{M \to \infty}\zeta_{M}^{\mathcal{S}, \bullet}(\bk)$
for $\bullet \in \{\ast, \shp\}$ \cite[Corollary 2.7]{OSY}.
Note that we do not need any regularization here.
Hence we could follow the above procedure to construct
a $q$-analogue of SMZVs.
It is the main theme of this paper.

One important example of the relations among SMZVs
is the double shuffle relation
(see Theorem \ref{thm:DS-SMZV} below) due to Kaneko and Zagier.
Jarossay proved that the $t$-adic SMZVs satisfy
a generalization of the double shuffle relation
\cite{Jar}.
The point is that
the truncated version of the $t$-adic SMZVs
also satisfies them \cite[Theorem 1.6]{OSY}.
As a corollary we see that
the truncated version of SMZVs \eqref{eq:intro-DS1} and \eqref{eq:intro-DS2}
satisfies the same double shuffle relation as SMZVs.

In this paper we first construct a $q$-analogue of the
truncated SMZVs \eqref{eq:intro-DS1} and \eqref{eq:intro-DS2}
which satisfies the double shuffle relation
(Proposition \ref{prop:qS-harmonic} and Proposition \ref{prop:shuffle-rel-q-truncated}).
Second, we define a $q$-analogue of SMZVs
as a limit of the above truncated version.
As SMZVs are defined to be the elements of the quotient
$\mathcal{Z}/\zeta(2)\mathcal{Z}$,
our $q$-analogue of SMZVs is an element of the quotient of the space of
the $q$MZVs by a sum of two ideals $\mathcal{N}_{q}+\mathcal{P}_{q}$.
Roughly speaking, $\mathcal{N}_{q}$ is the ideal generated by
the $q$MZVs which turn into zero in the limit as $q\to 1-0$,
and $\mathcal{P}_{q}$ is the ideal generated by the values
$\sum_{m>0}q^{2km}/[m]^{2k}$ with $k\ge 1$,
which turns into $\zeta(2k)$ in the limit as $q \to 1-0$.
Hence the ideal $\mathcal{N}_{q}+\mathcal{P}_{q}$ could be regarded as a
$q$-analogue of to the ideal $\zeta(2)\mathcal{Z}$.

Unlike the truncated SMZVs, the double shuffle relation of
the truncated $q$SMZVs does not imply that of the $q$SMZVs.
It is because not all of the truncated $q$SMZVs converge
in the limit as $M \to \infty$
and the space spanned by them which converge is not closed
under the shuffle product (see Example \ref{ex:shuffle-not-closed} below).
Hence our $q$-analogue of SMZVs satisfies only a part of
the double shuffle relation of SMZVs.
At this stage the author does not know how to overcome this point.

The paper is organized as follows.
In Section \ref{sec:SMZV} we review on SMZV and
its truncated version and the relations of them
including the double shuffle relation.
Section \ref{sec:qMZV} gives some preliminaries
on $q$MZVs.
In Section \ref{sec:qSMZV-truncated}
we define a $q$-analogue of the truncated SMZVs
and prove the double shuffle relation.
The proof is quite similar to that for
the truncated SMZVs given in \cite{OSY}.
In Section \ref{sec:qSMZV}
we construct a $q$-analogue of SMZVs.
In Section \ref{sec:relations}
we see that our $q$-analogue shares many of the relations
with SMZVs, which are the reversal relation and
a part of the double shuffle relation and the Ohno-type relation
due to Oyama \cite{Oyama}.
Additionally two appendices follow.
In Appendix \ref{sec:app-limit}
we discuss the asymptotic behavior of $q$MZVs
in the limit as $q \to 1-0$ to prove Proposition \ref{prop:limit-of-qMZV} below.
Appendix \ref{sec:app-proofs} provides proofs
of technical propositions.

Here we give notation used throughout.
For a tuple of non-negative integers $\bk=(k_{1},  \ldots , k_{r})$,
we define its \textit{weight} $\mathrm{wt}(\bk)$ and \textit{depth} $\mathrm{dep}(\bk)$ by
$\mathrm{wt}(\bk)=\sum_{i=1}^{r}k_{i}$ and $\mathrm{dep}(\bk)=r$, respectively.
We call a tuple of positive integers an \textit{index}.
We regard the empty set $\varnothing$ as an index whose weight and depth are zero.
An index $\bk=(k_{1}, \ldots , k_{r})$ is said to be \textit{admissible} if
$\bk=\varnothing$, or $r \ge 1$ and $k_{r}\ge 2$.
We denote the set of indices (resp. admissible indices) by $I$ (resp. $I_{0}$).
The  reversal $\overline{\bk}$ of an index $\bk=(k_{1}, k_{2}, \ldots , k_{r})$ is defined
by $\overline{\bk}=(k_{r}, \ldots , k_{2}, k_{1})$.
For the empty index we set $\overline{\varnothing}=\varnothing$.

In this paper we often make use of generating functions in proofs.
Then we use the operations defined below without mention.
Suppose that $\mathfrak{A}$ is a unital algebra over a commutative ring $\mathcal{C}$.
Then we extend the addition $+$ and the multiplication $\cdot$ on $\mathfrak{A}$ to
the formal power series ring $\mathfrak{A}[[X]]$
by $f(X)+g(X)=\sum_{j \ge 0}(a_{j}+b_{j})X^{j}$ and
$f(X) \cdot g(X)=\sum_{j, l\ge 0}(a_{j}\cdot b_{l})X^{j+l}$
for $f(X)=\sum_{j\ge 0}a_{j}X^{j}$ and $g(X)=\sum_{l\ge 0}b_{l}X^{l}$
with $a_{j}, b_{l} \in \mathfrak{A}$, respectively.
Similarly, we extend a $\mathcal{C}$-linear map $\varphi: \mathfrak{A} \rightarrow \mathfrak{B}$
to the $\mathcal{C}$-linear map $\varphi: \mathfrak{A}[[X]] \rightarrow \mathfrak{B}[[X]]$ by
$\varphi(f(X))=\sum_{j\ge 0}\varphi(a_{j})X^{j}$ for
$f(X)=\sum_{j\ge 0}a_{j}X^{j}$.
We adopt the above convention for the formal power series ring
of several variables.


\section{Symmetric multiple zeta value}\label{sec:SMZV}

\subsection{Multiple zeta value}

For an index $\bk=(k_{1}, \ldots , k_{r})$ and a positive integer $M$,
we define the \textit{truncated multiple zeta value} $\zeta_{M}(\bk)$ by
\begin{align*}
  \zeta_{M}(\bk)=\sum_{0<m_{1}<\cdots <m_{r}<M}
  \frac{1}{m_{1}^{k_{1}} \cdots m_{r}^{k_{r}}}
\end{align*}
if $\bk$ is not empty, and $\zeta_{M}(\varnothing)=1$ for the empty index.
If $1\le M\le \mathrm{dep}(\bk)$, we set $\zeta_{M}(\bk)=0$.

For an admissible index $\bk$, we define the
\textit{multiple zeta value} (MZV) $\zeta(\bk)$ by
$\zeta(\bk)=\lim_{M \to \infty}\zeta_{M}(\bk)$.
If $\bk=(k_{1}, \ldots , k_{r})$ is a non-empty admissible index,
we have
\begin{align*}
  \zeta(\bk)=\sum_{0<m_{1}<\cdots <m_{r}}
  \frac{1}{m_{1}^{k_{1}} \cdots m_{r}^{k_{r}}},
\end{align*}
which converges since $k_{r} \ge 2$.

Let $\mathfrak{h}=\mathbb{Q}\langle x, y \rangle$ be the non-commutative polynomial ring
of two variables $x$ and $y$ over $\mathbb{Q}$.
We set $z_{k}=yx^{k-1}$ for $k \ge 1$, and
$z_{\bk}=z_{k_{1}} \cdots z_{k_{r}}$
for a non-empty index $\bk=(k_{1}, \ldots , k_{r})$.
For the empty index we set $z_{\varnothing}=1$.

Set $\mathfrak{h}^{1}=\mathbb{Q}+y\mathfrak{h}$.
It is a $\mathbb{Q}$-subalgebra of $\mathfrak{h}$,
which can be identified with the non-commutative polynomial ring over $\mathbb{Q}$
with the set of variables $\{z_{k} \mid k\ge 1\}$.
We also set $\mathfrak{h}^{0}=\mathbb{Q}+y\mathfrak{h}x$,
which is a $\mathbb{Q}$-submodule of  $\mathfrak{h}^{1}$
with a basis $\{ z_{\bk} \mid \bk \in I_{0} \}$.

The \textit{harmonic product} $\ast$ on $\mathfrak{h}^{1}$ is the
$\mathbb{Q}$-bilinear map
$\ast:  \mathfrak{h}^{1} \times \mathfrak{h}^{1} \longrightarrow \mathfrak{h}^{1}$
defined by the following properties:
\begin{enumerate}
  \item For any $w \in \mathfrak{h}^{1}$, it holds that $w \ast 1=w$ and $1\ast w=w$.
  \item For any $w, w'\in \mathfrak{h}^{1}$ and $k, l \ge 1$,
        it holds that $(w z_{k})\ast(w' z_{l})=(w\ast w' z_{l})z_{k}+(w z_{k}\ast w')z_{l}+
          (w\ast w')z_{k+l}$.
\end{enumerate}
The \textit{shuffle product}
$\shp: \mathfrak{h} \times \mathfrak{h} \longrightarrow \mathfrak{h}$
is similarly defined by the following properties and $\mathbb{Q}$-linearity:
\begin{enumerate}
  \item For any $w \in \mathfrak{h}$, it holds that $w \,\shp\, 1=w$ and $1\,\shp\, w=w$.
  \item For any $w, w'\in \mathfrak{h}$ and $u, v \in \{x, y \}$,
        it holds that $(w u)\,\shp\,(w' v)=(w\,\shp\, w' v)u+(w u\,\shp\, w')v$.
\end{enumerate}
Then $\mathfrak{h}^{1}$ (resp. $\mathfrak{h}$) becomes a commutative $\mathbb{Q}$-algebra
with respect to the harmonic (resp. shuffle) product, which we denote by
$\mathfrak{h}^{1}_{\ast}$ (resp. $\mathfrak{h}_{\shp}$).
We see that $\mathfrak{h}^{0}$ is a $\mathbb{Q}$-subalgebra
of $\mathfrak{h}^{1}_{\ast}$ with respect to the harmonic product.
We denote it by $\mathfrak{h}^{0}_{\ast}$.
We also see that $\mathfrak{h}^{1}$ and $\mathfrak{h}^{0}$ are
$\mathbb{Q}$-subalgebras of
$\mathfrak{h}_{\shp}$ with respect to the shuffle product.
We denote them by $\mathfrak{h}^{1}_{\shp}$ and $\mathfrak{h}^{0}_{\shp}$,
respectively.

For a positive integer $M$, we define the $\mathbb{Q}$-linear map
$Z_{M}: \mathfrak{h}^{1} \to \mathbb{Q}$
by $Z_{M}(z_{\bk})=\zeta_{M}(\bk)$ for an index $\bk$.
Similarly, we define the $\mathbb{Q}$-linear map
$Z: \mathfrak{h}^{0} \to \mathbb{R}$ by $Z(z_{\bk})=\zeta(\bk)$,
for an admissible index $\bk$.

\begin{prop}
  \begin{enumerate}
    \item For any $w, w' \in \mathfrak{h}^{1}$ and $M \ge 1$, it holds that
          \begin{align*}
            Z_{M}(w \ast w')=Z_{M}(w)Z_{M}(w').
          \end{align*}
          Hence, if $w$ and $w'$ belong to $\mathfrak{h}^{0}$,
          we have
          \begin{align}
            Z(w \ast w')=Z(w)Z(w').
            \label{eq:harmonic-rel}
          \end{align}
    \item For any $w, w' \in \mathfrak{h}^{0}$, it holds that
          \begin{align}
            Z(w \,\shp\, w')=Z(w)Z(w').
            \label{eq:shuffle-rel}
          \end{align}
  \end{enumerate}
\end{prop}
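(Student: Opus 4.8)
The plan is to establish (i) first, since (ii) will have a structurally similar flavour but require a genuinely different combinatorial identity. For (i), I would argue by induction on $\mathrm{wt}(w)+\mathrm{wt}(w')$ (equivalently on the total degree in $x,y$), using the defining properties of $\ast$. Both sides of $Z_M(w\ast w')=Z_M(w)Z_M(w')$ are $\mathbb{Q}$-bilinear, so it suffices to treat $w=w''z_k$ and $w'=w'''z_l$ with $w'',w'''\in\mathfrak{h}^1$ and $k,l\ge1$ (the base case $w=1$ or $w'=1$ being immediate from property~(1) of $\ast$ together with $Z_M(1)=\zeta_M(\varnothing)=1$). Expanding with property~(2),
\begin{align*}
  Z_M\bigl((w''z_k)\ast(w'''z_l)\bigr)
  &=Z_M\bigl((w''\ast w'''z_l)z_k\bigr)+Z_M\bigl((w''z_k\ast w''')z_l\bigr)
    +Z_M\bigl((w''\ast w''')z_{k+l}\bigr).
\end{align*}
The key computational input is the ``stuffle'' decomposition of the truncated sum: for $w''z_k\leftrightarrow(\bk'',k)$, $w'''z_l\leftrightarrow(\bk''',l)$, the product $\zeta_M(\bk'',k)\,\zeta_M(\bk''',l)$ is a sum over pairs $(m_1<\dots<m_s)$, $(n_1<\dots<n_t)$ of indices $<M$, and one splits according to whether the two largest indices $m_s,n_t$ satisfy $m_s<n_t$, $m_s>n_t$, or $m_s=n_t$ — these three cases correspond exactly to the three terms above after applying the induction hypothesis to the shorter words $w''\ast w'''z_l$, $w''z_k\ast w'''$, $w''\ast w'''$. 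Then, since the evaluation $Z$ on $\mathfrak{h}^0$ is the $M\to\infty$ limit of $Z_M$ and the product is continuous, restricting $w,w'$ to $\mathfrak{h}^0$ and passing to the limit yields \eqref{eq:harmonic-rel}.

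For (ii) I would use the iterated-integral representation of MZVs. Identify $\mathfrak{h}^0$ with $\mathbb{Q}$-linear combinations of words in $x,y$ beginning in $y$ and ending in $x$, and recall that for an admissible index $\bk$ one has $\zeta(\bk)=\int_{\Delta}\omega_{\varepsilon_1}\cdots\omega_{\varepsilon_n}$ where the one-forms $\omega_{\varepsilon}$ are $dt/t$ for a letter $x$ and $dt/(1-t)$ for a letter $y$, and $\Delta$ is the simplex $0<t_1<\dots<t_n<1$. Then $Z(w\,\shp\,w')=Z(w)Z(w')$ is the classical statement that a product of two iterated integrals over the \emph{same} path, written as a single integral over a product of simplices, decomposes as a sum over shuffles of the two orderings — this is exactly the combinatorial content encoded in properties~(1) and~(2) of $\shp$. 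The only thing to check is that this is internally consistent with the conventions of the paper: that $z_k=yx^{k-1}$ corresponds to the form sequence $\tfrac{dt}{1-t}\bigl(\tfrac{dt}{t}\bigr)^{k-1}$, so that $z_{\bk}$ for admissible $\bk$ indeed lands in $\mathfrak{h}^0=\mathbb{Q}+y\mathfrak{h}x$ and has a convergent integral, and that the shuffle product as defined letterwise on $\{x,y\}$ matches the shuffle of the integration variables.

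The main obstacle is convergence/admissibility bookkeeping in (ii): one must be careful that every word appearing in the shuffle expansion $w\,\shp\,w'$ of two words in $\mathfrak{h}^0$ again lies in $\mathfrak{h}^0$ (so that $Z$ is defined on it) — this holds because a shuffle of a word ending in $x$ with a word ending in $x$ ends in $x$, and similarly a shuffle of two words beginning in $y$ begins in $y$, so $\mathfrak{h}^0$ is indeed a $\shp$-subalgebra, consistent with the statement made earlier in the excerpt. For (i) the analogous point is even easier since $\mathfrak{h}^0_\ast\subseteq\mathfrak{h}^1_\ast$ is already recorded above; there the only subtlety is that the finite truncation parameter $M$ must be the \emph{same} on both factors — which is automatic here — and that the identity $Z_M(w\ast w')=Z_M(w)Z_M(w')$ holds at every finite level $M$, not merely in the limit, which is precisely what makes the induction go through cleanly.
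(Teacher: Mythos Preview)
The paper does not actually supply a proof of this proposition: it is stated as background (the finite double shuffle relation for MZVs) and immediately followed by the remark naming it, with no argument given. So there is no ``paper's own proof'' to compare against; this is a classical result the author assumes.

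Your proposal is correct and follows the standard proofs. For (i), the induction on total depth/weight together with the trichotomy $m_s<n_t$, $m_s>n_t$, $m_s=n_t$ on the largest summation indices is exactly the usual stuffle argument, and it indeed works at every finite truncation level $M$ as you note. For (ii), the iterated-integral representation and the shuffle-of-simplices argument is the standard route; your admissibility check (that $\mathfrak{h}^0$ is closed under $\shp$ because shuffles preserve first and last letters) is the right thing to verify and is consistent with what the paper records. The only point on which you should double-check yourself against the paper's conventions is the reading direction of the word $z_{\bk}=z_{k_1}\cdots z_{k_r}=yx^{k_1-1}\cdots yx^{k_r-1}$ versus the ordering $0<m_1<\cdots<m_r$ in the nested sum, to make sure your assignment of $dt/(1-t)$ to $y$ and $dt/t$ to $x$ gives the integral in the correct order; this is purely bookkeeping and does not affect the validity of the argument.
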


The relations \eqref{eq:harmonic-rel} and \eqref{eq:shuffle-rel} are called
the finite double shuffle relation.
They imply that the map $Z$ is a $\mathbb{Q}$-algebra homomorphism to $\mathbb{R}$
with respect to both the harmonic and shuffle product.
We denote the image of $Z$ by $\mathcal{Z}$, which is a $\mathbb{Q}$-subalgebra of $\mathbb{R}$.

\subsection{Symmetric multiple zeta value}\label{subsec:SMZV}

It is known that $\mathfrak{h}^{1}_{\bullet} \simeq \mathfrak{h}^{0}_{\bullet}[y]$
for $\bullet \in \{\ast, \shp\}$ (see \cite{Hoffman, FreeLie}).
Hence, for $\bullet \in \{\ast, \shp\}$,
there uniquely exists the $\mathbb{Q}$-algebra homomorphism
$Z^{\bullet}$ from $\mathfrak{h}^{1}_{\bullet}$ to the polynomial ring $\mathcal{Z}[T]$
such that $Z^{\bullet}(y)=T$.
For an index $\bk$, we define
$\zeta^{\ast}(\bk)=Z^{\ast}(z_{\bk})|_{T=0}$ and
$\zeta^{\shp}(\bk)=Z^{\shp}(z_{\bk})|_{T=0}$.
It is known that
\begin{align}
  \sum_{s \ge 0}X^{s}\zeta^{\ast}(\bk, \underbrace{1, \ldots , 1}_{s})=
  \exp{(\sum_{n \ge 2}\frac{(-1)^{n-1}}{n}\zeta(n)X^{n})}
  \sum_{s\ge 0}X^{s}\zeta^{\shp}(\bk, \underbrace{1, \ldots , 1}_{s})
  \label{eq:comparison-zeta}
\end{align}
in the formal power series ring $\mathcal{Z}[[X]]$
for any index $\bk$ \cite[Theorem 1 and Proposition 10]{IKZ}.

For an index $\bk=(k_{1}, \ldots , k_{r})$ and $\bullet \in \{\ast, \shp\}$, we set
\begin{align*}
  \zeta^{\mathcal{S}, \bullet}(\bk)=\sum_{i=0}^{r}
  (-1)^{k_{i+1}+\cdots +k_{r}}\zeta^{\bullet}(k_{1}, k_{2}, \ldots , k_{i})
  \zeta^{\bullet}(k_{r}, k_{r-1}, \ldots , k_{i+1}).
\end{align*}
{}From \eqref{eq:comparison-zeta}, we see that
the difference $\zeta^{\mathcal{S},\ast}(\bk)-\zeta^{\mathcal{S}, \shp}(\bk)$
belongs to the ideal
$\zeta(2)\mathcal{Z}=\pi^{2}\mathcal{Z}$ of $\mathcal{Z}$.
The \textit{symmetric MZV} (SMZV) $\zeta_{\mathcal{S}}(\bk)$ is defined by
\begin{align*}
  \zeta^{\mathcal{S}}(\bk)=\zeta^{\mathcal{S}, \bullet}(\bk) \,\, \hbox{mod} \,\,
  \pi^{2}\mathcal{Z}
\end{align*}
as an element of $\mathcal{Z}/\zeta(2)\mathcal{Z}$.

\subsection{Truncated symmetric multiple zeta value}

We define the $\mathbb{Q}$-algebra anti-automorphism $\psi$ on
$\mathfrak{h}^{1} \simeq \mathbb{Q}\langle z_{1}, z_{2}, \ldots \rangle$ by
$\psi(z_{k})=(-1)^{k}z_{k}$ for $k\ge 1$.
For $\bullet \in \{\ast, \shp\}$,
we define the $\mathbb{Q}$-linear map
$w^{\mathcal{S}, \bullet}: \mathfrak{h}^{1} \to \mathfrak{h}^{1}$ by
$w^{\mathcal{S}, \bullet}(1)=1$ and
\begin{align*}
  w^{\mathcal{S}, \bullet}(u_{1} \cdots u_{r})=\sum_{i=0}^{r}
  u_{1} \cdots u_{i} \bullet \psi(u_{i+1}\cdots u_{r})
\end{align*}
for $r \ge 1$ and $u_{1}, \ldots , u_{r} \in \{z_{k}\}_{k \ge 1}$.

For a positive integer $M$ and $\bullet \in \{\ast, \shp\}$,
we define the $\mathbb{Q}$-linear map
$Z^{\mathcal{S}, \bullet}_{M}: \mathfrak{h}^{1} \to \mathbb{Q}$  by
$Z^{\mathcal{S}, \bullet}_{M}=Z_{M} \circ w_{\mathcal{S}}^{\bullet}$.
For an index $\bk=(k_{1}, \ldots , k_{r})$ and a positive integer $M$,
the value $Z^{\mathcal{S}, \bullet}_{M}(z_{\bk})$ $(\bullet\in\{\ast,\shp\})$
is expressed as follows:
\begin{align*}
  Z_{M}^{\mathcal{S}, \ast}(z_{\bk})
   & =\sum_{i=0}^{r}(-1)^{k_{i+1}+\cdots +k_{r}}
  \sum_{\substack{0<m_{1}<\cdots <m_{i}<M        \\ 0<m_{r}<\cdots <m_{i+1}<M}}
  \frac{1}{m_{1}^{k_{1}} \cdots m_{r}^{k_{r}}},  \\
  Z_{M}^{\mathcal{S}, \shp}(z_{\bk})
   & =\sum_{i=0}^{r}(-1)^{k_{i+1}+\cdots +k_{r}}
  \sum_{\substack{0<m_{1}<\cdots <m_{i}          \\ 0<m_{r}<\cdots <m_{i+1} \\ m_{i}+m_{i+1}<M}}
  \frac{1}{m_{1}^{k_{1}} \cdots m_{r}^{k_{r}}},
\end{align*}
where $m_{0}$ and $m_{r+1}$ in the condition $m_{i}+m_{i+1}<M$
are set equal to zero in the latter formula.
We call the above values the \textit{truncated symmetric multiple zeta value}.

\begin{thm}[\cite{Jar, OSY}]\label{thm:OSY}
  \begin{enumerate}
    \item For any $w, w' \in \mathfrak{h}^{1}$ and $M \ge 1$, it holds that
          \begin{align*}
            Z_{M}^{\mathcal{S}, \ast}(w \ast w')=Z_{M}^{\mathcal{S}, \ast}(w)Z_{M}^{\mathcal{S}, \ast}(w'), \qquad
            Z_{M}^{\mathcal{S}, \shp}(w \,\shp\, w')=Z_{M}^{\mathcal{S}, \shp}(w \psi(w')).
          \end{align*}
    \item It holds that $w^{\mathcal{S}, \bullet}(\mathfrak{h}^{1}) \subset \mathfrak{h}^{0}$ for
          $\bullet \in \{\ast, \shp\}$. Hence the limit $\lim_{M \to \infty}Z_{M}^{\mathcal{S}, \bullet}(w)$
          converges for any $w \in \mathfrak{h}^{1}$ and $\bullet \in \{\ast, \shp\}$.
    \item For any index $\bk$ and $\bullet \in \{\ast, \shp\}$, it holds that
          \begin{align*}
            \lim_{M\to \infty}Z_{M}^{\mathcal{S}, \bullet}(z_{\bk})=\zeta^{\mathcal{S}, \bullet}(\bk).
          \end{align*}
  \end{enumerate}
\end{thm}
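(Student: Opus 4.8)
I would reduce all three assertions to properties of the $\mathbb{Q}$-linear map $w^{\mathcal{S},\bullet}=m_{\bullet}\circ(\mathrm{id}\otimes\psi)\circ\delta$, where $\delta\colon\mathfrak{h}^{1}\to\mathfrak{h}^{1}\otimes\mathfrak{h}^{1}$ is deconcatenation in the alphabet $\{z_{k}\}_{k\ge1}$, $\delta(u_{1}\cdots u_{r})=\sum_{i=0}^{r}(u_{1}\cdots u_{i})\otimes(u_{i+1}\cdots u_{r})$, and $m_{\bullet}$ is multiplication for $\bullet\in\{\ast,\shp\}$. \emph{Harmonic part of} (i): one checks directly from the defining recursion that, since $\ast$ preserves weight, word-reversal $R$ in the letters $z_{k}$ is an anti-automorphism and $z_{\bk}\mapsto(-1)^{\mathrm{wt}(\bk)}z_{\bk}$ an automorphism of the commutative algebra $\mathfrak{h}^{1}_{\ast}$; hence their composite $\psi$ is an automorphism of $\mathfrak{h}^{1}_{\ast}$. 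Since $(\mathfrak{h}^{1}_{\ast},\delta)$ is a bialgebra (Hoffman \cite{Hoffman}), $\delta$ is an algebra homomorphism, and $m_{\ast}$ is one by commutativity, so $w^{\mathcal{S},\ast}$ is an endomorphism of $\mathfrak{h}^{1}_{\ast}$; composing it with $Z_{M}$ (an algebra homomorphism for $\ast$) gives $Z_{M}^{\mathcal{S},\ast}(w\ast w')=Z_{M}^{\mathcal{S},\ast}(w)Z_{M}^{\mathcal{S},\ast}(w')$.

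\emph{Assertion} (ii): write $w^{\mathcal{S},\bullet}(u_{1}\cdots u_{r})=\sum_{i=0}^{r}T_{i}$, $T_{i}=(u_{1}\cdots u_{i})\bullet\psi(u_{i+1}\cdots u_{r})$. Every monomial begins with $y$ because both factors do, so the point is to cancel the monomials ending in the letter $y$, i.e.\ in the block $z_{1}$; this is precisely $w^{\mathcal{S},\bullet}(\mathfrak{h}^{1})\subseteq\mathfrak{h}^{0}$. By the recursion for $\bullet$, a monomial of $T_{i}$ ends in $z_{1}$ only by ending with the last block of the first factor --- forcing $k_{i}=1$, with total contribution $\bigl((u_{1}\cdots u_{i-1})\bullet\psi(u_{i+1}\cdots u_{r})\bigr)z_{1}$ --- or with the last block of the second factor --- forcing $k_{i+1}=1$ and, since $\psi(z_{1})=-z_{1}$, contributing $-\bigl((u_{1}\cdots u_{i})\bullet\psi(u_{i+2}\cdots u_{r})\bigr)z_{1}$ (for $\ast$ the merged block $z_{k_{i}+k_{i+1}}$ is never $z_{1}$, so no other case occurs). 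The first contribution from $T_{i}$ cancels the second from $T_{i-1}$ for every $i$, which proves the inclusion; since $\mathfrak{h}^{0}$ is spanned by admissible words, $\lim_{M}Z_{M}=Z$ on $\mathfrak{h}^{0}$, so the limits in (ii) converge.

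\emph{Assertion} (iii): for an index $\bk=(k_{1},\dots,k_{r})$ one has $w^{\mathcal{S},\bullet}(z_{\bk})\in\mathfrak{h}^{0}$ by Step 2, so $\lim_{M}Z_{M}^{\mathcal{S},\bullet}(z_{\bk})=Z(w^{\mathcal{S},\bullet}(z_{\bk}))$. Let $Z^{\bullet}\colon\mathfrak{h}^{1}_{\bullet}\to\mathcal{Z}[T]$ be the regularization homomorphism with $Z^{\bullet}(y)=T$, which restricts to $Z$ on $\mathfrak{h}^{0}$. Applying $Z^{\bullet}$ to $w^{\mathcal{S},\bullet}(z_{\bk})=\sum_{i=0}^{r}(z_{k_{1}}\cdots z_{k_{i}})\bullet\psi(z_{k_{i+1}}\cdots z_{k_{r}})$ and using $\bullet$-multiplicativity gives
\[
Z\bigl(w^{\mathcal{S},\bullet}(z_{\bk})\bigr)=\sum_{i=0}^{r}Z^{\bullet}(z_{k_{1}}\cdots z_{k_{i}})\,Z^{\bullet}\bigl(\psi(z_{k_{i+1}}\cdots z_{k_{r}})\bigr);
\]
the left side is constant in $T$, so evaluating the right side at $T=0$ and using $\psi(z_{k_{i+1}}\cdots z_{k_{r}})=(-1)^{k_{i+1}+\cdots+k_{r}}z_{k_{r}}\cdots z_{k_{i+1}}$ and $Z^{\bullet}(z_{\bl})|_{T=0}=\zeta^{\bullet}(\bl)$ turns it into $\zeta^{\mathcal{S},\bullet}(\bk)$.

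\emph{Shuffle part of} (i), \emph{the main obstacle}: now $\delta$ is \emph{not} an algebra homomorphism for $\shp$ (the shuffle product does not respect the $z_{k}$-block structure), so $w^{\mathcal{S},\shp}$ is not a homomorphism and the argument of Step 1 fails. Instead I would establish the algebraic identity
\[
w^{\mathcal{S},\shp}(w\,\shp\,w')=w^{\mathcal{S},\shp}\bigl(w\,\psi(w')\bigr)\qquad(w,w'\in\mathfrak{h}^{1})
\]
in $\mathfrak{h}^{1}$ itself, and then apply $Z_{M}$ to both sides. To prove this identity I would induct on the degree of $w'$ in the letters $x,y$, peeling off its last letter and using the compatibility of $\delta$ with concatenation, the intertwining $\delta\circ\psi=(\psi\otimes\psi)\circ\delta^{\mathrm{op}}$, and the shuffle recursion; equivalently one passes to generating functions, as in \cite{OSY}, and reduces to a formal power-series identity. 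This is the crux: the cancellations that force the identity are not visible Hopf-theoretically, and controlling how $\psi$ responds to removing an $x$ rather than a $y$ from $w'$ --- which alters the block decomposition --- is the real technical content of the theorem. Granting it, the shuffle relation in (i) follows, completing the proof.
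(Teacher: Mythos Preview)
The paper does not itself prove this theorem; it is cited from \cite{Jar,OSY}. The closest internal proofs are the $q$-analogues (Propositions~\ref{prop:qS-harmonic} and~\ref{prop:shuffle-rel-q-truncated}), which the paper says follow the method of \cite{OSY}. Your arguments for the harmonic half of~(i), for~(ii), and for~(iii) are correct; for the harmonic relation you use that $w^{\mathcal{S},\ast}=m_{\ast}\circ(\mathrm{id}\otimes\psi)\circ\delta$ is an endomorphism of $\mathfrak{h}^{1}_{\ast}$, whereas the paper's $q$-analogue rewrites the truncated symmetric value as a single sum over Kontsevich's order on $(\mathbb{Z}\setminus\{0\})\sqcup\{\infty\}$ and reduces to the ordinary harmonic relation --- a different but equally short route.

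The shuffle half of~(i) is a genuine gap. You propose the algebraic identity $w^{\mathcal{S},\shp}(w\,\shp\,w')=w^{\mathcal{S},\shp}(w\,\psi(w'))$ in $\mathfrak{h}^{1}$ and then apply $Z_{M}$, but you only sketch an induction on the $x,y$-length of $w'$ and write ``Granting it''. As you yourself observe, $\psi$ is defined through the $z_{k}$-blocks, so peeling an $x$ from the end of $w'$ disturbs the block decomposition and there is no evident inductive step. The paper's $q$-analogue (and hence presumably \cite{OSY}) avoids working in $\mathfrak{h}^{1}$ here: it introduces a trilinear form $T_{q,M}(u,v,w)$ built from sums over lattice regions $D_{j}^{(t)}\subset\mathbb{Z}^{r_{1}+r_{2}+r_{3}}$, checks that $T_{q,M}(u,v,1)=Z_{q,M}^{\mathcal{S},\shp}(u\,\psi^{\shp}(v))$, and proves $T_{q,M}(u,v,w)=Z_{q,M}^{\mathcal{S},\shp}((u\,\shp_{\hbar}\,v)w)$ by induction on depth, driven by the partial-fraction identity $\frac{1}{m-X}\cdot\frac{1}{n-Y}=\bigl(\frac{1}{m-X}+\frac{1}{n-Y}\bigr)\frac{1}{m+n-X-Y}$ (the $q=1$ specialization of the identity for $H_{q}(m;X)$ in the proof of Proposition~\ref{prop:shuffle-rel-q-truncated}). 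This combinatorial device is the missing ingredient; without it or an equivalent, your shuffle argument is incomplete.
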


\subsection{Relations of SMZVs}

{}From the definition of $\zeta^{\mathcal{S}, \bullet}$,
we obtain the following equality,
which is called the reversal relation:
\begin{thm}
  For any index $\bk$, it holds that
  \begin{align*}
    \zeta^{\mathcal{S}}(\overline{\bk})=(-1)^{\mathrm{wt}(\bk)}
    \zeta^{\mathcal{S}}(\bk).
  \end{align*}
\end{thm}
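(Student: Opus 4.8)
The plan is to prove the stronger statement that the claimed identity already holds before passing to the quotient, namely that
\[
\zeta^{\mathcal{S},\bullet}(\overline{\bk})=(-1)^{\mathrm{wt}(\bk)}\,\zeta^{\mathcal{S},\bullet}(\bk)
\]
in $\mathcal{Z}[T]$ (indeed in $\mathcal{Z}$, since both sides are values at $T=0$) for each fixed $\bullet\in\{\ast,\shp\}$; the assertion in $\mathcal{Z}/\zeta(2)\mathcal{Z}$ then follows by reduction. So there is no need to invoke the comparison \eqref{eq:comparison-zeta} at all: the statement is purely a reindexing of the defining double sum.

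First I would write $\bk=(k_1,\dots,k_r)$ and $\overline{\bk}=(k'_1,\dots,k'_r)$ with $k'_j=k_{r+1-j}$, and expand $\zeta^{\mathcal{S},\bullet}(\overline{\bk})$ directly from its definition as a sum over $i=0,\dots,r$. The $i$-th term involves the sign $(-1)^{k'_{i+1}+\cdots+k'_r}$ and the product $\zeta^{\bullet}(k'_1,\dots,k'_i)\,\zeta^{\bullet}(k'_r,\dots,k'_{i+1})$. Translating back through $k'_j=k_{r+1-j}$, one has $k'_{i+1}+\cdots+k'_r=k_1+\cdots+k_{r-i}$, while $\zeta^{\bullet}(k'_1,\dots,k'_i)=\zeta^{\bullet}(k_r,k_{r-1},\dots,k_{r-i+1})$ and $\zeta^{\bullet}(k'_r,\dots,k'_{i+1})=\zeta^{\bullet}(k_1,k_2,\dots,k_{r-i})$. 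The next step is the substitution $j=r-i$, under which $i$ runs over $\{0,\dots,r\}$ iff $j$ does; this turns the $i$-th term into $(-1)^{k_1+\cdots+k_j}\,\zeta^{\bullet}(k_r,\dots,k_{j+1})\,\zeta^{\bullet}(k_1,\dots,k_j)$, with the empty-index conventions $\zeta^{\bullet}(\varnothing)=1$ handled automatically by the boundary cases $j=0$ and $j=r$.

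Then I would factor the sign: $(-1)^{k_1+\cdots+k_j}=(-1)^{\mathrm{wt}(\bk)}(-1)^{k_{j+1}+\cdots+k_r}$, pull the common factor $(-1)^{\mathrm{wt}(\bk)}$ out of the sum, and use the commutativity of multiplication in $\mathcal{Z}$ to rewrite $\zeta^{\bullet}(k_r,\dots,k_{j+1})\,\zeta^{\bullet}(k_1,\dots,k_j)=\zeta^{\bullet}(k_1,\dots,k_j)\,\zeta^{\bullet}(k_r,\dots,k_{j+1})$. What remains inside the sum is exactly the $j$-th term of $\zeta^{\mathcal{S},\bullet}(\bk)$, so $\zeta^{\mathcal{S},\bullet}(\overline{\bk})=(-1)^{\mathrm{wt}(\bk)}\zeta^{\mathcal{S},\bullet}(\bk)$, and reducing modulo $\zeta(2)\mathcal{Z}$ gives the theorem. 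This argument is essentially pure bookkeeping; the only point requiring any care is the simultaneous reversal of \emph{both} index segments and the sign shift $k_1+\cdots+k_{r-i}\leftrightarrow k_{r-i+1}+\cdots+k_r$, so I expect the ``hard part'' to amount to nothing more than presenting the index substitution cleanly.
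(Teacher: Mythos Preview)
Your argument is correct and is exactly the computation the paper has in mind: the text simply says ``From the definition of $\zeta^{\mathcal{S}, \bullet}$, we obtain the following equality'' without spelling out the reindexing, and what you have written is precisely that reindexing. The only remark is that you could state the conclusion $\zeta^{\mathcal{S},\bullet}(\overline{\bk})=(-1)^{\mathrm{wt}(\bk)}\zeta^{\mathcal{S},\bullet}(\bk)$ as an identity in $\mathcal{Z}$ (not merely $\mathcal{Z}[T]$), since both sides are already real numbers.
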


For indices $\bk, \bl$ and $\bm$, we define the non-negative integer
$d_{\bk, \bl}^{\, \bullet, \bm} \, (\bullet\in\{\ast, \shp\})$ by
\begin{align}
  z_{\bk}\bullet z_{\bl}=\sum_{\bm}d_{\bk, \bl}^{\, \bullet, \bm}z_{\bm}.
  \label{eq:def-d}
\end{align}
The following relation is called the double shuffle relation of
SMZVs, which can be obtained from Theorem \ref{thm:OSY}.

\begin{thm}\label{thm:DS-SMZV}
  For any index $\bk$ and $\bl$, it holds that
  \begin{align*}
     &
    \zeta^{\mathcal{S}}(\bk)\zeta^{\mathcal{S}}(\bl)=
    \sum_{\bm}d_{\bk,\bl}^{\, \ast, \bm}\zeta^{\mathcal{S}}(\bm), \\
     &
    (-1)^{\mathrm{wt}(\bl)}
    \zeta^{\mathcal{S}}(\bk, \overline{\bl})=
    \sum_{\bm}d_{\bk,\bl}^{\, \shp, \bm}\zeta^{\mathcal{S}}(\bm).
  \end{align*}
\end{thm}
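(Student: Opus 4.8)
The plan is to derive Theorem \ref{thm:DS-SMZV} from Theorem \ref{thm:OSY} by taking the limit $M \to \infty$ of the two product formulas in part (1). First I would fix indices $\bk$ and $\bl$ and apply Theorem \ref{thm:OSY}(1) with $w = z_{\bk}$ and $w' = z_{\bl}$. For the harmonic side this gives
\begin{align*}
  Z_{M}^{\mathcal{S}, \ast}(z_{\bk})\,Z_{M}^{\mathcal{S}, \ast}(z_{\bl})
  = Z_{M}^{\mathcal{S}, \ast}(z_{\bk} \ast z_{\bl})
  = \sum_{\bm} d_{\bk, \bl}^{\, \ast, \bm}\, Z_{M}^{\mathcal{S}, \ast}(z_{\bm}),
\end{align*}
where the last equality uses the definition \eqref{eq:def-d} of the structure constants and $\mathbb{Q}$-linearity of $Z_{M}^{\mathcal{S},\ast}$. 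By Theorem \ref{thm:OSY}(2) every term $Z_{M}^{\mathcal{S},\ast}(z_{\bm})$ converges as $M \to \infty$, and the sum over $\bm$ is finite, so I may pass to the limit term by term; by Theorem \ref{thm:OSY}(3) the limit of $Z_{M}^{\mathcal{S},\ast}(z_{\bm})$ is $\zeta^{\mathcal{S},\ast}(\bm)$. Reducing modulo $\zeta(2)\mathcal{Z}$ turns each $\zeta^{\mathcal{S},\ast}(\bm)$ into $\zeta^{\mathcal{S}}(\bm)$, yielding the first claimed identity.

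For the shuffle side the argument is the same in structure but one must account for the twist $\psi$. Applying Theorem \ref{thm:OSY}(1) with $w = z_{\bk}$, $w' = z_{\bl}$ gives $Z_{M}^{\mathcal{S},\shp}(z_{\bk} \shp z_{\bl}) = Z_{M}^{\mathcal{S},\shp}(z_{\bk}\,\psi(z_{\bl}))$. Here I would use that $\psi$ is the anti-automorphism with $\psi(z_{k}) = (-1)^{k} z_{k}$, so that $\psi(z_{\bl}) = (-1)^{\mathrm{wt}(\bl)} z_{\overline{\bl}}$ for $\bl = (l_{1}, \ldots, l_{s})$, and hence $z_{\bk}\,\psi(z_{\bl}) = (-1)^{\mathrm{wt}(\bl)} z_{(\bk, \overline{\bl})}$, the concatenation. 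Expanding $z_{\bk} \shp z_{\bl} = \sum_{\bm} d_{\bk,\bl}^{\,\shp,\bm} z_{\bm}$ on the left and using linearity of $Z_{M}^{\mathcal{S},\shp}$, this reads
\begin{align*}
  \sum_{\bm} d_{\bk,\bl}^{\,\shp,\bm}\, Z_{M}^{\mathcal{S},\shp}(z_{\bm})
  = (-1)^{\mathrm{wt}(\bl)}\, Z_{M}^{\mathcal{S},\shp}\bigl(z_{(\bk,\overline{\bl})}\bigr).
\end{align*}
Now I take $M \to \infty$ using parts (2) and (3) of Theorem \ref{thm:OSY} exactly as before, obtaining $\sum_{\bm} d_{\bk,\bl}^{\,\shp,\bm}\,\zeta^{\mathcal{S},\shp}(\bm) = (-1)^{\mathrm{wt}(\bl)}\,\zeta^{\mathcal{S},\shp}(\bk,\overline{\bl})$, and reduce modulo $\zeta(2)\mathcal{Z}$ to replace each $\zeta^{\mathcal{S},\shp}$ by $\zeta^{\mathcal{S}}$.

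There is no serious obstacle here: all the analytic content (convergence of the truncated SMZVs, identification of the limits, and the product formulas at finite $M$) has already been packaged into Theorem \ref{thm:OSY}, so this theorem is essentially a formal corollary. The only point requiring a little care is the bookkeeping on the shuffle side — correctly tracking the sign $(-1)^{\mathrm{wt}(\bl)}$ and the fact that $\psi$ reverses the order of the letters, so that $z_{\bk}\psi(z_{\bl})$ becomes the concatenated index $(\bk, \overline{\bl})$ rather than $(\bk, \bl)$ — and noting that passing to the quotient $\mathcal{Z}/\zeta(2)\mathcal{Z}$ is legitimate because $\zeta^{\mathcal{S}}$ is by definition the common reduction of $\zeta^{\mathcal{S},\ast}$ and $\zeta^{\mathcal{S},\shp}$. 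I would also remark that the harmonic identity is genuinely a relation among the $\zeta^{\mathcal{S},\bullet}$ themselves (before reduction), whereas the shuffle identity already holds at the level of $\zeta^{\mathcal{S},\shp}$, so only the statement that both descend to a single well-defined $\zeta^{\mathcal{S}}$ needs the quotient.
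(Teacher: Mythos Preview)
Your proposal is correct and follows exactly the route the paper indicates: the paper does not give a written proof of this theorem but simply states that it ``can be obtained from Theorem \ref{thm:OSY}'', and your argument is precisely the natural elaboration of that remark. The bookkeeping you do (expanding $z_{\bk}\bullet z_{\bl}$ via the structure constants, computing $\psi(z_{\bl})=(-1)^{\mathrm{wt}(\bl)}z_{\overline{\bl}}$, passing to the limit via parts (2) and (3), and reducing modulo $\zeta(2)\mathcal{Z}$) is all correct.
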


In \cite{Oyama}, Oyama proved that
Theorem \ref{thm:DS-SMZV} and the identity
$\zeta^{\mathcal{S}}(k, \ldots , k)=0$ for any $k \ge 1$
imply the Ohno-type relation.
To write down it, we define the Hoffman dual $\bk^{\vee}$ of an index $\bk$.
We define the $\mathbb{Q}$-algebra automorphism $\tau$ on $\mathfrak{h}$
by $\tau(x)=y$ and $\tau(y)=x$.
For a non-empty index $\bk$, the monomial $z_{\bk}$ is written in the form
$z_{\bk}=y w$ with a monomial $w \in \mathfrak{h}$.
Then the Hoffman dual index of $\bk$ is defined to be the index $\bk^{\vee}$
satisfying $z_{\bk^{\vee}}=y\tau(w)$.
For example, if $\bk=(3, 1, 2, 1)$, we have
$z_{\bk}=yx^{2}y^{2}xy$ and $z_{\bk^{\vee}}=y\tau(x^{2}y^{2}xy)=
  y^{3}x^{2}yx=z_{1}^{2}z_{3}z_{2}$,
hence $\bk^{\vee}=(1, 1, 3, 2)$.
For the empty index we set $\varnothing^{\vee}=\varnothing$.

\begin{thm}[Ohno-type relation \cite{Oyama}]
  For any index $\bk$ and any non-negative integer $m$,
  it holds that
  \begin{align*}
    \sum_{\substack{\be \in (\mathbb{Z}_{\ge 0})^{r} \\ \mathrm{wt}(\be)=m}}
    \zeta^{\mathcal{S}}(\bk+\be)=
    \sum_{\substack{\be \in (\mathbb{Z}_{\ge 0})^{s} \\ \mathrm{wt}(\be)=m}}
    \zeta^{\mathcal{S}}((\bk^{\vee}+\be)^{\vee}),
  \end{align*}
  where $r=\mathrm{dep}(\bk)$ and $s=\mathrm{dep}(\bk^{\vee})$.
\end{thm}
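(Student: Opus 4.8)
The plan is to follow Oyama \cite{Oyama} and deduce the relation from Theorem \ref{thm:DS-SMZV} together with the vanishing $\zeta^{\mathcal{S}}(k,\ldots,k)=0$ ($k\ge 1$), working inside $\mathfrak{h}^{1}$. First I would restate the assertion algebraically. Extend $\zeta^{\mathcal{S}}$ to the $\mathbb{Q}$-linear map $\mathfrak{h}^{1}\to\mathcal{Z}/\zeta(2)\mathcal{Z}$, $z_{\bk}\mapsto\zeta^{\mathcal{S}}(\bk)$, introduce a formal variable $c$, and let $\rho_{c}\colon\mathfrak{h}^{1}\to\mathfrak{h}^{1}[[c]]$ be the homomorphism for the concatenation product determined by $z_{k}\mapsto\sum_{j\ge k}c^{\,j-k}z_{j}$ (equivalently, the one induced by the substitution $x\mapsto x$, $y\mapsto y(1-cx)^{-1}$). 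Then $\sum_{\mathrm{wt}(\be)=m}z_{\bk+\be}$ is the coefficient of $c^{m}$ in $\rho_{c}(z_{\bk})$, and, writing $h\colon\mathfrak{h}^{1}\to\mathfrak{h}^{1}$ for the $\mathbb{Q}$-linear involution $z_{\bk}\mapsto z_{\bk^{\vee}}$ (Hoffman dual), the coefficient of $c^{m}$ in $h\rho_{c}h(z_{\bk})$ is $\sum_{\mathrm{wt}(\be)=m}z_{(\bk^{\vee}+\be)^{\vee}}$. Hence the theorem is equivalent to the single generating-series identity $\zeta^{\mathcal{S}}(\rho_{c}(w))=\zeta^{\mathcal{S}}(h\rho_{c}h(w))$ for all $w\in\mathfrak{h}^{1}$, i.e.\ to $(\rho_{c}-h\rho_{c}h)(\mathfrak{h}^{1})\subset\ker\zeta^{\mathcal{S}}$. (At $m=0$ this is only the triviality $(\bk^{\vee})^{\vee}=\bk$, so all the content sits in the terms with $m\ge 1$.)

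Second I would collect the elements of $\ker\zeta^{\mathcal{S}}$ that the hypotheses provide. The shuffle part of Theorem \ref{thm:DS-SMZV} gives $z_{\bk}\,\shp\,z_{\bl}-(-1)^{\mathrm{wt}(\bl)}z_{\bk}z_{\overline{\bl}}\in\ker\zeta^{\mathcal{S}}$ for all indices $\bk,\bl$, the harmonic part says that $\zeta^{\mathcal{S}}$ is a homomorphism for $\ast$, and the reversal relation gives $\zeta^{\mathcal{S}}(\overline{\bk})=(-1)^{\mathrm{wt}(\bk)}\zeta^{\mathcal{S}}(\bk)$; together these are, for $\zeta^{\mathcal{S}}$, the counterpart of the regularized double shuffle relations, and it is from relations of exactly this type that the derivation relations are extracted in the classical proof of Ohno's relation \cite{IKZ}. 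Finally, $\zeta^{\mathcal{S}}(z_{k}^{\,n})=0$ for all $k,n\ge 1$, and, combined with the $\ast$-homomorphism property, this vanishing propagates to words of the form $u\,z_{k}^{\,n}\,v$.

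Third — and this is the main step and the chief obstacle — I would establish the generating-series identity expressing $(\rho_{c}-h\rho_{c}h)(w)$, for an arbitrary $w\in\mathfrak{h}^{1}$, as an explicit $\mathbb{Q}[[c]]$-linear combination of the kernel elements collected above together with diagonal words $u\,z_{k}^{\,n}\,v$; applying $\zeta^{\mathcal{S}}$ then annihilates every summand and proves the theorem. Concretely this is the symmetric-MZV incarnation of the classical factorization of the Ohno operator as $(\text{Hoffman duality})\circ\exp\!\bigl(\sum_{n\ge 1}\tfrac{(-c)^{n}}{n}D_{n}\bigr)$ with the $D_{n}$ the Ihara--Kaneko--Zagier derivations, and I would carry it out by comparing the insertion description of $\rho_{c}$ (it appends $(1-cx)^{-1}$ after each $z_{k}$) with that of its conjugate $h\rho_{c}h$ — one computes $h\rho_{c}h(yw)=y(1-cy)^{-1}\,(\tau\rho_{c}\tau)(w)$, where $\tau$ exchanges $x$ and $y$ — and accounting for the difference term by term. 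Two features make this delicate: only the \emph{symmetrized} double shuffle of Theorem \ref{thm:DS-SMZV} is available, so the derivation-type relations needed as building blocks must be manufactured from it (and from the diagonal vanishing) rather than borrowed wholesale; and matching the Hoffman-dual side against the insertion picture is precisely what forces the words $u\,z_{k}^{\,n}\,v$ into the decomposition, so that the identity $\zeta^{\mathcal{S}}(z_{k}^{\,n})=0$ — an input with no classical analogue, since $\zeta(\{2\}^{n})\ne 0$ — is genuinely used. Everything else (the reformulation, the list of kernel elements, and the final application of $\zeta^{\mathcal{S}}$) is routine.
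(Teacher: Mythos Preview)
The paper does not prove this statement itself --- it is quoted from \cite{Oyama} --- but it reproduces Oyama's argument when proving the $q$-analogue in Section~\ref{sec:relations}, so that is the relevant comparison.

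Your generating-series reformulation via $\rho_c$ and $h\rho_c h$ is correct, but the route you sketch is not Oyama's. Oyama uses only two very narrow consequences of Theorem~\ref{thm:DS-SMZV}: (i)~$\zeta^{\mathcal{S}}(z_{\bk}\ast z_1^{n})=0$ for every index $\bk$ and $n\ge 1$, which follows from the harmonic relation together with $\zeta^{\mathcal{S}}(1,\ldots,1)=0$ (the vanishing at $(k,\ldots,k)$ with $k\ge 2$ is \emph{not} used), and (ii)~$\zeta^{\mathcal{S}}(z_{\bk}\,\shp\,z_1^{n})=(-1)^n\zeta^{\mathcal{S}}(z_{\bk}z_1^{n})$, the shuffle relation with $\bl=(1,\ldots,1)$. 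The algebraic engine is then a single combinatorial identity in $\mathfrak{h}^{1}$ (equation~(3) of \cite{Oyama}, reproduced in the paper),
\[
\sum_{p=0}^{\min(n,r)}\ \sum_{\substack{m+s=n-p\\ m,s\ge 0}}(-1)^{s}A_{\bk,s,p}\,\shp\,z_{1}^{m}=z_{\bk}\ast z_{1}^{n},
\]
where the $A_{\bk,s,p}$ are explicit sums of words built from $a_s(\bk)$ and encode precisely the Hoffman-dual insertions. Applying $\zeta^{\mathcal{S}}$ and invoking (i) and (ii) collapses this to $\sum_p(-1)^p\sum_{m+s=n-p}\zeta^{\mathcal{S}}(A_{\bk,s,p}z_1^m)=0$, and the Ohno-type relation then drops out by a short generating-function manipulation. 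No derivation operators $D_n$, no exponential factorization, and no diagonal words $u\,z_k^{n}\,v$ with $k\ge 2$ enter the argument.

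Your third step, by contrast, remains only a plan: you do not actually produce the decomposition of $(\rho_c-h\rho_c h)(w)$ into kernel elements, and the analogy with the IKZ proof of the classical Ohno relation is loose (the duality there is the ordinary one on $\mathfrak{h}^{0}$, not the Hoffman dual on $\mathfrak{h}^{1}$, so the derivation machinery does not transfer without real work). This is a gap at exactly the point where all the content lies. The direct fix is to drop the derivation picture and use Oyama's identity above, which already packages the comparison between $\rho_c$ and $h\rho_c h$ you are after, but expressed entirely through $\ast$ and $\shp$ against powers of $z_1$.
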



\section{A $q$-analogue of multiple zeta value}\label{sec:qMZV}

\subsection{Algebraic formulation}

Set $\mathcal{C}=\mathbb{Q}[\hbar]$, where $\hbar$ is a formal variable.
Let $\mathfrak{H}=\mathcal{C}\langle a, b \rangle$ be
the non-commutative polynomial ring
of two variables $a$ and $b$ over $\mathcal{C}$.
For $k \ge 1$ we set
\begin{align*}
  g_{k}=ba^{k}, \qquad e_{k}=b(a+\hbar)a^{k-1}.
\end{align*}
They are related to each other as
\begin{align*}
  g_{k}= (-\hbar)^{k-1}g_{1}+\sum_{j=2}^{k}(-\hbar)^{k-j}e_{j}
\end{align*}
for $k \ge 1$, and
\begin{align}
  e_{k}=g_{k}+\hbar g_{k-1}
  \label{eq:e-to-g}
\end{align}
for $k \ge 2$.
Note that $e_{1}-g_{1}=\hbar b$.

We set
\begin{align*}
  A=\{\hbar b\} \cup \{ba^{k} \mid k \ge 1\}=\{e_{1}-g_{1}\} \cup \{g_{k} \mid k\ge 1\},
\end{align*}
which is an algebraically independent set,
and denote by $\mathcal{C}\langle A \rangle$
the $\mathcal{C}$-subalgebra of $\mathfrak{H}$ generated by $1$ and $A$.
The \textit{depth} of a monomial $u_{1} \cdots u_{r} \, (u_{1}, \ldots , u_{r} \in A)$
is defined to be $r$.

We define the $\mathcal{C}$-submodule $\widehat{\mathfrak{H}^{0}}$
of $\mathcal{C}\langle A \rangle$ by
\begin{align*}
  \widehat{\mathfrak{H}^{0}}=\mathcal{C}+\sum_{k\ge 1}\mathcal{C}\langle A \rangle g_{k}.
\end{align*}
Then the set consisting of the elements
\begin{align}
  (e_{1}-g_{1})^{\alpha_{1}} g_{\beta_{1}+1}
  \cdots  (e_{1}-g_{1})^{\alpha_{r}} g_{\beta_{r}+1}
  \label{eq:basis-monomial}
\end{align}
with $r \ge 0$ and $\alpha_{1}, \ldots , \alpha_{r}, \beta_{1}, \ldots, \beta_{r}\ge 0$
forms a free basis of $\widehat{\mathfrak{H}^{0}}$.

Let $q$ be a complex parameter satisfying $0<|q|<1$.
We endow the complex number field $\mathbb{C}$ with $\mathcal{C}$-module structure
such that $\hbar$ acts as multiplication by $1-q$.

We denote by $\mathfrak{z}$
the $\mathcal{C}$-submodule of $\mathcal{C}\langle A \rangle$
spanned by the set $A$.
For a positive integer $m$, we define the $\mathcal{C}$-linear map
$F_{q}(m; \cdot): \mathfrak{z} \longrightarrow \mathbb{C}$ by
\begin{align}
  F_{q}(m; e_{1}-g_{1})=1-q, \qquad
  F_{q}(m; g_{k})=\frac{q^{km}}{[m]^{k}}
  \label{eq:def-Iq1}
\end{align}
for $k \ge 1$, where $[m]$ is the $q$-integer
\begin{align*}
  [m]=\frac{1-q^{m}}{1-q}=1+q+\cdots+q^{m-1}.
\end{align*}
Then, from $e_{1}=(e_{1}-g_{1})+g_{1}$ and \eqref{eq:e-to-g} for $k \ge 2$, we have
\begin{align}
  F_{q}(m; e_{k})=\frac{q^{(k-1)m}}{[m]^{k}}
  \label{eq:def-Iq2}
\end{align}
for any $k \ge 1$ because $(1-q)[m]+q^{m}=1$.

For a positive integer $M$, we define the $\mathcal{C}$-linear map
$Z_{q, M}: \mathcal{C}\langle A \rangle \longrightarrow \mathbb{C}$
by $Z_{q, M}(1)=1$ and
\begin{align*}
  Z_{q, M}(u_{1}\cdots u_{r})=\sum_{0<m_{1}<\cdots <m_{r}<M}
  \prod_{i=1}^{r}F_{q}(m_{i}; u_{i})
\end{align*}
for $u_{1}, \ldots , u_{r} \in A$.
Then we see that, if $w \in \widehat{\mathfrak{H}^{0}}$,
$Z_{q, M}(w)$ converges in the limit as $M \to \infty$.
Hence we can define the $\mathcal{C}$-linear map
$Z_{q}: \widehat{\mathfrak{H}^{0}} \longrightarrow \mathbb{C}$ by
\begin{align*}
  Z_{q}(w)=\lim_{M \to \infty}Z_{q, M}(w)
\end{align*}
for $w \in \widehat{\mathfrak{H}^{0}}$.
In this paper we call the value $Z_{q}(w)$ with  $w \in \widehat{\mathfrak{H}^{0}}$
a \textit{$q$-analogue of MZV} ($q$MZV).

\begin{rem}
  There are various models of $q$-analogue of MZV (see \cite[Chapter 12]{ZhaoBook}).
  We can represent them in the form of $Z_{q}(w)$ with
  some $w \in \widehat{\mathfrak{H}^{0}}$.
  For example, from \eqref{eq:def-Iq1} and \eqref{eq:def-Iq2}, we see that
  \begin{align*}
     &
    Z_{q}(g_{k_{1}} \cdots g_{k_{r}})=\sum_{0<m_{1}<\cdots <m_{r}}
    \frac{q^{k_{1}m_{1}+\cdots +k_{r}m_{r}}}
    {[m_{1}]^{k_{1}} \cdots [m_{r}]^{k_{r}}}, \\
     &
    Z_{q}(e_{k_{1}} \cdots e_{k_{r}})=\sum_{0<m_{1}<\cdots <m_{r}}
    \frac{q^{(k_{1}-1)m_{1}+\cdots +(k_{r}-1)m_{r}}}
    {[m_{1}]^{k_{1}} \cdots [m_{r}]^{k_{r}}},
  \end{align*}
  which are called the Schlesinger-Zudilin model and
  the Bradley-Zhao model, respectively.
\end{rem}

\subsection{Double shuffle relation of $q$MZVs}

The harmonic product and the shuffle product associated with the $q$MZV
are defined as follows.

First, we define the symmetric $\mathcal{C}$-bilinear map
$\circ_{\hbar}: \mathfrak{z}\times \mathfrak{z} \to \mathfrak{z}$ by
\begin{align*}
  (e_{1}-g_{1})\circ_{\hbar}(e_{1}-g_{1})=\hbar (e_{1}-g_{1}), \quad
  (e_{1}-g_{1})\circ_{\hbar} g_{k}=\hbar g_{k}, \quad
  g_{k}\circ_{\hbar} g_{l}=g_{k+l}
\end{align*}
for $k, l\ge 1$.
Then we see that
\begin{align}
  F_{q}(m; u \circ_{\hbar} v)=F_{q}(m; u)  F_{q}(m; v)
  \label{eq:harmonic-I}
\end{align}
for any $u, v \in \mathfrak{z}$ and $m\ge 1$.
The harmonic product $\ast_{\hbar}$ is
the $\mathcal{C}$-bilinear binary operation on
$\mathcal{C}\langle A \rangle$
uniquely defined by the following properties:
\begin{enumerate}
  \item For any $w \in \mathcal{C}\langle A \rangle$,
        it holds that $w \ast_{\hbar} 1=w$ and $1\ast_{\hbar} w=w$.
  \item For any $w, w'\in \mathcal{C}\langle A \rangle$ and $u, v \in A$,
        it holds that $(w u)\ast_{\hbar}(w' v)=(w\ast_{\hbar} w' v)u+(wu\ast_{\hbar} w')v+
          (w\ast_{\hbar} w')(u\circ_{\hbar}v)$.
\end{enumerate}
The harmonic product $\ast_{\hbar}$ on $\mathcal{C}\langle A \rangle$
is commutative and associative.

\begin{prop}\label{prop:q-harmonic}
  The $\mathcal{C}$-submodule $\widehat{\mathfrak{H}^{0}}$
  of $\mathcal{C}\langle A \rangle$ is closed
  under the harmonic product $\ast_{\hbar}$, and it holds that
  $Z_{q, M}(w \ast_{\hbar} w')=Z_{q, M}(w)Z_{q, M}(w')$ for any
  $w, w' \in  \mathcal{C}\langle A \rangle$ and $M \ge 1$.
  Therefore, we have $Z_{q}(w \ast_{\hbar} w')=Z_{q}(w)Z_{q}(w')$ for any
  $w, w' \in  \widehat{\mathfrak{H}^{0}}$.
\end{prop}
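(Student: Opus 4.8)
The plan is to establish the three assertions in order, since each builds on the previous. First I would prove the multiplicativity $Z_{q,M}(w\ast_{\hbar}w')=Z_{q,M}(w)Z_{q,M}(w')$ for all $w,w'\in\mathcal{C}\langle A\rangle$ and all $M\ge 1$. By $\mathcal{C}$-bilinearity it suffices to check this on monomials $w=u_1\cdots u_r$ and $w'=v_1\cdots v_s$ with $u_i,v_j\in A$. The natural approach is induction on $r+s$, using the recursive definition of $\ast_{\hbar}$: writing $w=w_0u_r$ and $w'=w_0'v_s$, the three terms in $(w_0u_r)\ast_{\hbar}(w_0'v_s)$ correspond under $Z_{q,M}$ to splitting the sum $\sum_{0<m_1<\cdots<m_{r+s}<M}$ according to whether the largest index is carried by the factor coming from $w$, by the factor coming from $w'$, or is a common index $m$ at which both $u_r$ and $v_s$ sit. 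In the last case the summand contributes $F_q(m;u_r)F_q(m;v_s)=F_q(m;u_r\circ_{\hbar}v_s)$ by \eqref{eq:harmonic-I}, which is exactly the definition of the $\circ_{\hbar}$-term. This is the standard stuffle/quasi-shuffle computation; the role of $\circ_{\hbar}$ and \eqref{eq:harmonic-I} is precisely to make the diagonal terms match, so the induction goes through routinely.

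Next I would show that $\widehat{\mathfrak{H}^0}$ is closed under $\ast_{\hbar}$. Recall $\widehat{\mathfrak{H}^0}=\mathcal{C}+\sum_{k\ge1}\mathcal{C}\langle A\rangle g_k$, i.e.\ it consists of $\mathcal{C}$-combinations of $1$ and of monomials in $A$ ending in some $g_k$. Given two basis monomials of the form \eqref{eq:basis-monomial}, each ends in a letter $g_{\beta_r+1}$, $g_{\beta'_{s}+1}$. Unwinding the recursion for $\ast_{\hbar}$ once at the top level, $(w_0g_k)\ast_{\hbar}(w_0'g_l)$ is a sum of three terms, each of which visibly ends in $g_k$, in $g_l$, or in $g_k\circ_{\hbar}g_l=g_{k+l}$ — in every case a letter of the form $g_{\bullet}$. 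Hence the product lies in $\sum_{j\ge1}\mathcal{C}\langle A\rangle g_j\subset\widehat{\mathfrak{H}^0}$, and of course $1\ast_{\hbar}w=w$ stays in $\widehat{\mathfrak{H}^0}$ when $w$ does. I should note here that membership in $\widehat{\mathfrak{H}^0}$ does not require the letter $\hbar b=e_1-g_1$ to be absent, only that the monomial not end in it; the closure argument only looks at the final letter, so it is unaffected.

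Finally, the convergence statement: if $w\in\widehat{\mathfrak{H}^0}$ then $Z_{q,M}(w)$ converges as $M\to\infty$. For a monomial $u_1\cdots u_r$ ending in $g_k$, the innermost sum over $m_r$ contributes a factor $F_q(m_r;g_k)=q^{km_r}/[m_r]^k$, which since $0<|q|<1$ decays geometrically in $m_r$ (note $[m_r]\to 1/(1-q)$, so the decay is governed by $|q|^{km_r}$); summing the remaining finitely many nested sums preserves absolute convergence. Therefore $Z_q(w)=\lim_{M\to\infty}Z_{q,M}(w)$ is well defined on $\widehat{\mathfrak{H}^0}$, and passing to the limit in the already-established identity $Z_{q,M}(w\ast_{\hbar}w')=Z_{q,M}(w)Z_{q,M}(w')$ — legitimate because $w\ast_{\hbar}w'\in\widehat{\mathfrak{H}^0}$ by the closure step — gives $Z_q(w\ast_{\hbar}w')=Z_q(w)Z_q(w')$ for $w,w'\in\widehat{\mathfrak{H}^0}$. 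I expect the only genuinely delicate point to be the bookkeeping in the induction of the first step — making sure the three-way split of the outermost summation index matches the three terms of the recursive definition exactly, with no boundary cases lost when $r$ or $s$ is small — but this is the same argument that underlies the classical harmonic-product relation and the deformation by $\hbar$ does not change its structure.
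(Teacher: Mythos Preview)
Your proposal is correct and is exactly the standard quasi-shuffle argument that the paper has in mind; in fact the paper states this proposition without proof, having set things up so that \eqref{eq:harmonic-I} makes the diagonal terms in the stuffle recursion match, which is precisely the mechanism you describe. Your closure argument for $\widehat{\mathfrak{H}^{0}}$ (only the terminal letter matters, and all three terms in the recursion end in some $g_{j}$) and your passage to the limit are likewise the intended reasoning.
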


Next, we define the shuffle product.
Consider the $\mathbb{Q}$-linear right action of $\mathfrak{H}$
on a $\mathbb{C}$-valued function $f(t)$ defined by
\begin{align*}
  (f\hbar)(t)=(1-q)f(t), \qquad
  (fa)(t)=(1-q)\sum_{j=1}^{\infty}f(q^{j}t), \qquad
  (fb)(t)=\frac{t}{1-t}f(t).
\end{align*}
Then, for any function $f$ and $g$, it holds that
\begin{align}
  fa \cdot ga=(fa\cdot g+f\cdot ga+(1-q)f\cdot g)a, \quad
  fb \cdot g=f\cdot gb=(f \cdot g)b
  \label{eq:ab-rep}
\end{align}
if all terms are well-defined,
where $\cdot$ denotes the usual multiplication of functions.
Motivated by the above relations we define
the shuffle product on $\mathfrak{H}$
as the $\mathcal{C}$-bilinear binary operation
uniquely defined by the following properties:
\begin{enumerate}
  \item For any $w \in \mathfrak{H}$,
        it holds that $w \,\shp_{\hbar}\, 1=w$ and $1\,\shp_{\hbar}\,w=w$.
  \item For any $w, w'\in \mathfrak{H}$, it holds that
        \begin{align*}
           &
          wa\,\shp_{\hbar}\,w'a=(wa\,\shp_{\hbar}\, w'+w\,\shp_{\hbar}\, w'a+\hbar w\,\shp_{\hbar}\, w')a, \\
           &
          wb\,\shp_{\hbar}\, w'=w\,\shp_{\hbar}\, w'b=(w\,\shp_{\hbar}\, w')b.
        \end{align*}
\end{enumerate}
The shuffle product is commutative and associative.
Note that
\begin{align}
  (w(e_{1}-g_{1}))\,\shp_{\hbar}\,w'=w\,\shp_{\hbar}\,(w'(e_{1}-g_{1}))=
  (w\,\shp_{\hbar}\,w')(e_{1}-g_{1})
  \label{eq:e1g1-central}
\end{align}
for any $w, w' \in \mathfrak{H}$ since $e_{1}-g_{1}=\hbar b$.

\begin{prop}\label{prop:q-shuffle}
  The $\mathcal{C}$-submodules $\mathcal{C} \langle A \rangle$ and
  $\widehat{\mathfrak{H}^{0}}$ of $\mathfrak{H}$ are closed
  under the shuffle product $\shp_{\hbar}$.
  For any $u_{1}, \ldots , u_{r}, v_{1}, \ldots , v_{s} \in \mathfrak{z}$
  and $M \ge 1$, it holds that
  \begin{align}
    Z_{q, M}(u_{1} \cdots u_{r} \,\shp_{\hbar}\, v_{1} \cdots v_{s})=
    \sum_{\substack{0<m_{1}<\cdots <m_{r} \\ 0<n_{1}<\cdots <n_{s} \\ m_{r}+n_{s}<M}}
    \prod_{i=1}^{r}F_{q}(m_{i}; u_{i})
    \prod_{j=1}^{s}F_{q}(n_{j}; v_{j}).
    \label{eq:truncated-shuffle}
  \end{align}
  Therefore, we have
  $Z_{q}(w \,\shp_{\hbar}\, w')=Z_{q}(w)Z_{q}(w')$ for any
  $w, w' \in  \widehat{\mathfrak{H}^{0}}$.
\end{prop}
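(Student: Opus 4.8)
The plan is to route everything through a single generating‑function map. Let $\rho\colon\mathcal{C}+b\mathfrak{H}\to\mathbb{C}[[t]]$ be the $\mathcal{C}$-linear map ($\hbar$ acting as multiplication by $1-q$) determined by $\rho(1)=1$, $\rho(wb)=\frac{t}{1-t}\rho(w)$, and $\rho(wa)=D\rho(w)$, where $D$ is the coefficientwise operator $\sum_{m\ge1}c_mt^m\mapsto\sum_{m\ge1}\frac{q^m}{[m]}c_mt^m$; this is well defined because $\rho(w)$ has no constant term for $w\in b\mathfrak{H}$, so $D$ is never applied illegitimately. A direct check gives $\rho(u)(t)=\sum_{m\ge1}F_q(m;u)\,t^m$ for $u\in A$, and, building a monomial up from the left,
\[
  \rho(u_1\cdots u_r)(t)=\sum_{0<m_1<\cdots<m_r}\Bigl(\prod_{i=1}^rF_q(m_i;u_i)\Bigr)t^{m_r}
\]
for $u_1,\ldots,u_r\in A$, hence, by $\mathcal{C}$-linearity of $F_q(m;\cdot)$ on $\mathfrak{z}$, for $u_1,\ldots,u_r\in\mathfrak{z}$. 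Comparing with the definition of $Z_{q,M}$ one gets $Z_{q,M}(v)=\sum_{0\le p<M}[t^p]\rho(v)$ for $v\in\mathcal{C}\langle A\rangle$, i.e.\ $Z_{q,M}(v)$ is the $(M-1)$-st partial sum of the coefficient sequence of $\rho(v)$. Next, using the defining recursion of $\shp_{\hbar}$ together with \eqref{eq:ab-rep} (valid here since $D$ is never applied to a series with constant term), an induction on the total degree in $a,b$ proves at once that $\mathcal{C}+b\mathfrak{H}$ is $\shp_{\hbar}$-closed and that $\rho$ is an algebra homomorphism $(\mathcal{C}+b\mathfrak{H},\shp_{\hbar})\to(\mathbb{C}[[t]],\cdot)$, i.e.\ $\rho(w\shp_{\hbar}w')=\rho(w)\rho(w')$.

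Granting the $\shp_{\hbar}$-closure of $\mathcal{C}\langle A\rangle$ (discussed below), the identity \eqref{eq:truncated-shuffle} becomes formal: for $w=u_1\cdots u_r$, $w'=v_1\cdots v_s$ with $u_i,v_j\in\mathfrak{z}$ we have $w\shp_{\hbar}w'\in\mathcal{C}\langle A\rangle$, so $Z_{q,M}(w\shp_{\hbar}w')=\sum_{0\le p<M}[t^p]\bigl(\rho(w)\rho(w')\bigr)$; multiplying the two series above,
\[
  \rho(w)\rho(w')=\sum_{\substack{0<m_1<\cdots<m_r\\0<n_1<\cdots<n_s}}\Bigl(\prod_iF_q(m_i;u_i)\Bigr)\Bigl(\prod_jF_q(n_j;v_j)\Bigr)t^{m_r+n_s},
\]
and extracting the partial sum of coefficients keeps precisely the terms with $m_r+n_s\le M-1$, which is the right-hand side of \eqref{eq:truncated-shuffle}; the general case follows by bilinearity. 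For the last assertion, the $\shp_{\hbar}$-closure of $\widehat{\mathfrak{H}^0}$ follows from that of $\mathcal{C}\langle A\rangle$ once one observes that a $\shp_{\hbar}$-product of two monomials ending in the letter $a$ is again a sum of monomials ending in $a$ (immediate from the recursion), and that a monomial of $\mathcal{C}\langle A\rangle$ ending in $a$ lies in $\mathcal{C}\langle A\rangle g_k$ for some $k\ge1$; thus $w\shp_{\hbar}w'\in\widehat{\mathfrak{H}^0}$ for $w,w'\in\widehat{\mathfrak{H}^0}$, and letting $M\to\infty$ in \eqref{eq:truncated-shuffle} yields $Z_q(w\shp_{\hbar}w')=Z_q(w)Z_q(w')$, the exchange of limit and sum being justified by absolute convergence (for a monomial of $\widehat{\mathfrak{H}^0}$ the innermost letter is $g_k$ with $k\ge1$, giving geometric decay in the outermost index).

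The step requiring real work, and the expected main obstacle, is the $\shp_{\hbar}$-closure of $\mathcal{C}\langle A\rangle$. I would argue as follows. Write a word of $\mathfrak{H}$ beginning with $b$ uniquely as $W=ba^{l_1}\cdots ba^{l_r}$ and put $d(W)=\#\{i:l_i=0\}$; then $\mathcal{C}\langle A\rangle$ is exactly $\mathcal{C}\cdot1$ together with the $\mathcal{C}$-span of the $\hbar^{d(W)}W$, equivalently the set of $\sum_Wc_WW$ with $\mathrm{ord}_{\hbar}c_W\ge d(W)$ for every $W$. Since $\shp_{\hbar}$ is graded when one sets $\mathrm{wt}(a)=\mathrm{wt}(\hbar)=1$ and $\mathrm{wt}(b)=0$, in $W\shp_{\hbar}W'=\sum_{W''}a_{W''}W''$ every $a_{W''}$ is a single power of $\hbar$ (times a rational), so closure amounts to the estimate $\mathrm{ord}_{\hbar}a_{W''}\ge d(W'')-d(W)-d(W')$ for all $W''$ occurring, which I would prove by induction on the total length via the shuffle recursion. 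When the last letter of $W$ or $W'$ is $b$ the recursion simply carries that $b$ to the end and the estimate is easy; when both end in $a$ one obtains three terms by stripping one $a$, and the only delicate point is that re-appending the $a$ can raise $d(\cdot)$ by one. That loss is recovered from the elementary fact that if one factor ends in the letter $b$ then so does every monomial of any $\shp_{\hbar}$-product involving it, which makes exactly the problematic sub-cases vacuous. I expect this bookkeeping induction to be the heaviest part of the proof (it is plausibly what the paper defers to its appendix on technical propositions); given the generating-function dictionary above, the remaining assertions are routine.
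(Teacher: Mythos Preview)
Your generating-function argument for \eqref{eq:truncated-shuffle} is essentially the paper's: your $\rho$ is the paper's $L_q$, the multiplicativity $\rho(w\,\shp_\hbar\,w')=\rho(w)\rho(w')$ is exactly how the paper proceeds, and extracting partial coefficient sums is the same concluding step. The closure of $\widehat{\mathfrak{H}^0}$ given that of $\mathcal{C}\langle A\rangle$ is also handled in the same spirit.

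Where you diverge is the closure of $\mathcal{C}\langle A\rangle$, and here the paper is much lighter than you anticipate. It is \emph{not} deferred to an appendix: the paper simply observes that $e_1-g_1=\hbar b$ is central for $\shp_\hbar$ (your recursion gives $(w(e_1-g_1))\,\shp_\hbar\,w'=(w\,\shp_\hbar\,w')(e_1-g_1)$ directly), and then writes down three short recursion formulas for $wg_1\,\shp_\hbar\,w'g_1$, $wg_1\,\shp_\hbar\,w'g_{k+1}$, and $wg_{k+1}\,\shp_\hbar\,w'g_{l+1}$ using $g_{k+1}=g_ka$. These yield the inclusion $\mathcal{C}\langle A\rangle g_k\,\shp_\hbar\,\mathcal{C}\langle A\rangle g_l\subset\sum_{\min(k,l)\le j\le k+l-1}\mathcal{C}\langle A\rangle g_j$, and closure of both $\mathcal{C}\langle A\rangle$ and $\widehat{\mathfrak{H}^0}$ drops out in a few lines. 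Your $\hbar$-order bookkeeping (characterising $\mathcal{C}\langle A\rangle$ via $\mathrm{ord}_\hbar c_W\ge d(W)$ and inducting on length) is correct and the sketch you give can be completed as you indicate, but it is noticeably heavier than the paper's route; what it buys is that it never leaves the $\{a,b\}$ alphabet, whereas the paper's argument exploits the decomposition into the generators $e_1-g_1$ and $g_k$ from the start.
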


\begin{proof}
  Note that $g_{k+1}=g_{k}a$ for $k \ge 1$.
  For $w, w' \in \mathcal{C} \langle A \rangle$ and $k, l \ge 1$, it holds that
  \begin{align}
    wg_{1}\,\shp_{\hbar}\,w'g_{1}     & =
    (w\,\shp_{\hbar}\,w'g_{1}+wg_{1}\,\shp_{\hbar}\,w'+(w\,\shp_{\hbar}\,w')(e_{1}-g_{1}))g_{1}
    \label{eq:shp-g1g1},                  \\
    wg_{1}\,\shp_{\hbar}\,w'g_{k+1}   & =
    (w\,\shp_{\hbar}\,w'g_{k+1})g_{1}+(wg_{1}\,\shp_{\hbar}\,w'g_{k})a+
    \hbar(w\,\shp_{\hbar}\,w'g_{k})g_{1}, \\
    wg_{k+1}\,\shp_{\hbar}\,w'g_{l+1} & =
    (wg_{k}\,\shp_{\hbar}\,w'g_{l+1}+wg_{k+1}\,\shp_{\hbar}\,w'g_{l}+
    \hbar wg_{k}\,\shp_{\hbar}\,w'g_{l})a.
  \end{align}
  These formulas imply that
  \begin{align}
    \mathcal{C} \langle A \rangle g_{k}\,\shp_{\hbar}\,
    \mathcal{C} \langle A \rangle g_{l} \subset
    \sum_{\min{(k, l)}\le j \le k+l-1}\mathcal{C} \langle A \rangle g_{j}
    \label{eq:shuffle-inclusion}
  \end{align}
  for $k, l \ge 1$.
    {}From \eqref{eq:e1g1-central} and \eqref{eq:shuffle-inclusion},
  we see that $\mathcal{C} \langle A \rangle$ and
  $\widehat{\mathfrak{H}^{0}}$ are closed under the shuffle product.

  Denote by $\mathcal{F}$ the $\mathbb{C}$-vector space of holomorphic functions
  on the unit disc.
  We endow $\mathcal{F}$ with $\mathcal{C}$-module structure such that
  $\hbar$ acts as multiplication by $1-q$.
  Let $\mathbf{1}(t)=1$ be the constant function.
  We define the $\mathcal{C}$-linear map
  $L_{q}(\cdot): \mathcal{C} \langle A \rangle \longrightarrow \mathcal{F}$ by
  \begin{align*}
    L_{q}(u_{1} \cdots u_{r})(t)=((\mathbf{1})u_{1}\cdots u_{r})(t),
  \end{align*}
  where $u_{1}, \ldots , u_{r} \in A$.
  We see that
  \begin{align*}
    L_{q}(u_{1} \cdots u_{r})(t)=\sum_{0<m_{1}<\cdots <m_{r}}
    t^{m_{r}}\prod_{i=1}^{r}F_{q}(m_{i}; u_{i})
  \end{align*}
  for $u_{1}, \ldots , u_{r} \in \mathfrak{z}$.
  Hence, $Z_{q, M}(w)$ is equal to the sum of the coefficients of $t^{m}$ in $L_{q}(w)(t)$
  over $0\le m <M$ for $w \in \mathcal{C}\langle A \rangle$.
  By \eqref{eq:ab-rep}, we have
  $L_{q}(w\,\shp_{\hbar}\, w')=L_{q}(w)L_{q}(w')$ for any
  $w, w' \in \mathcal{C} \langle A \rangle$.
  Thus we obtain \eqref{eq:truncated-shuffle}.
\end{proof}

As stated in Proposition \ref{prop:q-harmonic} and Proposition \ref{prop:q-shuffle},
we have
\begin{align}
  Z_{q}(w \ast w')=Z_{q}(w)Z_{q}(w'), \qquad
  Z_{q}(w \,\shp_{\hbar}\, w')=Z_{q}(w)Z_{q}(w')
  \label{eq:DS-qMZV}
\end{align}
for any $w, w' \in \widehat{\mathfrak{H}^{0}}$.
We call them the double shuffle relation for the $q$MZVs.
We denote by $\widehat{\mathcal{Z}_{q}}$ the image of the map
$Z_{q}: \widehat{\mathfrak{H}^{0}} \rightarrow \mathbb{C}$.
Then $\widehat{\mathcal{Z}_{q}}$ is a $\mathcal{C}$-subalgebra of $\mathbb{C}$.

\subsection{Limit of $q$MZVs as $q \to 1-0$}

We denote by $\mathfrak{n}_{0}$ the $\mathcal{C}$-submodule
of $\widehat{\mathfrak{H}^{0}}$ spanned by
the elements of the form \eqref{eq:basis-monomial} with
$r \ge 1$ and $\alpha_{s}, \beta_{t}\ge 1$ for some $1\le s\le t\le r$.
We define the $\mathcal{C}$-modules $\mathfrak{n}$ and
$\mathfrak{H}^{0}$ by
\begin{align*}
  \mathfrak{n}=\mathfrak{n}_{0}+\hbar \,\widehat{\mathfrak{H}^{0}}, \qquad
  \mathfrak{H}^{0}=\mathcal{C}+
  \sum_{k \ge 2}\mathcal{C} \langle A \rangle g_{k}+\mathfrak{n}.
\end{align*}
For a non-empty index $\bk=(k_{1}, \ldots , k_{r})$, we set
\begin{align*}
  g_{\bk}=g_{k_{1}}\cdots g_{k_{r}}.
\end{align*}
For the empty index, we set $g_{\varnothing}=1$.
Note that the quotient
$\mathfrak{H}^{0}/\mathfrak{n}$
is a free $\mathbb{Q}$-module which has a basis
$\{ g_{\bk} \mid \bk \in I_{0} \}$.

\begin{prop}\label{prop:limit-of-qMZV}
  Here we consider a limit as $q \to 1-0$ with $q$ being real.
  \begin{enumerate}
    \item If $\bk$ is an admissible index, it holds that $\lim_{q \to 1-0}Z_{q}(g_{\bk})=\zeta(\bk)$.
    \item For any $w \in \mathfrak{n}$, it holds that $\lim_{q \to 1-0}Z_{q}(w)=0$.
  \end{enumerate}
\end{prop}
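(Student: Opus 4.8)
The plan is to establish the two assertions separately. Assertion (i) is a routine interchange of limit and summation; assertion (ii) is reduced, after a short linear-algebra argument, to uniform estimates of nested $q$-series as $q\to 1-0$, which I would carry out in Appendix \ref{sec:app-limit}.

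For (i), the first step is the elementary inequality $[m]=1+q+\cdots+q^{m-1}\ge m\,q^{m-1}$, valid for $0<q<1$ and $m\ge 1$, which gives $0\le q^{km}/[m]^{k}\le q^{k}/m^{k}\le 1/m^{k}$ for every $k\ge 1$. Hence each term of $Z_{q}(g_{\bk})=\sum_{0<m_{1}<\cdots<m_{r}}\prod_{i=1}^{r}q^{k_{i}m_{i}}/[m_{i}]^{k_{i}}$ is bounded, uniformly in $q\in(0,1)$, by $\prod_{i}m_{i}^{-k_{i}}$, and since $\bk$ is admissible the majorant $\sum_{0<m_{1}<\cdots<m_{r}}\prod_{i}m_{i}^{-k_{i}}$ converges, to $\zeta(\bk)$. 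Since $q^{k_{i}m_{i}}/[m_{i}]^{k_{i}}\to m_{i}^{-k_{i}}$ as $q\to 1-0$ for each fixed $(m_{1},\ldots,m_{r})$, the second step is to pass to the limit termwise by dominated convergence on the counting measure (Tannery's theorem), giving $\lim_{q\to 1-0}Z_{q}(g_{\bk})=\zeta(\bk)$.

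For (ii), since $Z_{q}$ is $\mathcal{C}$-linear and $\hbar$ acts on $\mathbb{C}$ as multiplication by $1-q$, so that $Z_{q}(\hbar v)=(1-q)Z_{q}(v)$, the decomposition $\mathfrak{n}=\mathfrak{n}_{0}+\hbar\,\widehat{\mathfrak{H}^{0}}$ reduces the statement to: (a) $(1-q)\,Z_{q}(v)\to 0$ for every $v\in\widehat{\mathfrak{H}^{0}}$; and (b) $Z_{q}(w)\to 0$ for every monomial $w$ of the form \eqref{eq:basis-monomial} with $\alpha_{s},\beta_{t}\ge 1$ for some $s\le t$, since such monomials span $\mathfrak{n}_{0}$. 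By linearity both reduce to a bound on $Z_{q}(w)$ for a single basis monomial $w=(e_{1}-g_{1})^{\alpha_{1}}g_{\beta_{1}+1}\cdots(e_{1}-g_{1})^{\alpha_{r}}g_{\beta_{r}+1}$. Writing $Z_{q}(w)$ as a nested sum over $\sum_{i}(\alpha_{i}+1)$ indices, each letter $e_{1}-g_{1}$ contributes the constant factor $1-q$ (together with an ordering constraint on its index), and each letter $g_{\beta_{i}+1}$ contributes $q^{(\beta_{i}+1)m}/[m]^{\beta_{i}+1}=(1-q)^{\beta_{i}+1}q^{(\beta_{i}+1)m}/(1-q^{m})^{\beta_{i}+1}$. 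Expanding each $1/(1-q^{m})^{d}$ as a Lambert-type series and carrying out the index sums, I would prove a bound of the shape $|Z_{q}(w)|\le C\,(1-q)^{N}\bigl(\log\tfrac{1}{1-q}\bigr)^{D}$ with $N$ a non-negative integer determined by the $\alpha$'s and $\beta$'s: a letter $g_{1}$ forces at most a logarithmic gain, a letter $g_{k}$ with $k\ge 2$ keeps its index sum bounded, and a factor $1-q$ coming from a letter $e_{1}-g_{1}$ is absorbed when that letter is followed by a later letter $g_{1}$ but not when it faces only letters $g_{k}$ with $k\ge 2$. Consequently $N\ge 0$ always, so $(1-q)^{N+1}\bigl(\log\tfrac{1}{1-q}\bigr)^{D}\to 0$ and (a) holds; and $N\ge 1$ precisely under the condition defining $\mathfrak{n}_{0}$, which gives (b).

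The main obstacle is this last bound: the uniform asymptotics as $q\to 1-0$ of the iterated Lambert-type sums $\sum_{0<n_{1}<\cdots<n_{p}}\prod_{j}q^{c_{j}n_{j}}/(1-q^{n_{j}})^{d_{j}}$ that arise after the expansion, and the combinatorial bookkeeping that tracks the exponent $N$ through a general word in the letters $e_{1}-g_{1}$ and $g_{\beta_{i}+1}$ and confirms that the hypothesis defining $\mathfrak{n}_{0}$ forces $N\ge 1$. I would isolate these analytic estimates in Appendix \ref{sec:app-limit}; granting them, the reductions above are formal.
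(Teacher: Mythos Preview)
Your argument for (i) is correct and essentially parallel to the paper's, though you use dominated convergence (via the clean majorant $q^{km}/[m]^{k}\le 1/m^{k}$ from $[m]\ge m\,q^{m-1}$) while the paper uses monotone convergence (via the fact that $q\mapsto q^{n}/[n]$ is non-decreasing on $(0,1)$). Either route works.

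For (ii) your reduction to the two claims (a) and (b) and the overall shape of the target bound $|Z_{q}(w)|\le C(1-q)^{N}\bigl(\log\tfrac{1}{1-q}\bigr)^{D}$ match the paper's Propositions~\ref{prop:estimateB-1} and~\ref{prop:estimateB-2}. However, your heuristic for when $N\ge 1$ is not right. You say a factor $1-q$ from a letter $e_{1}-g_{1}$ is ``absorbed when that letter is followed by a later letter $g_{1}$ but not when it faces only letters $g_{k}$ with $k\ge 2$''. Take $w=(e_{1}-g_{1})g_{1}g_{2}$: here $\alpha_{1}=1,\beta_{1}=0,\alpha_{2}=0,\beta_{2}=1$, so $w\in\mathfrak{n}_{0}$ (with $s=1,t=2$) and one must show $N\ge 1$; but by your rule the single $1-q$ would be absorbed by the intervening $g_{1}$, predicting $N=0$. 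In fact $Z_{q}(w)\to 0$, and the mechanism is different from the one you describe.

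The paper's key idea, which your sketch is missing, is twofold. First, one keeps a geometric weight in the termwise bound: $q^{n}/[n]\le q^{(n+1)/2}/n$ (Corollary~\ref{cor:app-non-decreasing}), so after passing to gap variables $l_{j}=n_{j}-n_{j-1}$ each block contributes $\binom{l_{j}}{\alpha_{j}}(1-q)^{\alpha_{j}}q^{l_{j}/2}/(l_{1}+\cdots+l_{j})^{\beta_{j}+1}$. Second, when $\alpha_{s}\ge 1$ and $\beta_{t}\ge 1$ with $s\le t$, one \emph{borrows} one power of $1/(l_{1}+\cdots+l_{t})$ from position $t$ and bounds it by $1/l_{s}$, upgrading the $s$-th factor from $\sum_{l}\binom{l}{\alpha_{s}}(1-q)^{\alpha_{s}}q^{l/2}/l$ to $\sum_{l}\binom{l}{\alpha_{s}}(1-q)^{\alpha_{s}}q^{l/2}/l^{2}$; for $\alpha_{s}\ge 1$ the latter is $(1-q)\,O(-\log(1-q))$, which is where the extra factor of $1-q$ genuinely comes from. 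Without this borrowing step (or an equivalent device), the combinatorial bookkeeping you outline does not produce $N\ge 1$ in general.
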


See Appendix \ref{sec:app-limit} for the proof.

We define the unital $\mathbb{Q}$-algebra homomorphism
$\iota: \mathcal{C} \langle A \rangle \rightarrow \mathfrak{h}^{1}$ by
\begin{align}
  \iota(\hbar)=0, \quad
  \iota(e_{1}-g_{1})=0, \quad
  \iota(g_{k})=z_{k}
  \label{eq:def-iota}
\end{align}
for $k \ge 1$.
Then we see that
the restriction of $\iota$ to $\mathfrak{H}^{0}$ is a surjection onto
$\mathfrak{h}^{0}$ and
its kernel is equal to $\mathfrak{n}$.
Therefore, from Proposition \ref{prop:limit-of-qMZV}, we obtain the following corollary.

\begin{cor}\label{cor:qMZV-lim}
  For any $w \in \mathfrak{H}^{0}$, it holds that
  \begin{align}
    \lim_{q \to 1-0}Z_{q}(w)=Z(\iota(w)).
    \label{eq:limit-of-qMZV}
  \end{align}
\end{cor}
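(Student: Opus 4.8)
The plan is to deduce Corollary \ref{cor:qMZV-lim} from Proposition \ref{prop:limit-of-qMZV} by a decomposition-of-$w$ argument that matches the two parts of that proposition against the structure of $\mathfrak{H}^{0}$. First I would recall that, by definition,
\[
  \mathfrak{H}^{0}=\mathcal{C}+\sum_{k\ge 2}\mathcal{C}\langle A\rangle g_{k}+\mathfrak{n},
  \qquad \mathfrak{n}=\mathfrak{n}_{0}+\hbar\,\widehat{\mathfrak{H}^{0}},
\]
and that the quotient $\mathfrak{H}^{0}/\mathfrak{n}$ is the free $\mathbb{Q}$-module with basis $\{g_{\bk}\mid \bk\in I_{0}\}$. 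Hence any $w\in\mathfrak{H}^{0}$ can be written as
\[
  w=\sum_{\bk\in I_{0}} c_{\bk}\, g_{\bk} + n
\]
with $c_{\bk}\in\mathbb{Q}$ (all but finitely many zero) and $n\in\mathfrak{n}$. I would make this decomposition explicit using the basis \eqref{eq:basis-monomial}: each basis monomial $(e_{1}-g_{1})^{\alpha_{1}}g_{\beta_{1}+1}\cdots(e_{1}-g_{1})^{\alpha_{r}}g_{\beta_{r}+1}$ either has some $\alpha_{s}\ge 1$ for $s\le t$ with $\beta_{t}\ge 1$ — in which case it lies in $\mathfrak{n}_{0}\subset\mathfrak{n}$ — or else it equals $g_{\bk}$ for an index $\bk$ (possibly with a trailing factor requiring $k_{r}\ge 2$, i.e. $\beta_{r}\ge 1$, to land in $\mathfrak{H}^{0}$), up to elements of $\mathfrak{n}$. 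The key algebraic input, asserted in the text just before the corollary, is that $\iota$ restricted to $\mathfrak{H}^{0}$ surjects onto $\mathfrak{h}^{0}$ with kernel exactly $\mathfrak{n}$; combined with $\iota(g_{\bk})=z_{\bk}$ this gives $\iota(w)=\sum_{\bk}c_{\bk}z_{\bk}$ and therefore $Z(\iota(w))=\sum_{\bk}c_{\bk}\zeta(\bk)$.

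Next I would take the limit $q\to 1-0$ of $Z_{q}$ applied to the decomposition, using $\mathcal{C}$-linearity of $Z_{q}$:
\[
  \lim_{q\to1-0}Z_{q}(w)=\sum_{\bk}c_{\bk}\lim_{q\to1-0}Z_{q}(g_{\bk})+\lim_{q\to1-0}Z_{q}(n).
\]
Part (i) of Proposition \ref{prop:limit-of-qMZV} handles each term in the first sum, giving $\zeta(\bk)$; part (ii) kills the second term since $n\in\mathfrak{n}$. One subtlety to address carefully: a $\mathbb{Q}$-linear combination $w=\sum_{\bk}c_{\bk}g_{\bk}+n$ has coefficients $c_{\bk}\in\mathbb{Q}$, not in $\mathcal{C}=\mathbb{Q}[\hbar]$; if one instead writes $w$ with $\mathcal{C}$-coefficients, any occurrence of $\hbar$ can be absorbed into $\mathfrak{n}$ because $\hbar\,\widehat{\mathfrak{H}^{0}}\subseteq\mathfrak{n}$ and $g_{\bk}\in\widehat{\mathfrak{H}^{0}}$, so one may always reduce to $\mathbb{Q}$-coefficients modulo $\mathfrak{n}$. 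I would state this reduction explicitly so the pairing with Proposition \ref{prop:limit-of-qMZV}(i), which is stated only for the pure monomials $g_{\bk}$, is legitimate.

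Assembling the two computations yields $\lim_{q\to1-0}Z_{q}(w)=\sum_{\bk}c_{\bk}\zeta(\bk)=Z(\iota(w))$, which is \eqref{eq:limit-of-qMZV}. The only real work is the bookkeeping in the first paragraph — writing a general element of $\mathfrak{H}^{0}$ in the basis \eqref{eq:basis-monomial} and checking that the non-$g_{\bk}$ basis elements indeed lie in $\mathfrak{n}$, together with the $\hbar$-reduction — but this is exactly the content of the sentence in the text asserting $\ker(\iota|_{\mathfrak{H}^{0}})=\mathfrak{n}$, so I expect the main obstacle to be merely organizing that identification cleanly rather than proving anything genuinely new. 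Everything analytic is delegated to Proposition \ref{prop:limit-of-qMZV}.
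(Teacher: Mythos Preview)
Your proposal is correct and follows essentially the same approach as the paper: decompose $w\in\mathfrak{H}^{0}$ as a $\mathbb{Q}$-linear combination of $g_{\bk}$ with $\bk$ admissible plus an element of $\mathfrak{n}$ (using the stated fact that $\mathfrak{H}^{0}/\mathfrak{n}$ has $\mathbb{Q}$-basis $\{g_{\bk}\mid\bk\in I_{0}\}$, equivalently that $\ker(\iota|_{\mathfrak{H}^{0}})=\mathfrak{n}$), then apply the two parts of Proposition~\ref{prop:limit-of-qMZV}. The paper simply asserts the corollary as an immediate consequence of this structure without spelling out the decomposition; your write-up is a more detailed version of the same argument.
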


\begin{rem}
  The equality \eqref{eq:limit-of-qMZV} does not necessarily hold
  for $w \in \widehat{\mathfrak{H}^{0}}\setminus \mathfrak{H}^{0}$ such that
  the limit of $Z_{q}(w)$ as $q \to 1-0$ converges.
  For example, we see that
  \begin{align*}
    Z_{q}((e_{1}-g_{1})g_{1}) & =(1-q)^{2}\sum_{0<m<n}\frac{q^{n}}{1-q^{n}}=
    (1-q)^{2}\sum_{0<n, l}(n-1)q^{nl}                                             \\
                              & =(1-q)^{2}\sum_{0<l}\frac{q^{2l}}{(1-q^{l})^{2}}=
    Z_{q}(g_{2}),
  \end{align*}
  which is a special case of the resummation identity
  \begin{align*}
    Z_{q}((e_{1}-g_{1})^{\alpha_{1}}g_{\beta_{1}+1} \cdots (e_{1}-g_{1})^{\alpha_{r}}g_{\beta_{r}+1})=
    Z_{q}((e_{1}-g_{1})^{\beta_{r}}g_{\alpha_{r}+1} \cdots (e_{1}-g_{1})^{\beta_{1}}g_{\alpha_{1}+1})
  \end{align*}
  for $\alpha_{1}, \ldots , \alpha_{r}, \beta_{1}, \ldots , \beta_{r} \ge 0$
  \cite[Theorem 4]{T1}.
  Hence, the limit of $Z_{q}((e_{1}-g_{1})g_{1})$ as $q \to 1-0$ is equal to $\zeta(2)$.
  However, we have $\iota((e_{1}-g_{1})g_{1})=0$.
\end{rem}

\subsection{Restoration of finite double shuffle relation of MZVs}

\begin{prop}\label{prop:n-ideal}
  The $\mathcal{C}$-module $\mathfrak{n}$ is an ideal of $\widehat{\mathfrak{H}^{0}}$
  with respect to both the harmonic product and the shuffle product.
\end{prop}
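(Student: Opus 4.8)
The plan is to prove that $\mathfrak{n}$ is closed under left and right multiplication by arbitrary elements of $\widehat{\mathfrak{H}^0}$, for each of the two products separately. Since $\mathfrak{n}=\mathfrak{n}_0+\hbar\,\widehat{\mathfrak{H}^0}$, and since $\hbar$ is central in $\mathcal{C}$ and hence $\hbar\,\widehat{\mathfrak{H}^0}$ is automatically an ideal of $\widehat{\mathfrak{H}^0}$ under both products (both products are $\mathcal{C}$-bilinear and $\widehat{\mathfrak{H}^0}$ is a subalgebra under each, by Proposition~\ref{prop:q-harmonic} and Proposition~\ref{prop:q-shuffle}), it suffices to show: for every basis element $v$ of $\mathfrak{n}_0$ of the form \eqref{eq:basis-monomial} with some $\alpha_s\ge 1$ and some $\beta_t\ge 1$ with $s\le t$, and every basis element $w$ of $\widehat{\mathfrak{H}^0}$ of the form \eqref{eq:basis-monomial}, the products $v\ast_\hbar w$ and $v\,\shp_\hbar\, w$ lie in $\mathfrak{n}$. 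Working modulo $\hbar\,\widehat{\mathfrak{H}^0}$ throughout, one may further replace all factors $(e_1-g_1)^{\alpha}$ by their leading behaviour and in effect treat $\circ_\hbar$ as $g_k\circ g_l=g_{k+l}$ and $(e_1-g_1)\circ(e_1-g_1)=(e_1-g_1)\circ g_k=0$ mod $\hbar$.

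The key structural observation is that both products \emph{preserve the relative positions of the two distinguished markers} carried by $v$: a factor $(e_1-g_1)$ of positive multiplicity sitting to the left of a $g_{\beta+1}$ with $\beta\ge 1$. For the harmonic product, one argues by induction on the total depth using defining property (2) of $\ast_\hbar$: each term on the right-hand side is built by appending a letter of $A$ (or a $\circ_\hbar$-product of two such letters) to a shorter harmonic product, and in every such term the block of $v$ containing the two markers survives intact as a contiguous sub-block — the interleaving from $w$ can only insert additional letters between or around the markers, never destroy the pair $(e_1-g_1)^{\ge 1}\cdots g_{\ge 2}$. Hence every monomial appearing in $v\ast_\hbar w$ still has, in its expansion in the basis \eqref{eq:basis-monomial}, some $\alpha_{s'}\ge 1$ and some $\beta_{t'}\ge 1$ with $s'\le t'$, modulo terms with a factor of $\hbar$ coming from the $(e_1-g_1)\circ_\hbar(\cdot)$ cases; the latter land in $\hbar\,\widehat{\mathfrak{H}^0}\subset\mathfrak{n}$. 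For the shuffle product one runs the analogous induction using property (2) of $\shp_\hbar$ together with \eqref{eq:e1g1-central}, which says $e_1-g_1$ is central for $\shp_\hbar$; centrality makes the $(e_1-g_1)^{\ge 1}$ factor of $v$ trivially persist, and the pairing with a later $g_{\ge 2}$ is preserved because the shuffle recursion on the $a$-letters (using $g_{k+1}=g_k a$) only lengthens the $a$-string of an existing $g$-block, never merges it away or drops its index below $2$.

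More precisely, for the shuffle case I would make the induction cleaner by writing $v=v'\,(e_1-g_1)^{\alpha_r}g_{\beta_r+1}$ (isolating the last block) when the last block already contains one of the markers, or otherwise peeling off the last block of $v$ and invoking the inductive hypothesis on $v'$; the recursion \eqref{eq:shp-g1g1} and its companions from the proof of Proposition~\ref{prop:q-shuffle} then show that shuffling in letters of $w$ sends each monomial to a sum of basis elements \eqref{eq:basis-monomial} still satisfying the marker condition, up to $\hbar$-multiples. The inclusion \eqref{eq:shuffle-inclusion} guarantees that a $g_{\beta+1}$ block with $\beta\ge1$ never produces a $g_1$ or a bare $a$ outside a $g$-block.

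The main obstacle I anticipate is bookkeeping in the harmonic case: when two $g$-blocks are merged via $g_k\circ_\hbar g_l=g_{k+l}$, one must check that the merged index and the rearranged markers still witness the defining condition on $\mathfrak{n}_0$ (some $\alpha_{s}\ge 1$ at or before some $\beta_{t}\ge 1$), and when a marker $(e_1-g_1)$ from $v$ is merged with something via $(e_1-g_1)\circ_\hbar(\cdot)=\hbar(\cdot)$ one must correctly route that term into $\hbar\,\widehat{\mathfrak{H}^0}$. Setting up the induction so that the hypothesis is exactly ``$v\in\mathfrak{n}_0\ \Rightarrow\ v\ast_\hbar w\in\mathfrak{n}$ for all $w$ in the monomial basis of $\widehat{\mathfrak{H}^0}$'' — rather than trying to track the precise indices $s,t$ — and then checking it term-by-term against property (2) of $\ast_\hbar$ should keep this manageable. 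A streamlined alternative, which I would mention, is to use the homomorphism $\iota$ of \eqref{eq:def-iota}: since $\ker(\iota|_{\mathfrak{H}^0})=\mathfrak{n}$ and (one checks) $\iota$ intertwines $\ast_\hbar$ with $\ast$ and $\shp_\hbar$ with a shuffle-type product modulo $\hbar$, the ideal property of $\mathfrak{n}$ follows from $\ker\iota$ being an ideal — but making ``modulo $\hbar$'' precise here essentially reproduces the same case analysis, so the direct inductive argument above is the honest route.
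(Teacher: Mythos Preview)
Your proposal is correct and follows essentially the same approach as the paper's own proof: reduce to showing $\mathfrak{n}_0 \bullet_\hbar \widehat{\mathfrak{H}^0}\subset\mathfrak{n}$ for $\bullet\in\{\ast,\shp\}$, handle the harmonic case directly from the recursive definition of $\ast_\hbar$ (your ``marker'' bookkeeping is exactly the content of the paper's one-line ``follows from the definition of the harmonic product''), and handle the shuffle case via \eqref{eq:e1g1-central} and \eqref{eq:shuffle-inclusion}. The paper's proof is a two-sentence citation of these same ingredients, so your write-up is simply a more explicit unfolding of it; the alternative via $\ker\iota$ you mention is not used in the paper (indeed Proposition~\ref{prop:iota-prod} comes afterwards), and as you note it would amount to the same verification.
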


\begin{proof}
  It suffices to show that $\mathfrak{n}_{0} \ast_{\hbar} \widehat{\mathfrak{H}^{0}} \subset \mathfrak{n}$
  and  $\mathfrak{n}_{0} \,\shp_{\hbar}\, \widehat{\mathfrak{H}^{0}} \subset \mathfrak{n}$.
  The former follows from the definition of the harmonic product.
  The latter follows from \eqref{eq:e1g1-central} and \eqref{eq:shuffle-inclusion}.
\end{proof}

\begin{cor}\label{cor:n-ideal}
  The $\mathcal{C}$-module $\mathfrak{H}^{0}$ is closed under both
  the harmonic product and the shuffle product.
\end{cor}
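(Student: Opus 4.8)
The plan is to deduce this directly from Proposition \ref{prop:n-ideal} together with the fact that $\widehat{\mathfrak{H}^0}$ is already known to be closed under both products (Proposition \ref{prop:q-harmonic} and Proposition \ref{prop:q-shuffle}). The only work is to handle the complementary piece $\mathcal{C} + \sum_{k\ge 2}\mathcal{C}\langle A\rangle g_k$ modulo $\mathfrak{n}$. Recall that $\mathfrak{H}^0 = \mathcal{C} + \sum_{k\ge 2}\mathcal{C}\langle A\rangle g_k + \mathfrak{n}$, and since $\mathfrak{n}$ is an ideal of $\widehat{\mathfrak{H}^0}$ and $\mathfrak{H}^0 \subset \widehat{\mathfrak{H}^0}$, it suffices to prove that the image of $\mathfrak{H}^0$ in the quotient $\widehat{\mathfrak{H}^0}/\mathfrak{n}$ is a subalgebra for both products. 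Equivalently, writing $V = \mathcal{C} + \sum_{k\ge 2}\mathcal{C}\langle A\rangle g_k$, I would show that for all $w, w' \in V$ one has $w \ast_\hbar w' \in \mathfrak{H}^0$ and $w \,\shp_\hbar\, w' \in \mathfrak{H}^0$.

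The key structural observation is that modulo $\mathfrak{n}$ the generator $e_1 - g_1 = \hbar b$ becomes zero (since $\mathfrak{n} \supset \hbar\,\widehat{\mathfrak{H}^0}$), so in the quotient $\widehat{\mathfrak{H}^0}/\mathfrak{n}$ only the monomials $g_{\bk}$ survive, and $V$ maps onto the span of $\{g_{\bk} : \bk \in I_0\}$ — exactly the admissible ones, since a monomial $w g_k$ with $w \in \mathcal{C}\langle A\rangle$ lies in $V + \mathfrak{n}$ iff, after discarding $\hbar$-multiples and $(e_1-g_1)$-factors, it reduces to some $g_{\bk}$ with last entry $\ge 2$. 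Now I would invoke the multiplicativity statements already in hand: for the harmonic product, the recursive definition of $\ast_\hbar$ shows that each term in $(wg_j)\ast_\hbar(w'g_l)$ ending in a $\circ_\hbar$-contraction $g_j \circ_\hbar g_l = g_{j+l}$ or ending in $g_j$ or $g_l$ individually; when $j, l \ge 2$ every resulting last factor is again $g_m$ with $m \ge 2$ (and the contraction case gives $m = j+l \ge 2$), so modulo $\mathfrak{n}$ we stay inside $V$. For the shuffle product the same is achieved by \eqref{eq:shuffle-inclusion}: $\mathcal{C}\langle A\rangle g_j \,\shp_\hbar\, \mathcal{C}\langle A\rangle g_l \subset \sum_{\min(j,l)\le m\le j+l-1}\mathcal{C}\langle A\rangle g_m$, and if $j, l \ge 2$ then $\min(j,l)\ge 2$, so every term ends in $g_m$ with $m \ge 2$; the factors $e_1 - g_1$ are central for $\shp_\hbar$ by \eqref{eq:e1g1-central} and land in $\mathfrak{n}$.

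Assembling these: given $w, w' \in \mathfrak{H}^0$, write $w = v + n$, $w' = v' + n'$ with $v, v' \in V$ and $n, n' \in \mathfrak{n}$; then $w \bullet w' = v \bullet v' + (v \bullet n' + n \bullet v' + n \bullet n')$ for $\bullet \in \{\ast_\hbar, \shp_\hbar\}$, where the parenthesised part lies in $\mathfrak{n} \subset \mathfrak{H}^0$ by Proposition \ref{prop:n-ideal}, and $v \bullet v' \in \mathfrak{H}^0$ by the previous paragraph. Hence $w \bullet w' \in \mathfrak{H}^0$. I do not expect any serious obstacle here; the one point requiring a little care is the bookkeeping showing that the harmonic-product recursion, applied to two monomials whose final letters are $g_j, g_l$ with $j, l \ge 2$, never produces a final letter of the form $g_1$ — this follows because the only ways a monomial's final letter is generated in the recursion are by copying $g_j$, copying $g_l$, or forming $g_j \circ_\hbar g_l = g_{j+l}$, and none of $j$, $l$, $j+l$ equals $1$.
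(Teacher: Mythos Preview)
Your proof is correct and follows the same route as the paper: show that $\sum_{k\ge 2}\mathcal{C}\langle A\rangle g_k$ is closed under both products (for $\ast_\hbar$ via one step of the recursion together with $g_j\circ_\hbar g_l=g_{j+l}$, for $\shp_\hbar$ via \eqref{eq:shuffle-inclusion}), and then use Proposition~\ref{prop:n-ideal} to absorb the $\mathfrak{n}$-part. One side remark you make is inaccurate, though harmless here: it is not true that $e_1-g_1$ vanishes in $\widehat{\mathfrak{H}^0}/\mathfrak{n}$ --- for example $(e_1-g_1)g_1\in\widehat{\mathfrak{H}^0}$ lies in neither $\mathfrak{n}_0$ nor $\hbar\,\widehat{\mathfrak{H}^0}$ (the latter because $b\notin\widehat{\mathfrak{H}^0}$), so the quotient $\widehat{\mathfrak{H}^0}/\mathfrak{n}$ is strictly larger than the span of the $g_{\bk}$; fortunately your argument never actually uses this claim.
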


\begin{proof}
  {}From $g_{k}\circ_{\hbar}g_{l}=g_{k+l}$,
  we see that $\sum_{k \ge 2}\mathcal{C}\langle A \rangle g_{k}$ is closed
  under the harmonic product.
  It is also closed under the shuffle product because of
  \eqref{eq:shuffle-inclusion}.
  Hence Proposition \ref{prop:n-ideal} implies that $\mathfrak{H}^{0}$
  is closed.
\end{proof}

We set
\begin{align*}
  \mathcal{Z}_{q}=Z_{q}(\mathfrak{H}^{0}).
\end{align*}
Corollary \ref{cor:n-ideal} implies that
$\mathcal{Z}_{q}$ is a $\mathcal{C}$-subalgebra of $\widehat{\mathcal{Z}_{q}}$.

\begin{prop}\label{prop:iota-prod}
  For any $w, w' \in \mathcal{C}\langle A \rangle$ and $\bullet \in \{\ast, \shp\}$,
  it holds that
  \begin{align}
    \iota(w \bullet_{\hbar} w')=\iota(w) \bullet \iota(w').
    \label{eq:iota-prod}
  \end{align}
\end{prop}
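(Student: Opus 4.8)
The plan is to prove \eqref{eq:iota-prod} for each product $\bullet_\hbar$ separately, reducing in both cases to the defining recursive properties of the product and an induction on depth (equivalently, on total degree in the relevant generators). Since $\iota$ is a $\mathbb{Q}$-algebra homomorphism with $\iota(\hbar)=0$, the essential content is that $\iota$ intertwines the two recursive rules with the corresponding rules for $\ast$ and $\shp$ on $\mathfrak{h}^{1}$, $\mathfrak{h}$.

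\textbf{Harmonic case.} First I would treat $\bullet=\ast$. Here it is cleanest to work with the alphabet $A=\{e_{1}-g_{1}\}\cup\{g_{k}\mid k\ge 1\}$, and to note that $\iota$ kills $e_{1}-g_{1}$ and sends $g_{k}\mapsto z_{k}$. Given words $w=w_{0}u$, $w'=w_{0}'v$ with $u,v\in A$, the defining rule gives
\begin{align*}
  (w_{0}u)\ast_{\hbar}(w_{0}'v)=(w_{0}\ast_{\hbar}w_{0}'v)u+(w_{0}u\ast_{\hbar}w_{0}')v+(w_{0}\ast_{\hbar}w_{0}')(u\circ_{\hbar}v).
\end{align*}
Apply $\iota$ and use the induction hypothesis on the three products of shorter words. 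If $u=e_{1}-g_{1}$ then $\iota(u)=0$, and one checks that $u\circ_\hbar v\in\hbar\,\mathfrak{z}$ in every case (indeed $(e_1-g_1)\circ_\hbar(e_1-g_1)=\hbar(e_1-g_1)$ and $(e_1-g_1)\circ_\hbar g_k=\hbar g_k$), so $\iota(u\circ_\hbar v)=0$; hence the right-hand side collapses to $\iota(w_{0}u\ast_{\hbar}w_{0}')\,\iota(v)=\iota(w_{0}u)\ast\iota(w_{0}')\iota(v)$, which matches $\iota(w)\ast\iota(w')$ because $\iota(w)=\iota(w_0)$. A symmetric argument covers $v=e_1-g_1$. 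If $u=g_{k}$ and $v=g_{l}$, then $u\circ_\hbar v=g_{k+l}$, $\iota(u)=z_k$, $\iota(v)=z_l$, $\iota(u\circ_\hbar v)=z_{k+l}$, and the displayed identity maps exactly onto the defining rule (ii) for $\ast$ on $\mathfrak{h}^{1}$. The base cases (one factor equal to $1$) are immediate from $w\ast_\hbar 1=w$ and $\iota(1)=1$.

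\textbf{Shuffle case.} For $\bullet=\shp$ it is more natural to use the letters $a,b$ of $\mathfrak{H}$ directly, since $\shp_\hbar$ is defined recursively on those. Here $\iota(a)$ and $\iota(b)$ are not separately meaningful, so instead I would induct on the total degree of $ww'$ and use that every element of $\mathcal{C}\langle A\rangle$ is a $\mathcal{C}$-combination of words in $A$, each such word ending in either $b$ (i.e.\ a word of the form $(\text{stuff})g_1=(\text{stuff})b$, or starting from $\hbar b$) or the relevant reductions. Concretely, using $g_{k+1}=g_k a$ and $e_1-g_1=\hbar b$, the formulas \eqref{eq:shp-g1g1} and the two displays following it in the proof of Proposition~\ref{prop:q-shuffle} already express $\shp_\hbar$ recursively on the $g_k$-alphabet. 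Applying $\iota$ to those three formulas, using $\iota(e_1-g_1)=0$ to kill the central term in \eqref{eq:shp-g1g1} and $\iota(\hbar)=0$ to kill the $\hbar(\cdots)$ terms, one obtains precisely the recursion
\begin{align*}
  (w_0 z_1)\shp(w_0' z_1)&=(\iota(w_0)\shp \iota(w_0')z_1+\iota(w_0)z_1\shp\iota(w_0'))z_1,\\
  (w_0 z_1)\shp(w_0' z_{k+1})&=(\iota(w_0)\shp\iota(w_0')z_{k+1})z_1+(\iota(w_0)z_1\shp\iota(w_0')z_k)x,\\
  (w_0 z_{k+1})\shp(w_0' z_{l+1})&=(\iota(w_0)z_k\shp\iota(w_0')z_{l+1}+\iota(w_0)z_{k+1}\shp\iota(w_0')z_l)x,
\end{align*}
which is exactly what rule (ii) for $\shp$ on $\mathfrak{h}$ gives after substituting $z_k=yx^{k-1}$ (note $z_{k+1}=z_k x$), the induction hypothesis supplying $\iota$ of the shorter shuffle products. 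One also has to handle the leading letters, i.e.\ words beginning with $e_1-g_1=\hbar b$: by \eqref{eq:e1g1-central}, $(w(e_1-g_1))\shp_\hbar w'=(w\shp_\hbar w')(e_1-g_1)$, and since $\iota(e_1-g_1)=0$ both sides have $\iota$ equal to $0=\iota(w(e_1-g_1))\shp\iota(w')$, so these cases are automatic.

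\textbf{Main obstacle.} The genuine technical care is in the shuffle case: one must make sure the induction is set up on a quantity that strictly decreases in all of the recursive formulas (total degree in $a,b$, or equivalently $\mathrm{wt}+\mathrm{dep}$ on the $g$-side works), and one must check that the recursion \eqref{eq:shp-g1g1} and its companions, together with \eqref{eq:e1g1-central}, really do cover \emph{all} pairs of basis words of $\mathcal{C}\langle A\rangle$ — in particular words that are not of the form $(\cdots)g_k$ because they end in $e_1-g_1$, which is where \eqref{eq:e1g1-central} is needed. Once the bookkeeping of which letter a word ends (or starts) with is organized, each step is a mechanical comparison of the $\iota$-image of a $\shp_\hbar$-rule against the corresponding $\shp$-rule, with every $\hbar$- and $(e_1-g_1)$-term dying under $\iota$. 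The harmonic case is entirely straightforward by comparison.
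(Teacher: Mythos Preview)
Your proof is correct. The harmonic case is handled exactly as in the paper.

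For the shuffle case, you take a different route from the paper. The paper packages all products $ug_{k}\,\shp_{\hbar}\,vg_{l}$ into a single generating-function identity (Lemma~\ref{lem:shuffle-prod} in the appendix), applies $\iota$ to that identity, and compares with the analogous classical identity for $z_{1}(1-xX)^{-1}$. You instead work directly with the three recursive formulas \eqref{eq:shp-g1g1} and its two companions, together with \eqref{eq:e1g1-central}, and match each against the corresponding step of the classical shuffle recursion, inducting on total $(a,b)$-degree. Both approaches are valid; yours is more elementary and avoids introducing an auxiliary lemma, while the paper's generating-function argument is more uniform (it treats all $k,l$ simultaneously) and is consistent with the heavy use of generating functions elsewhere in the paper.

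One small point worth making explicit in your write-up: when you apply $\iota$ to terms of the form $(u\,\shp_{\hbar}\,v)\,a$ appearing in the second and third recursion formulas, you are implicitly using that $\iota(w\cdot a)=\iota(w)\cdot x$ for any $w$ that is a $\mathcal{C}$-linear combination of nonempty words in $A$. This is not literally $\iota$ being multiplicative (since $a\notin\mathcal{C}\langle A\rangle$), but it follows immediately from $g_{k}a=g_{k+1}$, $(e_{1}-g_{1})a=\hbar g_{1}$, and $z_{k}x=z_{k+1}$. Also, your phrase ``leading letters, i.e.\ words beginning with $e_{1}-g_{1}$'' should read ``ending with'': the recursions peel letters from the right, and the formula you then quote from \eqref{eq:e1g1-central} is indeed the right-hand version.
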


\begin{proof}
  We may assume that $w$ and $w'$ are monomials in $A$.
  If $w=1$ or $w'=1$, the desired equality \eqref{eq:iota-prod} is trivial.
  Now we proceed the proof by induction on the sum of the depth of $w$ and $w'$.

  Since $\iota((e_{1}-g_{1})\circ_{\hbar} u)=0$ for any $u \in \mathfrak{z}$
  and $\iota(g_{k} \circ_{\hbar} g_{l})=z_{k+l}$ for $k, l \ge 1$,
  we see that \eqref{eq:iota-prod} holds for $\bullet=\ast$.

  We consider the case of $\bullet=\shp$.
  If $w$ or $w'$ is the form of $u(e_{1}-g_{1})$ with
  a monomial $u$ of $A$,
  we see that \eqref{eq:iota-prod} from $\iota(e_{1}-g_{1})=0$ and \eqref{eq:e1g1-central}.
  Hence it suffices to prove the case where $w=ug_{k}$ and $w'=vg_{l}$ with
  monomials $u, v$ in $A$
  and $k, l \ge 1$.
  For that purpose we use the following formula.
  Note that, in the formal power series ring $\mathfrak{H}[[X]]$, we have
  \begin{align*}
    \sum_{k=1}^{\infty}g_{k}X^{k-1}=g_{1}\frac{1}{1-aX}.
  \end{align*}

  \begin{lem}\label{lem:shuffle-prod}
    For any $u, v \in \mathfrak{H}$, we have
    \begin{align*}
       &
      ug_{1}\frac{1}{1-aX} \,\shp_{\hbar}\, vg_{1}\frac{1}{1-aY} \\
       & =\left\{
      (1+\hbar X)\left(ug_{1}\frac{1}{1-aX}\,\shp_{\hbar}\,v\right)+
      (1+\hbar Y)\left(u\,\shp_{\hbar}\,vg_{1}\frac{1}{1-aY}\right)+
      (u\,\shp_{\hbar}\,v)(e_{1}-g_{1}) \right\}
      \\
       & \times
      g_{1}\frac{1}{1-a(X+Y+\hbar XY)}.
    \end{align*}
  \end{lem}

  See Appendix \ref{subsec:app-shpUV} for the proof.
    {}From the above equality, we see that
  \begin{align*}
    \iota(ug_{1}\frac{1}{1-aX} \,\shp_{\hbar}\, vg_{1}\frac{1}{1-aY})=
    \iota(
    ug_{1}\frac{1}{1-aX}\,\shp_{\hbar}\,v+
    u\,\shp_{\hbar}\,vg_{1}\frac{1}{1-aY}
    )z_{1}\frac{1}{1-x(X+Y)}
  \end{align*}
  in the formal power series ring $\mathfrak{h}[[X, Y]]$.
  The induction hypothesis implies that the right hand side is equal to
  \begin{align*}
    \left\{
    \iota(u)z_{1}\frac{1}{1-xX}\,\shp\,\iota(v)+
    \iota(u)\,\shp\,\iota(v)z_{1}\frac{1}{1-xY}
    \right\}
    z_{1}\frac{1}{1-(X+Y)x}.
  \end{align*}
  We see that it is equal to
  \begin{align*}
    \iota(u)z_{1}\frac{1}{1-xX}\,\shp\,\iota(v)z_{1}\frac{1}{1-xY}=
    \iota(ug_{1}\frac{1}{1-aX})\,\shp\,
    \iota(vg_{1}\frac{1}{1-aY})
  \end{align*}
  in the same way as the proof of Lemma \ref{lem:shuffle-prod}.
  This completes the proof of Proposition \ref{prop:iota-prod} for $\bullet=\shp$.
\end{proof}

Using Corollary \ref{cor:n-ideal} and Proposition \ref{prop:iota-prod},
we restore the finite double shuffle relation of MZVs from \eqref{eq:DS-qMZV}
as follows.
Let $\bk$ and $\bl$ be admissible indices.
Then, for $\bullet \in \{\ast, \shp\}$, we see that
\begin{align*}
  Z(z_{\bk})Z(z_{\bl})
   & =\lim_{q\to 1-0}Z_{q}(g_{\bk})Z_{q}(g_{\bl})=
  \lim_{q\to 1-0}Z_{q}(g_{\bk} \bullet g_{\bl})                                   \\
   & =Z(\iota(g_{\bk} \bullet g_{\bl}))=Z(\iota(g_{\bk}) \bullet \iota(g_{\bl}))=
  Z(z_{\bk}\bullet z_{\bl}).
\end{align*}


\section{A $q$-analogue of truncated SMZV}\label{sec:qSMZV-truncated}

Let $\psi^{\bullet} \, (\bullet \in \{\ast, \shp\})$ be the $\mathcal{C}$-algebra
anti-involution on
$\mathcal{C}\langle A \rangle=\mathcal{C}\langle \hbar b, ba, ba^{2}, \ldots \rangle$
defined by
\begin{align*}
   &
  \psi^{\ast}(\hbar b)=\hbar b, \quad \psi^{\ast}(ba^{k})=b(-a)^{k}, \\
   &
  \psi^{\shp}(\hbar b)=\hbar b, \quad \psi^{\shp}(ba^{k})=b(-a-\hbar)^{k}
\end{align*}
for $k \ge 1$.
For $\bullet \in \{\ast, \shp\}$, we define the $\mathcal{C}$-linear map
$w^{\mathcal{S}, \bullet}_{\hbar}:
  \mathcal{C}\langle A \rangle \to \mathcal{C}\langle A \rangle$
by $w^{\mathcal{S}, \bullet}_{\hbar}(1)=1$ and
\begin{align*}
  w^{\mathcal{S}, \bullet}_{\hbar}(u_{1} \cdots u_{r})=
  \sum_{i=0}^{r} u_{1}\cdots u_{i} \bullet_{\hbar} \psi^{\bullet}(u_{i+1} \cdots u_{r})
\end{align*}
for $r \ge 1$ and $u_{1}, \ldots , u_{r} \in A$.

Now we define the $\mathcal{C}$-linear map
$Z^{\mathcal{S}, \bullet}_{q, M}: \mathcal{C}\langle A \rangle \to \mathbb{C}$
for $\bullet \in \{\ast, \shp\}$ and $M \ge 1$ by
$Z^{\mathcal{S}, \bullet}_{q, M}=Z_{q, M}\circ w^{\mathcal{S}, \bullet}_{\hbar}$.
We call the value $Z^{\mathcal{S}, \bullet}_{q, M}(w)$ with
$w \in\mathcal{C}\langle A \rangle$
a \textit{$q$-analogue of truncated symmetric multiple zeta value}.

{}From Proposition \ref{prop:q-harmonic} and Proposition \ref{prop:q-shuffle},
we see that
\begin{align}
  Z_{q, M}^{\mathcal{S}, \ast}(u_{1}\cdots u_{r})
   & =\sum_{i=0}^{r}
  \sum_{\substack{0<m_{1}<\cdots <m_{i}<M \\
      0<m_{r}<\cdots <m_{i+1}<M}}
  \prod_{j=1}^{i}F_{q}(m_{j}; u_{j})\prod_{j=i+1}^{r}F_{q}(m_{j}; \psi^{\ast}(u_{j})),
  \label{eq:truncated-qSMZV-harmonic}     \\
  Z_{q, M}^{\mathcal{S}, \shp}(u_{1}\cdots u_{r})
   & =\sum_{i=0}^{r}
  \sum_{\substack{0<m_{1}<\cdots <m_{i}   \\ 0<m_{r}<\cdots <m_{i+1} \\ m_{i}+m_{i+1}<M}}
  \prod_{j=1}^{i}F_{q}(m_{j}; u_{j})
  \prod_{j=i+1}^{r}F_{q}(m_{j}; \psi^{\shp}(u_{j}))
  \label{eq:truncated-qSMZV-shuffle}
\end{align}
for $u_{1}, \ldots , u_{r} \in A$.

We prove the double shuffle relation of the
$q$-analogue of truncated SMZVs.
The proof is similar to that of the truncated SMZVs in \cite{OSY}.

\begin{prop}\label{prop:qS-harmonic}
  For any $w, w' \in \mathcal{C}\langle A \rangle$ and $M \ge 1$,
  it holds that
  \begin{align}
    Z^{\mathcal{S}, \ast}_{q, M}(w \ast_{\hbar} w')=Z^{\mathcal{S}, \ast}_{q, M}(w)
    Z^{\mathcal{S}, \ast}_{q, M}(w').
    \label{eq:qS-harmonic}
  \end{align}
\end{prop}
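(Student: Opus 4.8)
The plan is to exhibit $Z^{\mathcal{S},\ast}_{q,M}$ as a composition of maps each of which is a homomorphism for the harmonic product, so that \eqref{eq:qS-harmonic} becomes automatic. Let $\Delta:\mathcal{C}\langle A\rangle\to\mathcal{C}\langle A\rangle\otimes_{\mathcal{C}}\mathcal{C}\langle A\rangle$ be the deconcatenation map, $\Delta(u_{1}\cdots u_{r})=\sum_{i=0}^{r}(u_{1}\cdots u_{i})\otimes(u_{i+1}\cdots u_{r})$ for $u_{1},\dots,u_{r}\in A$ and $\Delta(1)=1\otimes 1$. From the definition of $w^{\mathcal{S},\ast}_{\hbar}$ together with Proposition \ref{prop:q-harmonic} (applied to each summand $u_{1}\cdots u_{i}\ast_{\hbar}\psi^{\ast}(u_{i+1}\cdots u_{r})$) one gets, for every monomial in $A$,
\[
Z^{\mathcal{S},\ast}_{q,M}(u_{1}\cdots u_{r})
=\sum_{i=0}^{r}Z_{q,M}(u_{1}\cdots u_{i})\,
Z_{q,M}\bigl(\psi^{\ast}(u_{i+1}\cdots u_{r})\bigr),
\]
which is also the content of \eqref{eq:truncated-qSMZV-harmonic}. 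In other words, $Z^{\mathcal{S},\ast}_{q,M}=m_{\mathbb{C}}\circ\bigl(Z_{q,M}\otimes(Z_{q,M}\circ\psi^{\ast})\bigr)\circ\Delta$, where $m_{\mathbb{C}}$ is the multiplication of $\mathbb{C}$; by $\mathcal{C}$-linearity this identity holds on all of $\mathcal{C}\langle A\rangle$.

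Next I would verify, in order, that each of the three maps in this factorization is a homomorphism for $\ast_{\hbar}$. (a) $\Delta$ is a $\mathcal{C}$-algebra homomorphism from $(\mathcal{C}\langle A\rangle,\ast_{\hbar})$ to the tensor-product algebra, where the latter carries the product $(p\otimes q)(p'\otimes q')=(p\ast_{\hbar}p')\otimes(q\ast_{\hbar}q')$. This is the quasi-shuffle bialgebra property: since $\circ_{\hbar}$ is a commutative associative product on the letter space $\mathfrak{z}$, Hoffman's construction makes $(\mathcal{C}\langle A\rangle,\ast_{\hbar},\Delta)$ a bialgebra (cf.\ \cite{Hoffman}); concretely it follows by induction on the total number of letters from the recursion $(wu)\ast_{\hbar}(w'v)=(w\ast_{\hbar}w'v)u+(wu\ast_{\hbar}w')v+(w\ast_{\hbar}w')(u\circ_{\hbar}v)$ and the identity $\Delta(Wu)=(\mathrm{id}\otimes R_{u})(\Delta W)+(Wu)\otimes 1$ for $u\in A$, $R_{u}$ denoting right concatenation by $u$ on the second tensor factor. (b) $\psi^{\ast}$ is a homomorphism of $(\mathcal{C}\langle A\rangle,\ast_{\hbar})$. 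Because $\ast_{\hbar}$ is commutative it suffices that $\psi^{\ast}$ be a $\ast_{\hbar}$-anti-homomorphism; writing $\psi^{\ast}$ as word-reversal applied after the letterwise substitution $g_{k}\mapsto(-1)^{k}g_{k}$, $e_{1}-g_{1}\mapsto e_{1}-g_{1}$, this reduces (by the standard fact that such an operation is a $\ast_{\hbar}$-anti-automorphism as soon as the substitution commutes with $\circ_{\hbar}$ on $\mathfrak{z}$) to the direct check on the three defining relations $g_{k}\circ_{\hbar}g_{l}=g_{k+l}$, $(e_{1}-g_{1})\circ_{\hbar}g_{k}=\hbar g_{k}$, $(e_{1}-g_{1})\circ_{\hbar}(e_{1}-g_{1})=\hbar(e_{1}-g_{1})$, each of which is immediate. (c) $Z_{q,M}$ is a $\ast_{\hbar}$-homomorphism to $\mathbb{C}$ by Proposition \ref{prop:q-harmonic}, hence so is $Z_{q,M}\circ\psi^{\ast}$ by (b); therefore $Z_{q,M}\otimes(Z_{q,M}\circ\psi^{\ast})$ is a homomorphism from the tensor-product algebra to $\mathbb{C}\otimes_{\mathcal{C}}\mathbb{C}$, and $m_{\mathbb{C}}$ is a homomorphism $\mathbb{C}\otimes_{\mathcal{C}}\mathbb{C}\to\mathbb{C}$.

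Composing (a), (b), (c) shows that $Z^{\mathcal{S},\ast}_{q,M}$ is a $\mathcal{C}$-algebra homomorphism from $(\mathcal{C}\langle A\rangle,\ast_{\hbar})$ to $\mathbb{C}$, which is precisely \eqref{eq:qS-harmonic}. The only step with real content is (a), the compatibility of deconcatenation with the $\hbar$-deformed harmonic product; (b) is a three-line verification and (c) is just Proposition \ref{prop:q-harmonic}. If one prefers not to invoke the bialgebra formalism, (a) can be replaced by a direct induction on $\deg w+\deg w'$ establishing $\Delta(w\ast_{\hbar}w')=\Delta(w)\ast_{\hbar}\Delta(w')$ — this is the induction carried out (in the undeformed case $\hbar=0$, $\circ_{\hbar}=+$) in \cite{OSY}, and the $\hbar$-terms are handled by the extra summand $(w\ast_{\hbar}w')(u\circ_{\hbar}v)$ in exactly the same way.
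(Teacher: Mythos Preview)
Your argument is correct, and the factorization $Z^{\mathcal{S},\ast}_{q,M}=m_{\mathbb{C}}\circ(Z_{q,M}\otimes(Z_{q,M}\circ\psi^{\ast}))\circ\Delta$ together with the three homomorphism checks is a clean way to obtain \eqref{eq:qS-harmonic}. The paper takes a different but closely related route: instead of invoking the quasi-shuffle bialgebra compatibility of deconcatenation, it extends $F_{q}(m;\cdot)$ to negative integers via $F_{q}(m;u)=F_{q}(-m;\psi^{\ast}(u))$ and rewrites $Z^{\mathcal{S},\ast}_{q,M}(u_{1}\cdots u_{r})$ as a \emph{single} sum over tuples $m_{1}\prec\cdots\prec m_{r}$ in $(\mathbb{Z}\setminus\{0\})\sqcup\{\infty=-\infty\}$ with Kontsevich's order $1\prec 2\prec\cdots\prec\infty=-\infty\prec\cdots\prec-2\prec-1$; the identity then follows exactly as for the ordinary truncated harmonic relation, once one checks $\psi^{\ast}(u\circ_{\hbar}v)=\psi^{\ast}(u)\circ_{\hbar}\psi^{\ast}(v)$ (your step (b)). The two proofs share this letterwise check and are really two packagings of the same idea: the Kontsevich-order sum is a combinatorial avatar of your coproduct splitting (the cut at $\infty$ corresponds to the tensor cut in $\Delta$). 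Your version has the advantage that the ``hard'' step is isolated as a general Hopf-algebraic fact (Hoffman's bialgebra), reusable verbatim for other evaluation maps; the paper's version is more self-contained and matches the presentation in \cite{OSY}, avoiding any appeal to the bialgebra formalism.
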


\begin{proof}
  Note that $\psi^{\ast}$ is an anti-automorphism.
  We extend the $\mathcal{C}$-linear map
  $F_{q}(m; \cdot) : \mathfrak{z} \to \mathbb{C}$
  for $m<0$ by
  $F_{q}(m; u)=F_{q}(-m; \psi^{\ast}(u))$
  for $u \in \mathfrak{z}$.
  Then, from \eqref{eq:truncated-qSMZV-harmonic}, we see that
  \begin{align}
    Z^{\mathcal{S}, \ast}_{q, M}(u_{1} \cdots u_{r})=
    \sum_{\substack{m_{1} \prec \cdots \prec m_{r} \\ 0<|m_{1}|, \ldots , |m_{r}|<M}}
    \prod_{i=1}^{r}F_{q}(m_{i}; u_{i})
    \label{eq:qSMZV-harmonic-series}
  \end{align}
  for $u_{1}, \ldots , u_{r} \in A$,
  where $\prec$ is Kontsevich's order on the set
  $(\mathbb{Z}\setminus\{0\})\sqcup\{\infty=-\infty\}$
  defined by
  $1 \prec 2 \prec \cdots \prec \infty=-\infty \prec \cdots \prec -2 \prec -1$.

    {}From the definition of $\circ_{\hbar}$ and $\psi^{\ast}$,
  we see that
  \begin{align*}
    \psi^{\ast}(u \circ_{\hbar} v)=\psi^{\ast}(u) \circ_{\hbar} \psi^{\ast}(v)
  \end{align*}
  for any $u, v \in A$.
  Hence the relation \eqref{eq:harmonic-I} holds for
  any non-zero integer $m$ and $u, v \in A$,
  and we obtain \eqref{eq:qS-harmonic}
  by using the expression \eqref{eq:qSMZV-harmonic-series}
  in the same way as the proof for the harmonic product relation of the truncated MZVs
  $\zeta_{M}(\bk)$.
\end{proof}

\begin{prop}\label{prop:shuffle-rel-q-truncated}
  For any $w, w' \in  \mathcal{C}\langle A \rangle$ and $M \ge 1$,
  it holds that
  \begin{align}
    Z^{\mathcal{S}, \shp}_{q, M}(w \,\shp_{\hbar}\, w')=
    Z^{\mathcal{S}, \shp}_{q, M}(w \, \psi^{\shp}(w')).
    \label{eq:qS-shuffle}
  \end{align}
\end{prop}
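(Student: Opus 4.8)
The approach is to run the $q$-analogue of the iterated-integral proof of the corresponding relation for truncated SMZVs in \cite{OSY}, using the generating-function map $L_{q}$ introduced in the proof of Proposition~\ref{prop:q-shuffle} in place of iterated integrals. Since both sides of \eqref{eq:qS-shuffle} are $\mathcal{C}$-bilinear in $(w,w')$, I would first reduce to monomials $w=u_{1}\cdots u_{p}$ and $w'=v_{1}\cdots v_{s}$ with $u_{i},v_{j}\in A$. Recall that $L_{q}\colon\mathcal{C}\langle A\rangle\to\mathcal{F}$ satisfies $L_{q}(w\,\shp_{\hbar}\,w')=L_{q}(w)L_{q}(w')$ and that $Z_{q,M}(v)$ is the sum of the coefficients of $t^{m}$ in $L_{q}(v)(t)$ over $0\le m<M$.

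Define a $\mathcal{C}$-linear map $\mathcal{L}_{q}\colon\mathcal{C}\langle A\rangle\to\mathcal{F}$ by
\begin{align*}
  \mathcal{L}_{q}(u_{1}\cdots u_{r})=\sum_{i=0}^{r}L_{q}(u_{1}\cdots u_{i})\,L_{q}\big(\psi^{\shp}(u_{i+1}\cdots u_{r})\big),\qquad u_{1},\dots,u_{r}\in A.
\end{align*}
Because $w^{\mathcal{S},\shp}_{\hbar}(u_{1}\cdots u_{r})=\sum_{i}(u_{1}\cdots u_{i})\,\shp_{\hbar}\,\psi^{\shp}(u_{i+1}\cdots u_{r})$ and $Z^{\mathcal{S},\shp}_{q,M}=Z_{q,M}\circ w^{\mathcal{S},\shp}_{\hbar}$, Proposition~\ref{prop:q-shuffle} yields, for every $w\in\mathcal{C}\langle A\rangle$ and $M\ge1$, that $Z^{\mathcal{S},\shp}_{q,M}(w)$ equals the sum of the coefficients of $t^{m}$ in $\mathcal{L}_{q}(w)(t)$ over $0\le m<M$; in particular the constraint $m_{i}+m_{i+1}<M$ in \eqref{eq:truncated-qSMZV-shuffle} is exactly the truncation carried by this generating function. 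Since the coefficient of $t^{m}$ in $\mathcal{L}_{q}(w)(t)$ is recovered as the difference of the truncations at $m+1$ and at $m$, Proposition~\ref{prop:shuffle-rel-q-truncated} is \emph{equivalent} to the identity of functions
\begin{align*}
  \mathcal{L}_{q}(w\,\shp_{\hbar}\,w')=\mathcal{L}_{q}\big(w\,\psi^{\shp}(w')\big)\qquad(w,w'\in\mathcal{C}\langle A\rangle),
\end{align*}
which I call $(\star)$.

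To prove $(\star)$ I would again reduce to monomials and induct on the total number of $A$-letters occurring in $w$ and $w'$, the cases $w=1$ and $w'=1$ being clear from $\psi^{\shp}(1)=1$ and $x\,\shp_{\hbar}\,1=x$. In the inductive step one peels off the trailing $A$-letters of $w$ and $w'$. When the trailing letter is the central letter $e_{1}-g_{1}$ one uses \eqref{eq:e1g1-central} and $\psi^{\shp}(e_{1}-g_{1})=e_{1}-g_{1}$. Otherwise, since $\psi^{\shp}$ sends the single letter $g_{k}=ba^{k}$ to the polynomial $b(-a-\hbar)^{k}$, it is convenient to package the trailing letters into the two-variable generating series
\begin{align*}
  \sum_{k\ge1}g_{k}X^{k-1}=g_{1}\,\frac{1}{1-aX},\qquad \sum_{k\ge1}\psi^{\shp}(g_{k})Y^{k-1}=-\,e_{1}\,\frac{1}{1+(a+\hbar)Y},
\end{align*}
and to establish, in the spirit of Lemma~\ref{lem:shuffle-prod} and the proof of Proposition~\ref{prop:iota-prod}, a closed formula for $u g_{1}(1-aX)^{-1}\,\shp_{\hbar}\,v g_{1}(1-aY)^{-1}$ together with the formula obtained from it after applying $\psi^{\shp}$ to the second factor. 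The recursions \eqref{eq:shp-g1g1} (and the two displays following it in the proof of Proposition~\ref{prop:q-shuffle}) rewrite the left-hand shuffle of $(\star)$ in terms of strictly smaller depth, the defining formula for $\mathcal{L}_{q}$ together with the deconcatenation of the trailing letters does the same on both sides, and $(\star)$ then follows from the induction hypothesis.

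The main obstacle is precisely this last computation. Unlike the classical shuffle, $\shp_{\hbar}$ carries the $\hbar$-corrections — so it is not symmetric under reversal of $A$-words, and deconcatenation is not a bialgebra structure for it — while $\psi^{\shp}$ does not act letterwise. Consequently $\mathcal{L}_{q}$ is \emph{not} a homomorphism for $\shp_{\hbar}$, and $(\star)$ is not a formal consequence of the product rule for $L_{q}$: one must verify that the various $\hbar$-contributions on the two sides cancel so that $(\star)$ holds as an exact identity of functions, not merely after the truncation $0\le m<M$. Checking this cancellation — equivalently, that the two generating-function formulas of the previous paragraph hold on the nose — is the delicate point; everything else is a routine transcription of the argument of \cite{OSY} under the dictionary ``iterated integral $\leftrightarrow$ $L_{q}$'', the harmonic analogue having been dealt with in Proposition~\ref{prop:qS-harmonic}.
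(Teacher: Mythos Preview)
Your reformulation via $\mathcal{L}_{q}$ is correct, and the equivalence of the proposition with the functional identity $(\star)$ is valid. The difficulty is not the $\hbar$-cancellation you flag, but that the induction as you set it up does not close. When you expand $w\,\shp_{\hbar}\,w'$ by the recursions \eqref{eq:shp-g1g1} (or by Lemma~\ref{lem:shuffle-prod}), each term has the form $(\text{shuffle of shorter words})\cdot\xi$ with a nontrivial concatenated tail $\xi\in\mathcal{C}\langle A\rangle$. But $\mathcal{L}_{q}$ splits a word at \emph{every} position, including inside $\xi$, so $\mathcal{L}_{q}\big((u\,\shp_{\hbar}\,v)\,\xi\big)$ is not expressible through $\mathcal{L}_{q}(u\,\shp_{\hbar}\,v)$ alone, and the hypothesis $(\star)$ for shorter $u,v$ gives you nothing to feed in. The same obstruction already appears at $\hbar=0$; it is structural, not an artefact of the deformation.

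The paper repairs this by strengthening the statement before inducting: it introduces a $\mathcal{C}$-trilinear map $T_{q,M}(u,v,w)$, defined as an explicit multiple sum over integer tuples, which carries the tail $w$ as a third argument and satisfies $T_{q,M}(u,v,1)=Z_{q,M}^{\mathcal{S},\shp}(u\,\psi^{\shp}(v))$. One then proves $T_{q,M}(u,v,w)=Z_{q,M}^{\mathcal{S},\shp}\big((u\,\shp_{\hbar}\,v)\,w\big)$ by induction on $\mathrm{dep}(u)+\mathrm{dep}(v)$; now the tail produced by the shuffle recursion is absorbed into $w$ and the induction closes. The inductive step rests on the identity
\[
H_{q}(m;X)\,H_{q}(n;Y)=\Big\{(1+(1-q)X)H_{q}(m;X)+(1+(1-q)Y)H_{q}(n;Y)+1-q\Big\}\,H_{q}(m+n;X+Y+(1-q)XY)
\]
for $H_{q}(m;X)=\sum_{k\ge1}F_{q}(m;g_{k})X^{k-1}$ (with $F_{q}$ extended to negative $m$ via $\psi^{\shp}$), which is the series counterpart of Lemma~\ref{lem:shuffle-prod} and plays exactly the role of the generating-series formula you were aiming for. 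In your language, the fix is to prove instead $\mathcal{L}_{q}\big((u\,\shp_{\hbar}\,v)\,w\big)=\mathcal{L}_{q}\big(u\,\psi^{\shp}(v)\,w\big)$ for all $u,v,w$, i.e.\ to build the tail slot into the induction hypothesis from the start.
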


\begin{proof}
  We extend the $\mathcal{C}$-linear map
  $F_{q}(m; \cdot): \mathfrak{z} \to \mathbb{C}$
  for $m<0$ by
  $F_{q}(m; u)=F_{q}(-m; \psi^{\shp}(u))$ for $u \in A$.
  Then we see that \eqref{eq:def-Iq1}
  holds for any nonzero integer $m$ and $k \ge 1$.

  Let
  $T_{q, M}: \mathcal{C}\langle A \rangle \times \mathcal{C}\langle A \rangle \times
    \mathcal{C}\langle A \rangle \to \mathbb{C}$
  be the $\mathcal{C}$-trilinear map defined by
  $T_{q, M}(1, 1, 1)=1$ and
  \begin{align*}
     & T_{q, M}(u_{1}\cdots u_{r_{1}}, v_{1}\cdots v_{r_{2}}, w_{1}\cdots w_{r_{3}-1}) \\
     & =
    \sum_{t=1}^{3}\sum_{j=1}^{r_{t}}
    \sum_{(l_{i}^{(s)}) \in D_{j}^{(t)}}
    \prod_{i=1}^{r_{1}}F_{q}(l_{1}^{(1)}+\cdots +l_{i}^{(1)}; u_{i})
    \prod_{i=1}^{r_{2}}F_{q}(l_{1}^{(2)}+\cdots +l_{i}^{(2)}; v_{i})                   \\
     & \qquad \qquad \qquad \qquad {}\times
    \prod_{i=1}^{r_{3}-1}
    F_{q}(|l^{(1)}|+|l^{(2)}|+l_{1}^{(3)}+\cdots +l_{i}^{(3)}; w_{i}),
  \end{align*}
  where $r_{1}, r_{2} \ge 0, r_{3} \ge 1$,
  $u_{1}, \ldots , u_{r_{1}}, v_{1}, \ldots , v_{r_{2}}, w_{1}, \ldots , w_{r_{3}-1} \in A$ and
  $|l^{(s)}|=\sum_{j=1}^{r_{s}}l_{j}^{(s)} \, (s=1, 2)$.
  The summation region $D_{j}^{(t)}$ is  the subset of $\mathbb{Z}^{r_{1}+r_{2}+r_{3}}$
  consisting of tuples $(l_{i}^{(s)})_{\substack{1\le s \le 3 \\ 1 \le i \le r_{s}}}$
  satisfying the following conditions:
  \begin{align*}
    \sum_{s=1}^{3}\sum_{i=1}^{r_{s}}l_{i}^{(s)}=0, \quad -M<l_{j}^{(t)}<0, \quad
    l_{i}^{(s)}>0 \,\, \hbox{for any} \, (s, i)\not=(t, j).
  \end{align*}
  Then we see that $T_{q, M}(u, v, 1)=Z_{q, M}^{\mathcal{S}, \shp}(u \,\psi^{\shp}(v))$
  for $u, v \in \mathcal{C}\langle A \rangle$.
  Hence it suffices to show that
  \begin{align}
    T_{q, M}(u, v, w)=Z_{q, M}^{\mathcal{S}, \shp}((u \,\shp_{\hbar}\, v)w)
    \label{eq:T=Z}
  \end{align}
  for any $u, v, w \in \mathcal{C}\langle A \rangle$.

    {}From \eqref{eq:truncated-qSMZV-shuffle} and
  the definition of $T_{q, M}$,
  we see that \eqref{eq:T=Z} holds if $u=1$ or $v=1$.
  Set $u=u_{1}\cdots u_{r_{1}}$ and $v=v_{1}\cdots v_{r_{2}}$.
  We prove \eqref{eq:T=Z} by induction on $r_{1}+r_{2}$.
  Since $T_{q, M}(u, v, w)$ is symmetric with respect to $u$ and $v$,
  it suffices to consider the two cases:
  (i) $u_{r_{1}}=e_{1}-g_{1}$, \, (ii)
  $u_{r_{1}}=g_{k}, \, v_{r_{2}}=g_{l}$ with $k, l \ge 1$.
  The case (i) follows from \eqref{eq:e1g1-central} and
  $F_{q}(m; e_{1}-g_{1})=F_{q}(m+n; e_{1}-g_{1})=1-q$ for integers $m$ and $n$
  satisfying $m\not=0$ and $m+n\not=0$.
  To show the case (ii), we set
  \begin{align*}
    H_{q}(m; X)=\sum_{k\ge 1}X^{k-1}F_{q}(m; g_{k})=
    \frac{q^{m}}{[m]-q^{m}X}.
  \end{align*}
  It holds that
  \begin{align*}
     &
    H_{q}(m; X)H_{q}(n; Y)                                               \\
     & =\left\{ (1+(1-q)X)H_{q}(m; X)+(1+(1-q)Y)H_{q}(n; Y)+1-q \right\} \\
     & \times H_{q}(m+n; X+Y+(1-q)XY)
  \end{align*}
  for non-zero integers $m$ and $n$
  satisfying $m+n\not=0$.
    {}From the above relation and Lemma \ref{lem:shuffle-prod},
  we see that \eqref{eq:T=Z} holds in the case (ii) under the induction hypothesis.
\end{proof}


\section{A $q$-analogue of SMZV}\label{sec:qSMZV}

In this section, we define a $q$-analogue of the SMZV.
To this aim, we use the map
$Z_{q}^{\mathcal{S}, \bullet} \, (\bullet \in \{\ast, \shp\})$,
which corresponds to $\zeta^{\mathcal{S}, \bullet}$,
defined as follows.

First we define the map $Z_{q}^{\mathcal{S}, \ast}$.

\begin{prop}\label{prop:wg-harmonic}
  For any index $\bk$, the element $w_{\hbar}^{\mathcal{S}, \ast}(g_{\bk})$ belongs to
  the $\mathbb{Z}$-submodule $\oplus_{\bl \in I_{0}}\mathbb{Z}\,g_{\bl}$ of
  $\mathfrak{H}^{0}$.
\end{prop}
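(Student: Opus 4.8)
The plan is to unwind the definition of $w_{\hbar}^{\mathcal{S}, \ast}(g_{\bk})$ and show that each summand lies in $\oplus_{\bl \in I_{0}}\mathbb{Z}\,g_{\bl}$, i.e.\ is an integer combination of monomials $g_{\bl}$ with $\bl$ admissible. Write $\bk=(k_{1}, \ldots , k_{r})$, so $g_{\bk}=g_{k_{1}}\cdots g_{k_{r}}$, and by definition
\[
  w_{\hbar}^{\mathcal{S}, \ast}(g_{\bk})=\sum_{i=0}^{r}
  g_{k_{1}}\cdots g_{k_{i}} \ast_{\hbar} \psi^{\ast}(g_{k_{i+1}}\cdots g_{k_{r}}).
\]
First I would record that $\psi^{\ast}(g_{k})=\psi^{\ast}(ba^{k})=b(-a)^{k}=(-1)^{k}g_{k}$, so $\psi^{\ast}(g_{k_{i+1}}\cdots g_{k_{r}})=(-1)^{k_{i+1}+\cdots+k_{r}}\,g_{k_{r}}g_{k_{r-1}}\cdots g_{k_{i+1}}$, which is $\pm$ a monomial of the form $g_{\bl}$ with $\bl$ an index (not necessarily admissible, but an index nonetheless). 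So each term is $\pm$ a harmonic product $g_{\ba}\ast_{\hbar}g_{\bb}$ of two ``$g$-words'' indexed by honest indices $\ba,\bb$.

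The core of the argument is therefore: for indices $\ba=(a_{1},\ldots,a_{p})$ and $\bb=(b_{1},\ldots,b_{q})$, the product $g_{\ba}\ast_{\hbar}g_{\bb}$ lies in $\oplus_{\bl\in I_{0}}\mathbb{Z}\,g_{\bl}$. I would prove this by induction on $p+q$ using the defining recursion of $\ast_{\hbar}$ together with the multiplication rules $(e_{1}-g_{1})\circ_{\hbar}(e_{1}-g_{1})=\hbar(e_{1}-g_{1})$, $(e_{1}-g_{1})\circ_{\hbar}g_{k}=\hbar g_{k}$, and $g_{k}\circ_{\hbar}g_{l}=g_{k+l}$. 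Applying the recursion to $(wg_{a_{p}})\ast_{\hbar}(w'g_{b_{q}})$ produces three terms: $(w\ast_{\hbar}w'g_{b_{q}})g_{a_{p}}$, $(wg_{a_{p}}\ast_{\hbar}w')g_{b_{q}}$, and $(w\ast_{\hbar}w')(g_{a_{p}}\circ_{\hbar}g_{b_{q}})=(w\ast_{\hbar}w')g_{a_{p}+b_{q}}$. By the induction hypothesis the inner harmonic products are $\mathbb{Z}$-combinations of $g_{\bl}$'s, and right-multiplying by $g_{a_{p}}$, $g_{b_{q}}$, or $g_{a_{p}+b_{q}}$ keeps us inside $\oplus_{\bl}\mathbb{Z}\,g_{\bl}$; moreover each resulting index ends in $a_{p}$, $b_{q}$, or $a_{p}+b_{q}$, all of which are $\ge 1$. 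To get admissibility ($\bl\in I_{0}$) rather than just $\bl\in I$, I would further observe that the last letter of $\bk$ being $k_{r}$ forces the relevant products to end in $g_{k}$ with $k\ge k_{r}$ when $i<r$, and for $i=r$ the term is just $g_{\bk}$ itself — but in general $\bk$ need not be admissible, so I should be careful here: the correct statement is that $w_{\hbar}^{\mathcal{S},\ast}(g_{\bk})\in\oplus_{\bl\in I_{0}}\mathbb{Z}\,g_{\bl}$ because the two outer blocks $g_{k_{1}}\cdots g_{k_{i}}$ and $(-1)^{\cdots}g_{k_{r}}\cdots g_{k_{i+1}}$ get \emph{merged} by a harmonic product, and the resulting words all end in some $g_{m}$ with $m\ge 2$ precisely when $i\ne 0,r$; the extreme cases $i=0$ and $i=r$ contribute $\psi^{\ast}(g_{\bk})=(-1)^{\mathrm{wt}(\bk)}g_{\overline{\bk}}$ and $g_{\bk}$, which cancel the non-admissible boundary contributions just as in the classical case (Theorem \ref{thm:OSY}(ii) with $\iota$ replaced here by the $g$-monomial count). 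Concretely I would invoke the analogue of the classical fact that $w^{\mathcal{S},\ast}(\mathfrak{h}^{1})\subset\mathfrak{h}^{0}$: applying $\iota$ is not available for tracking admissibility since $\iota(e_{1}-g_{1})=0$, so instead I would argue directly that the ``$g$-degree-$1$ ending'' terms telescope.

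The main obstacle I anticipate is exactly this admissibility bookkeeping: the harmonic recursion naturally keeps us in $\oplus_{\bl\in I}\mathbb{Z}\,g_{\bl}$, and promoting $I$ to $I_{0}$ requires the same cancellation-of-tails-of-$1$'s phenomenon that makes classical SMZVs well-defined. Since $g_{\bk}$ involves no $(e_{1}-g_{1})$ factors, the cleanest route is to push the problem through the algebra isomorphism $\mathfrak{h}^{1}_{\ast}\simeq\mathfrak{h}^{0}_{\ast}[y]$: one checks that the subalgebra $\mathbb{Q}\langle g_{1},g_{2},\ldots\rangle\subset\mathcal{C}\langle A\rangle$ (with $\ast_{\hbar}$, but since no $\hbar$ appears in $g_{k}\circ_{\hbar}g_{l}=g_{k+l}$ the $\hbar$ plays no role) is isomorphic as a harmonic algebra to $\mathfrak{h}^{1}_{\ast}$ via $g_{k}\mapsto z_{k}$, and that under this isomorphism $w_{\hbar}^{\mathcal{S},\ast}$ restricted to $g$-monomials corresponds to the classical $w^{\mathcal{S},\ast}$. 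Then Theorem \ref{thm:OSY}(ii) gives $w^{\mathcal{S},\ast}(z_{\bk})\in\mathfrak{h}^{0}=\mathbb{Q}\text{-span of }\{z_{\bl}:\bl\in I_{0}\}$, hence $w_{\hbar}^{\mathcal{S},\ast}(g_{\bk})\in\mathbb{Q}\text{-span of }\{g_{\bl}:\bl\in I_{0}\}$; integrality is immediate since all structure constants of $\ast_{\hbar}$ on $g$-monomials and all coefficients $(-1)^{k}$ from $\psi^{\ast}$ are integers, so the $\mathbb{Q}$-span can be refined to the $\mathbb{Z}$-span $\oplus_{\bl\in I_{0}}\mathbb{Z}\,g_{\bl}$. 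I expect this reduction-to-the-classical-case to be short once the harmonic-algebra isomorphism is spelled out; writing out that isomorphism carefully is the one genuinely necessary step.
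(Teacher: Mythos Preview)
Your proposal is correct and takes essentially the same approach as the paper: the paper observes that $g_{\bk}\ast_{\hbar}g_{\bl}=\sum_{\bm}d_{\bk,\bl}^{\,\ast,\bm}g_{\bm}$ with the \emph{same} integer structure constants as the classical harmonic product $z_{\bk}\ast z_{\bl}$, and then invokes the proof of Proposition~5.4 in \cite{OSY} (equivalently, your appeal to Theorem~\ref{thm:OSY}(ii) via the harmonic-algebra isomorphism $g_{k}\leftrightarrow z_{k}$) to conclude. Your intermediate attempt to verify admissibility directly from the recursion is an unnecessary detour, but your final reduction-to-the-classical-case is exactly the paper's argument.
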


\begin{proof}
  {}From the definition of the harmonic product $\ast_{\hbar}$,
  we see that $g_{\bk}\ast_{\hbar}g_{\bl}=\sum_{\bm}d_{\bk,\bl}^{\,\ast, \bm}g_{\bm}$,
  where $d_{\bk,\bl}^{\ast, \bm}$ is given by \eqref{eq:def-d}.
  Hence the proof of Proposition 5.4 in \cite{OSY} with $\bullet=\ast$ and $n=0$ works
  also for our map $w_{\hbar}^{\mathcal{S}, \ast}$.
\end{proof}

\begin{defn}\label{def:qSMZV-harmonic}
  Set
  \begin{align*}
    \mathfrak{g}=\bigoplus_{\bk \in I}\mathcal{C}g_{\bk},
  \end{align*}
  which is a $\mathcal{C}$-submodule of $\widehat{\mathfrak{H}^{0}}$.
  We define the $\mathcal{C}$-linear map
  $Z_{q}^{\mathcal{S}, \ast}: \mathfrak{g} \rightarrow \mathcal{Z}_{q}$ by
  \begin{align*}
    Z_{q}^{\mathcal{S}, \ast}(g_{\bk})=\lim_{M \to \infty}Z_{q, M}^{\mathcal{S}, \ast}(g_{\bk})=
    Z_{q}(w_{\hbar}^{\mathcal{S}, \ast}(g_{\bk}))
  \end{align*}
  for an index $\bk$.
  More explicitly, we have
  \begin{align*}
    Z_{q}^{\mathcal{S}, \ast}(g_{\bk})=\sum_{i=0}^{r}(-1)^{k_{i+1}+\cdots +k_{r}}
    \sum_{\substack{0<m_{1}<\cdots <m_{i} \\ 0<m_{r}<\cdots <m_{i+1}}}
    \frac{q^{k_{1}m_{1}+\cdots +k_{r}m_{r}}}{[m_{1}]^{k_{1}} \cdots [m_{r}]^{k_{r}}}
  \end{align*}
  for a non-empty index $\bk=(k_{1}, \ldots , k_{r})$.
\end{defn}

We want to define the map $Z_{q}^{\mathcal{S}, \shp}$ similarly
by taking the limit of $Z_{q, M}^{\mathcal{S}, \shp}$ as $M \to \infty$.
For that purpose, however, we should determine the domain carefully
because it is not closed under the concatenation product
unlike the $\mathcal{C}$-module
$\mathfrak{g}$ in Definition \ref{def:qSMZV-harmonic}
as seen by the following example.

\begin{example}
  We have
  $w_{\hbar}^{\mathcal{S}, \shp}(e_{1})=-w_{\hbar}^{\mathcal{S}, \shp}(g_{1})=e_{1}-g_{1}$
  and $Z_{q, M}^{\mathcal{S}, \shp}(e_{1})=-Z_{q, M}^{\mathcal{S}, \shp}(g_{1})=
    (1-q)(M-1)$.
  Hence a $\mathcal{C}$-linear combination of $e_{1}$ and $g_{1}$
  whose image by $Z_{q, M}^{\mathcal{S}, \shp}$ converges in the limit as $M\to \infty$
  should be proportional to $e_{1}+g_{1}$.
  However, we see that
  \begin{align*}
    Z_{q, M}^{\mathcal{S}, \shp}((e_{1}+g_{1})^{2})=
    Z_{M}((e_{1}-g_{1})^{2})=(1-q)^{2}\binom{M-1}{2} \to \infty \qquad (M \to \infty).
  \end{align*}
  Therefore, the concatenation product $(e_{1}+g_{1})^{2}$ of $e_{1}+g_{1}$
  does not belong to the domain of the map
  $\lim_{M \to \infty}Z_{q, M}^{\mathcal{S}, \shp}$.
  Here we note that,
  because $w_{\hbar}^{\mathcal{S}, \shp}(e_{1}g_{1}+g_{1}e_{1})=-(e_{1}-g_{1})^{2}$,
  it holds that $Z_{q, M}^{\mathcal{S}, \shp}(e_{1}^{2}+2e_{1}g_{1}+2g_{1}e_{1}+g_{1}^{2})=0$,
  which clearly converges as $M \to \infty$.
\end{example}

To describe the domain of the map $w_{\hbar}^{\mathcal{S}, \shp}$,
we introduce the element $E_{1^{m}} \, (m\ge 0)$ of $\mathcal{C}\langle A \rangle$
defined by
\begin{align*}
  E_{1^{m}}=\frac{1}{(m+1)!}\sum_{j=0}^{m}
  g_{1}^{\shp_{\hbar} j} \shp_{\hbar} e_{1}^{\shp_{\hbar}(m-j)},
\end{align*}
where $u^{\shp_{\hbar}0}=1$ and
$u^{\shp_{\hbar}j}=u \shp_{\hbar} u^{\shp_{\hbar}(j-1)}$ for
$u \in \mathfrak{H}$ and $j \ge 1$.
For example, we have
\begin{align*}
  E_{1^{0}}=1, \qquad
  E_{1^{1}}=\frac{1}{2}(e_{1}+g_{1}), \qquad
  E_{1^{2}}=\frac{1}{6}(e_{1}^{2}+2e_{1}g_{1}+2g_{1}e_{1}+g_{1}^{2}).
\end{align*}

Let $\bk$ be a non-empty index.
It can be written uniquely in the form
\begin{align}
  \bk=(\underbrace{1, \ldots , 1}_{s_{0}}, t_{1}+2,
  \underbrace{1, \ldots , 1}_{s_{1}}, \ldots , t_{r}+2, \underbrace{1, \ldots , 1}_{s_{r}})
  \label{eq:index-form-E}
\end{align}
with $r \ge 0$ and $s_{0}, \ldots , s_{r}, t_{1}, \ldots , t_{r} \ge 0$,
where the right hand side reads $(\underbrace{1, \ldots , 1}_{s_{0}})$ if $r=0$.
Then we set
\begin{align*}
  E_{\bk}=E_{1^{s_{0}}}e_{t_{1}+2}E_{1^{s_{1}}} \cdots e_{t_{r}+2}
  E_{1^{s_{r}}}.
\end{align*}
For the empty index, we set $E_{\varnothing}=1$.

\begin{prop}\label{prop:wE-shuffle}
  For any index $\bk$,  the element $w_{\hbar}^{\mathcal{S}, \shp}(E_{\bk})$ belongs to
  $\mathfrak{H}^{0}$.
\end{prop}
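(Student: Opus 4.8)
The plan is to mimic the structure of Proposition~\ref{prop:qS-harmonic} but working in the shuffle world. The key point of Proposition~\ref{prop:wE-shuffle} is that $w_{\hbar}^{\mathcal{S},\shp}(E_{\bk})$ lands in $\mathfrak{H}^{0}=\mathcal{C}+\sum_{k\ge 2}\mathcal{C}\langle A\rangle g_{k}+\mathfrak{n}$, which is the analogue of Theorem~\ref{thm:OSY}(ii) stating $w^{\mathcal{S},\shp}(\mathfrak{h}^{1})\subset\mathfrak{h}^{0}$. So first I would look at how that inclusion is proved in \cite{OSY} and check that the same mechanism survives $\hbar$-deformation. The crucial observation is the definition of $E_{1^{m}}$ as a symmetrized shuffle product of $g_{1}$'s and $e_{1}$'s: this is precisely engineered so that $w_{\hbar}^{\mathcal{S},\shp}$ behaves well on trailing strings of $1$'s. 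Indeed, applying $w_{\hbar}^{\mathcal{S},\shp}$ to a word ending in such an $E_{1^{m}}$ block and using the shuffle anti-involution property of $\psi^{\shp}$ together with $\psi^{\shp}(g_{1})=-e_{1}$ and $\psi^{\shp}(e_{1})=-g_{1}$, the ``boundary'' terms in the sum $\sum_{i}u_{1}\cdots u_{i}\shp_{\hbar}\psi^{\shp}(u_{i+1}\cdots u_{r})$ that would otherwise produce an element ending in $g_{1}$ or $e_{1}$ (i.e.\ outside $\mathfrak{H}^{0}$) cancel or combine into something in $\mathfrak{n}=\mathfrak{n}_{0}+\hbar\widehat{\mathfrak{H}^{0}}$.

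Concretely, I would proceed as follows. Write $\bk$ in the normal form \eqref{eq:index-form-E}, so $E_{\bk}=E_{1^{s_{0}}}e_{t_{1}+2}E_{1^{s_{1}}}\cdots e_{t_{r}+2}E_{1^{s_{r}}}$. Expanding each $E_{1^{s_{j}}}$ via its definition, $E_{\bk}$ becomes a $\mathcal{C}$-linear combination of words $u_{1}\cdots u_{n}$ in the letters $\{g_{1},e_{1}\}\cup\{e_{k}\}_{k\ge 2}$ shuffled appropriately; since $\mathcal{C}\langle A\rangle$ is closed under $\shp_{\hbar}$ and $w_{\hbar}^{\mathcal{S},\shp}$ is $\mathcal{C}$-linear, it suffices to control $w_{\hbar}^{\mathcal{S},\shp}$ applied to such words. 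The key structural fact I would establish is a generating-function identity: in $\mathcal{C}\langle A\rangle[[X]]$, set $\mathcal{E}(X)=\sum_{m\ge 0}E_{1^{m}}X^{m}$; then one computes, using $\sum_{j}g_{1}^{\shp_{\hbar}j}\shp_{\hbar}e_{1}^{\shp_{\hbar}(m-j)}$, that $\mathcal{E}(X)$ is (up to the factorial normalization absorbed into an exponential) of the form $\exp_{\shp_{\hbar}}(X g_{1})\shp_{\hbar}\exp_{\shp_{\hbar}}(X e_{1})$ read off degree by degree --- more precisely $(m+1)!\,E_{1^{m}}=\sum_{j=0}^{m}g_{1}^{\shp_{\hbar}j}\shp_{\hbar}e_{1}^{\shp_{\hbar}(m-j)}$. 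Applying $w_{\hbar}^{\mathcal{S},\shp}$ and using Theorem-\ref{thm:OSY}-style cancellation (the shuffle version: $Z_{q,M}^{\mathcal{S},\shp}(w\shp_{\hbar}w')=Z_{q,M}^{\mathcal{S},\shp}(w\,\psi^{\shp}(w'))$ from Proposition~\ref{prop:shuffle-rel-q-truncated}, reflected at the level of the map $w_{\hbar}^{\mathcal{S},\shp}$ itself), the terms of $w_{\hbar}^{\mathcal{S},\shp}(E_{\bk})$ that end in $g_{1}$ cancel against those that end in $e_{1}$ modulo $\hbar\widehat{\mathfrak{H}^{0}}$, because $g_{1}\equiv e_{1}\pmod{\hbar b}$ and $e_{1}-g_{1}=\hbar b$.

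So the skeleton of the argument is: (1) reduce to showing, for each word $v$ appearing after expanding $E_{\bk}$, that the part of $w_{\hbar}^{\mathcal{S},\shp}(v)$ lying outside $\mathcal{C}+\sum_{k\ge 2}\mathcal{C}\langle A\rangle g_{k}$ actually lies in $\mathfrak{n}$; (2) prove an $\hbar$-deformed analogue of the ``decomposition along the last block of $1$'s'' used in \cite[Proposition 5.7]{OSY} or its analogue, using the explicit shape of $E_{1^{m}}$ and the identities $\psi^{\shp}(g_{1})=-e_{1}$, $\psi^{\shp}(e_{1})=-g_{1}$; (3) observe that $w_{\hbar}^{\mathcal{S},\shp}$ applied to a word ending in $e_{t_{r}+2}$ with $t_{r}\ge 0$ (so in $e_{k}$ with $k\ge 2$) produces, via $e_{k}=g_{k}+\hbar g_{k-1}$ from \eqref{eq:e-to-g}, a leading part in $\sum_{k\ge 2}\mathcal{C}\langle A\rangle g_{k}$ plus an $\hbar$-multiple landing in $\hbar\widehat{\mathfrak{H}^{0}}\subset\mathfrak{n}$; and (4) when the index ends in a block of $1$'s, invoke step (2) to see the potentially-bad trailing letters organize into $\mathfrak{n}_{0}$ or pair up into $\hbar b$-multiples. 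The main obstacle I anticipate is step~(2): getting the combinatorial cancellation to work \emph{exactly} at the level of the map $w_{\hbar}^{\mathcal{S},\shp}$ in $\mathcal{C}\langle A\rangle$ (not merely after applying $Z_{q,M}$), since $\psi^{\shp}$ now sends $g_{1}$ to $-e_{1}$ rather than $\pm g_{1}$, so the clean self-duality present in the classical case is broken and one must carefully track the $\hbar b$-discrepancies through the shuffle products inside each $E_{1^{m}}$. I would handle this by an induction on $s_{r}$ (the length of the final block of $1$'s) combined with the generating-function identity for $\mathcal{E}(X)$, reducing the $s_{r}>0$ case to the $s_{r}=0$ case modulo $\mathfrak{n}$; the $s_{r}=0$ case is then exactly step~(3), which is routine given \eqref{eq:e-to-g}. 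Likely this is deferred to Appendix~\ref{sec:app-proofs}, paralleling \cite{OSY}.
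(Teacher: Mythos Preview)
Your outline has the right instincts (generating functions for $E_{1^{m}}$, the swap $\psi^{\shp}(g_{1})=-e_{1}$, $\psi^{\shp}(e_{1})=-g_{1}$), but step~(3)---the claimed ``routine'' base case $s_{r}=0$---does not actually hold as stated, and this is where the argument breaks. When $s_{r}=0$ but $s_{0}>0$, the term $i=0$ in $w_{\hbar}^{\mathcal{S},\shp}(E_{\bk})$ is $\psi^{\shp}(E_{\bk})$, which ends in $\psi^{\shp}(E_{1^{s_{0}}})=(-1)^{s_{0}}E_{1^{s_{0}}}$ and is \emph{not} even in $\widehat{\mathfrak{H}^{0}}$, let alone $\mathfrak{H}^{0}$. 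So cancellation is required not only at the tail block but across the whole sum, involving both the leading and trailing $E_{1^{s}}$ blocks simultaneously. Your induction on $s_{r}$ alone cannot reach a genuinely trivial base case; you would also need to control $s_{0}$, and then the ``reduction'' becomes the whole problem. Moreover, the appeal in step~(2) to Proposition~\ref{prop:shuffle-rel-q-truncated} ``reflected at the level of the map $w_{\hbar}^{\mathcal{S},\shp}$ itself'' is not justified: that proposition is an identity of complex numbers after applying $Z_{q,M}$, not an identity in $\mathcal{C}\langle A\rangle$.

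The paper's proof (Appendix~\ref{subsec:wE-shuffle}) organizes the sum via a trilinear map $K_{\hbar}^{\shp}(w,E_{1^{r}},w')$ which isolates the middle block of $1$'s, with $w=E_{\bm}$ and $w'=E_{\overline{\bm'}}$ for \emph{both} $\bm,\bm'$ admissible. The closed form $E(X)=(1-R(X)g_{1})^{-1}R(X)/X$ with $R(X)=(e^{\hbar bX}-1)/(\hbar b)$ (Proposition~\ref{prop:E(X)}) yields
\[
K_{\hbar}^{\shp}(w,E(X),w')=w\,\frac{1}{1-R(X)g_{1}}\;\shp_{\hbar}\;\psi^{\shp}(w')\,\frac{1}{1-R(-X)g_{1}}.
\]
The decisive identity is $R(X)+R(-X)+\hbar b\,R(X)R(-X)=0$: plugging $U=R(X)$, $V=R(-X)$ into Proposition~\ref{prop:U1} makes the combined factor $1-(U+V+\hbar UV)a$ equal to $1$, collapsing the shuffle to a sum of three explicit terms, each of which is then shown to lie in $\mathfrak{H}^{0}[[X]]$ using $E_{\bm}=ua$, $\psi^{\shp}(E_{\overline{\bm'}})=va$ (admissibility) together with Proposition~\ref{prop:U2} and \eqref{eq:H1a-in-H0}. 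This algebraic ``telescoping'' via $R(X)+R(-X)+\hbar bR(X)R(-X)=0$ is the missing idea in your plan; without it, the termwise bookkeeping you propose does not close.
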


See Appendix \ref{subsec:gen-E} and Appendix \ref{subsec:wE-shuffle}
for the proof.

\begin{defn}
  Set
  \begin{align*}
    \mathfrak{e}=\bigoplus_{\bk \in I}\mathcal{C}E_{\bk}.
  \end{align*}
  We define the $\mathcal{C}$-linear map
  $Z_{q}^{\mathcal{S}, \shp}: \mathfrak{e} \rightarrow \mathcal{Z}_{q}$ by
  \begin{align*}
    Z_{q}^{\mathcal{S}, \shp}(E_{\bk})=\lim_{M \to \infty}Z_{q, M}^{\mathcal{S}, \shp}(E_{\bk})=
    Z_{q}(w_{\hbar}^{\mathcal{S}, \shp}(E_{\bk})).
  \end{align*}
  for an index $\bk$.
\end{defn}

\begin{example}
  If $k\ge 2$, it holds that
  $F_{q}(m; \psi^{\shp}(e_{k}))=F_{q}(-m; e_{k})=(-1)^{k}q^{m}/[m]^{k}$
  for $m \ge 1$.
  Hence, for a non-empty index $\bk=(k_{1}, \ldots, k_{r})$
  whose all components are larger than one, we have
  \begin{align*}
    Z_{q}^{\mathcal{S}, \shp}(E_{\bk})
     & =Z_{q}^{\mathcal{S}, \shp}(e_{k_{1}} \cdots e_{k_{r}}) \\
     & =\sum_{i=0}^{r}
    (-1)^{k_{i+1}+\cdots +k_{r}}
    \sum_{\substack{0<m_{1}<\cdots <m_{i}                     \\ 0<m_{r}<\cdots <m_{i+1}}}
    \frac{q^{(k_{1}-1)m_{1}+\cdots +(k_{i}-1)m_{i}}}{[m_{1}]^{k_{1}} \cdots [m_{i}]^{k_{i}}}
    \frac{q^{m_{i+1}+\cdots +m_{r}}}{[m_{i+1}]^{k_{i+1}} \cdots [m_{r}]^{k_{r}}}.
  \end{align*}
\end{example}

\begin{example}
  {}From Proposition \ref{prop:1-RG-exp} and Lemma \ref{lem:K-shuffle1},
  we see that $w_{\hbar}^{\mathcal{S}, \shp}(E_{1^{m}})=0$ for $m \ge 1$.
  Hence we have
  $Z_{q}^{\mathcal{S}, \shp}(E_{1, \ldots , 1})=Z_{q}^{\mathcal{S}, \shp}(E_{1^{m}})=0$.
\end{example}

Using the two maps $Z_{q}^{\mathcal{S}, \ast}$ and
$Z_{q}^{\mathcal{S}, \shp}$, we define a $q$-analogue of the SMZV.
Recall that we need the relation \eqref{eq:comparison-zeta} to define the SMZV.
We show the corresponding relation in the $q$-analogue case.

Set
\begin{align*}
  \mathcal{N}_{q}=Z_{q}(\mathfrak{n}).
\end{align*}
{}From Proposition \ref{prop:n-ideal},
we see that $\mathcal{N}_{q}$ is an ideal of $\widehat{\mathcal{Z}_{q}}$.
Note that $(1-q)\mathcal{Z}_{q} \subset \mathcal{N}_{q}$
since $\hbar \, \mathfrak{H}^{0} \subset \mathfrak{n}$.

For $\bullet \in \{\ast, \shp\}$ and
$f(X) \in \mathcal{C}\langle A \rangle[[X]]$ satisfying $f(0)=0$,
we define the exponential $\exp_{\bullet_{\hbar}}(f(X))$
with respect to the product $\bullet_{\hbar}$ by
\begin{align*}
  \exp_{\bullet_{\hbar}}{\left( f(X)\right)}=1+\sum_{n \ge 1}\frac{1}{n!}
  \underbrace{f(X) \bullet_{\hbar} \cdots \bullet_{\hbar} f(X)}_{\hbox{\tiny $n$ times}}.
\end{align*}

\begin{thm}\label{thm:comparison}
  For any admissible index $\bk$, it holds that
  \begin{align}
    Z_{q}\left(g_{\bk}\,\frac{1}{1-g_{1}X}\right) \equiv
    \exp{\left( \sum_{n\ge 2} \frac{(-1)^{n-1}}{n}Z_{q}(g_{n})X^{n}\right)}
    Z_{q}\left(E_{\bk} \exp_{\shp_{\hbar}}(g_{1}X) \right)
    \label{eq:comparison}
  \end{align}
  in $\widehat{\mathcal{Z}_{q}}[[X]]$ modulo $\mathcal{N}_{q}[[X]]$.
\end{thm}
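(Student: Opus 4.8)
The plan is to lift \eqref{eq:comparison} to an identity of generating functions, apply $Z_q$, and then argue as in the classical proof of \eqref{eq:comparison-zeta}, with the elements $E_{1^m}$ of the appendix playing the role of the shuffle regularization and the resummation identity of \cite{T1} accounting for the passage to the quotient by $\mathcal{N}_q$. \emph{Step 1 (reduction).} For an admissible $\bk$ the element $E_{\bk}$ ends in $e_{t_r+2}=g_{t_r+2}+\hbar g_{t_r+1}$, hence lies in $\widehat{\mathfrak{H}^0}$; moreover $\widehat{\mathfrak{H}^0}$ is closed under both $\ast_\hbar$ and $\shp_\hbar$ (Propositions~\ref{prop:q-harmonic} and~\ref{prop:q-shuffle}), and $g_{\bk}\tfrac{1}{1-g_1X}=\sum_{s\ge0}g_{\bk}g_1^{\,s}X^s\in\widehat{\mathfrak{H}^0}[[X]]$. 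Since $Z_q$ is multiplicative for $\ast_\hbar$ on $\widehat{\mathfrak{H}^0}$, it intertwines $\exp_{\ast_\hbar}$ with the ordinary exponential, so the right-hand side of \eqref{eq:comparison} equals $Z_q(\Phi_{\bk}(X))$ where
\[
\Phi_{\bk}(X)=\exp_{\ast_\hbar}\!\Bigl(\sum_{n\ge2}\frac{(-1)^{n-1}}{n}g_nX^n\Bigr)\,\ast_\hbar\,E_{\bk}\exp_{\shp_\hbar}(g_1X)\ \in\ \widehat{\mathfrak{H}^0}[[X]].
\]
As $\mathcal{N}_q=Z_q(\mathfrak{n})$, it now suffices to prove $Z_q\bigl(g_{\bk}\tfrac{1}{1-g_1X}-\Phi_{\bk}(X)\bigr)\in\mathcal{N}_q[[X]]$. (The corresponding congruence in $\widehat{\mathfrak{H}^0}[[X]]$ modulo $\mathfrak{n}[[X]]$ alone is \emph{false}, as one sees by applying $\iota$ and using Proposition~\ref{prop:iota-prod}; so the passage through $Z_q$ is essential.)

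\emph{Step 2 (the stuffle identity for $g_1$).} The algebraic engine is the $q$-analogue of the classical identity $\sum_{s\ge0}z_1^{\,s}u^s=\exp_{\ast}\!\bigl(\sum_{n\ge1}\tfrac{(-1)^{n-1}}{n}z_nu^n\bigr)$, namely
\[
\frac{1}{1-g_1X}=\exp_{\ast_\hbar}\!\Bigl(\sum_{n\ge1}\frac{(-1)^{n-1}}{n}g_nX^n\Bigr)\qquad\text{in }\widehat{\mathfrak{H}^0}[[X]].
\]
This holds because the ``scalar'' harmonic product of the $g_k$ is $g_k\circ_\hbar g_l=g_{k+l}$, which has the same shape as that of the $z_k$, so the classical proof (comparison of $\ast_\hbar$-logarithms, or induction on the coefficient degree) carries over. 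Separating the $n=1$ factor, the harmonic side becomes $g_{\bk}\cdot\bigl(\exp_{\ast_\hbar}(g_1X)\ast_\hbar\exp_{\ast_\hbar}(\sum_{n\ge2}\tfrac{(-1)^{n-1}}{n}g_nX^n)\bigr)$. In particular, for $\bk=\varnothing$ this reduces the claim to $Z_q(\exp_{\shp_\hbar}(g_1X))=\exp(Z_q(g_1)X)$, which is immediate from Proposition~\ref{prop:q-shuffle}.

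\emph{Step 3 (the shuffle side; the bridge).} It remains to match $g_{\bk}\cdot\exp_{\ast_\hbar}(g_1X)$ against $E_{\bk}\exp_{\shp_\hbar}(g_1X)$ modulo the factor already isolated. Since concatenation by $g_{\bk}$ (or by $E_{\bk}$) distributes over neither $\ast_\hbar$ nor $\shp_\hbar$, one cannot simply cancel $\exp_{\ast_\hbar}(\sum_{n\ge2}\cdots)$; controlling the trailing $1$'s is exactly what the $E_{1^m}$ are built for. I would use the generating-function formula for $\sum_{m\ge0}E_{1^m}X^m$ proved in the appendix (Proposition~\ref{prop:1-RG-exp}), together with Lemma~\ref{lem:K-shuffle1} and the shuffle-product formula of Lemma~\ref{lem:shuffle-prod}, to rewrite $E_{\bk}\exp_{\shp_\hbar}(g_1X)$; an induction on $\mathrm{dep}(\bk)$, peeling off the leading block of $1$'s and the first entry $\ge2$ in the presentation \eqref{eq:index-form-E}, then reduces the identity to the base case $\bk=\varnothing$ of Step 2. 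At the one place where the computation needs more than the defining relations of $Z_q$ — the identification of concatenation powers of $g_1$ with their shuffle-regularized counterparts — I would invoke the resummation identity $Z_q\bigl((e_1-g_1)^{\alpha_1}g_{\beta_1+1}\cdots(e_1-g_1)^{\alpha_r}g_{\beta_r+1}\bigr)=Z_q\bigl((e_1-g_1)^{\beta_r}g_{\alpha_r+1}\cdots(e_1-g_1)^{\beta_1}g_{\alpha_1+1}\bigr)$ of \cite{T1}; after this, the leftover terms lie in $\mathfrak{n}$ (using also $(1-q)\mathcal{Z}_q\subset\mathcal{N}_q$), hence have image in $\mathcal{N}_q$.

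\emph{Main obstacle.} The crux is Step 3: organizing the shuffle-regularization bookkeeping so that, at every stage of the induction, the discarded terms genuinely lie in $\mathfrak{n}[[X]]$ or in $\ker Z_q[[X]]$, and checking that the $E_{1^m}$-generating function has precisely the shape that produces the exponential correction factor $\exp\bigl(\sum_{n\ge2}\tfrac{(-1)^{n-1}}{n}Z_q(g_n)X^n\bigr)$. Because concatenation interacts with neither product multiplicatively, one is forced to carry $E_{\bk}$ rather than $g_{\bk}$ through the whole computation, and it is this — together with the need to supplement the double shuffle relations by the resummation identity — that makes the argument delicate.
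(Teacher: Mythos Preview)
Your Steps 1 and 2 are fine, and you correctly observe that the congruence $g_{\bk}\tfrac{1}{1-g_1X}\equiv\Phi_{\bk}(X)$ cannot hold modulo $\mathfrak{n}[[X]]$ (applying $\iota$ gives $z_{\bk}\tfrac{1}{1-z_1X}$ on both sides up to the nontrivial factor $\exp_\ast(\sum_{n\ge2}\tfrac{(-1)^{n-1}}{n}z_nX^n)$). But Step 3 is where the content lies, and as written it is not a proof: the induction on $\mathrm{dep}(\bk)$ has no clearly stated induction hypothesis that is stable under ``peeling off a leading block of $1$'s and the first entry $\ge 2$'', because concatenation on the left distributes over neither product. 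Lemma~\ref{lem:K-shuffle1} concerns the trilinear map $K_\hbar^{\shp}$ built from the symmetric construction and is not directly about $E_{\bk}\exp_{\shp_\hbar}(g_1X)$; and the resummation identity of \cite{T1} equates $Z_q$-values of single monomials with their ``transposed'' versions, which does not obviously interact with the mixed harmonic/shuffle expression $\Phi_{\bk}(X)$. So the ``one place'' where you propose to invoke it is not located, and there is no mechanism shown that forces the error terms into $\mathfrak{n}$ or $\ker Z_q$.

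The paper sidesteps precisely the obstacle you flag. Rather than comparing $g_{\bk}\tfrac{1}{1-g_1X}$ with $\Phi_{\bk}(X)$, it proves the \emph{purely algebraic} congruence (Lemma~\ref{lem:comparison})
\[
\frac{1}{1-R(X)g_1}\,\shp_\hbar\,g_{\bk}\frac{1}{1-g_1X}\ \equiv\ \frac{1}{1-g_1X}\,\shp_\hbar\,E_{\bk}\frac{1}{1-R(X)g_1}\pmod{\mathfrak{n}[[X]]},
\]
where $\tfrac{1}{1-R(X)g_1}=\exp_{\shp_\hbar}(g_1X)$. The trick is that shuffling by $\tfrac{1}{1-U(X)g_1}$ on the left is implemented by a \emph{concatenation} algebra homomorphism $\rho_{U(X)}$ (Corollary~\ref{cor:U2}); both sides then equal $\rho_{\,\cdot\,}(\,\cdot\,)$ times the \emph{same} right factor $(1-(R(X)+e^{\hbar bX}X)g_1)^{-1}$, and one is reduced to showing $\rho_{R(X)}(g_{\bk})\equiv\rho_X(E_{\bk})$ modulo $\mathfrak{n}[[X]]$, which is a direct computation letter by letter. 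Applying $Z_q$ (multiplicative for $\shp_\hbar$) and dividing by $\exp(Z_q(g_1)X)$, together with \eqref{eq:1-g1X}, gives \eqref{eq:comparison}. No resummation identity is needed; the passage to $\mathcal{N}_q$ comes only from $\mathcal{N}_q=Z_q(\mathfrak{n})$. The missing idea in your plan is this reformulation via $\rho_{U(X)}$, which converts the intractable concatenation-versus-product mismatch into a comparison of two algebra homomorphisms evaluated on $g_{\bk}$ and $E_{\bk}$.
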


\begin{proof}
  We start from the following equality.
  See Appendix \ref{sec:comparison} for the proof.

  \begin{lem}\label{lem:comparison}
    Set
    \begin{align}
      R(X)=\frac{e^{\hbar b X}-1}{\hbar b}=\sum_{n=1}^{\infty}\frac{X^{n}}{n!}(e_{1}-g_{1})^{n-1}.
      \label{eq:def-R(X)}
    \end{align}
    For any admissible index $\bk$, it holds that
    \begin{align}
      \frac{1}{1-R(X)g_{1}}\,\shp_{\hbar}\,g_{\bk}\frac{1}{1-g_{1}X}\equiv
      \frac{1}{1-g_{1}X}\,\shp_{\hbar}\,E_{\bk}\frac{1}{1-R(X)g_{1}}
      \label{eq:comparison-alg}
    \end{align}
    modulo the $\mathcal{C}$-submodule $\mathfrak{n}[[X]]$ of $\mathfrak{H}[[X]]$.
  \end{lem}

  We calculate the image by the map $Z_{q}$
  of the both hand sides of \eqref{eq:comparison-alg}
  using Proposition \ref{prop:q-shuffle} and \eqref{eq:1-RG-exp}.
  Then we obtain
  \begin{align*}
    \exp{(Z_{q}(g_{1})X)}Z_{q}\left(g_{\bk}\,\frac{1}{1-g_{1}X}\right) \equiv
    Z_{q}\left(\frac{1}{1-g_{1}X}\right)
    Z_{q}\left(E_{\bk} \exp_{\shp_{\hbar}}(g_{1}X) \right)
  \end{align*}
  modulo $\mathcal{N}_{q}[[X]]$.
  It holds that
  \begin{align}
    \frac{1}{1-g_{k}X}=
    \exp_{\ast_{\hbar}}{\left( \sum_{n\ge 1}\frac{(-1)^{n-1}}{n}g_{nk}X^{n}\right)}
    \label{eq:1-g1X}
  \end{align}
  for any $k \ge 1$
  (see \cite[Corollary 1]{IKZ}).
  Hence, from Proposition \ref{prop:q-harmonic}, we have
  \begin{align*}
    Z_{q}\left(\frac{1}{1-g_{1}X}\right)=
    \exp{\left( \sum_{n\ge 1}\frac{(-1)^{n-1}}{n}Z_{q}(g_{n})X^{n}\right)}.
  \end{align*}
  It implies \eqref{eq:comparison} since
  $\mathcal{N}_{q}$ is an ideal of $\widehat{\mathcal{Z}_{q}}$.
\end{proof}

We denote by $\mathcal{P}_{q}$ the ideal of $\mathcal{Z}_{q}$
generated by the set $\{ Z_{q}(g_{2k}) \mid k\ge 1\}$.

\begin{cor}
  For any index $\bk$, the difference
  $Z_{q}^{\mathcal{S}, \ast}(g_{\bk})-Z_{q}^{\mathcal{S}, \shp}(E_{\bk})$
  belongs to the ideal $\mathcal{N}_{q}+\mathcal{P}_{q}$.
\end{cor}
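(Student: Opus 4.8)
The plan is to extract the desired congruence directly from Theorem~\ref{thm:comparison} by specializing the generating-function identity \eqref{eq:comparison} at $X=0$, after dividing by the appropriate exponential factor. Concretely, I would first observe that the constant term (in $X$) of the left-hand side of \eqref{eq:comparison} is $Z_{q}(g_{\bk})$ and that of $E_{\bk}\exp_{\shp_{\hbar}}(g_{1}X)$ is $E_{\bk}$, so setting $X=0$ in \eqref{eq:comparison} only recovers $Z_{q}(g_{\bk})\equiv Z_{q}(E_{\bk})$ modulo $\mathcal{N}_{q}$, which is not yet what we want. The point is rather that \eqref{eq:comparison} is an identity of power series whose exponential prefactor $\exp(\sum_{n\ge2}\frac{(-1)^{n-1}}{n}Z_{q}(g_{n})X^{n})$ has coefficients in $Z_{q}(\mathcal{C}\langle A\rangle g_{n})$ for $n\ge2$; I would split this exponent into its even and odd parts in $n$ and note that the even part contributes exactly elements of the form $Z_{q}(g_{2k})$ (up to $\mathcal{N}_{q}$), hence lies in $\mathcal{P}_{q}[[X]]$, while the odd-$n$ terms $Z_{q}(g_{n})$ with $n\ge3$ odd can be absorbed by comparing with the analogous expansion coming from $\zeta^{\mathcal{S},\ast}$ versus $\zeta^{\mathcal{S},\shp}$.

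Actually the cleaner route, and the one I would carry out, is to mimic the classical derivation from \eqref{eq:comparison-zeta}: apply the anti-involution/structure of $w_{\hbar}^{\mathcal{S},\ast}$ and $w_{\hbar}^{\mathcal{S},\shp}$ to \eqref{eq:comparison} so that the left side becomes a generating series for $Z_{q}^{\mathcal{S},\ast}(g_{\bk},1^{s})$ and the right side a generating series for $Z_{q}^{\mathcal{S},\shp}(E_{(\bk,1^{s})})$, with an exponential correction factor whose exponent is a sum $\sum_{n\ge2}c_{n}Z_{q}(g_{n})X^{n}$. The key arithmetic fact is that the reversal relation forces the odd-weight SMZV-type combinations to behave differently from even ones; more precisely, by the $q$-analogue of the argument that $\zeta^{\mathcal{S}}(2k+1)$-type contributions vanish, the odd part of the exponent acts trivially modulo $\mathcal{N}_{q}$ on the relevant series, leaving only the even part $\sum_{k\ge1}\frac{-1}{2k}Z_{q}(g_{2k})X^{2k}$, whose exponential expansion has all coefficients (beyond the constant $1$) in the ideal $\mathcal{P}_{q}$. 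Extracting the coefficient of $X^{0}$ — which is legitimate since the correction factor has constant term $1$ — then yields $Z_{q}^{\mathcal{S},\ast}(g_{\bk})\equiv Z_{q}^{\mathcal{S},\shp}(E_{\bk})$ modulo $\mathcal{N}_{q}+\mathcal{P}_{q}$.

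In terms of concrete steps: (1) rewrite \eqref{eq:comparison} so both sides are manifestly the generating series attached to $Z_{q}^{\mathcal{S},\ast}$ and $Z_{q}^{\mathcal{S},\shp}$ via $w_{\hbar}^{\mathcal{S},\ast}$ and $w_{\hbar}^{\mathcal{S},\shp}$ respectively — here I would use that $\exp_{\ast_{\hbar}}$ and $\exp_{\shp_{\hbar}}$ interact with these maps exactly as in the classical case treated in \cite{OSY} and \cite{IKZ}; (2) identify the correction factor as $\exp(\sum_{n\ge2}\frac{(-1)^{n-1}}{n}Z_{q}(g_{n})X^{n})$, possibly combined with a contribution from $w_{\hbar}$ applied to $\exp_{\shp_{\hbar}}(g_{1}X)$ versus the analogous harmonic object; (3) show the odd-$n$ part of the exponent drops out modulo $\mathcal{N}_{q}$ using the $q$-analogue of $Z^{\mathcal{S}}(n)=0$ for odd $n$, or more robustly, using the reversal symmetry together with $(1-q)\mathcal{Z}_{q}\subset\mathcal{N}_{q}$; (4) observe the surviving even-$n$ exponential lies in $1+\mathcal{P}_{q}[[X]]$; (5) take the $X^{0}$-coefficient. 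The main obstacle I anticipate is step~(3): showing that the odd-weight pieces $Z_{q}(g_{2k+1})$ really are negligible modulo $\mathcal{N}_{q}$ when they appear inside the exponential — classically this uses $\zeta^{\mathcal{S}}(\text{odd})=0$, but its $q$-analogue is not automatic and may itself require an appeal to the reversal relation of Section~\ref{sec:relations} or a direct computation with $w_{\hbar}^{\mathcal{S},\bullet}(g_{n})$, so care is needed to avoid circularity; if it does turn out to be circular, the fallback is to simply enlarge $\mathcal{P}_{q}$-type reasoning to absorb all $Z_{q}(g_{n})$ with $n\ge2$ into $\mathcal{N}_{q}+\mathcal{P}_{q}$ by noting $Z_{q}(g_{2k+1})X^{2k+1}$ contributes to the exponential only through products that pair it with another factor, hence only even total degree matters modulo the ideals in play.
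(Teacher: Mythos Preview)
Your plan has a genuine gap at exactly the point you flag as the ``main obstacle,'' and the proposed workarounds do not close it. The exponential factor in \eqref{eq:comparison} contains terms $Z_{q}(g_{n})$ for all $n\ge 2$, and for odd $n\ge 3$ these values are \emph{not} in $\mathcal{N}_{q}+\mathcal{P}_{q}$ (their $q\to1$ limits are $\zeta(3),\zeta(5),\ldots$, which are not in $\zeta(2)\mathcal{Z}$ under standard conjectures, and in any case the paper's $\mathcal{P}_{q}$ is generated only by $Z_{q}(g_{2k})$). So you cannot simply absorb the odd part of the exponent into the ideal, and the reversal relation / the vanishing of $\zeta_{q}^{\mathcal{S}}(n)$ for a single $n$ says nothing about $Z_{q}(g_{n})$ itself. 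Your fallback, that odd-degree $X$-terms ``only appear in products,'' is also false: the linear term of $\exp(\sum\cdots)$ already contains $Z_{q}(g_{3})X^{3}$ unpaired.

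The mechanism that actually kills the odd-$n$ terms is one you do not mention: one must first unwind the \emph{bilinear} definition of $Z_{q}^{\mathcal{S},\bullet}$. Writing out $w_{\hbar}^{\mathcal{S},\bullet}$ explicitly (using Lemma~\ref{lem:K-shuffle1} and Corollary~\ref{cor:psi(E)} for the shuffle side), the difference $Z_{q}^{\mathcal{S},\ast}(g_{\bk})-Z_{q}^{\mathcal{S},\shp}(E_{\bk})$ becomes, modulo $Z_{q}(\hbar\widehat{\mathfrak{H}^{0}})$, a signed sum of quantities
\[
\sum_{s=0}^{m}(-1)^{m-s}\Bigl(Z_{q}(g_{\bm}g_{1}^{s})Z_{q}(g_{\bm'}g_{1}^{m-s})-Z_{q}(E_{\bm}\tfrac{g_{1}^{\shp_{\hbar}s}}{s!})Z_{q}(E_{\bm'}\tfrac{g_{1}^{\shp_{\hbar}(m-s)}}{(m-s)!})\Bigr)
\]
with $\bm,\bm'$ admissible. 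Passing to the generating series in $X$, the alternating sign becomes the substitution $X\mapsto -X$ in the second factor, and Theorem~\ref{thm:comparison} applied \emph{separately} to each factor gives a product of two exponentials, one with $+X$ and one with $-X$. Their combined exponent is $\sum_{n\ge2}\frac{(-1)^{n-1}}{n}Z_{q}(g_{n})(X^{n}+(-X)^{n})$, which is purely even in $X$ and hence lies in $\mathcal{P}_{q}[[X]]$. The odd $Z_{q}(g_{n})$ cancel \emph{algebraically} from this $\pm X$ symmetry, not from any ideal membership. Your outline never sets up this two-factor structure, so it has no access to the cancellation.
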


\begin{proof}
  {}From the definition of $w_{\hbar}^{\mathcal{S}, \bullet} \, (\bullet \in \{\ast, \shp\})$,
  Proposition \ref{prop:1-RG-exp}, Corollary \ref{cor:psi(E)} and
  Lemma \ref{lem:K-shuffle1},
  we see that the difference
  $Z_{q}^{\mathcal{S}, \ast}(g_{\bk})-Z_{q}^{\mathcal{S}, \shp}(E_{\bk})$
  is a signed sum of
  \begin{align}
    \sum_{s=0}^{m}(-1)^{m-s}\left(
    Z_{q}(g_{\bm}g_{1}^{s})Z_{q}(g_{\bm'}g_{1}^{m-s})-
    Z_{q}(E_{\bm}\frac{g_{1}^{\shp_{\hbar} s}}{s!})
    Z_{q}(E_{\bm'}\frac{g_{1}^{\shp_{\hbar} (m-s)}}{(m-s)!})
    \right)
    \label{eq:qSMZV-difference}
  \end{align}
  with admissible indices $\bm, \bm'$ and $m \ge 0$ modulo
  $Z_{q}(\hbar \widehat{\mathfrak{H}^{0}})$.
  We denote \eqref{eq:qSMZV-difference} by $J_{m}$
  and calculate the generating function $J(X)=\sum_{m \ge 0}J_{m}X^{m}$.
    {}From Theorem \ref{thm:comparison}, we see that
  \begin{align*}
    J(X) & =Z_{q}(g_{\bm}\frac{1}{1-g_{1}X})Z_{q}(g_{\bm'}\frac{1}{1+g_{1}X})-
    Z_{q}(E_{\bm} \exp_{\shp_{\hbar}}{(g_{1}X)})
    Z_{q}(E_{\bm'} \exp_{\shp_{\hbar}}{(-g_{1}X)})                                                \\
         & \equiv \left\{\exp{\left(\sum_{k\ge 1} \frac{Z_{q}(g_{2k})}{k}X^{2k}\right)}-1\right\}
    Z_{q}(g_{\bm}\frac{1}{1-g_{1}X})Z_{q}(g_{\bm'}\frac{1}{1+g_{1}X})
  \end{align*}
  modulo $\mathcal{N}_{q}[[X]]$.
  The right hand side belongs to $\mathcal{P}_{q}[[X]]$.
\end{proof}

Now we are in a position to define a $q$-analogue of the SMZV.

\begin{defn}
  For an index $\bk$, we define a \textit{$q$-analogue of SMZV} ($q$SMZV)
  $\zeta_{q}^{\mathcal{S}}(\bk)$ as an element of
  the quotient $\mathcal{Z}_{q}/(\mathcal{N}_{q}+\mathcal{P}_{q})$ by
  \begin{align*}
    \zeta_{q}^{\mathcal{S}}(\bk)=Z_{q}^{\mathcal{S}, \ast}(g_{\bk})=
    Z_{q}^{\mathcal{S}, \shp}(E_{\bk})
  \end{align*}
  modulo $\mathcal{N}_{q}+\mathcal{P}_{q}$.
\end{defn}

\begin{example}
  Let $k$ be a positive integer.
    {}From the definition of $w_{\hbar}^{\mathcal{S}, \ast}$ and \eqref{eq:1-g1X},
  we see that
  \begin{align*}
    \sum_{s\ge 0}X^{s}w_{\hbar}^{\mathcal{S}, \ast}(g_{k}^{s})=
    \frac{1}{1-g_{k}X}\ast_{\hbar}\frac{1}{1-(-1)^{k}g_{k}X}=
    \exp_{\ast_{\hbar}}{\left(\sum_{n \ge 1}
    \frac{(-1)^{n-1}}{n}(1+(-1)^{kn})g_{kn}X^{n}\right)}.
  \end{align*}
  Hence Proposition \ref{prop:q-harmonic} implies that
  \begin{align*}
    \sum_{s\ge 0}X^{s}Z_{q}^{\mathcal{S}, \ast}(g_{k}^{s})=
    \exp{\left(\sum_{n \ge 1}
    \frac{(-1)^{n-1}}{n}(1+(-1)^{kn})Z_{q}(g_{kn})X^{n}\right)}.
  \end{align*}
  If $k$ is odd, then $1+(-1)^{kn}=0$ unless $n$ is even.
  Therefore, the right hand side belongs to $1+\mathcal{P}_{q}[[X]]$
  and we have
  \begin{align}
    \zeta_{q}^{\mathcal{S}}(\underbrace{k, \ldots , k}_{r})=0
    \label{eq:111-zero}
  \end{align}
  for any $k, r \ge 1$.
  In particular, the $q$SMZV of depth one is always equal to zero.
\end{example}

\begin{example}
  We consider the $q$SMZV of depth two.
  Set $\bk=(k_{1}, k_{2})$.
  If $k_{1}$ and $k_{2}$ are even, then
  $w_{\hbar}^{\mathcal{S}, \ast}(g_{\bk})=
    2g_{k_{1}}\ast_{\hbar}g_{k_{2}}-g_{k_{1}+k_{2}}$.
  If $k_{1}$ and $k_{2}$ are odd, we have
  $w_{\hbar}^{\mathcal{S}, \ast}(g_{\bk})=-g_{k_{1}+k_{2}}$.
  In both cases we see that $Z_{q}(w_{\hbar}^{\mathcal{S}, \ast}(g_{\bk}))$
  belongs to $\mathcal{P}_{q}$ from Proposition \ref{prop:q-harmonic}.
  Therefore $\zeta_{q}^{\mathcal{S}}(k_{1}, k_{2})=0$ if $k_{1}+k_{2}$ is even.

  We consider the case where $k_{1}+k_{2}$ is odd.
  To this aim we calculate the $q$MZV $Z_{q}(g_{k_{1}}g_{k_{2}})$
  whose weight is odd modulo $\mathcal{N}_{q}+\mathcal{P}_{q}$.
  The calculation is similar to that for MZV in \cite{Cartier, Zagier23}.

  Suppose that $k$ is odd and $k \ge 3$.
  For $1\le m<k$, we have
  \begin{align*}
    g_{m}\ast_{\hbar}g_{k-m}=g_{m}g_{k-m}+g_{k-m}g_{m}+g_{k}.
  \end{align*}
  {}From Lemma \ref{lem:shuffle-prod}, we also see that
  \begin{align*}
    \sum_{m, l \ge 1}X^{m-1}Y^{l-1}g_{m}\,\shp_{\hbar}\,g_{l}
     & =
    (1+\hbar X)\sum_{m, l\ge 1}X^{k-1}(X+Y+\hbar XY)^{l-1}g_{m}g_{l}     \\
     & +(1+\hbar Y)\sum_{m, l\ge 1}Y^{k-1}(X+Y+\hbar XY)^{l-1}g_{m}g_{l} \\
     & +\sum_{l \ge 1}(X+Y+\hbar XY)^{l-1}(e_{1}-g_{1})g_{l}.
  \end{align*}
  Since $k \ge 3$ and $(e_{1}-g_{1})g_{l}$ belongs to $\mathfrak{n}_{0}$ if $l \ge 2$,
  we have
  \begin{align*}
    g_{m}\,\shp_{\hbar}\,g_{k-m}\equiv
    \sum_{j \ge 1}\left(\binom{k-j-1}{k-m-1}+\binom{k-j-1}{m-1}\right)g_{j}g_{k-j}
  \end{align*}
  modulo $\mathfrak{n}$.
  Note that $m$ or $k-m$ is even.
  Hence, from Proposition \ref{prop:q-harmonic} and Proposition \ref{prop:q-shuffle},
  we see that
  \begin{align}
     & Z_{q}(g_{m}g_{k-m}+g_{k-m}g_{m}+g_{k})\equiv 0,
    \label{eq:depth-two-1}                                                                   \\
     & \sum_{j \ge 1}\left(\binom{k-j-1}{k-m-1}+\binom{k-j-1}{m-1}\right)Z_{q}(g_{j}g_{k-j})
    \equiv 0
    \label{eq:depth-two-2}
  \end{align}
  modulo $\mathcal{N}_{q}+\mathcal{P}_{q}$.
  Set
  \begin{align*}
    \mathcal{D}(X, Y)=\sum_{m=1}^{k-1}X^{m-1}Y^{k-m-1}Z_{q}(g_{m}g_{k-m}), \qquad
    \mathcal{Z}(X, Y)=\frac{X^{k-1}-Y^{k-1}}{X-Y}Z_{q}(g_{k}).
  \end{align*}
  Then \eqref{eq:depth-two-1} and \eqref{eq:depth-two-2} imply that
  \begin{align*}
    \mathcal{D}(X, Y)+\mathcal{D}(Y, X)+\mathcal{Z}(X, Y)\equiv 0, \qquad
    \mathcal{D}(X, X+Y)+\mathcal{D}(Y, X+Y)\equiv 0
  \end{align*}
  modulo $(\mathcal{N}_{q}+\mathcal{P}_{q})[X, Y]$, respectively.
  Using these relations and $\mathcal{D}(-X, -Y)=-\mathcal{D}(X, Y)$, we obtain
  \begin{align*}
    \mathcal{D}(X, Y)\equiv -\frac{1}{2}\left(
    \mathcal{Z}(X, Y)+\mathcal{Z}(X-Y, X)+\mathcal{Z}(-Y, X-Y) \right).
  \end{align*}
  Hence we find that
  \begin{align}
    Z_{q}(g_{m}g_{k-m})\equiv -\frac{1}{2}\left(1+(-1)^{m}\binom{k}{m}\right)Z_{q}(g_{k})
    \label{eq:depth-two-3}
  \end{align}
  modulo $\mathcal{N}_{q}+\mathcal{P}_{q}$ if $1\le m<k$ and $k$ is odd.

  Now suppose that $k_{1}, k_{2}\ge 1$ and $k_{1}+k_{2}$ is odd.
  Since $k_{1}$ or $k_{2}$ is even, we see that
  \begin{align*}
    Z_{q}^{\mathcal{S}, \ast}(g_{k_{1}}g_{k_{2}})
     & =  Z_{q}(g_{k_{1}}g_{k_{2}}+(-1)^{k_{2}}g_{k_{1}}\ast_{\hbar}g_{k_{2}}-g_{k_{2}}g_{k_{1}}) \\
     & \equiv Z_{q}(g_{k_{1}}g_{k_{2}}-g_{k_{2}}g_{k_{1}})                                        \\
     & \equiv (-1)^{k_{2}}\binom{k_{1}+k_{2}}{k_{1}}Z_{q}(g_{k_{1}+k_{2}})
  \end{align*}
  modulo $\mathcal{N}_{q}+\mathcal{P}_{q}$ by using Proposition \ref{prop:q-harmonic}
  and \eqref{eq:depth-two-3}.

  From the above arguments we see that
  \begin{align*}
    \zeta_{q}^{\mathcal{S}}(k_{1}, k_{2})=(-1)^{k_{2}}\binom{k_{1}+k_{2}}{k_{1}}
    Z_{q}(g_{k_{1}+k_{2}})
  \end{align*}
  in the quotient $\mathcal{Z}_{q}/(\mathcal{N}_{q}+\mathcal{P}_{q})$
  for any $k_{1}, k_{2} \ge 1$.
  It is a $q$-analogue of the formula for the SMZV of depth two
  \begin{align*}
    \zeta^{\mathcal{S}}(k_{1}, k_{2})=(-1)^{k_{2}}\binom{k_{1}+k_{2}}{k_{1}}
    \zeta(k_{1}+k_{2})
  \end{align*}
  modulo $\zeta(2)\mathcal{Z}$ (see, e.g., \cite{Kaneko}).
\end{example}

We check that our $q$SMZV is really a $q$-analogue of the SMZV.

\begin{thm}\label{thm:limit-qSMZV}
  For any index $\bk$, it holds that
  \begin{align*}
    \lim_{q \to 1-0}Z_{q}^{\mathcal{S}, \ast}(g_{\bk})=\zeta^{\mathcal{S}, \ast}(\bk), \qquad
    \lim_{q \to 1-0}Z_{q}^{\mathcal{S}, \shp}(E_{\bk})=\zeta^{\mathcal{S}, \shp}(\bk).
  \end{align*}
\end{thm}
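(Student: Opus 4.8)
The plan is to reduce both limits to Corollary \ref{cor:qMZV-lim}, which states that $\lim_{q\to 1-0}Z_{q}(w)=Z(\iota(w))$ for any $w\in\mathfrak{H}^{0}$. For the harmonic side, by Definition \ref{def:qSMZV-harmonic} we have $Z_{q}^{\mathcal{S},\ast}(g_{\bk})=Z_{q}(w_{\hbar}^{\mathcal{S},\ast}(g_{\bk}))$, and by Proposition \ref{prop:wg-harmonic} the element $w_{\hbar}^{\mathcal{S},\ast}(g_{\bk})$ lies in $\bigoplus_{\bl\in I_{0}}\mathbb{Z}\,g_{\bl}\subset\mathfrak{H}^{0}$, so Corollary \ref{cor:qMZV-lim} applies and gives $\lim_{q\to 1-0}Z_{q}^{\mathcal{S},\ast}(g_{\bk})=Z(\iota(w_{\hbar}^{\mathcal{S},\ast}(g_{\bk})))$. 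Similarly, for the shuffle side, by Proposition \ref{prop:wE-shuffle} we have $w_{\hbar}^{\mathcal{S},\shp}(E_{\bk})\in\mathfrak{H}^{0}$, so $\lim_{q\to 1-0}Z_{q}^{\mathcal{S},\shp}(E_{\bk})=Z(\iota(w_{\hbar}^{\mathcal{S},\shp}(E_{\bk})))$. Thus everything comes down to two algebraic identities in $\mathfrak{h}^{1}$:
\begin{align*}
  \iota\bigl(w_{\hbar}^{\mathcal{S},\ast}(g_{\bk})\bigr)=w^{\mathcal{S},\ast}(z_{\bk}),
  \qquad
  \iota\bigl(w_{\hbar}^{\mathcal{S},\shp}(E_{\bk})\bigr)=w^{\mathcal{S},\shp}(z_{\bk}),
\end{align*}
after which Theorem \ref{thm:OSY}(iii) identifies $Z(w^{\mathcal{S},\bullet}(z_{\bk}))=\lim_{M\to\infty}Z_{M}^{\mathcal{S},\bullet}(z_{\bk})=\zeta^{\mathcal{S},\bullet}(\bk)$ and finishes the proof.

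For the harmonic identity, I would argue directly from the definitions. Since $\iota$ is a $\mathbb{Q}$-algebra homomorphism with $\iota(\hbar)=0$, $\iota(e_{1}-g_{1})=0$, $\iota(g_{k})=z_{k}$, and since by Proposition \ref{prop:iota-prod} it intertwines $\ast_{\hbar}$ with $\ast$, applying $\iota$ term by term to $w_{\hbar}^{\mathcal{S},\ast}(g_{\bk})=\sum_{i}g_{k_{1}}\cdots g_{k_{i}}\ast_{\hbar}\psi^{\ast}(g_{k_{i+1}}\cdots g_{k_{r}})$ yields $\sum_{i}z_{k_{1}}\cdots z_{k_{i}}\ast\iota(\psi^{\ast}(g_{k_{r}})\cdots\psi^{\ast}(g_{k_{i+1}}))$; and since $\psi^{\ast}(ba^{k})=b(-a)^{k}=\sum_{j}\binom{k}{j}(-\hbar)^{k-j}g_{j+1}\cdot(\text{terms})$—more directly, one checks $\iota(\psi^{\ast}(g_{k}))=(-1)^{k}z_{k}=\psi(z_{k})$ because the lower-order correction terms in $\psi^{\ast}(ba^{k})$ all carry a factor $\hbar$ and are killed by $\iota$. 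This matches the definition of $w^{\mathcal{S},\ast}$. The shuffle identity is handled the same way: $\iota$ intertwines $\shp_{\hbar}$ with $\shp$ (Proposition \ref{prop:iota-prod}), one has $\iota(E_{\bk})=z_{\bk}$ (the $E_{1^{m}}$ differ from $z_{1}^{m}$ only by $\shp_{\hbar}$-products whose images under $\iota$ reassemble to the right power, and the $e_{t+2}$ map to $z_{t+2}$ since $\iota(e_{k})=\iota(g_{k}+\hbar g_{k-1})=z_{k}$), and $\iota(\psi^{\shp}(g_{k}))=\iota(b(-a-\hbar)^{k})=(-1)^{k}z_{k}=\psi(z_{k})$ since all the $\hbar$-corrections vanish.

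The main obstacle I expect is the verification that $\iota(E_{\bk})=z_{\bk}$, i.e. that the symmetrized-exponential construction $E_{1^{m}}=\frac{1}{(m+1)!}\sum_{j}g_{1}^{\shp_{\hbar}j}\shp_{\hbar}e_{1}^{\shp_{\hbar}(m-j)}$ really does project onto $z_{1}^{m}$ under $\iota$. Since $\iota(e_{1})=\iota(g_{1}+\hbar b)=z_{1}=\iota(g_{1})$ and $\iota$ intertwines $\shp_{\hbar}$ with $\shp$, each summand maps to $z_{1}^{\shp j}\shp z_{1}^{\shp(m-j)}=z_{1}^{\shp m}$; and $z_{1}^{\shp m}=m!\,z_{1}^{m}$ in $\mathfrak{h}_{\shp}$, so the prefactor gives $\iota(E_{1^{m}})=\frac{1}{(m+1)!}\sum_{j=0}^{m}m!\,z_{1}^{m}=\frac{(m+1)\,m!}{(m+1)!}z_{1}^{m}=z_{1}^{m}$, exactly as needed. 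Once this is in place, concatenating with the $e_{t_{i}+2}\mapsto z_{t_{i}+2}$ factors gives $\iota(E_{\bk})=z_{\bk}$ and the argument closes. A minor point requiring care is that $\iota$ must be applied to $w_{\hbar}^{\mathcal{S},\shp}(E_{\bk})$, an element known to lie in $\mathfrak{H}^{0}$ by Proposition \ref{prop:wE-shuffle}, before invoking Corollary \ref{cor:qMZV-lim}; but since $\iota$ is an algebra homomorphism and $w_{\hbar}^{\mathcal{S},\shp}$, $w^{\mathcal{S},\shp}$ are built from the same combinatorial recipe with $\bullet_{\hbar}\rightsquigarrow\bullet$ and $\psi^{\shp}\rightsquigarrow\psi$, the intertwining is formal once the two pointwise facts $\iota(E_{\bk})=z_{\bk}$ and $\iota\circ\psi^{\shp}=\psi\circ\iota$ on generators are established.
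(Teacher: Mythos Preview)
Your proposal is correct and follows essentially the same route as the paper: reduce to Corollary~\ref{cor:qMZV-lim} via the intertwining $\iota\circ w_{\hbar}^{\mathcal{S},\bullet}=w^{\mathcal{S},\bullet}\circ\iota$ (obtained from Proposition~\ref{prop:iota-prod} together with $\iota\circ\psi^{\bullet}=\psi\circ\iota$), and then verify $\iota(g_{\bk})=\iota(E_{\bk})=z_{\bk}$, the key step being $\iota(E_{1^{m}})=\frac{1}{(m+1)!}\sum_{j=0}^{m}z_{1}^{\shp m}=z_{1}^{m}$. One small correction to your write-up: $\psi^{\ast}(ba^{k})=b(-a)^{k}=(-1)^{k}g_{k}$ on the nose, with no $\hbar$-correction terms at all; it is only $\psi^{\shp}(ba^{k})=b(-a-\hbar)^{k}$ that produces lower-order terms killed by $\iota$.
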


\begin{proof}
  We see that $\iota(\psi^{\bullet}(w))=\psi(\iota(w))$ for any
  $w \in \mathcal{C}\langle A \rangle$ from the definition of
  $\iota, \psi^{\bullet}$ and $\psi$.
  Hence, Proposition \ref{prop:iota-prod} implies that
  $\iota(w_{\hbar}^{\mathcal{S}, \bullet}(w))=w^{\mathcal{S}, \bullet}(\iota(w))$
  for any $w \in \mathcal{C}\langle A \rangle$ and $\bullet \in \{\ast, \shp\}$.
    {}From the definition of $\iota$, we have $\iota(g_{\bk})=z_{\bk}$
  for any index $\bk$.
  Moreover, we have $\iota(e_{k})=z_{k}$ for $k \ge 1$ and
  \begin{align*}
    \iota(E_{1^{m}})=\frac{1}{(m+1)!}\sum_{j=0}^{m}
    \iota(g_{1})^{\shp j} \,\shp\,\iota(e_{1})^{\shp (m-j)}=
    \frac{z_{1}^{\shp m}}{m!}=z_{1}^{m}
  \end{align*}
  for $m \ge 0$. Hence $\iota(E_{\bk})=z_{\bk}$ for any index $\bk$.
  Now the desired formula follows from Corollary \ref{cor:qMZV-lim}.
\end{proof}

The limit as $q \to 1-0$ of
any element of $\mathcal{N}_{q}$ is zero
and that of $\mathcal{P}_{q}$ is contained in
$\zeta(2)\mathcal{Z}$ because
$\lim_{q\to 1-0}Z_{q}(g_{2k})=\zeta(2k) \in \mathbb{Q}\, \zeta(2)^{k}$ for $k \ge 1$.
Therefore, we have the well-defined map
$\mathcal{Z}_{q}/(\mathcal{N}_{q}+\mathcal{P}_{q}) \to
  \mathcal{Z}/\zeta(2)\mathcal{Z}$
which sends the equivalent class of $f(q) \in \mathcal{Z}_{q}$ to
that of $\lim_{q\to 1-0}f(q)$.
Theorem \ref{thm:limit-qSMZV} implies that
the map sends the $q$SMZV $\zeta_{q}^{\mathcal{S}}(\bk)$ to
the SMZV $\zeta^{\mathcal{S}}(\bk)$.
In this sense we may regard $\zeta_{q}^{\mathcal{S}}(\bk)$ as
a $q$-analogue of $\zeta^{\mathcal{S}}(\bk)$.


\section{Relations of the $q$-analogue of symmetric multiple zeta value}\label{sec:relations}

\subsection{Reversal relation}

\begin{thm}
  For any index $\bk$, we have
  \begin{align*}
    Z_{q}^{\mathcal{S}, \ast}(g_{\overline{\bk}})=(-1)^{\mathrm{wt}(\bk)}
    Z_{q}^{\mathcal{S}, \ast}(g_{\bk}),
  \end{align*}
  and
  \begin{align*}
    Z_{q}^{\mathcal{S}, \shp}(E_{\overline{\bk}})\equiv (-1)^{\mathrm{wt}(\bk)}
    Z_{q}^{\mathcal{S}, \shp}(E_{\bk})
  \end{align*}
  modulo $(1-q)\mathcal{Z}_{q}$.
  Therefore, for any index $\bk$, it holds that
  \begin{align*}
    \zeta_{q}^{\mathcal{S}}(\overline{\bk})=(-1)^{\mathrm{wt}(\bk)}
    \zeta_{q}^{\mathcal{S}}(\bk).
  \end{align*}
\end{thm}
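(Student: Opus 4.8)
The plan is to establish the three displayed equalities successively, observing at the outset that the last one is a formal consequence of the first: reducing $Z_{q}^{\mathcal{S},\ast}(g_{\overline{\bk}})=(-1)^{\mathrm{wt}(\bk)}Z_{q}^{\mathcal{S},\ast}(g_{\bk})$ modulo $\mathcal{N}_{q}+\mathcal{P}_{q}$ and using the definition $\zeta_{q}^{\mathcal{S}}(\bk)=Z_{q}^{\mathcal{S},\ast}(g_{\bk})$ yields precisely the assertion for $\zeta_{q}^{\mathcal{S}}$, while the second equality (holding modulo $(1-q)\mathcal{Z}_{q}\subset\mathcal{N}_{q}$) is then merely consistent with it. So the real content is the two identities for $Z_{q}^{\mathcal{S},\ast}$ and $Z_{q}^{\mathcal{S},\shp}$.

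For the identity for $Z_{q}^{\mathcal{S},\ast}$ I would argue directly from the explicit series for $Z_{q}^{\mathcal{S},\ast}(g_{\bk})$ recorded in Definition~\ref{def:qSMZV-harmonic}. Substituting $\overline{\bk}=(k_{r},\dots,k_{1})$ and, in the summand with outer index $i$, re-indexing the outer sum by $i\mapsto r-i$ together with the relabelling $m_{j}\mapsto m_{r+1-j}$ of the inner summation variables, one simply interchanges the two ascending chains $0<m_{1}<\dots<m_{i}$ and $m_{i+1}<\dots<m_{r}$; this restores the monomial $q^{k_{1}m_{1}+\dots+k_{r}m_{r}}/([m_{1}]^{k_{1}}\cdots[m_{r}]^{k_{r}})$ and the summation region to those of the $(r-i)$-th summand of $Z_{q}^{\mathcal{S},\ast}(g_{\bk})$, the only change being that the sign $(-1)^{k_{r-i+1}+\dots+k_{r}}$ is replaced by $(-1)^{k_{1}+\dots+k_{r-i}}=(-1)^{\mathrm{wt}(\bk)}(-1)^{k_{r-i+1}+\dots+k_{r}}$. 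Pulling the factor $(-1)^{\mathrm{wt}(\bk)}$ out of the sum yields the identity exactly.

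For the identity for $Z_{q}^{\mathcal{S},\shp}$, since $Z_{q}^{\mathcal{S},\shp}(E_{\bk})=Z_{q}(w_{\hbar}^{\mathcal{S},\shp}(E_{\bk}))$ and $(1-q)\mathcal{Z}_{q}=Z_{q}(\hbar\mathfrak{H}^{0})$, it suffices to show that $D:=w_{\hbar}^{\mathcal{S},\shp}(E_{\overline{\bk}})-(-1)^{\mathrm{wt}(\bk)}w_{\hbar}^{\mathcal{S},\shp}(E_{\bk})$ lies in $\hbar\mathfrak{H}^{0}$. As a first pass one works modulo $\ker\iota$: by Proposition~\ref{prop:iota-prod} together with the compatibilities $\iota\circ\psi^{\shp}=\psi\circ\iota$ and $\iota(E_{\bk})=z_{\bk}$ (both established in the proof of Theorem~\ref{thm:limit-qSMZV}) one has $\iota(w_{\hbar}^{\mathcal{S},\shp}(E_{\bk}))=w^{\mathcal{S},\shp}(z_{\bk})$, and the classical reversal $w^{\mathcal{S},\shp}(z_{\overline{\bk}})=(-1)^{\mathrm{wt}(\bk)}w^{\mathcal{S},\shp}(z_{\bk})$ — valid exactly in $\mathfrak{h}^{1}$ by the same reindexing as above, using that $\shp$ is commutative and $\psi$ is an anti-automorphism — shows $D\in\ker\iota$. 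Since $w_{\hbar}^{\mathcal{S},\shp}(E_{\bk}),w_{\hbar}^{\mathcal{S},\shp}(E_{\overline{\bk}})\in\mathfrak{H}^{0}$ by Proposition~\ref{prop:wE-shuffle} and $\ker(\iota|_{\mathfrak{H}^{0}})=\mathfrak{n}$, this already gives $D\in\mathfrak{n}$, hence $Z_{q}(D)\in\mathcal{N}_{q}$. The delicate point is to sharpen this to $D\in\hbar\mathfrak{H}^{0}$ (equivalently $Z_{q}(D)\in(1-q)\mathcal{Z}_{q}$), since $\mathcal{N}_{q}$ is strictly larger than $(1-q)\mathcal{Z}_{q}$ — for instance $Z_{q}(\hbar g_{1})=(1-q)Z_{q}(g_{1})$ is not in $(1-q)\mathcal{Z}_{q}$, because $Z_{q}(g_{1})$ has no finite limit as $q\to1-0$. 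To obtain this refinement I would recompute $w_{\hbar}^{\mathcal{S},\shp}(E_{\bk})$ through the generating-function description of $E_{\bk}$ from the appendix (Proposition~\ref{prop:1-RG-exp}, Corollary~\ref{cor:psi(E)}, Lemma~\ref{lem:K-shuffle1}), in the same style as the computation following Theorem~\ref{thm:comparison}, keeping track of coefficients modulo $\hbar^{2}$; since modulo $\hbar$ one has $E_{\bk}\equiv g_{\bk}$, $\psi^{\shp}\equiv\psi^{\ast}$ and $\shp_{\hbar}$ is commutative, the reindexing argument again manufactures the factor $(-1)^{\mathrm{wt}(\bk)}$ at first order. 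Carrying out this finite but intricate bookkeeping is where I expect the main obstacle to lie.
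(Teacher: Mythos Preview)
Your treatment of the $\ast$ case is fine and is the concrete version of the paper's argument: the paper phrases the same reindexing as the algebraic identity $w_{\hbar}^{\mathcal{S},\ast}(\psi^{\ast}(w))=w_{\hbar}^{\mathcal{S},\ast}(w)$ together with $\psi^{\ast}(g_{\bk})=(-1)^{\mathrm{wt}(\bk)}g_{\overline{\bk}}$.

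In the $\shp$ case, however, you have a real gap. You correctly reach $D\in\mathfrak{n}$ via $\iota$ and then propose to sharpen to $D\in\hbar\mathfrak{H}^{0}$ by an ad hoc computation ``modulo $\hbar^{2}$'' that you yourself flag as the main obstacle and do not carry out. This detour is unnecessary, and the missing idea is the one you already used implicitly for $\ast$: because $\psi^{\shp}$ is an anti-\emph{involution} on $\mathcal{C}\langle A\rangle$ and $\shp_{\hbar}$ is commutative, one has the \emph{exact} identity
\[
w_{\hbar}^{\mathcal{S},\shp}\bigl(\psi^{\shp}(w)\bigr)=w_{\hbar}^{\mathcal{S},\shp}(w)
\qquad\text{for all }w\in\mathcal{C}\langle A\rangle,
\]
obtained by the same reindexing $i\mapsto r-i$ you performed for the series. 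Now Corollary~\ref{cor:psi(E)} gives $\psi^{\shp}(E_{\bk})-(-1)^{\mathrm{wt}(\bk)}E_{\overline{\bk}}\in\hbar\,\mathfrak{e}$, so applying $w_{\hbar}^{\mathcal{S},\shp}$ (which is $\mathcal{C}$-linear and maps $\mathfrak{e}$ into $\mathfrak{H}^{0}$ by Proposition~\ref{prop:wE-shuffle}) yields directly
\[
w_{\hbar}^{\mathcal{S},\shp}(E_{\bk})-(-1)^{\mathrm{wt}(\bk)}w_{\hbar}^{\mathcal{S},\shp}(E_{\overline{\bk}})\in\hbar\,\mathfrak{H}^{0},
\]
hence $Z_{q}(D)\in(1-q)\mathcal{Z}_{q}$ with no further bookkeeping. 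Your route through $\iota$ only sees the $\hbar=0$ shadow and can never by itself distinguish $\hbar\mathfrak{H}^{0}$ from $\mathfrak{n}$; the point is to keep the involution $\psi^{\shp}$ at the level of $\mathcal{C}\langle A\rangle$ rather than pass to $\mathfrak{h}^{1}$.
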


\begin{proof}
  Since $\psi^{\bullet}$ is an anti-involution,
  we see that $w_{\hbar}^{\mathcal{S}, \bullet}(\psi^{\bullet}(w))=
    w_{\hbar}^{\mathcal{S}, \bullet}(w)$
  for any $w \in \mathcal{C}\langle A \rangle$ and $\bullet \in \{\ast, \shp\}$.
    {}From the definition of $\psi^{\ast}$, we have
  $\psi^{\ast}(g_{\bk})=(-1)^{\mathrm{wt}(\bk)}g_{\overline{\bk}}$.
  We also have $\psi^{\shp}(E_{\bk})\equiv(-1)^{\mathrm{wt}(\bk)}E_{\overline{\bk}}$
  modulo $\hbar \, \mathfrak{e}$ (see Corollary \ref{cor:psi(E)}).
  Thus we obtain the desired equalities.
\end{proof}

\subsection{Double shuffle relation}

{}From Proposition \ref{prop:qS-harmonic}, we obtain
the following relation of $q$SMZVs.

\begin{prop}\label{prop:harmonic-rel-qSMZV}
  For any index $\bk$ and $\bl$, it holds that
  \begin{align*}
    Z_{q}^{\mathcal{S}, \ast}(g_{\bk} \ast_{\hbar} g_{\bl})=
    Z_{q}^{\mathcal{S}, \ast}(g_{\bk})
    Z_{q}^{\mathcal{S}, \ast}(g_{\bl}).
  \end{align*}
\end{prop}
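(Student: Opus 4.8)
The plan is to reduce everything to the finite-truncation identity already established in Proposition~\ref{prop:qS-harmonic} and then pass to the limit $M\to\infty$, the only subtle point being that the two factors on the right converge individually.

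\textbf{Step 1: well-definedness of the left-hand side.} First I would note that the harmonic product of two $g$-monomials stays inside the span of $g$-monomials. Indeed $u\circ_{\hbar}v=g_{k+l}$ whenever $u=g_{k}$ and $v=g_{l}$, so induction on depth via the defining recursion for $\ast_{\hbar}$ gives $g_{\bk}\ast_{\hbar}g_{\bl}=\sum_{\bm}d_{\bk,\bl}^{\,\ast,\bm}g_{\bm}\in\mathfrak{g}$ (this is exactly the observation used in the proof of Proposition~\ref{prop:wg-harmonic}). Hence $g_{\bk}\ast_{\hbar}g_{\bl}$ lies in the domain $\mathfrak{g}$ of $Z_{q}^{\mathcal{S},\ast}$ and the left-hand side of the proposition makes sense.

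\textbf{Step 2: descend to finite $M$ and apply Proposition~\ref{prop:qS-harmonic}.} Using the definition of $Z_{q}^{\mathcal{S},\ast}$ on $\mathfrak{g}$ as the $M\to\infty$ limit of $Z_{q,M}^{\mathcal{S},\ast}$, I would write
\[
  Z_{q}^{\mathcal{S},\ast}(g_{\bk}\ast_{\hbar}g_{\bl})
  =\lim_{M\to\infty}Z_{q,M}^{\mathcal{S},\ast}(g_{\bk}\ast_{\hbar}g_{\bl})
  =\lim_{M\to\infty}Z_{q,M}^{\mathcal{S},\ast}(g_{\bk})\,Z_{q,M}^{\mathcal{S},\ast}(g_{\bl}),
\]
where the second equality is Proposition~\ref{prop:qS-harmonic} with $w=g_{\bk}$, $w'=g_{\bl}$.

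\textbf{Step 3: split the limit.} To finish, I would invoke Proposition~\ref{prop:wg-harmonic}: the elements $w_{\hbar}^{\mathcal{S},\ast}(g_{\bk})$ and $w_{\hbar}^{\mathcal{S},\ast}(g_{\bl})$ lie in $\mathfrak{H}^{0}\subset\widehat{\mathfrak{H}^{0}}$, so $Z_{q,M}^{\mathcal{S},\ast}(g_{\bk})=Z_{q,M}(w_{\hbar}^{\mathcal{S},\ast}(g_{\bk}))$ converges to $Z_{q}(w_{\hbar}^{\mathcal{S},\ast}(g_{\bk}))=Z_{q}^{\mathcal{S},\ast}(g_{\bk})$ as $M\to\infty$, and similarly for $\bl$. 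Consequently $\lim_{M}(xy)=(\lim_M x)(\lim_M y)$ applies, and the right-hand side of the display in Step 2 equals $Z_{q}^{\mathcal{S},\ast}(g_{\bk})\,Z_{q}^{\mathcal{S},\ast}(g_{\bl})$, as desired. There is essentially no serious obstacle here; the only point requiring care — and the reason one cites Proposition~\ref{prop:wg-harmonic} in addition to Proposition~\ref{prop:qS-harmonic} — is precisely this separate convergence of each factor, which is what licenses breaking the limit of the product into a product of limits.
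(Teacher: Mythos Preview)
Your proposal is correct and follows essentially the same route as the paper, which simply says the proposition is obtained from Proposition~\ref{prop:qS-harmonic}; you have merely made explicit the well-definedness check $g_{\bk}\ast_{\hbar}g_{\bl}\in\mathfrak{g}$ and the passage to the limit via Proposition~\ref{prop:wg-harmonic}, both of which the paper leaves implicit.
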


Next we consider the shuffle relation.
Note that the product $E_{\bk}\,\shp_{\hbar}\,E_{\bl}$ does not
necessarily belong to $\mathfrak{e}$ as follows.

\begin{example}\label{ex:shuffle-not-closed}
  We have
  \begin{align*}
    E_{1}\,\shp_{\hbar}\,E_{1}=\frac{1}{4}(g_{1}+e_{1})\,\shp_{\hbar}\,(g_{1}+e_{1})=
    \frac{1}{4}(g_{1}^{2}+3g_{1}e_{1}+3e_{1}g_{1}+e_{1}^{2}),
  \end{align*}
  which is not a $\mathcal{C}$-linear combination of
  $E_{11}=E_{1^{2}}=(e_{1}^{2}+2e_{1}g_{1}+2g_{1}e_{1}+g_{1}^{2})/6, E_{2}=e_{2}$
  and $E_{1}=(e_{1}+g_{1})/2$.
\end{example}

However, we have the following proposition.
We set
\begin{align*}
  \mathfrak{e}^{0}=\sum_{\bk \in I_{0}}\mathcal{C}E_{\bk}.
\end{align*}

\begin{prop}\label{prop:E-close}
  Let $\bk$ and $\bl$ be an index.
  If at least one of $\bk$ and $\bl$ is admissible,
  then $E_{\bk} \,\shp_{\hbar}\, E_{\bl}$ belongs to $\mathfrak{e}$.
  If both $\bk$ and $\bl$ are admissible,
  then $E_{\bk} \,\shp_{\hbar}\, E_{\bl}$ belongs to $\mathfrak{e}^{0}$.
\end{prop}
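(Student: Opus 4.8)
The plan is to argue by induction on $\mathrm{dep}(\bk)+\mathrm{dep}(\bl)$, translating the recursion that defines the shuffle product into the generating–function calculus for the $E$-blocks set up in Appendix \ref{subsec:gen-E}. The harmonic companion, Proposition \ref{prop:harmonic-rel-qSMZV}, is immediate because $g_\bk\ast_\hbar g_\bl$ is visibly a $\mathbb Z$-combination of $g_\bm$'s; the whole difficulty here is the merging of $a$'s in the shuffle recursion, and Lemma \ref{lem:shuffle-prod} is the only device that resolves it in closed form. I would also note at the outset that, by Proposition \ref{prop:iota-prod}, $\iota(E_\bk\,\shp_\hbar\,E_\bl)=z_\bk\,\shp\,z_\bl$ lies in $\mathfrak h^1$, and in $\mathfrak h^0$ when both indices are admissible; but this only pins down $E_\bk\,\shp_\hbar\,E_\bl$ modulo $\ker\iota$, which is not contained in $\mathfrak e$ (for instance $(e_1-g_1)g_1\in\ker\iota\setminus\mathfrak e$), so the genuine task is to control the ``$\hbar b$-part'' of the product.

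First I would package the trailing data of an $E$-block. If $\bk$ is admissible and non-empty, then $E_\bk=U\,e_{j}$ with $j\ge 2$ and $U$ again an $E$-block of depth $\mathrm{dep}(\bk)-1$; bundling over the last index gives $\sum_{j\ge 2}U e_{j}X^{j-2}=U\,e_2\tfrac{1}{1-aX}$, and since $e_2\tfrac{1}{1-aX}=\tfrac1X\bigl((1+\hbar X)g_1\tfrac1{1-aX}-g_1\bigr)$ this tail is, up to the invertible factor $1+\hbar X$ and a lower term $g_1$, of the shape $g_1\tfrac1{1-aX}$ appearing in Lemma \ref{lem:shuffle-prod}. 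If $\bk$ is not admissible, $E_\bk$ ends instead in a block $E_{1^{s}}$ with $s\ge 1$, and here I would use the identities $\tfrac1{1-R(X)g_1}=\exp_{\shp_\hbar}(g_1X)$ (equation \eqref{eq:1-RG-exp}, with $R(X)$ as in Lemma \ref{lem:comparison}) and $\sum_{m\ge0}E_{1^m}X^m=\exp_{\shp_\hbar}(g_1X)\,\shp_\hbar\,\tfrac{R(X)}{X}$, which rewrite this block through $g_1\tfrac1{1-aX}$-tails up to the central factor $R(X)\in\mathcal C[[e_1-g_1]]$.

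For the inductive step, suppose first that both $\bk$ and $\bl$ are admissible, write $E_\bk=U e_{j}$, $E_\bl=V e_{j'}$ with $j,j'\ge 2$, expand the two trailing $e_2\tfrac1{1-a\cdot}$-tails as above, and apply Lemma \ref{lem:shuffle-prod} (and its specializations at $X=0$ and $Y=0$) to each resulting pair of $g_1\tfrac1{1-a\cdot}$-tails. The output is
\[
\Bigl\{(1+\hbar X)\bigl(U g_1\tfrac{1}{1-aX}\,\shp_\hbar\,V\bigr)+(1+\hbar Y)\bigl(U\,\shp_\hbar\,V g_1\tfrac{1}{1-aY}\bigr)+(U\,\shp_\hbar\,V)(e_1-g_1)\Bigr\}\,g_1\tfrac{1}{1-a(X+Y+\hbar XY)},
\]
in which the shuffles inside the braces — re-expanded through $E$-blocks via Step 1 — involve $E$-blocks of strictly smaller total depth and are controlled by the induction hypothesis, while the outer factor $g_1\tfrac1{1-a(\cdot)}$ still carries a trailing $e_{\ge 2}$-letter; putting these together and reorganizing the $g_1$- and $(e_1-g_1)$-corrections via Proposition \ref{prop:1-RG-exp} and Lemma \ref{lem:K-shuffle1}, one finds the whole expression is again a combination of admissible $E$-blocks, so $E_\bk\,\shp_\hbar\,E_\bl\in\mathfrak e^0$. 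If only $\bk$ is admissible, write $E_\bl=V\,E_{1^{s}}$, pull the central factor $\tfrac{R(Y)}{Y}$ out of the shuffle using \eqref{eq:e1g1-central} (it merely appends powers of $e_1-g_1$ on the right), reduce the remaining $E_\bk\,\shp_\hbar\,\bigl(V\exp_{\shp_\hbar}(g_1Y)\bigr)$ to the first case, and note that re-appending the central $(e_1-g_1)$'s turns each resulting $E_\bm$ into an $E_{\bm'}$ with a longer trailing run of $1$'s, so $E_\bk\,\shp_\hbar\,E_\bl\in\mathfrak e$.

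The main obstacle is exactly this recombination: one must keep every intermediate generating function inside $\mathfrak e$, whereas a priori it only lands in $\widehat{\mathfrak H^0}$ by Corollary \ref{cor:n-ideal}. The individual $g_1$-pieces produced by Lemma \ref{lem:shuffle-prod} do not lie in $\mathfrak e$, and it is only their total contribution that does, after the identities of Step 1 are invoked — so verifying the relevant cancellations is where the real work lies. That admissibility is indispensable is shown by Example \ref{ex:shuffle-not-closed}: when neither index is admissible the two trailing $E_{1^\bullet}$-blocks shuffle into each other with nothing to halt the recursion, and the result genuinely leaves $\mathfrak e$; the argument must therefore use the barrier supplied by the trailing $e_{\ge 2}$-letter of the admissible factor and must track the $1+\hbar X$, the $e_1-g_1$ and the $R(X)$ corrections through the induction, which is precisely the content of the auxiliary results of Appendix \ref{subsec:gen-E} (Proposition \ref{prop:1-RG-exp}, Lemma \ref{lem:K-shuffle1}, Corollary \ref{cor:psi(E)}).
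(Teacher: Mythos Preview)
Your overall strategy---induction on depth via generating functions, with Lemma \ref{lem:shuffle-prod} doing the heavy lifting---is the same as the paper's, but your execution has a genuine gap in the ``recombination'' step you yourself flag as the main obstacle.

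The specific failure is the claim that ``re-appending the central $(e_{1}-g_{1})$'s turns each resulting $E_{\bm}$ into an $E_{\bm'}$ with a longer trailing run of $1$'s.'' This is false: $E_{\bm}(e_{1}-g_{1})$ does \emph{not} lie in $\mathfrak{e}$ in general. For instance $E_{1}(e_{1}-g_{1})=\tfrac{1}{2}(e_{1}^{2}-e_{1}g_{1}+g_{1}e_{1}-g_{1}^{2})$ is not a $\mathcal{C}$-combination of $E_{1^{2}}$ and $E_{2}$. So pulling out the factor $R(Y)/Y$ and reinserting it at the end does not preserve $\mathfrak{e}$. A parallel problem occurs in your admissible-admissible case: writing $g_{1}\tfrac{1}{1-aX}=\tfrac{1}{1+\hbar X}(Xe^{0}(X)+g_{1})$ leaves a bare $Ug_{1}$ that is not an $E$-block, and the terms $Ug_{1}\tfrac{1}{1-aX}\,\shp_{\hbar}\,V$ inside the braces are not individually in $\mathfrak{e}$ by induction. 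You acknowledge that ``only their total contribution'' lands in $\mathfrak{e}$, but you give no mechanism to see the cancellation; the lemmas you cite (Lemma \ref{lem:K-shuffle1} and Corollary \ref{cor:psi(E)}) concern $K_{\hbar}^{\shp}$ and $\psi^{\shp}$ and are simply not relevant here.

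What the paper does instead is introduce three auxiliary bilinear operations $\Xi(w,w')=w\,\shp_{\hbar}\,w'E_{1}+wE_{1}\,\shp_{\hbar}\,w'$, $\partial(w,w')=wg_{1}\,\shp_{\hbar}\,w'-(w\,\shp_{\hbar}\,w')g_{1}$, and $\Lambda_{Y}(w,w')=(wE(Y)\,\shp_{\hbar}\,w')E(Y)^{-1}$, and prove closed recursions for $\Xi(wE(Y_{1}),w'E(Y_{2}))$, $wE(Y)\,\shp_{\hbar}\,w'e^{0}(X)$, and $we^{0}(X_{1})\,\shp_{\hbar}\,w'e^{0}(X_{2})$ in terms of these (Propositions \ref{prop:Xi-EE}, \ref{prop:Ee0}, \ref{prop:e0e0} and Lemma \ref{lem:partial-lambda}). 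The key new object is the ``merged'' generating function
\[
E(Y_{1},Y_{2})=\frac{1}{Y_{1}Y_{2}}\sum_{j=1}^{2}Y_{j}\bigl(E(Y_{1}+Y_{2})-E(Y_{j})\bigr),
\]
whose coefficients lie in $\mathfrak{e}$; it absorbs exactly the stray $(e_{1}-g_{1})$- and $g_{1}$-corrections you are unable to recombine. The induction then simultaneously proves that $\Xi(w,w')\in\mathfrak{e}$, $w\,\shp_{\hbar}\,w'e^{0}(X)\in\mathfrak{e}$, and $we^{0}(X_{1})\,\shp_{\hbar}\,w'e^{0}(X_{2})\in\mathfrak{e}^{0}$ for $w,w'\in\mathfrak{e}$. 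Your sketch is missing the discovery of $\Xi$ and $E(Y_{1},Y_{2})$, which is where the actual cancellation lives.
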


See Appendix \ref{sec:app-E-close} for the proof.

\begin{prop}\label{prop:shuffle-rel-qSMZV}
  Let $\bk$ and $\bl$ be an index and suppose that at least one of them is admissible.
  Then it holds that
  \begin{align*}
    Z_{q}^{\mathcal{S}, \shp}(E_{\bk} \shp_{\hbar} E_{\bl})\equiv
    (-1)^{\mathrm{wt}(\bl)}
    Z_{q}^{\mathcal{S}, \shp}(E_{(\bk, \overline{\bl})}),
  \end{align*}
  modulo $(1-q)\mathcal{Z}_{q}$,
  where $(\bk, \overline{\bl})$ is the concatenation of $\bk$ and $\overline{\bl}$.
\end{prop}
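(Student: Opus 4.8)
The plan is to transfer the truncated shuffle relation of Proposition~\ref{prop:shuffle-rel-q-truncated} to the limit $M\to\infty$; the essential point is that the admissibility hypothesis is exactly what forces the only surviving correction term to lie in $\hbar\,\mathfrak{e}$, after which the argument is a verbatim copy of the end of the proof of the reversal relation. I would begin by reducing to the case in which $\bk$ itself is admissible. If instead only $\bl$ is admissible, then $\shp_{\hbar}$ being commutative and $E_{\bl}\shp_{\hbar}E_{\bk}\in\mathfrak{e}$ (Proposition~\ref{prop:E-close}) give $Z_{q}^{\mathcal{S},\shp}(E_{\bk}\shp_{\hbar}E_{\bl})=Z_{q}^{\mathcal{S},\shp}(E_{\bl}\shp_{\hbar}E_{\bk})$; applying the admissible case with $\bk$ and $\bl$ interchanged yields $Z_{q}^{\mathcal{S},\shp}(E_{\bl}\shp_{\hbar}E_{\bk})\equiv(-1)^{\mathrm{wt}(\bk)}Z_{q}^{\mathcal{S},\shp}(E_{(\bl,\overline{\bk})})$ modulo $(1-q)\mathcal{Z}_{q}$, and then the reversal relation together with $\overline{(\bk,\overline{\bl})}=(\bl,\overline{\bk})$ and $\mathrm{wt}((\bk,\overline{\bl}))=\mathrm{wt}(\bk)+\mathrm{wt}(\bl)$ rewrites the right-hand side as $(-1)^{\mathrm{wt}(\bl)}Z_{q}^{\mathcal{S},\shp}(E_{(\bk,\overline{\bl})})$.

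So assume $\bk$ admissible. The first step is a purely algebraic observation: for an admissible index $\bk$ the canonical form \eqref{eq:index-form-E} of $\bk$ has no trailing $1$'s, so $\bn\mapsto E_{\bn}$ is a concatenation homomorphism at $\bk$, i.e.\ $E_{\bk}E_{\bn}=E_{(\bk,\bn)}$ for every index $\bn$; in particular $E_{\bk}E_{\overline{\bl}}=E_{(\bk,\overline{\bl})}$ and $E_{\bk}\,w\in\mathfrak{e}$ for every $w\in\mathfrak{e}$. Now specialize Proposition~\ref{prop:shuffle-rel-q-truncated} to $E_{\bk},E_{\bl}$, obtaining $Z_{q,M}^{\mathcal{S},\shp}(E_{\bk}\shp_{\hbar}E_{\bl})=Z_{q,M}^{\mathcal{S},\shp}(E_{\bk}\,\psi^{\shp}(E_{\bl}))$ for all $M$. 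By Corollary~\ref{cor:psi(E)} I may write $\psi^{\shp}(E_{\bl})=(-1)^{\mathrm{wt}(\bl)}E_{\overline{\bl}}+\hbar\eta$ with $\eta\in\mathfrak{e}$, and then the observation gives
\begin{align*}
  E_{\bk}\,\psi^{\shp}(E_{\bl})=(-1)^{\mathrm{wt}(\bl)}E_{(\bk,\overline{\bl})}+\hbar\,E_{\bk}\eta,\qquad E_{\bk}\eta\in\mathfrak{e}.
\end{align*}

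Finally I would let $M\to\infty$. The three elements $E_{\bk}\shp_{\hbar}E_{\bl}$, $E_{(\bk,\overline{\bl})}$ and $E_{\bk}\eta$ all lie in $\mathfrak{e}$ (the first by Proposition~\ref{prop:E-close}, the others by the observation), so by Proposition~\ref{prop:wE-shuffle} and $\mathcal{C}$-linearity their images under $w_{\hbar}^{\mathcal{S},\shp}$ lie in $\mathfrak{H}^{0}$, and $\lim_{M}Z_{q,M}^{\mathcal{S},\shp}$ converges on each of them to the corresponding value of $Z_{q}^{\mathcal{S},\shp}$. Passing to the limit in the displayed identity and using $Z_{q,M}(\hbar\,\cdot)=(1-q)Z_{q,M}(\cdot)$ gives
\begin{align*}
  Z_{q}^{\mathcal{S},\shp}(E_{\bk}\shp_{\hbar}E_{\bl})=(-1)^{\mathrm{wt}(\bl)}Z_{q}^{\mathcal{S},\shp}(E_{(\bk,\overline{\bl})})+(1-q)\,Z_{q}^{\mathcal{S},\shp}(E_{\bk}\eta),
\end{align*}
and $(1-q)Z_{q}^{\mathcal{S},\shp}(E_{\bk}\eta)=Z_{q}(\hbar\,w_{\hbar}^{\mathcal{S},\shp}(E_{\bk}\eta))\in Z_{q}(\hbar\,\mathfrak{H}^{0})\subset(1-q)\mathcal{Z}_{q}$, exactly as in the reversal-relation proof. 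This is the asserted congruence.

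The step that requires the most care is the concatenation-homomorphism observation together with the case reduction: when $\bk$ is inadmissible the equality $E_{\bk}E_{\bn}=E_{(\bk,\bn)}$ genuinely fails — already $E_{1}E_{1}=E_{1^{1}}E_{1^{1}}\neq E_{1^{2}}=E_{11}$, the obstruction being an $E_{1^{a}}E_{1^{b}}$ versus $E_{1^{a+b}}$ discrepancy coming from the trailing $1$'s of $\bk$ (a phenomenon of the type behind Example~\ref{ex:shuffle-not-closed}) — so the admissibility of $\bk$ cannot be dropped and the inadmissible-$\bk$ case must be routed through commutativity of $\shp_{\hbar}$ and the reversal relation as in the first paragraph; one should also check there that $E_{(\bk,\overline{\bl})}$, $E_{(\bl,\overline{\bk})}$ are honest elements of $\mathfrak{e}$ so that $Z_{q}^{\mathcal{S},\shp}$ and the reversal relation apply to them.
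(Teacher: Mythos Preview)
Your proof is correct and follows essentially the same line as the paper's: invoke Proposition~\ref{prop:shuffle-rel-q-truncated} at the truncated level, use Corollary~\ref{cor:psi(E)} to replace $E_{\bk}\psi^{\shp}(E_{\bl})$ by $(-1)^{\mathrm{wt}(\bl)}E_{(\bk,\overline{\bl})}$ modulo $\hbar\,\mathfrak{e}$, and pass to the limit via Proposition~\ref{prop:wE-shuffle}. The one minor difference is that the paper handles both cases ($\bk$ admissible or $\bl$ admissible) in a single stroke by invoking the ``Moreover'' clause of Corollary~\ref{cor:psi(E)}: when $\bl$ is admissible the correction $\psi^{\shp}(E_{\bl})-(-1)^{\mathrm{wt}(\bl)}E_{\overline{\bl}}$ lies in $\hbar\sum_{j\ge 2}e_{j}\mathfrak{e}$, and left-multiplying by $E_{\bk}$ (which ends in some $E_{1^{s}}$) then automatically lands in $\hbar\,\mathfrak{e}$ for exactly the same concatenation reason you use when $\bk$ is admissible; your detour through commutativity of $\shp_{\hbar}$ and the reversal relation is a valid but slightly longer alternative to this direct argument.
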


\begin{proof}
  Proposition \ref{prop:shuffle-rel-q-truncated} implies that
  $Z_{q}^{\mathcal{S}, \shp}(E_{\bk}\,\shp_{\hbar}\,E_{\bl})=
    Z_{q}^{\mathcal{S}, \shp}(E_{\bk}\psi^{\shp}(E_{\bl}))$.
    {}From Corollary \ref{cor:psi(E)}, we see that
  $E_{\bk}\psi^{\shp}(E_{\bl})\equiv(-1)^{\mathrm{wt}(\bl)}E_{(\bk, \overline{\bl})}$
  modulo $\hbar \, \mathfrak{e}$ if $\bk$ or $\bl$ is admissible.
\end{proof}

{}From Proposition \ref{prop:harmonic-rel-qSMZV} and
Proposition \ref{prop:shuffle-rel-qSMZV},
we obtain the double shuffle relation of $q$SMZVs as follows.

\begin{thm}\label{thm:DS-qSMZV}
  Let $d_{\bk, \bl}^{\, \bullet, \bm}$ be
  the non-negative integer defined by \eqref{eq:def-d}.
  \begin{enumerate}
    \item     For any index $\bk$ and $\bl$, it holds that
          \begin{align*}
            \zeta_{q}^{\mathcal{S}}(\bk)\zeta_{q}^{\mathcal{S}}(\bl)=
            \sum_{\bm}d_{\bk,\bl}^{\, \ast, \bm}\zeta_{q}^{\mathcal{S}}(\bm).
          \end{align*}
    \item Moreover, if at least one of $\bk$ and $\bl$ is admissible,
          it holds that
          \begin{align*}
            (-1)^{\mathrm{wt}(\bl)}
            \zeta_{q}^{\mathcal{S}}(\bk, \overline{\bl})=
            \sum_{\bm}d_{\bk,\bl}^{\, \shp, \bm}\zeta_{q}^{\mathcal{S}}(\bm).
          \end{align*}
  \end{enumerate}
\end{thm}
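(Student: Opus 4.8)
The plan is to deduce Theorem \ref{thm:DS-qSMZV} from the two preceding propositions exactly as the double shuffle relation of SMZVs (Theorem \ref{thm:DS-SMZV}) is deduced from Theorem \ref{thm:OSY}. The only work is translating the statements about $Z_q^{\mathcal{S},\bullet}$ into statements about $\zeta_q^{\mathcal{S}}$, i.e.\ working modulo $\mathcal{N}_q+\mathcal{P}_q$ in the quotient $\mathcal{Z}_q/(\mathcal{N}_q+\mathcal{P}_q)$, and checking that the products $g_{\bk}\ast_\hbar g_{\bl}$ and $E_{\bk}\shp_\hbar E_{\bl}$ expand in the bases $\{g_{\bm}\}$ and $\{E_{\bm}\}$ with coefficients $d_{\bk,\bl}^{\ast,\bm}$ and $d_{\bk,\bl}^{\shp,\bm}$ respectively.

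\emph{Part (i).} First I would recall from the proof of Proposition \ref{prop:wg-harmonic} that $g_{\bk}\ast_\hbar g_{\bl}=\sum_{\bm}d_{\bk,\bl}^{\ast,\bm}g_{\bm}$, since the harmonic product $\ast_\hbar$ restricted to the $g_k$'s coincides combinatorially with the harmonic product $\ast$ on the $z_k$'s (the term $g_k\circ_\hbar g_l=g_{k+l}$ matches $z_k\ast z_l$'s stuffle term). Applying $Z_q^{\mathcal{S},\ast}$, which is $\mathcal{C}$-linear on $\mathfrak{g}$, Proposition \ref{prop:harmonic-rel-qSMZV} gives
\begin{align*}
Z_q^{\mathcal{S},\ast}(g_{\bk})Z_q^{\mathcal{S},\ast}(g_{\bl})
=Z_q^{\mathcal{S},\ast}(g_{\bk}\ast_\hbar g_{\bl})
=\sum_{\bm}d_{\bk,\bl}^{\ast,\bm}Z_q^{\mathcal{S},\ast}(g_{\bm}).
\end{align*}
Reducing modulo $\mathcal{N}_q+\mathcal{P}_q$ and using $\zeta_q^{\mathcal{S}}(\bm)=Z_q^{\mathcal{S},\ast}(g_{\bm})\bmod(\mathcal{N}_q+\mathcal{P}_q)$ yields (i).

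\emph{Part (ii).} Under the admissibility hypothesis, Proposition \ref{prop:E-close} guarantees $E_{\bk}\shp_\hbar E_{\bl}\in\mathfrak{e}$, so $Z_q^{\mathcal{S},\shp}$ is defined on it. I would first check the identity $E_{\bk}\shp_\hbar E_{\bl}\equiv\sum_{\bm}d_{\bk,\bl}^{\shp,\bm}E_{\bm}$ modulo $\hbar\,\mathfrak{e}$: applying $\iota$ and using $\iota(E_{\bm})=z_{\bm}$ (established in the proof of Theorem \ref{thm:limit-qSMZV}) together with $\iota(E_{\bk}\shp_\hbar E_{\bl})=\iota(E_{\bk})\shp\iota(E_{\bl})=z_{\bk}\shp z_{\bl}=\sum_{\bm}d_{\bk,\bl}^{\shp,\bm}z_{\bm}$ from Proposition \ref{prop:iota-prod} and \eqref{eq:def-d} pins down the expansion up to the kernel of $\iota$, which on $\mathfrak{e}$ is $\hbar\,\mathfrak{e}$. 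Then Proposition \ref{prop:shuffle-rel-qSMZV} gives
\begin{align*}
(-1)^{\mathrm{wt}(\bl)}Z_q^{\mathcal{S},\shp}(E_{(\bk,\overline{\bl})})
\equiv Z_q^{\mathcal{S},\shp}(E_{\bk}\shp_\hbar E_{\bl})
\equiv\sum_{\bm}d_{\bk,\bl}^{\shp,\bm}Z_q^{\mathcal{S},\shp}(E_{\bm})
\end{align*}
modulo $(1-q)\mathcal{Z}_q$; since $(1-q)\mathcal{Z}_q\subset\mathcal{N}_q$, reducing modulo $\mathcal{N}_q+\mathcal{P}_q$ and invoking $\zeta_q^{\mathcal{S}}(\bk,\overline{\bl})=Z_q^{\mathcal{S},\shp}(E_{(\bk,\overline{\bl})})$ and $\zeta_q^{\mathcal{S}}(\bm)=Z_q^{\mathcal{S},\shp}(E_{\bm})$ in the quotient finishes (ii).

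\textbf{Main obstacle.} The delicate point is \emph{not} the algebra but the bookkeeping of where each relation lives: $Z_q^{\mathcal{S},\shp}$ is only defined on $\mathfrak{e}$, so I must invoke Proposition \ref{prop:E-close} to know $E_{\bk}\shp_\hbar E_{\bl}$ lands there (this is why admissibility of one argument is required, and why part (ii) has a hypothesis absent in part (i)), and I must carry the error terms $\hbar\,\mathfrak{e}$ and $(1-q)\mathcal{Z}_q$ correctly down into $\mathcal{N}_q+\mathcal{P}_q$. I expect verifying $E_{\bk}\shp_\hbar E_{\bl}\equiv\sum_{\bm}d_{\bk,\bl}^{\shp,\bm}E_{\bm}\bmod\hbar\,\mathfrak{e}$ via $\iota$ to be the step needing the most care, since it requires knowing $\ker(\iota|_{\mathfrak{e}})=\hbar\,\mathfrak{e}$, which follows from $\iota(E_{\bk})=z_{\bk}$ and the freeness of $\{z_{\bk}\}$ in $\mathfrak{h}^1$.
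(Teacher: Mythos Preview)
Your proposal is correct and follows essentially the same route as the paper's proof: part (i) from $g_{\bk}\ast_\hbar g_{\bl}=\sum_{\bm}d_{\bk,\bl}^{\ast,\bm}g_{\bm}$ together with Proposition \ref{prop:harmonic-rel-qSMZV}, and part (ii) by writing $E_{\bk}\shp_\hbar E_{\bl}=\sum_{\bm}c_{\bk,\bl}^{\bm}(\hbar)E_{\bm}$ via Proposition \ref{prop:E-close}, applying $\iota$ (using $\iota(E_{\bm})=z_{\bm}$ and Proposition \ref{prop:iota-prod}) to identify $c_{\bk,\bl}^{\bm}(0)=d_{\bk,\bl}^{\shp,\bm}$, and then invoking Proposition \ref{prop:shuffle-rel-qSMZV} together with $(1-q)\mathcal{Z}_q\subset\mathcal{N}_q$. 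Your identification of $\ker(\iota|_{\mathfrak{e}})=\hbar\,\mathfrak{e}$ is exactly what underlies the paper's passage from $c_{\bk,\bl}^{\bm}(0)=d_{\bk,\bl}^{\shp,\bm}$ to $E_{\bk}\shp_\hbar E_{\bl}-\sum_{\bm}d_{\bk,\bl}^{\shp,\bm}E_{\bm}\in\hbar\,\mathfrak{e}$.
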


\begin{proof}
  {}From the definition of the harmonic product $\ast_{\hbar}$,
  we see that $g_{\bk}\ast_{\hbar}g_{\bl}=\sum_{\bm}d_{\bk,\bl}^{\, \ast, \bm}g_{\bm}$.
  Hence (i) is true because of Proposition \ref{prop:harmonic-rel-qSMZV}.

  We prove (ii) by using the map $\iota$ defined by \eqref{eq:def-iota}.
  Proposition \ref{prop:E-close} implies that there exists
  $c_{\bk, \bl}^{\bm}(\hbar) \in \mathbb{Q}[\hbar]$ such that
  $E_{\bk}\,\shp_{\hbar}\,E_{\bl}=\sum_{\bm}c_{\bk,\bl}^{\bm}(\hbar)E_{\bm}$.
  As shown in the proof of Theorem \ref{thm:limit-qSMZV},
  we have $\iota(E_{\bk})=z_{\bk}$ for any index $\bk$.
  Hence we see that
  \begin{align*}
    \iota(E_{\bk}\,\shp_{\hbar}\,E_{\bl})=\sum_{\bm}c_{\bk,\bl}^{\bm}(0)z_{\bm}.
  \end{align*}
  On the other hand, Proposition \ref{prop:iota-prod} implies that
  \begin{align*}
    \iota(E_{\bk}\,\shp_{\hbar}\,E_{\bl})=z_{\bk}\,\shp\,z_{\bl}=
    \sum_{\bm}d_{\bk,\bl}^{\,\shp,\bm}z_{\bm}.
  \end{align*}
  Hence $c_{\bk,\bl}^{\bm}(0)=d_{\bk,\bl}^{\,\shp,\bm}$ and
  \begin{align*}
    E_{\bk}\,\shp_{\hbar}\,E_{\bl}-\sum_{\bm}d_{\bk,\bl}^{\,\shp,\bm}E_{\bm} \in
    \hbar \, \mathfrak{e}.
  \end{align*}
  Therefore we see that (ii) is true from Proposition \ref{prop:shuffle-rel-qSMZV}
  and $(1-q)\mathcal{Z}_{q} \subset \mathcal{N}_{q}$.
\end{proof}

\subsection{Ohno-type relation}

{}From Theorem \ref{thm:DS-qSMZV}, we see that
the $q$SMZVs satisfy a part of the Ohno-type relation.

\begin{thm}
  Let $\bk$ be a non-empty admissible index.
  Then
  it holds that
  \begin{align}
    \sum_{\substack{\be \in (\mathbb{Z}_{\ge 0})^{r} \\ \mathrm{wt}(\be)=m}}
    \zeta_{q}^{\mathcal{S}}(\bk+\be)=
    \sum_{\substack{\be \in (\mathbb{Z}_{\ge 0})^{s} \\ \mathrm{wt}(\be)=m}}
    \zeta_{q}^{\mathcal{S}}((\bk^{\vee}+\be)^{\vee})
    \label{eq:Ohno-qMSZV}
  \end{align}
  for any $m \ge 1$, where $r=\mathrm{dep}(\bk)$ and $s=\mathrm{dep}(\bk^{\vee})$.
\end{thm}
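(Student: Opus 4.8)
The plan is to adapt Oyama's proof of the Ohno-type relation for SMZVs \cite{Oyama} to the present setting. Oyama established that relation as a purely formal consequence of two facts about the $\mathbb{Q}$-algebra $\mathcal{Z}/\zeta(2)\mathcal{Z}$: the double shuffle relation of SMZVs (Theorem \ref{thm:DS-SMZV}) and the vanishing identity $\zeta^{\mathcal{S}}(k,\dots ,k)=0$ for every $k\ge 1$. We have at our disposal the $q$-analogue of both inputs, namely Theorem \ref{thm:DS-qSMZV} and the identity \eqref{eq:111-zero}. Moreover, since $(1-q)\mathcal{Z}_{q}\subset\mathcal{N}_{q}$, the quotient $\mathcal{Z}_{q}/(\mathcal{N}_{q}+\mathcal{P}_{q})$ is a commutative $\mathbb{Q}$-algebra in which $\zeta_{q}^{\mathcal{S}}$ satisfies formally the same relations as $\zeta^{\mathcal{S}}$ does in $\mathcal{Z}/\zeta(2)\mathcal{Z}$. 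So the strategy is to reproduce Oyama's derivation verbatim inside $\mathcal{Z}_{q}/(\mathcal{N}_{q}+\mathcal{P}_{q})$, replacing each occurrence of $\zeta^{\mathcal{S}}$ by $\zeta_{q}^{\mathcal{S}}$.

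Concretely, I would first recast the two inputs in the algebraic framework of \cite{Oyama}, so that the harmonic relation reads $\zeta_{q}^{\mathcal{S}}(\bk)\zeta_{q}^{\mathcal{S}}(\bl)=\sum_{\bm}d_{\bk,\bl}^{\,\ast,\bm}\zeta_{q}^{\mathcal{S}}(\bm)$, the shuffle relation reads $(-1)^{\mathrm{wt}(\bl)}\zeta_{q}^{\mathcal{S}}(\bk,\overline{\bl})=\sum_{\bm}d_{\bk,\bl}^{\,\shp,\bm}\zeta_{q}^{\mathcal{S}}(\bm)$, and $\zeta_{q}^{\mathcal{S}}(k,\dots ,k)=0$. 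Then I would run Oyama's chain of manipulations, which combines these ingredients to pass from $\sum_{\mathrm{wt}(\be)=m}\zeta_{q}^{\mathcal{S}}(\bk+\be)$ to $\sum_{\mathrm{wt}(\be)=m}\zeta_{q}^{\mathcal{S}}((\bk^{\vee}+\be)^{\vee})$, leaving the argument structurally unchanged. The case $m=0$ of \eqref{eq:Ohno-qMSZV} is trivial since $(\bk^{\vee})^{\vee}=\bk$, so only $m\ge 1$ carries content.

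The one genuine point of departure from Oyama's argument, and hence the main obstacle, is that the shuffle half of the double shuffle relation is available here only in the conditional form of Theorem \ref{thm:DS-qSMZV}(ii): at least one of the two shuffled indices must be admissible, whereas Theorem \ref{thm:DS-SMZV} has no such restriction. So the real work is to audit every application of the shuffle relation in Oyama's proof and verify that, in each instance, one of the two indices being shuffled is admissible. This is exactly why the hypothesis is that $\bk$ be a non-empty admissible index: in Oyama's manipulations the index that gets shuffled against another inherits a block ending in an entry $\ge 2$ from $\bk$ (or from $\bk^{\vee}$), hence remains admissible. Should some step momentarily require shuffling two non-admissible indices, the fallback is to supply that special case directly from \eqref{eq:111-zero} and the unconditional harmonic relation, in the spirit of the depth-two computation in Section \ref{sec:qSMZV}; I expect, however, that no such step arises. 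A final, routine check is that the identities used, which a priori are Oyama's in $\mathcal{Z}/\zeta(2)\mathcal{Z}$, descend to $\mathcal{Z}_{q}/(\mathcal{N}_{q}+\mathcal{P}_{q})$; this is immediate since $\mathcal{N}_{q}+\mathcal{P}_{q}$ is an ideal and every $q$SMZV identity we invoke holds exactly modulo it.
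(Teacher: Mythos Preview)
Your proposal is correct and follows essentially the same approach as the paper. The paper likewise feeds Oyama's argument through the map $\eta(z_{\bk})=\zeta_q^{\mathcal{S}}(\bk)$, noting that the only shuffle relations required are of the form $A_{\bk,s,p}\,\shp\,z_1^{m}$, where $A_{\bk,s,p}$ is a sum of admissible monomials whenever $\bk$ is admissible, so the conditional Theorem~\ref{thm:DS-qSMZV}(ii) suffices throughout and no fallback is needed.
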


\begin{proof}
  We define the $\mathbb{Q}$-linear map
  $\eta: \mathfrak{h}^{1} \rightarrow \mathcal{Z}_{q}/(\mathcal{N}_{q}+\mathcal{P}_{q})$ by
  $\eta(z_{\bk})=\zeta^{\mathcal{S}}_{q}(\bk)$ for an index $\bk$.
  Theorem \ref{thm:DS-qSMZV} and \eqref{eq:111-zero} imply the following properties:
  \begin{enumerate}
    \item For any index $\bk$ and $n\ge 1$, it holds that
          $\eta(z_{\bk}\ast z_{1}^{n})=0$.
    \item For any admissible index $\bk$ and $n \ge 0$, it holds that
          $\eta(z_{\bk}\,\shp\,z_{1}^{n})=(-1)^{n} \eta(z_{\bk}z_{1}^{n})$.
  \end{enumerate}

  For a non-empty index $\bk=(k_{1}, \ldots , k_{r})$ and $s\ge 0$,
  we define the element $a_{s}(\bk)$ of $\mathfrak{h}^{1}$ by
  \begin{align*}
    a_{s}(\bk)=\sum \prod_{1\le i \le r}^{\curvearrowright}
    \left( y \prod_{1\le l \le k_{i}}^{\curvearrowright}(y^{j_{l}^{(i)}} x) \right),
  \end{align*}
  where the sum is over the set
  \begin{align*}
    \left\{ (j_{l}^{(i)})_{\substack{1\le i \le r \\ 1\le l \le k_{i}}} \in
    (\mathbb{Z}_{\ge 0})^{\mathrm{wt}(\bk)} \mid
    \sum_{i=1}^{r}\sum_{l=1}^{k_{i}}j_{l}^{(i)}=s \right\}.
  \end{align*}
  We also set
  \begin{align*}
    A_{\bk, s, p}=\sum_{\substack{\lambda_{1}, \ldots , \lambda_{r} \in \{0, 1\} \\
        \lambda_{1}+\cdots +\lambda_{r}=p}}
    a_{s}(k_{1}+\lambda_{1}-1, \ldots , k_{r}+\lambda_{r}-1)
  \end{align*}
  for $s, p \ge 0$.
  Then we have
  \begin{align*}
    \sum_{p=0}^{\min(n, r)}
    \sum_{\substack{m+s=n-p \\ m, s\ge 0}}(-1)^{s}A_{\bk, s, p}\,\shp\,z_{1}^{m}=
    z_{\bk}\ast z_{1}^{n}
  \end{align*}
  for any $n \ge 1$  (see equation (3) in \cite{Oyama}).

  Now suppose that $\bk$ is admissible.
  Then $A_{\bk, s, p}$ belongs to $\sum_{\bl \in I_{0}}\mathbb{Q}z_{\bl}$
  for any $s, p \ge 0$.
  Therefore we can use the properties (i) and (ii), and see that
  \begin{align*}
    \sum_{p=0}^{\min(n, r)}(-1)^{p}
    \sum_{\substack{m+s=n-p \\ m, s\ge 0}}\eta(A_{\bk, s, p}z_{1}^{m})=0
  \end{align*}
  for any $n \ge 1$.
    {}From the above identity, we obtain \eqref{eq:Ohno-qMSZV} in the same way
  as the proof of Theorem 2.5 in \cite{Oyama}.
\end{proof}


\appendix

\section{Proof of Proposition \ref{prop:limit-of-qMZV}}\label{sec:app-limit}

\begin{lem}\label{lem:app-non-decreasing}
  Suppose that $\alpha\ge 0$ and $\max{(1, \alpha)}\le \beta \le 2\alpha+1$.
  Then the function $f(x)=x^{\alpha}(1-x)/(1-x^{\beta})$ is non-decreasing
  on the interval $[0, 1)$.
\end{lem}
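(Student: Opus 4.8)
The plan is to reduce the monotonicity to a single pointwise inequality and then to prove that inequality by showing an auxiliary function is convex. Note first that $f$ is continuous on $[0,1)$ and differentiable on $(0,1)$, with $f>0$ on $(0,1)$ and $f(0)\ge 0$ (when $\alpha=0$ the hypotheses force $\beta=1$ and $f\equiv 1$, so that case is trivial); hence it suffices to show $f'(x)\ge 0$ on $(0,1)$, since continuity at $0$ then gives $f(0)\le f(y)$ for all $y\in(0,1)$ and the claim follows. Taking the logarithmic derivative,
\[
  (\log f)'(x)=\frac{\alpha}{x}-\frac{1}{1-x}+\frac{\beta x^{\beta-1}}{1-x^{\beta}},
\]
and multiplying by $x(1-x)(1-x^{\beta})>0$, the inequality $f'(x)\ge 0$ becomes $g(x)\ge 0$, where
\[
  g(x)=\alpha(1-x)(1-x^{\beta})+\beta x^{\beta}(1-x)-x(1-x^{\beta})
      =\alpha-(\alpha+1)x+(\beta-\alpha)x^{\beta}+(\alpha+1-\beta)x^{\beta+1}.
\]

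First I would record the boundary behaviour: $g(0)=\alpha\ge 0$, and from
$g'(x)=-(\alpha+1)+\beta(\beta-\alpha)x^{\beta-1}+(\beta+1)(\alpha+1-\beta)x^{\beta}$
one checks directly that $g(1)=g'(1)=0$. The heart of the argument is to show that $g$ is convex on $(0,1)$. Differentiating once more,
\[
  g''(x)=\beta\,x^{\beta-2}\bigl[(\beta-1)(\beta-\alpha)+(\beta+1)(\alpha+1-\beta)\,x\bigr],
\]
and the bracketed factor is affine in $x$, taking the value $(\beta-1)(\beta-\alpha)$ at $x=0$ and the value $2\alpha+1-\beta$ at $x=1$. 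The hypotheses $\beta\ge\max(1,\alpha)$ and $\beta\le 2\alpha+1$ say exactly that both of these values are nonnegative; an affine function nonnegative at the two endpoints of $[0,1]$ is nonnegative throughout $[0,1]$, and $\beta x^{\beta-2}>0$ on $(0,1)$, so $g''\ge 0$ on $(0,1)$.

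Finally, convexity of $g$ on $(0,1)$ makes $g'$ nondecreasing there; since $g'$ is continuous at $x=1$ with $g'(1)=0$, this forces $g'\le 0$ on $(0,1)$, hence $g$ is nonincreasing and $g(x)\ge g(1)=0$ for $x\in(0,1)$, while $g(0)=\alpha\ge 0$. Thus $g\ge 0$ on $[0,1)$, so $f'\ge 0$ on $(0,1)$, and $f$ is non-decreasing on $[0,1)$. I expect the only genuine obstacle to be the convexity step — namely, observing that $g''$ factors through an affine expression whose two endpoint values are precisely the quantities $(\beta-1)(\beta-\alpha)$ and $2\alpha+1-\beta$ that the hypotheses render nonnegative; everything else is routine differentiation and sign bookkeeping.
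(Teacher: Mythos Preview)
Your proof is correct and follows essentially the same route as the paper: both reduce to showing the auxiliary function $g(x)=\alpha-(\alpha+1)x+(\beta-\alpha)x^{\beta}+(\alpha+1-\beta)x^{\beta+1}$ is nonnegative on $[0,1)$ via $g(1)=0$ and $g'\le 0$ on $(0,1)$. The paper obtains $g'<0$ by showing $h(x)=x^{-\beta}g'(x)$ is increasing with $h(1)=0$, whereas you instead show $g''\ge 0$ directly; this is only a cosmetic difference, and your observation that $g''/(\beta x^{\beta-2})$ is affine with endpoint values $(\beta-1)(\beta-\alpha)$ and $2\alpha+1-\beta$ makes the role of the hypotheses especially transparent.
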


\begin{proof}
  If $\beta=1$, the statement is trivial.
  We assume that $\beta>1$. From the assumption we see that $\alpha\ge (\beta-1)/2>0$.
  Set
  \begin{align*}
    g(x)=x^{1-\alpha}(1-x^{\beta})^{2}f'(x)=
    \alpha-(\alpha+1)x+(\beta-\alpha)x^{\beta}+(\alpha-\beta+1)x^{\beta+1}.
  \end{align*}
  Since $g(0)=\alpha>0$ and $g(1)=0$, it suffices to show that
  $g'(x)<0$ on the interval $(0, 1)$.
  Set
  \begin{align*}
    h(x)=x^{-\beta}g'(x)=-(\alpha+1)x^{-\beta}+\beta(\beta-\alpha)x^{-1}+
    (\alpha-\beta+1)(\beta+1).
  \end{align*}
  Then we see that $h(1)=0$ and, from the assumption,
  \begin{align*}
    h'(x)=\beta x^{-\beta-1}(\alpha+1-(\beta-\alpha)x^{\beta-1})>
    \beta x^{-\beta-1}(2\alpha-\beta+1)\ge 0
  \end{align*}
  for $0<x<1$.
  Therefore we see that $h(x)<0$, and hence $g'(x)<0$, on the interval $(0, 1)$.
\end{proof}

\begin{cor}\label{cor:app-non-decreasing}
  Under the assumption of Lemma \ref{lem:app-non-decreasing},
  it holds that
  \begin{align*}
    x^{\alpha}\frac{1-x}{1-x^{\beta}} \le \frac{1}{\beta}
  \end{align*}
  for $0<x<1$.
\end{cor}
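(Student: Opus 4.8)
The plan is to deduce the bound immediately from the monotonicity established in Lemma~\ref{lem:app-non-decreasing}. Write $f(x)=x^{\alpha}(1-x)/(1-x^{\beta})$ for $x\in[0,1)$. By Lemma~\ref{lem:app-non-decreasing}, $f$ is non-decreasing on $[0,1)$, so for every $x\in(0,1)$ we have $f(x)\le\sup_{t\in[0,1)}f(t)=\lim_{t\to 1-0}f(t)$, the last equality being the standard fact that a non-decreasing function on $[0,1)$ attains its supremum as a one-sided limit at the right endpoint.

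It then remains to evaluate this limit. Since $\lim_{t\to 1-0}t^{\alpha}=1$, it suffices to compute $\lim_{t\to 1-0}(1-t)/(1-t^{\beta})$. For real $\beta>0$ this equals $1/\beta$; one sees this either by L'Hôpital's rule (the ratio of derivatives is $1/(\beta t^{\beta-1})\to 1/\beta$) or, after the substitution $t=e^{-u}$ with $u\to 0+$, from $(1-e^{-u})/(1-e^{-\beta u})\to 1/\beta$. Hence $\lim_{t\to 1-0}f(t)=1/\beta$, and combining with the previous paragraph yields $f(x)\le 1/\beta$ for all $x\in(0,1)$, which is the assertion.

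Since the statement is a direct consequence of the lemma, there is essentially no obstacle here; the only point worth a word of care is the evaluation of $\lim_{t\to 1-0}(1-t)/(1-t^{\beta})$ when $\beta$ is not an integer, which is handled as above.
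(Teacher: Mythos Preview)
Your proof is correct and follows exactly the same approach as the paper: the paper's proof is the single line ``It is because $f(x)\to 1/\beta$ as $x\to 1-0$,'' and you have merely spelled out this limit computation in more detail.
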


\begin{proof}
  It is because $f(x) \to 1/\beta$ as $x \to 1-0$.
\end{proof}

\begin{prop}\label{prop:estimateB-1}
  Suppose that $0<q<1$.
  Set
  \begin{align*}
    B_{q}(\alpha_{1}, \ldots , \alpha_{r}; \beta_{1}, \ldots , \beta_{r})=
    \sum_{l_{1}, \ldots , l_{r}\ge 1}
    \prod_{j=1}^{r}\binom{l_{j}}{\alpha_{j}}
    \frac{(1-q)^{\alpha_{j}} q^{l_{j}/2}}{(l_{1}+\cdots +l_{j})^{\beta_{j}+1}}.
  \end{align*}
  Then, for any non-negative integer
  $\alpha_{1}, \ldots , \alpha_{r}, \beta_{1}, \ldots , \beta_{r}$, it holds that
  \begin{align*}
    0\le Z_{q}((e_{1}-g_{1})^{\alpha_{1}}g_{\beta_{1}+1} \cdots
    (e_{1}-g_{1})^{\alpha_{r}}g_{\beta_{r}+1}) \le
    B_{q}(\alpha_{1}, \ldots , \alpha_{r}; \beta_{1}, \ldots , \beta_{r}).
  \end{align*}
\end{prop}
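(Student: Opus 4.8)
The plan is to evaluate $Z_q$ of the monomial in closed form and then bound it summand by summand. Write $w=(e_1-g_1)^{\alpha_1}g_{\beta_1+1}\cdots(e_1-g_1)^{\alpha_r}g_{\beta_r+1}$; this is an element of $\widehat{\mathfrak{H}^{0}}$ of the form \eqref{eq:basis-monomial}, so $Z_q(w)=\lim_{M\to\infty}Z_{q,M}(w)$ is well defined. Unfolding the definition of $Z_{q,M}$, I would organise the summation variables according to the positions of the $g$-letters. If $n_1<\cdots<n_r$ denote the values assigned to $g_{\beta_1+1},\ldots,g_{\beta_r+1}$, then the block of $\alpha_j$ letters $e_1-g_1$ occurring between $g_{\beta_{j-1}+1}$ and $g_{\beta_j+1}$ must receive $\alpha_j$ distinct integers strictly between $n_{j-1}$ and $n_j$ (with the convention $n_0=0$), which can be done in $\binom{n_j-n_{j-1}-1}{\alpha_j}$ ways, each contributing a factor $(1-q)^{\alpha_j}$ by \eqref{eq:def-Iq1}. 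Summing over the positions of the $g$-letters gives
\begin{align*}
  Z_q(w)=\sum_{0<n_1<\cdots<n_r}\prod_{j=1}^{r}
  \binom{n_j-n_{j-1}-1}{\alpha_j}(1-q)^{\alpha_j}\,\frac{q^{(\beta_j+1)n_j}}{[n_j]^{\beta_j+1}}
  \qquad (n_0:=0).
\end{align*}
Since $0<q<1$, every factor on the right is non-negative, which already yields the lower bound $0\le Z_q(w)$.

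For the upper bound I would pass to the variables $l_j=n_j-n_{j-1}\ge 1$, so that $n_j=l_1+\cdots+l_j$ and $(n_1,\ldots,n_r)\mapsto(l_1,\ldots,l_r)$ is a bijection from $\{0<n_1<\cdots<n_r\}$ onto $(\mathbb{Z}_{\ge 1})^r$. Two elementary termwise estimates then suffice. First, $\binom{l_j-1}{\alpha_j}\le\binom{l_j}{\alpha_j}$. Second --- and this is the crux --- for every $j$ one has
\begin{align*}
  \frac{q^{(\beta_j+1)n_j}}{[n_j]^{\beta_j+1}}\le\frac{q^{l_j/2}}{n_j^{\beta_j+1}}.
\end{align*}
To prove this I would invoke Corollary \ref{cor:app-non-decreasing} with $\alpha=n_j/2$ and $\beta=n_j$; its hypotheses $\alpha\ge 0$ and $\max(1,\alpha)\le\beta\le 2\alpha+1$ hold for every integer $n_j\ge 1$, the cases $n_j=1,2$ being checked by hand. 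Because $1-q^{n_j}=(1-q)[n_j]$, the corollary reads $q^{n_j/2}/[n_j]\le 1/n_j$, hence $q^{n_j}/[n_j]\le q^{n_j/2}/n_j$; raising this to the power $\beta_j+1\ge 1$ and then using $(\beta_j+1)n_j\ge n_j\ge l_j$ together with $0<q<1$ gives the displayed inequality.

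Putting the two estimates together, each summand of $Z_q(w)$ is at most $\prod_{j=1}^{r}\binom{l_j}{\alpha_j}(1-q)^{\alpha_j}q^{l_j/2}/(l_1+\cdots+l_j)^{\beta_j+1}$; summing over $0<n_1<\cdots<n_r$, equivalently over $(l_1,\ldots,l_r)\in(\mathbb{Z}_{\ge 1})^r$, produces precisely $B_q(\alpha_1,\ldots,\alpha_r;\beta_1,\ldots,\beta_r)$. Since all the series in play have non-negative terms, the reindexing is legitimate (and if $B_q=+\infty$ the asserted inequality is vacuous). The only step that is not pure bookkeeping is the crux estimate, and within it the sole subtlety is choosing the parameters $(\alpha,\beta)=(n_j/2,n_j)$ in Corollary \ref{cor:app-non-decreasing} and verifying its hypotheses for small $n_j$.
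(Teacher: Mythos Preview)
Your proof is correct and follows essentially the same route as the paper's: express $Z_q(w)$ as a sum over $0<n_1<\cdots<n_r$, bound $q^{n}/[n]$ via Corollary~\ref{cor:app-non-decreasing}, then change to the gap variables $l_j=n_j-n_{j-1}$. The only cosmetic difference is that the paper applies the corollary with $\alpha=(n-1)/2$ (obtaining the slightly sharper $q^{n}/[n]\le q^{(n+1)/2}/n$), whereas you use $\alpha=n/2$; either choice suffices, and your remark about checking $n_j=1,2$ by hand is in fact unnecessary since the hypotheses $\max(1,n_j/2)\le n_j\le n_j+1$ hold for all $n_j\ge 1$.
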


\begin{proof}
  {}From Corollary \ref{cor:app-non-decreasing}, we see that
  \begin{align*}
    0\le \frac{q^{n}}{[n]}=q^{(n-1)/2}\frac{1-q}{1-q^{n}}q^{(n+1)/2} \le \frac{q^{(n+1)/2}}{n}
  \end{align*}
  for any $n \ge 1$.
  Therefore, it holds that
  \begin{align*}
    0 & \le Z_{q}((e_{1}-g_{1})^{\alpha_{1}}g_{\beta_{1}+1} \cdots
    (e_{1}-g_{1})^{\alpha_{r}}g_{\beta_{r}+1})                     \\
      & =\sum_{0=n_{0}<n_{1}<\cdots <n_{r}}\prod_{j=1}^{r}
    (1-q)^{\alpha_{j}}\binom{n_{j}-n_{j-1}-1}{\alpha_{j}}
    \left(\frac{q^{n_{j}}}{[n_{j}]}\right)^{\beta_{j}+1}           \\
      & \le \sum_{0=n_{0}<n_{1}<\cdots <n_{r}}\prod_{j=1}^{r}
    (1-q)^{\alpha_{j}}\binom{n_{j}-n_{j-1}}{\alpha_{j}}
    \left(\frac{q^{(n_{j}+1)/2}}{n_{j}}\right)^{\beta_{j}+1}.
  \end{align*}
  Set $l_{j}=n_{j}-n_{j-1}$ for $1\le j \le r$.
  Then the right-hand side is dominated by
  $B_{q}(\alpha_{1}, \ldots , \alpha_{r}; \beta_{1}, \ldots , \beta_{r})$
  since $0<q<1$.
\end{proof}

\begin{prop}\label{prop:estimateB-2}
  Suppose that $0<q<1$ and
  $\alpha_{1}, \ldots , \alpha_{r}, \beta_{1}, \ldots , \beta_{r}$ are
  non-negative integers.
  Then it holds that
  \begin{align}
    B_{q}(\alpha_{1}, \ldots , \alpha_{r}; \beta_{1}, \ldots , \beta_{r})=
    O((-\log{(1-q)})^{r}) \qquad (q \to 1-0).
    \label{eq:estimateB-21}
  \end{align}
  Moreover, if $\alpha_{s} \ge 1$ and $\beta_{t}\ge 1$ for some $1\le s\le t \le r$,
  then we have
  \begin{align}
    B_{q}(\alpha_{1}, \ldots , \alpha_{r}; \beta_{1}, \ldots , \beta_{r})=
    (1-q)\, O((-\log{(1-q)})^{r}) \qquad (q \to 1-0).
    \label{eq:estimateB-22}
  \end{align}
\end{prop}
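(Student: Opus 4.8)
The plan is to reduce the whole statement to a single scalar series estimate after two cheap pointwise bounds. First I would set $n_j=l_1+\cdots+l_j$, so that $B_q$ becomes a sum over tuples $0<n_1<\cdots<n_r$ with general term $\prod_{j=1}^{r}\binom{n_j-n_{j-1}}{\alpha_j}(1-q)^{\alpha_j}q^{(n_j-n_{j-1})/2}\,n_j^{-\beta_j-1}$ (with $n_0=0$). The workhorse is the uniform bound $l^{\alpha}q^{l/4}\le C_{\alpha}(1-q)^{-\alpha}$ for $l\ge 1$ and $0<q<1$, obtained by maximizing $x\mapsto x^{\alpha}q^{x/4}$ over $x\ge 0$ and using $-\log q\ge 1-q$. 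Combined with $\binom{l}{\alpha}\le l^{\alpha}/\alpha!$ this gives, for all $\alpha\ge 0$, the crude estimate $\binom{l}{\alpha}(1-q)^{\alpha}q^{l/2}\le Cq^{l/4}$; and, writing $\binom{l}{\alpha}=\tfrac{l}{\alpha}\binom{l-1}{\alpha-1}\le l\binom{l}{\alpha-1}$, one gets for $\alpha\ge 1$ the sharper estimate $\binom{l}{\alpha}(1-q)^{\alpha}q^{l/2}\le C\,l\,(1-q)\,q^{l/4}$.

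For \eqref{eq:estimateB-21} I would plug the crude bound into every factor and use $n_j^{\beta_j+1}\ge n_j$; since $\prod_j q^{l_j/4}=q^{n_r/4}$ this yields $0\le B_q\le C^{r}\sum_{0<n_1<\cdots<n_r}q^{n_r/4}\prod_{j=1}^{r}n_j^{-1}$. Because $rn_r\ge\sum_j n_j$ and $0<q<1$ we have $q^{n_r/4}\le\prod_{j=1}^{r}q^{n_j/(4r)}$, so the ordered sum is at most $\frac{1}{r!}\left(\sum_{n\ge 1}\frac{q^{n/(4r)}}{n}\right)^{r}=\frac{1}{r!}\bigl(-\log(1-q^{1/(4r)})\bigr)^{r}$; and since $1-q^{1/(4r)}\sim(1-q)/(4r)$ as $q\to 1-0$, this is $O((-\log(1-q))^{r})$, which is \eqref{eq:estimateB-21}.

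For \eqref{eq:estimateB-22} I would keep the crude bound everywhere except at the index $s$ with $\alpha_s\ge 1$, where I use the sharp bound: there the numerator contributes $C\,l_s(1-q)q^{l_s/4}$ with $l_s=n_s-n_{s-1}\le n_s$, so after dividing by $n_s^{\beta_s+1}$ the $s$-th factor is $\le C(1-q)q^{l_s/4}n_s^{-\beta_s}$. If $\beta_s\ge 1$ this is $\le C(1-q)q^{l_s/4}n_s^{-1}$, and combining with the crude bounds $\le Cq^{l_j/4}n_j^{-1}$ at the other indices, the general term is $\le C^{r}(1-q)q^{n_r/4}\prod_j n_j^{-1}$. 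If $\beta_s=0$, then $s\neq t$ because $\beta_t\ge 1$, hence $s<t$ and $n_s\le n_t$; the $s$-th factor is only $\le C(1-q)q^{l_s/4}$ (no power of $n_s$ survives), but at index $t$ we have $n_t^{\beta_t+1}\ge n_t^{2}\ge n_s n_t$, so the $t$-th factor is $\le Cq^{l_t/4}(n_s n_t)^{-1}$, and again the general term is $\le C^{r}(1-q)q^{n_r/4}\prod_j n_j^{-1}$. Either way $0\le B_q\le C^{r}(1-q)\sum_{0<n_1<\cdots<n_r}q^{n_r/4}\prod_j n_j^{-1}$, and the computation of the previous paragraph delivers \eqref{eq:estimateB-22}.

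The only delicate step, and the one I expect to need care, is precisely this last piece of bookkeeping: extracting the factor $(1-q)$ from $B_q$ necessarily costs a factor $l_s$, which can be of size $(1-q)^{-1}$, so it must be reabsorbed by a denominator. The hypothesis that there exist $s\le t$ with $\alpha_s\ge 1$ and $\beta_t\ge 1$ is exactly what makes this work — when $\beta_s=0$ the compensating power $n^{-1}$ has to be borrowed from the later index $t$, which is legitimate only because $n_s\le n_t$. Everything else (the two pointwise bounds and the logarithmic asymptotics of $\sum_{n\ge 1}q^{n/(4r)}/n$) is routine and uniform in $q$.
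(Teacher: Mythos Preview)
Your argument is correct. It differs from the paper's proof mainly in how the $r$-fold sum is decoupled. The paper bounds $(l_{1}+\cdots+l_{j})^{\beta_{j}+1}\ge l_{j}$ so that $B_{q}$ is dominated by a genuine product $\prod_{j}\varphi_{\alpha_{j}}(q)$ with $\varphi_{\alpha}(x)=(1-x)^{\alpha}\sum_{l\ge 1}\binom{l}{\alpha}x^{l/2}/l$; these single sums have the closed forms $\varphi_{0}(x)=-\log(1-x^{1/2})$ and $\varphi_{\alpha}(x)=\alpha^{-1}x^{\alpha/2}(1+x^{1/2})^{\alpha}$ for $\alpha\ge 1$, from which the $O((-\log(1-q))^{r})$ bound is immediate. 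For the refined estimate the paper introduces $\tilde{\varphi}_{\alpha}(x)=(1-x)^{\alpha}\sum_{l}\binom{l}{\alpha}x^{l/2}/l^{2}\le\frac{1-x}{\alpha}\varphi_{\alpha-1}(x)$ and, when $s<t$, moves $\beta_{t}$ powers of $n_{t}^{-1}$ onto $n_{s}^{-1}$ so that the $s$-th single sum becomes $\tilde{\varphi}_{\alpha_{s}}$. You instead sacrifice half of the weight $q^{l/2}$ to kill the binomial via $l^{\alpha}q^{l/4}\le C_{\alpha}(1-q)^{-\alpha}$, keep the variables $n_{j}$, and decouple afterwards through $q^{n_{r}/4}\le\prod_{j}q^{n_{j}/(4r)}$. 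The paper's route is a little slicker (no Stirling-type maximisation, no $q^{1/(4r)}$ trick, and exact identities rather than asymptotics), while yours is more robust in that it never needs to recognise the closed form of $\sum_{l}\binom{l}{\alpha}x^{l}$. The delicate step you flag --- borrowing an extra factor $n_{s}^{-1}$ from the later index $t$ when $\beta_{s}=0$ --- is handled the same way in both proofs and is indeed exactly where the hypothesis $s\le t$ enters.
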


\begin{proof}
  For a non-negative integer $\alpha$, we define
  \begin{align*}
    \varphi_{\alpha}(x)=(1-x)^{\alpha}\sum_{l\ge 1}\binom{l}{\alpha}\frac{x^{l/2}}{l},
    \qquad
    \tilde{\varphi}_{\alpha}(x)=(1-x)^{\alpha}
    \sum_{l\ge 1}\binom{l}{\alpha}\frac{x^{l/2}}{l^2}.
  \end{align*}
  We have
  \begin{align*}
    \varphi_{0}(x)=-\log{(1-x^{1/2})}, \qquad
    \varphi_{\alpha}(x)=\frac{x^{\alpha/2}}{\alpha}(1+x^{1/2})^{\alpha} \quad
    (\alpha \ge 1).
  \end{align*}
  Hence $\varphi_{\alpha}(x)=O(-\log{(1-x)})$ as $x \to 1-0$ for any $\alpha \ge 0$.
  If $\alpha \ge 1$, we see that
  \begin{align*}
    0\le \tilde{\varphi}_{\alpha}(x)=(1-x)^{\alpha}
    \sum_{l\ge 1}\frac{1}{\alpha}\binom{l-1}{\alpha-1}\frac{x^{l/2}}{l}
    \le \frac{1-x}{\alpha}\varphi_{\alpha-1}(x)
  \end{align*}
  for $0\le x \le 1$.
  Hence $\tilde{\varphi}_{\alpha}(x)=(1-x)\,O(-\log{(1-x)})$ as
  $x \to 1-0$ for any $\alpha \ge 1$.

  Since $\beta_{j}\ge 0$ for $1\le j \le r$, we see that
  \begin{align*}
    0\le  B_{q}(\alpha_{1}, \ldots , \alpha_{r}; \beta_{1}, \ldots , \beta_{r}) \le
    \prod_{j=1}^{r}\varphi_{\alpha_{j}}(q).
  \end{align*}
  Hence we have \eqref{eq:estimateB-21}.
  Now assume further that $\alpha_{s} \ge 1$ and $\beta_{t}\ge 1$
  for some $1\le s\le t \le r$.
  If $s=t$, it holds that
  \begin{align*}
    0 \le B_{q}(\alpha_{1}, \ldots , \alpha_{r}; \beta_{1}, \ldots , \beta_{r})
    \le \tilde{\varphi}_{\alpha_{s}}(q)
    \prod_{\substack{1\le j \le r \\ j\not=s}}\varphi_{\alpha_{j}}(q).
  \end{align*}
  Hence we obtain \eqref{eq:estimateB-22} if $s=t$.
  If $s<t$, we see that
  \begin{align*}
    0 & \le B_{q}(\alpha_{1}, \ldots , \alpha_{r}; \beta_{1}, \ldots , \beta_{r}) \\
      & \le \sum_{l_{1}, \ldots , l_{r}\ge 1}
    \prod_{\substack{1\le j \le r                                                 \\ j\not=s, t}}
    \left\{(1-q)^{\alpha_{j}}
    \binom{l_{j}}{\alpha_{j}}\frac{q^{l_{j}/2}}{l_{j}}\right\}
    (1-q)^{\alpha_{s}+\alpha_{t}}
    \binom{l_{s}}{\alpha_{s}}\binom{l_{t}}{\alpha_{t}}
    \frac{q^{l_{s}/2}}{(l_{1}+\cdots +l_{s})^{\beta_{s}+\beta_{t}+1}}
    \frac{q^{l_{t}/2}}{l_{1}+\cdots +l_{t}}                                       \\
      & \le \sum_{l_{1}, \ldots , l_{r}\ge 1}
    \prod_{\substack{1\le j \le r                                                 \\ j\not=s}}
    \left\{(1-q)^{\alpha_{j}}
    \binom{l_{j}}{\alpha_{j}}\frac{q^{l_{j}/2}}{l_{j}}\right\}
    (1-q)^{\alpha_{s}}\binom{l_{s}}{\alpha_{s}}\frac{q^{l_{s}/2}}{l_{s}^{2}}      \\
      & =\tilde{\varphi}_{\alpha_{s}}(q)
    \prod_{\substack{1\le j \le r                                                 \\ j\not=s}}
    \varphi_{\alpha_{j}}(q)
  \end{align*}
  since $\beta_{t} \ge 1$.
  Thus we get \eqref{eq:estimateB-22} in the case where $s<t$ either.
\end{proof}

\begin{proof}[Proof of Proposition \ref{prop:limit-of-qMZV}]
  {}From Lemma \ref{lem:app-non-decreasing} and the monotone convergence theorem,
  we see that $Z_{q}(g_{\bk}) \to \zeta(\bk)$ as $q \to 1-0$ for any admissible index $\bk$.
  Suppose that $w$ is an element of $\widehat{\mathfrak{H}^{0}}$
  of the form \eqref{eq:basis-monomial}.
  Proposition \ref{prop:estimateB-1} and \eqref{eq:estimateB-21} imply that
  $Z_{q}(w)=O((-\log{(1-q)})^{r})$ as $q \to 1-0$.
  Hence $Z_{q}(\hbar w)\to 0$ in the limit as $q \to 1-0$.
  If $w \in \mathfrak{n}_{0}$, then there exist $s$ and $t$ such that
  $\alpha_{s}\ge 1, \beta_{t}\ge 1$ and $1\le s\le t \le r$.
  Then Proposition \ref{prop:estimateB-1} and \eqref{eq:estimateB-22} imply that
  $Z_{q}(w)\to 0$ in the limit as $q \to 1-0$.
  Thus we see that $\lim_{q\to 1-0}Z_{q}(w)=0$ for any $w \in \mathfrak{n}$.
\end{proof}


\section{Proofs}\label{sec:app-proofs}

\subsection{Formulas of shuffle product}\label{subsec:app-shpUV}

\begin{prop}\label{prop:U1}
  Suppose that $U(X)$ and $V(X)$ belong to the ideal $X \mathcal{C}[b][[X]]$
  of the formal power series ring $\mathcal{C}[b][[X]]$ generated by $X$.
  Then it holds that
  \begin{align*}
     &
    \left(
    w\frac{1}{1-U(X)a} \, \shp_{\hbar} \, w'\frac{1}{1-V(Y)a}
    \right)
    \left\{1-(U(X)+V(Y)+\hbar \, U(X)V(Y))a \right\}                                     \\
     & =-w\,\shp_{\hbar}\,w'+\left(w\frac{1}{1-U(X)a}\,\shp_{\hbar}\,w'\right)(1-U(X)a)+
    \left(w\,\shp_{\hbar}\,w'\frac{1}{1-V(Y)a}\right)(1-V(Y)a)
  \end{align*}
  for any $w, w' \in \mathfrak{H}$.
\end{prop}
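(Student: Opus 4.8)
The plan is to prove the identity by expanding the two shuffle products via the defining recursion for $\shp_{\hbar}$ and the rule governing right multiplication by $b$, and then matching terms.

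First I would fix notation: write $f=U(X)$, $g=V(Y)$, and put $A=w\,\tfrac{1}{1-fa}$, $B=w'\,\tfrac{1}{1-ga}$; these lie in $\mathfrak{H}[[X,Y]]$ because $f$ and $g$ have no constant term. From the geometric-series identity $\tfrac{1}{1-fa}=1+\tfrac{1}{1-fa}fa$ one gets $A=w+(Af)a$ and $B=w'+(Bg)a$, where $Af:=A\cdot f$ and $Bg:=B\cdot g$ are products in $\mathfrak{H}[[X,Y]]$; in particular $A-w=(Af)a$ and $B-w'=(Bg)a$. The key auxiliary fact is the \emph{pull-out rule}: for any $h\in\mathcal{C}[b][[X,Y]]$ and any $P,Q\in\mathfrak{H}[[X,Y]]$,
\[
(Ph)\shp_{\hbar}Q=(P\shp_{\hbar}Q)h=P\shp_{\hbar}(Qh).
\]
By $\mathcal{C}$-bilinearity and $(X,Y)$-adic continuity it suffices to prove this when $h$ is a power of $b$, and then it follows by iterating $wb\shp_{\hbar}w'=w\shp_{\hbar}w'b=(w\shp_{\hbar}w')b$. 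This is exactly where the hypothesis that the coefficients of $U,V$ lie in $\mathcal{C}[b]$ (and involve no $a$) is used.

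Then I would run the main computation. Bilinearity and the recursions give
\[
A\shp_{\hbar}B=w\shp_{\hbar}w'+w\shp_{\hbar}(Bg)a+(Af)a\shp_{\hbar}w'+(Af)a\shp_{\hbar}(Bg)a,
\]
and the relation $Ra\shp_{\hbar}Sa=(Ra\shp_{\hbar}S+R\shp_{\hbar}Sa+\hbar R\shp_{\hbar}S)a$ with $R=Af$, $S=Bg$ rewrites the last term. Using the pull-out rule together with $A-w=(Af)a$ and $B-w'=(Bg)a$,
\begin{align*}
 (Af)a\shp_{\hbar}Bg&=\bigl((A-w)\shp_{\hbar}B\bigr)g=(A\shp_{\hbar}B)g-(w\shp_{\hbar}B)g,\\
 Af\shp_{\hbar}(Bg)a&=\bigl(A\shp_{\hbar}(B-w')\bigr)f=(A\shp_{\hbar}B)f-(A\shp_{\hbar}w')f,\\
 \hbar\,Af\shp_{\hbar}Bg&=\hbar(A\shp_{\hbar}B)fg,
\end{align*}
so that $(Af)a\shp_{\hbar}(Bg)a=\bigl((A\shp_{\hbar}B)(f+g+\hbar fg)-(A\shp_{\hbar}w')f-(w\shp_{\hbar}B)g\bigr)a$. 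On the other hand, expanding the right-hand side of the proposition with $(A\shp_{\hbar}w')(1-fa)=A\shp_{\hbar}w'-\bigl((A\shp_{\hbar}w')f\bigr)a$ and $(w\shp_{\hbar}B)(1-ga)=w\shp_{\hbar}B-\bigl((w\shp_{\hbar}B)g\bigr)a$, and using $A\shp_{\hbar}w'=w\shp_{\hbar}w'+(Af)a\shp_{\hbar}w'$ and $w\shp_{\hbar}B=w\shp_{\hbar}w'+w\shp_{\hbar}(Bg)a$, one checks that both sides of the asserted identity, after cancelling the common summands $w\shp_{\hbar}w'$, $(Af)a\shp_{\hbar}w'$, $w\shp_{\hbar}(Bg)a$, reduce precisely to the displayed formula for $(Af)a\shp_{\hbar}(Bg)a$. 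Hence the proposition holds.

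There is no serious obstacle here: once the pull-out rule is in hand the argument is purely formal. The points demanding care are (i) verifying the pull-out rule, which is where the structural hypothesis on $U$ and $V$ is essential — an $a$ appearing in a coefficient would destroy commutation with $\shp_{\hbar}$; and (ii) keeping track of the many cancelling terms, which is best handled by organising the proof as ``expand both sides, cancel the shared pieces, and reduce to the single identity for $(Af)a\shp_{\hbar}(Bg)a$''.
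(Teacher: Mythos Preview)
Your proof is correct and follows essentially the same route as the paper's: both arguments expand via the geometric-series identity $(1-Ua)^{-1}=1+(1-Ua)^{-1}Ua$, apply the shuffle recursion to the term ending in $a$ on both sides, and use that coefficients in $\mathcal{C}[b]$ can be pulled through $\shp_{\hbar}$. The only cosmetic difference is that you isolate and name the pull-out rule explicitly, whereas the paper invokes it in passing (``since all the coefficients of $U(X)$ and $V(Y)$ are polynomials in $b$'') and manipulates one side into the other rather than expanding both and matching.
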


\begin{proof}
  Set
  \begin{align*}
    I(X, Y)=w\frac{1}{1-U(X)a} \, \shp_{\hbar} \, w'\frac{1}{1-V(Y)a}.
  \end{align*}
  Using $(1-U(X)a)^{-1}=1+(1-U(X)a)^{-1}U(X)a$, we see that
  \begin{align*}
    I(X, Y)
     & =-w\,\shp_{\hbar}\,w'+w\,\shp_{\hbar}w'\frac{1}{1-V(Y)a}+
    w\frac{1}{1-U(X)a}\,\shp_{\hbar}w'                                     \\
     & +    w\frac{1}{1-U(X)a}U(X)a\,\shp_{\hbar}w'\frac{1}{1-V(Y)a}V(Y)a.
  \end{align*}
  Since all the coefficients of $U(X)$ and $V(Y)$ are polynomials in $b$,
  the fourth term of the right hand side is equal to
  \begin{align*}
     & \left(w\frac{1}{1-U(X)a}\,\shp_{\hbar}w'\frac{1}{1-V(Y)a}V(Y)a\right)U(X)a           \\
     & +\left(w\frac{1}{1-U(X)a}U(X)a\,\shp_{\hbar}w'\frac{1}{1-V(Y)a}\right)V(Y)a          \\
     & +\left(w\frac{1}{1-U(X)a}\,\shp_{\hbar}w'\frac{1}{1-V(Y)a}\right)\hbar \, U(X)V(Y)a.
  \end{align*}
  Using $(1-U(X)a)^{-1}=1+(1-U(X)a)^{-1}U(X)a$ again, we see that it is equal to
  \begin{align*}
     & I(X, Y)(U(X)+V(Y)+\hbar \, U(X)V(Y))a                        \\
     & -\left(w\frac{1}{1-U(X)a}\,\shp_{\hbar}w'\right)U(X)a-\left(
    w\,\shp_{\hbar}w'\frac{1}{1-V(Y)a}\right)V(Y)a.
  \end{align*}
  Thus we get the desired equality.
\end{proof}

\begin{prop}\label{prop:U2}
  Suppose that $U(X) \in X\mathcal{C}[b][[X]]$.
  Then it holds that
  \begin{align*}
     &
    \left(w\frac{1}{1-U(X)a}\,\shp_{\hbar}\, w'a\right)(1-U(X)a) \\
     & =w\,\shp_{\hbar}\,w'a-(w\,\shp_{\hbar}\,w')a+
    \left(w\frac{1}{1-U(X)a}\,\shp_{\hbar}\, w' \right)(1+\hbar \, U(X))a
  \end{align*}
  for any $w, w' \in \mathfrak{H}$.
\end{prop}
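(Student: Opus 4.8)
The plan is to reproduce, in a simpler setting, the manipulation behind Proposition~\ref{prop:U1}: expand the geometric series, apply the two defining rules of $\,\shp_{\hbar}\,$, and re-collect. All identities below are read in $\mathfrak{H}[[X]]$. Abbreviate $P=(1-U(X)a)^{-1}$; from $P(1-U(X)a)=1$ one gets $P=1+PU(X)a$, hence
\begin{align*}
  wPU(X)a=wP-w .
\end{align*}

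First I would use this to split off the order-zero part of the left factor:
\begin{align*}
  w\frac{1}{1-U(X)a}\,\shp_{\hbar}\,w'a
  = w\,\shp_{\hbar}\,w'a+\big(wPU(X)a\big)\,\shp_{\hbar}\,w'a .
\end{align*}
Both factors of the last shuffle end in the letter $a$, so I would apply the rule $\xi a\,\shp_{\hbar}\,\eta a=\big(\xi a\,\shp_{\hbar}\,\eta+\xi\,\shp_{\hbar}\,\eta a+\hbar\,\xi\,\shp_{\hbar}\,\eta\big)a$ with $\xi=wPU(X)$ and $\eta=w'$. This produces three terms, which I would simplify as follows. In the first, $wPU(X)a\,\shp_{\hbar}\,w'=(wP\,\shp_{\hbar}\,w')-(w\,\shp_{\hbar}\,w')$, again using $wPU(X)a=wP-w$. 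In the remaining two, the trailing factor $U(X)$ is a power series all of whose coefficients are polynomials in $b$ alone; iterating the rule $\xi b\,\shp_{\hbar}\,\eta=(\xi\,\shp_{\hbar}\,\eta)b$ yields $(\xi\,U(X))\,\shp_{\hbar}\,\eta=(\xi\,\shp_{\hbar}\,\eta)U(X)$ for all $\eta\in\mathfrak{H}$ (and then, by $X$-linearity, for $\xi\in\mathfrak{H}[[X]]$), so that $wPU(X)\,\shp_{\hbar}\,w'a=(wP\,\shp_{\hbar}\,w'a)U(X)$ and $wPU(X)\,\shp_{\hbar}\,w'=(wP\,\shp_{\hbar}\,w')U(X)$.

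Writing $J=wP\,\shp_{\hbar}\,w'a$ and $K=wP\,\shp_{\hbar}\,w'$ and substituting everything back, the two displays collapse to
\begin{align*}
  J=w\,\shp_{\hbar}\,w'a+\big(K-w\,\shp_{\hbar}\,w'\big)a+JU(X)a+\hbar\,KU(X)a .
\end{align*}
Transposing the term $JU(X)a$ to the left turns the left side into $J(1-U(X)a)$ and the right side into $w\,\shp_{\hbar}\,w'a-(w\,\shp_{\hbar}\,w')a+K(1+\hbar\,U(X))a$, which is exactly the assertion (with $K=w\frac{1}{1-U(X)a}\,\shp_{\hbar}\,w'$).

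I expect no real obstacle here: the argument is entirely formal once one checks that each step is legitimate in $\mathfrak{H}[[X]]$ (so that $P$ makes sense and $\,\shp_{\hbar}\,$ distributes over the relevant sums). The only ingredient deserving a sentence of proof is the pass-through identity $(\xi\,U(X))\,\shp_{\hbar}\,\eta=(\xi\,\shp_{\hbar}\,\eta)U(X)$ for a $b$-polynomial series $U(X)$, which is the same ``move the $b$'s to the right end'' observation already used in the proof of Proposition~\ref{prop:U1}; and the only places where one can slip are keeping track of the lone $\hbar$ and of the sign in $-(w\,\shp_{\hbar}\,w')a$.
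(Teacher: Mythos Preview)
Your proof is correct. The computation is carried through cleanly, and the one nontrivial ingredient you flag---the pass-through identity $(\xi\,U(X))\,\shp_{\hbar}\,\eta=(\xi\,\shp_{\hbar}\,\eta)U(X)$ for $U(X)\in\mathcal{C}[b][[X]]$---is exactly the consequence of the rule $wb\,\shp_{\hbar}\,w'=(w\,\shp_{\hbar}\,w')b$ that you describe.

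The paper, however, takes a shorter route: it simply sets $V(Y)=Y$ in Proposition~\ref{prop:U1}, differentiates both sides with respect to $Y$, and evaluates at $Y=0$. Since $\frac{1}{1-Ya}\big|_{Y=0}=1$ and $\frac{d}{dY}\frac{1}{1-Ya}\big|_{Y=0}=a$, the product rule on the left of Proposition~\ref{prop:U1} yields the two terms $\bigl(wP\,\shp_{\hbar}\,w'a\bigr)(1-U(X)a)$ and $-\bigl(wP\,\shp_{\hbar}\,w'\bigr)(1+\hbar\,U(X))a$, while the right collapses to $w\,\shp_{\hbar}\,w'a-(w\,\shp_{\hbar}\,w')a$; rearranging gives the statement directly. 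Your approach redoes by hand the specialization of the computation that proves Proposition~\ref{prop:U1}, which has the virtue of being self-contained but is longer; the paper's differentiation trick reuses Proposition~\ref{prop:U1} wholesale and is essentially a one-liner.
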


\begin{proof}
  Set $V(Y)=Y$ in Proposition \ref{prop:U1},
  differentiate the both hand sides with respect to $Y$ and set $Y=0$.
  Then we get the desired equality.
\end{proof}

\begin{proof}[Proof of Lemma \ref{lem:shuffle-prod}]
  {}From Proposition \ref{prop:U1} and Proposition \ref{prop:U2}, we see that
  \begin{align*}
     &
    \left(ug_{1}\frac{1}{1-aX}\,\shp_{\hbar}\,vg_{1}\frac{1}{1-aY}\right)(1-(X+Y+\hbar XY)a)         \\
     & =-ug_{1}\,\shp_{\hbar}\,vg_{1}+\left(ug_{1}\frac{1}{1-aX}\,\shp_{\hbar}\,vg_{1}\right)(1-aX)+
    \left(ug_{1}\,\shp_{\hbar}\,vg_{1}\frac{1}{1-aY}\right)(1-aY)                                    \\
     & =ug_{1}\,\shp_{\hbar}\,vg_{1}-(ug_{1}\,\shp_{\hbar}\,v+u\,\shp_{\hbar}\,vg_{1})g_{1}          \\
     & +(1+\hbar X)\left(ug_{1}\frac{1}{1-aX}\,\shp_{\hbar}\,v\right)g_{1}+
    (1+\hbar Y)\left(u\,\shp_{\hbar}\,v\frac{1}{1-aX}\right)g_{1}.
  \end{align*}
  Using \eqref{eq:shp-g1g1}, we get the desired equality.
\end{proof}

\begin{cor}\label{cor:U2}
  For $U(X) \in X \mathcal{C}[b][[X]]$,
  we define the map $\rho_{U(X)}: \mathfrak{H} \rightarrow \mathfrak{H}[[X]]$ by
  \begin{align*}
    \rho_{U(X)}(w)=\left( \frac{1}{1-U(X)g_{1}}\, \shp_{\hbar}\, w\right)
    \left(1-U(X)g_{1} \right).
  \end{align*}
  Then the map $\rho_{U(X)}$ is a $\mathcal{C}$-algebra homomorphism with respect to
  the concatenation product on $\mathfrak{H}$, and we have
  \begin{align}
    \rho_{U(X)}(a)=\frac{1}{1-U(X)g_{1}}\left(1+\hbar b\, U(X)\right)a, \quad
    \rho_{U(X)}(b)=\frac{1}{1-U(X)g_{1}}b \left(1-U(X) g_{1}\right).
    \label{eq:rho-U-image}
  \end{align}
\end{cor}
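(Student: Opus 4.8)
The plan is to reduce everything to Proposition \ref{prop:U2}. The key preliminary observation is that, since $g_{1}=ba$ and every coefficient of $U(X)$ lies in $\mathcal{C}[b]$, one has $\frac{1}{1-U(X)g_{1}}=\frac{1}{1-(U(X)b)a}$ with $U(X)b\in X\mathcal{C}[b][[X]]$; hence Proposition \ref{prop:U2} applies verbatim with $U(X)$ replaced by $U(X)b$. Write $\mathbf{v}=\frac{1}{1-U(X)g_{1}}\in\mathfrak{H}[[X]]$, which is invertible for the concatenation product with concatenation-inverse $\mathbf{v}^{-1}=1-U(X)g_{1}$, so that $\rho_{U(X)}(w)=(\mathbf{v}\,\shp_{\hbar}\,w)\,\mathbf{v}^{-1}$.

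First I would record the values on the generators. From the shuffle recursion for $b$ one gets $\mathbf{v}\,\shp_{\hbar}\,b=\mathbf{v}b$, hence $\rho_{U(X)}(b)=\mathbf{v}b\,\mathbf{v}^{-1}$, which is the second formula of \eqref{eq:rho-U-image}. Applying the above instance of Proposition \ref{prop:U2} with both of its arguments equal to $1$ gives $(\mathbf{v}\,\shp_{\hbar}\,a)\,\mathbf{v}^{-1}=\mathbf{v}(1+\hbar U(X)b)a$; since $\hbar$ is central and $b$ commutes with the coefficients of $U(X)$, this equals $\mathbf{v}(1+\hbar b\,U(X))a$, the first formula of \eqref{eq:rho-U-image}. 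Also $\rho_{U(X)}(1)=\mathbf{v}\mathbf{v}^{-1}=1$.

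For the homomorphism property I would first prove $\rho_{U(X)}(wu)=\rho_{U(X)}(w)\rho_{U(X)}(u)$ for every $w\in\mathfrak{H}$ and $u\in\{a,b\}$. For $u=b$ this is immediate from $\mathbf{v}\,\shp_{\hbar}\,wb=(\mathbf{v}\,\shp_{\hbar}\,w)b$ together with the cancellation $\mathbf{v}^{-1}\mathbf{v}=1$ inside $\rho_{U(X)}(w)\rho_{U(X)}(b)$. For $u=a$, the same instance of Proposition \ref{prop:U2}, now with first argument $1$ and second argument $w$, yields $(\mathbf{v}\,\shp_{\hbar}\,wa)\,\mathbf{v}^{-1}=(\mathbf{v}\,\shp_{\hbar}\,w)(1+\hbar U(X)b)a$, while $\rho_{U(X)}(w)\rho_{U(X)}(a)=(\mathbf{v}\,\shp_{\hbar}\,w)\,\mathbf{v}^{-1}\mathbf{v}(1+\hbar U(X)b)a$, and again $\mathbf{v}^{-1}\mathbf{v}$ cancels. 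Granted the case of a single generator on the right, the general identity $\rho_{U(X)}(ww')=\rho_{U(X)}(w)\rho_{U(X)}(w')$ follows by induction on the length of $w'$: writing $w'=w''u$ one has $\rho_{U(X)}(w w''u)=\rho_{U(X)}(ww'')\rho_{U(X)}(u)=\rho_{U(X)}(w)\rho_{U(X)}(w'')\rho_{U(X)}(u)=\rho_{U(X)}(w)\rho_{U(X)}(w''u)$. Since $\rho_{U(X)}$ is visibly $\mathcal{C}$-linear, it is then a $\mathcal{C}$-algebra homomorphism for the concatenation product.

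The argument is essentially bookkeeping once the opening reduction is in place; the one point that needs attention is precisely that reduction, namely recognising the prefactor $\frac{1}{1-U(X)g_{1}}$ as $\frac{1}{1-\widetilde U(X)a}$ with $\widetilde U(X)=U(X)b$ still lying in $X\mathcal{C}[b][[X]]$, so that the hypothesis under which Proposition \ref{prop:U2} was proved is met. After that, the repeated use of the cancellation $\mathbf{v}^{-1}\mathbf{v}=1$ and the length induction are routine, and no appeal to Proposition \ref{prop:U1} is required.
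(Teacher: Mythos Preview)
Your proof is correct and follows essentially the same route as the paper's: both substitute $U(X)b$ (equivalently $bU(X)$, since $U(X)\in\mathcal{C}[b][[X]]$) for $U(X)$ in Proposition~\ref{prop:U2} with $w=1$, obtain $\rho_{U(X)}(w'a)=\rho_{U(X)}(w')\rho_{U(X)}(a)$ and $\rho_{U(X)}(w'b)=\rho_{U(X)}(w')\rho_{U(X)}(b)$ directly, and read off the formulas for $\rho_{U(X)}(a),\rho_{U(X)}(b)$ by setting $w'=1$. Your write-up is somewhat more explicit about the length induction, but there is no substantive difference in method.
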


\begin{proof}
  Set $w=1$ and $U(X)$ to $b U(X)$ in Proposition \ref{prop:U2}.
  Then we see that
  \begin{align*}
    \rho_{U(X)}(w'a)=\rho_{U(X)}(w')\frac{1}{1-U(X)g_{1}}(1+\hbar bU(X))a.
  \end{align*}
  {}From the definition of $\rho_{U(X)}$, we also see that
  \begin{align*}
    \rho_{U(X)}(w'b)=\rho_{U(X)}(w')\frac{1}{1-U(X)g_{1}}b(1-U(X)g_{1}).
  \end{align*}
  Since $\rho_{U(X)}(1)=1$, we have \eqref{eq:rho-U-image} and
  see that $\rho_{U(X)}$ is an algebra homomorphism.
\end{proof}

\subsection{Generating function of $E_{1^{m}}$}\label{subsec:gen-E}

Here we calculate the generating function
\begin{align*}
  E(X)=\sum_{m=0}^{\infty}E_{1^{m}}X^{m}.
\end{align*}
Recall that
\begin{align*}
  R(X)=\frac{e^{\hbar b X}-1}{\hbar b}=\sum_{n=1}^{\infty}\frac{X^{n}}{n!}(e_{1}-g_{1})^{n-1}.
\end{align*}
Note that $R(X)$ belongs to $X \mathcal{C}[b][[X]]$.

\begin{prop}\label{prop:1-RG-exp}
  It holds that
  \begin{align}
    \exp_{\shp_{\hbar}}{\left( g_{1}X\right)}=\frac{1}{1-R(X)g_{1}}.
    \label{eq:1-RG-exp}
  \end{align}
\end{prop}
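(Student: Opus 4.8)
The plan is to identify both sides of \eqref{eq:1-RG-exp}, as elements of $\mathfrak{H}[[X]]$, with the unique solution of the same differential equation, and then invoke uniqueness. Put $F(X)=\exp_{\shp_{\hbar}}(g_{1}X)$ and $\Phi(X)=\frac{1}{1-R(X)g_{1}}=\sum_{m\ge 0}(R(X)g_{1})^{m}$; the latter is well defined since $R(X)g_{1}\in X\mathfrak{H}[[X]]$. Consider the recursion $Y'(X)=g_{1}\shp_{\hbar}Y(X)$ with $Y(0)=1$: writing $Y(X)=\sum_{n\ge 0}Y_{n}X^{n}$ it reads $(n+1)Y_{n+1}=g_{1}\shp_{\hbar}Y_{n}$, which determines $(Y_{n})_{n\ge 0}$ uniquely from $Y_{0}=1$. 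So it is enough to check that $F$ and $\Phi$ both solve it. For $F$ this is immediate from the definition of $\exp_{\shp_{\hbar}}$, which gives $F'(X)=g_{1}\shp_{\hbar}F(X)$ and $F(0)=1$.

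For $\Phi$ I would argue as follows. From the definition \eqref{eq:def-R(X)} of $R(X)$ one reads off, in the commutative ring $\mathcal{C}[b][[X]]$, that $1+\hbar b\,R(X)=e^{\hbar bX}$ and $R'(X)=e^{\hbar bX}$, where $e^{\hbar bX}:=\sum_{m\ge 0}\frac{X^{m}}{m!}(\hbar b)^{m}$. Differentiating $\Phi=\sum_{m\ge 0}(R(X)g_{1})^{m}$ term by term yields $\Phi'(X)=\Phi(X)\,R'(X)g_{1}\,\Phi(X)=\Phi(X)\,e^{\hbar bX}g_{1}\,\Phi(X)$. On the other hand, since $R(X)\in X\mathcal{C}[b][[X]]$, Corollary \ref{cor:U2} applies with $U(X)=R(X)$: the map $\rho_{R(X)}$ is an algebra homomorphism for concatenation, so $\rho_{R(X)}(g_{1})=\rho_{R(X)}(b)\,\rho_{R(X)}(a)$; substituting \eqref{eq:rho-U-image} and using that $R(X)$ commutes with $b$ to simplify $b\,(1+\hbar b\,R(X))\,a=(1+\hbar b\,R(X))g_{1}=e^{\hbar bX}g_{1}$ gives $\rho_{R(X)}(g_{1})=\Phi(X)\,e^{\hbar bX}g_{1}$. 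By the definition of $\rho_{R(X)}$ this means $g_{1}\shp_{\hbar}\Phi(X)=\rho_{R(X)}(g_{1})\,\Phi(X)=\Phi(X)\,e^{\hbar bX}g_{1}\,\Phi(X)=\Phi'(X)$, and plainly $\Phi(0)=1$. Hence $F=\Phi$, i.e. \eqref{eq:1-RG-exp}.

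The computations are routine once this setup is in place; the only steps needing a little care are the simplification $b\,(1+\hbar b\,R(X))\,a=e^{\hbar bX}g_{1}$ (remembering that $R(X)$ involves only $b$, hence commutes with $b$) and checking the hypothesis $R(X)\in X\mathcal{C}[b][[X]]$ of Corollary \ref{cor:U2}. I do not foresee a genuine obstacle; the real content is just the elementary observation $1+\hbar b\,R(X)=R'(X)=e^{\hbar bX}$, which is exactly what makes the shuffle derivative of $\Phi$ agree with its ordinary derivative.
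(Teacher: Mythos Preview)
Your proof is correct and follows essentially the same approach as the paper: both identify each side as the unique solution of $Y'(X)=g_{1}\,\shp_{\hbar}\,Y(X)$ with $Y(0)=1$, and both compute $g_{1}\,\shp_{\hbar}\,(1-R(X)g_{1})^{-1}$ via Corollary~\ref{cor:U2} with $U(X)=R(X)$, using $\rho_{R(X)}(g_{1})=\rho_{R(X)}(b)\rho_{R(X)}(a)$ and the identity $R'(X)=1+\hbar b\,R(X)=e^{\hbar bX}$. The only cosmetic difference is that you compute $\Phi'(X)$ first and then match it with $g_{1}\,\shp_{\hbar}\,\Phi(X)$, whereas the paper goes in the reverse order.
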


\begin{proof}
  The power series $\Psi(X)=\exp_{\shp_{\hbar}}{\left( g_{1}X\right)}$
  is the unique solution of the differential equation $\Psi'(X)=g_{1}\,\shp_{\hbar}\,\Psi(X)$
  satisfying $\Psi(0)=1$.
  Since the right hand side with $X=0$ is equal to one, it suffices to show that
  \begin{align*}
    \frac{d}{dX}\frac{1}{1-R(X)g_{1}}=g_{1}\,\shp_{\hbar}\,\frac{1}{1-R(X)g_{1}}.
  \end{align*}
  The right hand side is equal to $\rho_{R(X)}(g_{1})(1-R(X)g_{1})^{-1}$.
  Hence \eqref{eq:rho-U-image} implies that
  \begin{align*}
    g_{1}\,\shp_{\hbar}\,\frac{1}{1-R(X)g_{1}}
     & =\rho_{R(X)}(b)\rho_{R(X)}(a)\frac{1}{1-R(X)g_{1}} \\
     & =\frac{1}{1-R(X)g_{1}}e^{\hbar b X}g_{1}
    \frac{1}{1-R(X)g_{1}}=\frac{d}{dX}\frac{1}{1-R(X)g_{1}}
  \end{align*}
  since $R'(X)=e^{\hbar b X}$.
\end{proof}

\begin{prop}\label{prop:E(X)}
  It holds that
  \begin{align*}
    E(X)=\frac{1}{1-R(X)g_{1}}\frac{R(X)}{X}.
  \end{align*}
\end{prop}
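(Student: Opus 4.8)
The plan is to put $E_{1^{m}}$ into a normal form which is visibly a convolution, and then to recognise the two factors via Proposition \ref{prop:1-RG-exp} and the definition of $R(X)$. Throughout I use $e_{1}-g_{1}=\hbar b$ (so $e_{1}=g_{1}+\hbar b$), the commutativity of $\shp_{\hbar}$, the fact that $b^{\shp_{\hbar}k}=b^{k}$ (an immediate induction from the identity $wb\shp_{\hbar}w'=(w\shp_{\hbar}w')b$), hence $(\hbar b)^{\shp_{\hbar}k}=(\hbar b)^{k}$, and the shuffle-centrality of $\hbar b$ recorded in \eqref{eq:e1g1-central}.

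\textbf{Step 1 (binomial expansion).} In the commutative shuffle algebra I would expand, for $l\ge 0$,
\begin{align*}
  e_{1}^{\shp_{\hbar}l}=(g_{1}+\hbar b)^{\shp_{\hbar}l}
  =\sum_{i=0}^{l}\binom{l}{i}g_{1}^{\shp_{\hbar}i}\shp_{\hbar}(\hbar b)^{l-i}
  =\sum_{i=0}^{l}\binom{l}{i}g_{1}^{\shp_{\hbar}i}(\hbar b)^{l-i},
\end{align*}
where the last equality moves the central factor $(\hbar b)^{l-i}$ to the right end using \eqref{eq:e1g1-central}. Shuffling with $g_{1}^{\shp_{\hbar}j}$ and using centrality once more gives
\begin{align*}
  g_{1}^{\shp_{\hbar}j}\shp_{\hbar}e_{1}^{\shp_{\hbar}l}
  =\sum_{i=0}^{l}\binom{l}{i}g_{1}^{\shp_{\hbar}(i+j)}(\hbar b)^{l-i}.
\end{align*}

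\textbf{Step 2 (collecting and hockey stick).} I would substitute this with $l=m-j$ into the definition of $E_{1^{m}}$, set $k=i+j$ (the total exponent of $g_{1}$), and evaluate the resulting inner sum by the hockey-stick identity,
\begin{align*}
  \sum_{j=0}^{k}\binom{m-j}{k-j}=\sum_{i=0}^{k}\binom{m-k+i}{i}=\binom{m+1}{k}.
\end{align*}
This produces the key intermediate identity
\begin{align*}
  \sum_{j=0}^{m}g_{1}^{\shp_{\hbar}j}\shp_{\hbar}e_{1}^{\shp_{\hbar}(m-j)}
  =\sum_{k=0}^{m}\binom{m+1}{k}g_{1}^{\shp_{\hbar}k}(\hbar b)^{m-k},
\end{align*}
and hence $E_{1^{m}}=\sum_{k=0}^{m}\frac{1}{k!\,(m+1-k)!}\,g_{1}^{\shp_{\hbar}k}(\hbar b)^{m-k}$; this formula is worth checking against the displayed values of $E_{1^{0}},E_{1^{1}},E_{1^{2}}$ before going on.

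\textbf{Step 3 (summation).} Writing $p=m-k$,
\begin{align*}
  E(X)=\sum_{m\ge 0}E_{1^{m}}X^{m}
  =\left(\sum_{k\ge 0}\frac{g_{1}^{\shp_{\hbar}k}}{k!}X^{k}\right)
   \left(\sum_{p\ge 0}\frac{(\hbar b)^{p}}{(p+1)!}X^{p}\right),
\end{align*}
which is a Cauchy product for the concatenation product. By Proposition \ref{prop:1-RG-exp} the first factor equals $\exp_{\shp_{\hbar}}(g_{1}X)=\frac{1}{1-R(X)g_{1}}$, and the second factor is $\frac{e^{\hbar bX}-1}{\hbar bX}=\frac{R(X)}{X}$ by \eqref{eq:def-R(X)}. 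Therefore $E(X)=\frac{1}{1-R(X)g_{1}}\frac{R(X)}{X}$, as claimed.

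The main obstacle is the bookkeeping in Steps 1--2: one must respect the noncommutativity of concatenation while systematically pushing every copy of $\hbar b$ to the right, so that after the binomial rearrangement every monomial has the form $g_{1}^{\shp_{\hbar}k}$ concatenated with a pure power of $\hbar b$. Once that normal form is in place, the hockey-stick evaluation and the factorisation into the two generating functions are routine, and the only nontrivial external input is Proposition \ref{prop:1-RG-exp}.
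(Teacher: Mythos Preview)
Your proof is correct and follows essentially the same route as the paper's: expand $e_{1}^{\shp_{\hbar}l}=(g_{1}+\hbar b)^{\shp_{\hbar}l}$ binomially in the commutative shuffle algebra, push the central factors $(\hbar b)^{j}$ to the right via \eqref{eq:e1g1-central}, collapse the inner binomial sum by the hockey-stick identity, and then split the result as the concatenation product $\exp_{\shp_{\hbar}}(g_{1}X)\cdot\frac{e^{\hbar bX}-1}{\hbar bX}$, invoking Proposition~\ref{prop:1-RG-exp} for the first factor. The paper's argument differs only in the bookkeeping (it tracks the exponent of $\hbar b$ rather than of $g_{1}$, and writes the identity as $\sum_{s=j}^{m}\binom{s}{j}=\binom{m+1}{j+1}$), but the two computations are the same up to relabelling.
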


\begin{proof}
  Note that $g_{1}$ and $\hbar b$ are commutative with respect to the shuffle product
  $\shp_{\hbar}$.
    {}From the definition of $E_{1^{m}}$, we see that
  \begin{align*}
    E(X) & =\sum_{m=0}^{\infty}\frac{1}{(m+1)!}
    \sum_{s=0}^{m}g_{1}^{\shp_{\hbar} (m-s)}\,\shp_{\hbar}\,
    (g_{1}+\hbar b)^{\shp_{\hbar}s}X^{m}                          \\
         & =\sum_{m\ge s \ge j \ge 0}\frac{1}{(m+1)!}\binom{s}{j}
    g_{1}^{\shp_{\hbar} (m-j)}(\hbar b)^{j}X^{m}.
  \end{align*}
  Using
  \begin{align*}
    \sum_{s=j}^{m}\binom{s}{j}=\sum_{s=j}^{m}\left(\binom{s+1}{j+1}-\binom{s}{j+1}\right)=
    \binom{m+1}{j+1}
  \end{align*}
  and \eqref{eq:1-RG-exp}, we see that
  \begin{align*}
    E(X)=\sum_{m\ge j\ge 0}\frac{g_{1}^{\shp_{\hbar}(m-j)}}{(m-j)!}X^{m-j}
    \frac{(\hbar b)^{j}}{(j+1)!}X^{j}=
    \exp_{\shp_{\hbar}}{(g_{1}X)}\frac{e^{\hbar b X}-1}{\hbar b X}=
    \frac{1}{1-R(X)g_{1}}\frac{R(X)}{X}.
  \end{align*}
\end{proof}

\begin{cor}\label{cor:psi(E)}
  For any index $\bk$, it holds that $\psi^{\shp}(E_{\bk})\equiv(-1)^{\mathrm{wt}(\bk)}E_{\overline{\bk}}$
  modulo $\hbar \mathfrak{e}$.
  Moreover, if $\bk$ (resp. $\overline{\bk}$) is admissible,
  then $\psi^{\shp}(E_{\bk})-(-1)^{\mathrm{wt}(\bk)}E_{\overline{\bk}}$ belongs to
  $\hbar \sum_{j\ge 2}e_{j}\mathfrak{e}$ (resp. $\hbar \sum_{j\ge 2}\mathfrak{e}e_{j}$).
\end{cor}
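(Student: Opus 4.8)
The plan is to reduce the statement to the behavior of $\psi^{\shp}$ on the two kinds of building blocks occurring in the factorization \eqref{eq:index-form-E} — the pieces $E_{1^m}$ and the pieces $e_k$ with $k\ge 2$ — and then to reassemble using that $\psi^{\shp}$ is an anti-automorphism. First I would pin down $\psi^{\shp}(E_{1^m})$ through the generating function $E(X)=\sum_{m\ge 0}E_{1^m}X^m=\frac{1}{1-R(X)g_1}\cdot\frac{R(X)}{X}$ of Proposition \ref{prop:E(X)}. Since $R(X)$ is a power series in $\hbar b=e_1-g_1$, which $\psi^{\shp}$ fixes, and $\psi^{\shp}(g_1)=-e_1$, the anti-automorphism property gives $\psi^{\shp}(E(X))=\frac{R(X)}{X}\cdot\frac{1}{1+e_1R(X)}$. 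Rewriting the denominator by means of $\hbar bR(X)=e^{\hbar bX}-1$ and $e_1=g_1+\hbar b$ as $e^{\hbar bX}+g_1R(X)$, factoring $e^{\hbar bX}$ out on the left, and comparing with $E(-X)=\frac{1}{1+e^{-\hbar bX}R(X)g_1}\,e^{-\hbar bX}\frac{R(X)}{X}$ (which comes from $R(-X)=-e^{-\hbar bX}R(X)$), I expect to reach the exact identity $\psi^{\shp}(E(X))=E(-X)$, that is, $\psi^{\shp}(E_{1^m})=(-1)^mE_{1^m}$ for every $m\ge 0$. This identity is the main obstacle: both sides are clearly congruent modulo $\hbar$, but the exact equality requires commuting $a$ past the functions of $b$ hidden in $R(X)$ and $e^{\pm\hbar bX}$, and the algebra homomorphism $\rho_{U(X)}$ of Corollary \ref{cor:U2} is the natural device for doing so.

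Next I would compute $\psi^{\shp}(e_k)$ for $k\ge 2$ directly. From $\psi^{\shp}(g_k)=b(-a-\hbar)^k=(-1)^k b(a+\hbar)^k$ and the identity $b(a+\hbar)a^{i}=e_{i+1}$ (immediate from $e_{k}=b(a+\hbar)a^{k-1}$), expanding $(a+\hbar)^k=(a+\hbar)(a+\hbar)^{k-1}$ gives $\psi^{\shp}(g_k)=(-1)^k\sum_{j=1}^{k}\binom{k-1}{j-1}\hbar^{k-j}e_j$; combining this with $e_k=g_k+\hbar g_{k-1}$ and Pascal's rule yields $\psi^{\shp}(e_k)=(-1)^k\sum_{j=2}^{k}\binom{k-2}{j-2}\hbar^{k-j}e_j$. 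In particular $\psi^{\shp}(e_k)\in\sum_{j\ge 2}\mathcal{C}e_j$, and $\psi^{\shp}(e_k)-(-1)^ke_k\in\hbar\sum_{j\ge 2}\mathcal{C}e_j$.

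Now I would reassemble. Writing $\bk$ in the form \eqref{eq:index-form-E} and using anti-multiplicativity,
\[
  \psi^{\shp}(E_{\bk})=\psi^{\shp}(E_{1^{s_r}})\,\psi^{\shp}(e_{t_r+2})\cdots\psi^{\shp}(e_{t_1+2})\,\psi^{\shp}(E_{1^{s_0}}).
\]
Substituting the two computations above and expanding, the term in which every $\psi^{\shp}(e_{t_j+2})$ contributes its leading value is $(-1)^{\sum_is_i+\sum_jt_j}E_{1^{s_r}}e_{t_r+2}\cdots E_{1^{s_0}}=(-1)^{\mathrm{wt}(\bk)}E_{\overline{\bk}}$, using that $\mathrm{wt}(\bk)\equiv\sum_is_i+\sum_jt_j\pmod 2$; every remaining term carries a factor $\hbar$ and, since any string $E_{1^{a_0}}e_{j_1}E_{1^{a_1}}\cdots e_{j_p}E_{1^{a_p}}$ with all $j_i\ge 2$ equals $E_{\bl}$ for the concatenated index $\bl$, lies in $\hbar\,\mathfrak{e}$. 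This proves the first assertion.

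For the sharper assertions, suppose $\bk$ is admissible; then $s_r=0$ in \eqref{eq:index-form-E}, so $\psi^{\shp}(E_{\bk})=\psi^{\shp}(e_{t_r+2})\,W$ with $W=\psi^{\shp}(E_{\bk'})$ for the shorter index $\bk'=(1^{s_0},t_1+2,\dots,t_{r-1}+2,1^{s_{r-1}})$ obtained by deleting the trailing block. By the first assertion $W\in\mathfrak{e}$ and $W-(-1)^{\mathrm{wt}(\bk')}E_{\overline{\bk'}}\in\hbar\,\mathfrak{e}$, while $\psi^{\shp}(e_{t_r+2})\in\sum_{j\ge 2}\mathcal{C}e_j$. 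Since $e_j\mathfrak{e}=\sum_{\bm\in I}\mathcal{C}E_{(j,\bm)}\subset\mathfrak{e}$ for $j\ge 2$, the decomposition
\[
  \psi^{\shp}(E_{\bk})-(-1)^{\mathrm{wt}(\bk)}E_{\overline{\bk}}=\bigl(\psi^{\shp}(e_{t_r+2})-(-1)^{t_r}e_{t_r+2}\bigr)W+(-1)^{t_r}e_{t_r+2}\bigl(W-(-1)^{\mathrm{wt}(\bk')}E_{\overline{\bk'}}\bigr),
\]
together with $e_{t_r+2}E_{\overline{\bk'}}=E_{\overline{\bk}}$ and $(-1)^{t_r+\mathrm{wt}(\bk')}=(-1)^{\mathrm{wt}(\bk)}$, places the difference in $\hbar\sum_{j\ge 2}e_j\mathfrak{e}$. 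The case where $\overline{\bk}$ is admissible (so $s_0=0$) is entirely symmetric: one isolates the right-most factor $\psi^{\shp}(e_{t_1+2})$ instead and uses $\mathfrak{e}e_j\subset\mathfrak{e}$.
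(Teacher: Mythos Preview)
Your proof is correct and follows essentially the same route as the paper: compute $\psi^{\shp}(e_k)=(-1)^k\sum_{j=2}^{k}\binom{k-2}{j-2}\hbar^{k-j}e_j$ for $k\ge 2$, establish the exact identity $\psi^{\shp}(E(X))=E(-X)$ (hence $\psi^{\shp}(E_{1^m})=(-1)^mE_{1^m}$) via the generating function of Proposition~\ref{prop:E(X)}, and reassemble by anti-multiplicativity. Your treatment of the ``Moreover'' clause is in fact more explicit than the paper's, which simply states that the displayed expansion ``implies the statement''.

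One remark: your worry that the exact identity $\psi^{\shp}(E(X))=E(-X)$ requires ``commuting $a$ past the functions of $b$'' and that $\rho_{U(X)}$ is the natural device is a red herring. The paper (and your own outlined computation) never needs this. After writing $\psi^{\shp}(E(X))=\dfrac{R(X)}{X}\,\dfrac{1}{1+e_1R(X)}=\dfrac{1}{1+R(X)e_1}\,\dfrac{R(X)}{X}$, one uses only the elementary identity
\[
1+R(X)e_1=1+R(X)(e_1-g_1)+R(X)g_1=e^{\hbar bX}+R(X)g_1=e^{\hbar bX}\bigl(1-R(-X)g_1\bigr),
\]
the last step coming from $e^{\hbar bX}R(-X)=-R(X)$. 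All factors being moved are power series in $\hbar b$, which commute with each other; no interaction with $a$ is required, and $\rho_{U(X)}$ plays no role here.
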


\begin{proof}
  For $k \ge 2$, using \eqref{eq:e-to-g}, we see that
  \begin{align}
    \psi^{\shp}(e_{k})=\psi^{\shp}(g_{k}+\hbar g_{k-1})=(-1)^{k}ba(a+\hbar)^{k-1}=
    (-1)^{k}\sum_{j=2}^{k}\binom{k-2}{j-2}\hbar^{k-j}e_{j}.
    \label{eq:psi-ek}
  \end{align}
  Next we calculate $\psi^{\shp}(E_{1^{m}})$ using Proposition \ref{prop:E(X)}.
  Since $\psi^{\shp}(g_{1})=-e_{1}$ and $\psi^{\shp}(\hbar b)=\hbar b$, it holds that
  \begin{align*}
    \psi^{\shp}(E(X))=\frac{R(X)}{X}\frac{1}{1+e_{1}R(X)}=\frac{1}{1+R(X)e_{1}}\frac{R(X)}{X}.
  \end{align*}
  We have
  \begin{align}
    1+R(X)e_{1}=1+R(X)(e_{1}-g_{1})+R(X)g_{1}=e^{\hbar b X}+R(X)g_{1}=
    e^{\hbar b X}\left(1-R(-X)g_{1}\right).
    \label{eq:1+Re1}
  \end{align}
  Thus we see that $\psi^{\shp}(E(X))=E(-X)$.
  Hence
  \begin{align}
    \psi^{\shp}(E_{1^{m}})=(-1)^{m}E_{1^{m}}
    \label{eq:psi-E1}
  \end{align}
  for $m \ge 0$.
  Therefore, for an index $\bk$ of the form \eqref{eq:index-form-E},
  we have
  \begin{align*}
    \psi^{\shp}(E_{\bk})=(-1)^{\mathrm{wt}(\bk)}
    \sum_{j_{1}=0}^{t_{1}}\cdots \sum_{j_{r}=0}^{t_{r}}
    \left\{\prod_{l=1}^{r}\hbar^{t_{l}-j_{l}}\binom{t_{l}}{j_{l}}\right\}
    E_{1^{s_{r}}}e_{j_{r}+2} \cdots E_{1^{s_{1}}}e_{j_{1}+2} E_{1^{s_{0}}},
  \end{align*}
  and it implies the statement.
\end{proof}

\subsection{Proof of Proposition \ref{prop:wE-shuffle}}\label{subsec:wE-shuffle}

Note that
\begin{align}
  (\mathcal{C}\langle A \rangle \setminus \mathcal{C})a \subset \mathfrak{H}^{0}
  \label{eq:H1a-in-H0}
\end{align}
because $(e_{1}-g_{1})a=\hbar g_{1}$ and $g_{k}a=g_{k+1}$ for $k \ge 1$.

We define the $\mathcal{C}$-trilinear map
$K_{\hbar}^{\shp}:
  \mathcal{C}\langle A \rangle \times \mathcal{C}\langle A \rangle \times
  \mathcal{C}\langle A \rangle \rightarrow \widehat{\mathfrak{H}^{1}}$ by
\begin{align*}
  K_{\hbar}^{\shp}(w, u_{1}\cdots u_{r}, w')=\sum_{i=0}^{r}
  wu_{1}\cdots u_{i}  \shp_{\hbar} \psi^{\shp}(u_{i+1}\cdots u_{r}w')
\end{align*}
for $w, w' \in \mathcal{C}\langle A \rangle$ and
$u_{1}, \ldots , u_{r} \in A$.
It suffices to show that
$K_{\hbar}^{\shp}(E_{\bm}, E_{1^{r}}, E_{\overline{\bm'}})$ belongs to
$\mathfrak{H}^{0}$ for any admissible index $\bm, \bm'$ and $r \ge 0$.

\begin{lem}\label{lem:K-shuffle1}
  For any $w, w' \in \mathcal{C}\langle A \rangle$, it holds that
  \begin{align}
    K_{\hbar}^{\shp}(w, E(X), w')=
    w\frac{1}{1-R(X)g_{1}} \, \shp_{\hbar} \, \psi^{\shp}(w')\frac{1}{1-R(-X)g_{1}}.
    \label{eq:K-shuffle1}
  \end{align}
\end{lem}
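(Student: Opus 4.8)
The assertion of Lemma~\ref{lem:K-shuffle1} is a generating-function identity: writing $G(X)=\tfrac{1}{1-R(X)g_{1}}$, the claim is
\[
K_{\hbar}^{\shp}(w, E(X), w')=\bigl(w\,G(X)\bigr)\,\shp_{\hbar}\,\bigl(\psi^{\shp}(w')\,G(-X)\bigr)
\]
for all $w, w'\in\mathcal{C}\langle A\rangle$, and by $\mathcal{C}$-trilinearity of $K_{\hbar}^{\shp}$ it suffices to treat monomials $w, w'$ in $A$. I plan to prove this by induction on $\mathrm{dep}(w)+\mathrm{dep}(w')$, using throughout: (a) $G(X)=\exp_{\shp_{\hbar}}(g_{1}X)$ by Proposition~\ref{prop:1-RG-exp}, so in particular $G(X)\,\shp_{\hbar}\,G(-X)=\exp_{\shp_{\hbar}}(g_{1}X-g_{1}X)=1$; (b) $e_{1}-g_{1}=\hbar b$ is $\shp_{\hbar}$-central by~\eqref{eq:e1g1-central}, hence $\psi^{\shp}$ fixes every power series in $\hbar b$ (in particular $\psi^{\shp}(R(X))=R(X)$) and the decomposition $E(X)=G(X)\cdot\tfrac{R(X)}{X}$ of Proposition~\ref{prop:E(X)}, in which $\tfrac{R(X)}{X}$ is such a power series, can be manipulated via concatenation/shuffle interchange for central elements; and (c) the elementary resummation $b R(X)+b R(-X)+\hbar\,b^{2}R(X)R(-X)=0$, obtained by expanding $R(X)=(e^{\hbar bX}-1)/(\hbar b)$.

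For the base case $w=w'=1$ the claim reads $w_{\hbar}^{\mathcal{S},\shp}(E(X))=1$. Here I would argue directly: expand the deconcatenation coproduct of the concatenation product $E(X)=G(X)\cdot\tfrac{R(X)}{X}$, apply $\psi^{\shp}$ to the second tensor leg (using $\psi^{\shp}(R(X))=R(X)$ together with $\psi^{\shp}(g_{1})=-e_{1}$, $\psi^{\shp}(e_{1})=-g_{1}$), and use centrality of $\hbar b$ to pull the $\tfrac{R(X)}{X}$-factors out of the resulting shuffles. This reduces $w_{\hbar}^{\mathcal{S},\shp}(E(X))$ to an explicit combination of $w_{\hbar}^{\mathcal{S},\shp}(G(X))$ and $G(X)$, which collapses to $1$ once one inserts $\psi^{\shp}(G(X))=\tfrac{1}{1+e_{1}R(X)}$ and identity~\eqref{eq:1+Re1}.

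For the inductive step I would peel one $A$-letter off $w$ (and symmetrically off $w'$), matching two recursions. On the right-hand side the engine is Proposition~\ref{prop:U1}, applied to $\tfrac{1}{1-bR(X)a}=G(X)$ and $\tfrac{1}{1-bR(Y)a}$ and then specialized at $Y=-X$: by fact~(c) the mixing coefficient $bR(X)+bR(-X)+\hbar b^{2}R(X)R(-X)$ vanishes, so Proposition~\ref{prop:U1} collapses to
\[
vG(X)\,\shp_{\hbar}\,v'G(-X)=-(v\,\shp_{\hbar}\,v')+\bigl(vG(X)\,\shp_{\hbar}\,v'\bigr)G(X)^{-1}+\bigl(v\,\shp_{\hbar}\,v'G(-X)\bigr)G(-X)^{-1},
\]
and the interaction of a single $A$-letter with $G(X)$ is governed by Lemma~\ref{lem:shuffle-prod} (equivalently by the concatenation homomorphism $\rho_{R(X)}$ of Corollary~\ref{cor:U2}). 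On the left-hand side I would unfold $K_{\hbar}^{\shp}$ one deconcatenation step at a time via the prefix/suffix identities $K_{\hbar}^{\shp}(w, uv, w')=w\,\shp_{\hbar}\,\psi^{\shp}(uvw')+K_{\hbar}^{\shp}(wu, v, w')$ and its mirror, splitting into the cases $u=e_{1}-g_{1}$ (disposed of by $\psi^{\shp}(e_{1}-g_{1})=e_{1}-g_{1}$ and~\eqref{eq:e1g1-central}) and $u=g_{k}$, $k\ge1$ (handled by Lemma~\ref{lem:shuffle-prod}, just as case~(ii) in the proof of Proposition~\ref{prop:shuffle-rel-q-truncated}). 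Term-by-term comparison of the two recursions then propagates the identity from the base case.

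The hard part will be exactly this last comparison: the two sides recurse in genuinely different directions — appending an $A$-letter to $w$ merely reindexes the deconcatenation sum defining $K_{\hbar}^{\shp}$, whereas on the right the same letter is processed through the shuffle by Proposition~\ref{prop:U1}, the homomorphism $\rho_{R(X)}$, and Lemma~\ref{lem:shuffle-prod} — so it is the bookkeeping reconciling them, together with the separate and gentler treatment of the $\shp_{\hbar}$-central letter $\hbar b=e_{1}-g_{1}$, that carries the argument. I expect the generating-function formalism of Proposition~\ref{prop:U1} and Lemma~\ref{lem:shuffle-prod}, combined with the vanishing of the mixing coefficient from~(c), to make the computation go through essentially as for the truncated symmetric case in~\cite{OSY}.
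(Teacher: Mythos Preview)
Your proposal assembles the right ingredients --- the factorisation $E(X)=G(X)\cdot\frac{R(X)}{X}$, the $\shp_{\hbar}$-centrality of $\hbar b$, the identity~\eqref{eq:1+Re1}, and the collapse of Proposition~\ref{prop:U1} at $Y=-X$ --- but the architecture (induction on $\mathrm{dep}(w)+\mathrm{dep}(w')$) does not carry them.

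The difficulty is that \emph{neither side} admits a recursion that peels a letter from $w$ or $w'$. In $K_{\hbar}^{\shp}(w,E(X),w')$ the element $w$ sits as a fixed left concatenation prefix inside every summand; replacing $w$ by $w_{0}u$ does not relate the expression to one with $w_{0}$ alone. The prefix/suffix identity you quote, $K_{\hbar}^{\shp}(w,uv,w')=w\,\shp_{\hbar}\,\psi^{\shp}(uvw')+K_{\hbar}^{\shp}(wu,v,w')$, moves a letter \emph{from the middle argument into} $w$, so iterating it \emph{increases} $\mathrm{dep}(w)$ rather than decreasing it. On the right-hand side the same obstruction appears: in $(wG(X))\,\shp_{\hbar}\,(\psi^{\shp}(w')G(-X))$ the letter $u$ is buried on the left of a concatenation, and the shuffle recursion (as in Lemma~\ref{lem:shuffle-prod} or Corollary~\ref{cor:U2}) peels letters from the \emph{right}, not the left. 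Your collapsed Proposition~\ref{prop:U1} identity is correct and useful, but it relates RHS-type expressions to one another without reducing $\mathrm{dep}(w)$ or $\mathrm{dep}(w')$.

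Moreover, the base case is not actually easier than the general case. The paper's proof is direct and works for arbitrary $w,w'$ from the outset: split $E(X)=\frac{R(X)}{X}+G(X)R(X)\,g_{1}\,\frac{R(X)}{X}$, use the single letter $g_{1}$ as a pivot to decompose the $K_{\hbar}^{\shp}$-sum without overlap, and use centrality of $R(X)/X$ to collapse $K_{\hbar}^{\shp}(v,R(X)/X,w')$ to $(v\,\shp_{\hbar}\,\psi^{\shp}(w'))e^{\hbar bX}$. This yields the functional equation
\[
K_{\hbar}^{\shp}\bigl(w,E(X),(1-g_{1}R(X))w'\bigr)=\bigl(wG(X)\,\shp_{\hbar}\,\psi^{\shp}(w')\bigr)e^{\hbar bX},
\]
which is then \emph{solved} by the substitution $w'\mapsto(1-g_{1}R(X))^{-1}w'$ together with~\eqref{eq:1+Re1}. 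That last step genuinely requires the freedom to vary $w'$; specialising to $w'=1$ first would leave you needing $K_{\hbar}^{\shp}(1,E(X),g_{1}R(X))$, which lies outside your base case. So your base-case sketch, though it points at the right computation, cannot be completed in isolation --- the argument must be run for general $w,w'$, and once you do that there is nothing left to induct on.
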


\begin{proof}
  Decompose $(1-R(X)g_{1})^{-1}=1+(1-R(X)g_{1})^{-1}R(X)g_{1}$.
  Then we obtain
  \begin{align*}
    K_{\hbar}^{\shp}(w, E(X), w')=
    K_{\hbar}^{\shp}(w, \frac{R(X)}{X}, w')+
    K_{\hbar}^{\shp}(w, \frac{1}{1-R(X)g_{1}}R(X)g_{1}\frac{R(X)}{X}, w').
  \end{align*}
  {}From the definition of $K_{\hbar}^{\shp}$,
  we see that the second term in the right hand side is equal to
  \begin{align*}
     &
    K_{\hbar}^{\shp}(w, \frac{1}{1-R(X)g_{1}}R(X), g_{1}\frac{R(X)}{X}w')+
    K_{\hbar}^{\shp}(w\frac{1}{1-R(X)g_{1}}R(X)g_{1}, \frac{R(X)}{X}, w') \\
     & =
    K_{\hbar}^{\shp}(w, E(X), g_{1}R(X)w')+
    K_{\hbar}^{\shp}(w(\frac{1}{1-R(X)g_{1}}-1), \frac{R(X)}{X}, w').
  \end{align*}
  Hence, it holds that
  \begin{align*}
    K_{\hbar}^{\shp}(w, E(X), (1-g_{1}R(X))w')=
    K_{\hbar}^{\shp}(w\frac{1}{1-R(X)g_{1}}, \frac{R(X)}{X}, w').
  \end{align*}
  Since $\psi^{\shp}(\hbar b)=\hbar b$, we see that the right hand side above is equal to
  \begin{align*}
    (w\frac{1}{1-R(X)g_{1}}\,\shp_{\hbar}\, \psi^{\shp}(w'))e^{\hbar b X}=
    w\frac{1}{1-R(X)g_{1}}\,\shp_{\hbar}\, (\psi^{\shp}(w') e^{\hbar b X}).
  \end{align*}
  Therefore, by replacing $w'$ with $(1-g_{1}R(X))^{-1}w'$, we obtain
  \begin{align*}
    K_{\hbar}^{\shp}(w, E(X), w')=
    w\frac{1}{1-R(X)g_{1}}\,\shp_{\hbar}\, (\psi^{\shp}(\frac{1}{1-g_{1}R(X)}w') e^{\hbar b X}).
  \end{align*}
  Since $\psi^{\shp}(R(X))=R(X)$ and $\psi^{\shp}(g_{1})=-e_{1}$, it holds that
  \begin{align*}
    \psi^{\shp}(\frac{1}{1-g_{1}R(X)}w')=\psi^{\shp}(w')\frac{1}{1+R(X)e_{1}}.
  \end{align*}
  Now the desired equality \eqref{eq:K-shuffle1} follows from \eqref{eq:1+Re1}.
\end{proof}

Since $R(X)+R(-X)+\hbar b\, R(X)R(-X)=0$, from Proposition \ref{prop:U1},
we see that
the right hand side of \eqref{eq:K-shuffle1} is equal to
\begin{align}
   &
  {}-w\,\shp_{\hbar}\,\psi^{\shp}(w')+
  \left(w \frac{1}{1-R(X)g_{1}}\,\shp_{\hbar}\, \psi^{\shp}(w')\right)(1-R(X)g_{1})
  \label{eq:K-explicit}
  \\
   & +\left(
  w \,\shp_{\hbar}\, \psi^{\shp}(w')\frac{1}{1-R(-X)g_{1}}\right)(1-R(-X)g_{1}).
  \nonumber
\end{align}

Now set $w=E_{\bm}$ and $w'=E_{\overline{\bm'}}$
with admissible indices $\bm$ and $\bm'$.
We have
$E_{\bm}\shp_{\hbar}\psi^{\shp}(E_{\overline{\bm'}})\equiv
  (-1)^{\mathrm{wt}(\bm')}E_{\bm}\shp_{\hbar}E_{\bm'}$
modulo $\hbar \widehat{\mathfrak{H}^{0}}$
because of Proposition \ref{prop:q-shuffle}, Corollary \ref{cor:psi(E)} and
$E_{\bm} \in \widehat{\mathfrak{H}^{0}}$.
  {}From the definition of the shuffle product and \eqref{eq:H1a-in-H0},
we see that $E_{\bm}\shp_{\hbar}E_{\bm'}$ belongs to $\mathfrak{H}^{0}$,
hence so does the first term of \eqref{eq:K-explicit}
with $w=E_{\bm}$ and $w'=E_{\overline{\bm'}}$.
Since $\bm$ and $\bm'$ are admissible,
we can write $E_{\bm}=ua$ and $\psi^{\shp}(E_{\overline{\bm'}})=va$
with some $u, v \in \widehat{\mathfrak{H}^{0}}$ because of \eqref{eq:psi-ek}.
Then, from Proposition \ref{prop:U2},
the second term of \eqref{eq:K-explicit} is equal to
\begin{align*}
  \left\{ u\,\shp_{\hbar}\,va+\hbar\,u\,\shp_{\hbar}\,v+
  \left( ua\frac{1}{1-R(X)g_{1}}\,\shp_{\hbar}\, v\right)e^{\hbar b X}
  \right\}a,
\end{align*}
which belongs to $\mathfrak{H}^{0}[[X]]$ because of \eqref{eq:H1a-in-H0}.
Similarly, the third term of \eqref{eq:K-explicit}
also belongs to $\mathfrak{H}^{0}[[X]]$.
Thus we find that $K_{\hbar}^{\shp}(E_{\bm}, E(X), E_{\overline{\bm'}})$ belongs to
$\mathfrak{H}^{0}[[X]]$, and this completes the proof of Proposition \ref{prop:wE-shuffle}.

\subsection{Proof of Lemma \ref{lem:comparison}}\label{sec:comparison}

We use the map $\rho_{R(X)}$ and $\rho_{X}$
defined in Corollary \ref{cor:U2} with $U(X)=R(X)$ and $U(X)=X$, respectively.
Let $w \in \mathfrak{H}$.
Since $\rho_{R(X)}$ is a $\mathcal{C}$-algebra homomorphism and
$\rho_{R(X)}(g_{1})=(1-R(X)g_{1})^{-1}e^{\hbar b X}g_{1}$,
we have
\begin{align*}
  \frac{1}{1-R(X)g_{1}}\,\shp_{\hbar}\,w\frac{1}{1-g_{1}X} & =
  \rho_{R(X)}\left(w\frac{1}{1-g_{1}X}\right) \frac{1}{1-R(X)g_{1}}                          \\
                                                           & =\rho_{R(X)}(w)
  \left(1-\frac{1}{1-R(X)g_{1}}e^{\hbar b X}g_{1}X \right)^{-1}\left(1-R(X)g_{1}\right)^{-1} \\
                                                           & =\rho_{R(X)}(w)
  \left(1-(R(X)+e^{\hbar b X}X)g_{1} \right)^{-1}.
\end{align*}
Similarly, we have
\begin{align*}
  \frac{1}{1-g_{1}X}\,\shp_{\hbar}\,w\frac{1}{1-R(X)g_{1}}
   & =  \rho_{X}\left(w\frac{1}{1-R(X)g_{1}}\right) \frac{1}{1-g_{1}X}          \\
   & =\rho_{X}(w)\left( 1-\frac{1}{1-g_{1}X}R(X)(1+\hbar b X)g_{1} \right)^{-1}
  (1-g_{1}X)^{-1}                                                               \\
   & =\rho_{X}(w)\left(1-(R(X)+e^{\hbar b X}X)g_{1} \right)^{-1}.
\end{align*}
Note that $\mathfrak{n}$ is a two-sided ideal of $\widehat{\mathfrak{H}^{0}}$
with respect to the concatenation product.
Hence, to prove Lemma \ref{lem:comparison}, it suffices to show that
\begin{align*}
  \rho_{R(X)}(g_{\bk}) \equiv \rho_{X}(E_{\bk})
\end{align*}
modulo $\mathfrak{n}[[X]]$ for any non-empty admissible index $\bk$.

First we calculate $\rho_{R(X)}(g_{\bk})$.
For $k \ge 1$, we have
\begin{align*}
  \rho_{R(X)}(g_{k}) & =\rho_{R(X)}(ba) \left(\rho_{R(X)}(a)\right)^{k-1} \\
                     & =\frac{1}{1-R(X)g_{1}}e^{\hbar b X}g_{1}
  \left( \frac{1}{1-R(X)g_{1}} (a+\hbar R(X)g_{1})\right)^{k-1}           \\
                     & \equiv
  \frac{1}{1-R(X)g_{1}}e^{\hbar b X}g_{1}
  \left( \frac{1}{1-R(X)g_{1}} a \right)^{k-1}
\end{align*}
modulo $\hbar \, \widehat{\mathfrak{H}^{0}}[[X]]$.
Moreover,
\begin{align*}
  \frac{1}{1-R(X)g_{1}}a=a+\sum_{n \ge 1}(R(X)g_{1})^{n-1}R(X)g_{2} \equiv
  a+\sum_{n \ge 1}(Xg_{1})^{n-1}Xg_{2}=\frac{1}{1-g_{1}X}a
\end{align*}
modulo $\mathfrak{n}_{0}[[X]]$.
Therefore, it holds that
\begin{align*}
  \rho_{R(X)}(g_{k})\equiv
  \frac{1}{1-R(X)g_{1}}e^{\hbar b X}g_{1}
  \left( \frac{1}{1-g_{1} X} a \right)^{k-1}
\end{align*}
modulo $\mathfrak{n}[[X]]$ for $k \ge 1$.
If $k \ge 2$,
each coefficient of the formal power series $g_{1}( (1-g_{1}X)^{-1} a)^{k-1}$
belongs to $\sum_{j \ge 2}\mathcal{C}\langle A \rangle g_{j}$.
For any $w \in \mathcal{C}\langle A \rangle $ and $j\ge 2$,
we see that $(\hbar b) w g_{j}=(e_{1}-g_{1})wg_{j} \in \mathfrak{n}$.
Therefore, if $\bk=(k_{1}, \ldots , k_{r})$ is a non-empty admissible index,
it holds that
\begin{align*}
  \rho_{R(X)}(g_{\bk})\equiv \prod_{1\le i\le r}^{\curvearrowright}
  \left( \frac{1}{1-g_{1}X}\, g_{1}\left(\frac{1}{1-g_{1}X}a \right)^{k_{i}-1} \right)
\end{align*}
modulo $\mathfrak{n}[[X]]$,
where $\prod_{1\le i \le r}^{\curvearrowright}A_{i}=A_{1}\cdots A_{r}$ denotes
the ordered product.

Next we calculate $\rho_{X}(E_{\bk})$.
For $k \ge 1$, we have
\begin{align*}
  \rho_{X}(e_{k}) & =\rho_{X}(b(a+\hbar))(\rho_{X}(a))^{k-1}=
  \frac{1}{1-g_{1}X}\,e_{1}
  \left(\frac{1}{1-g_{1}X}(a+\hbar g_{1}X) \right)^{k-1}      \\
                  & \equiv \frac{1}{1-g_{1}X}\,e_{1}
  \left(\frac{1}{1-g_{1}X}a \right)^{k-1}
\end{align*}
modulo $\hbar \,\widehat{\mathfrak{H}^{0}}[[X]]$.
Note that
\begin{align*}
  (e_{1}-g_{1})\frac{1}{1-g_{1}X}a=(e_{1}-g_{1})\left(1+\frac{1}{1-g_{1}X}g_{1}X\right)a=
  \hbar g_{1}+(e_{1}-g_{1})\frac{1}{1-g_{1}X}g_{2}X,
\end{align*}
which belongs to $\mathfrak{n}[[X]]$.
Since $\mathfrak{n}a\subset \mathfrak{n}$, it holds that
\begin{align}
  \rho_{X}(e_{k})\equiv \frac{1}{1-g_{1}X}g_{1}\left(\frac{1}{1-g_{1}X}a\right)^{k-1}
  \label{eq:rho-X-ek}
\end{align}
modulo $\mathfrak{n}[[X]]$ for $k \ge 2$.
Now we set $\bk=(k_{1}, \ldots , k_{r})$ and $\bk'=(k_{1}, \ldots, k_{r-1})$.
Since $\bk$ is admissible, it holds that
$\rho_{X}(E_{\bk})=\rho_{X}(E_{\bk'})\rho_{X}(e_{k_{r}})$,
and each coefficient of $\rho_{X}(e_{k_{r}})$ belongs to
$\sum_{j \ge 2}\mathcal{C}\langle A \rangle g_{j}$ because of
\eqref{eq:rho-X-ek}.
Therefore, we may calculate $\rho_{X}(E_{\bk'})$ modulo
$(\mathcal{C}\langle A \rangle (e_{1}-g_{1}) \mathcal{C}\langle A \rangle )[[X]]$.
Then we see that
\begin{align*}
  \rho_{X}(E(Y))\equiv \left(1-\frac{1}{1-g_{1}X}Y\right)^{-1}
\end{align*}
modulo $(\mathcal{C}\langle A \rangle (e_{1}-g_{1}) \mathcal{C}\langle A \rangle )[[X]]$,
and hence
\begin{align*}
  \rho_{X}(E_{1^{m}})\equiv \left(\frac{1}{1-g_{1}X}g_{1}\right)^{m}
\end{align*}
for $m \ge 0$.
As a result we find that
\begin{align*}
  \rho_{X}(E_{\bk})\equiv \prod_{1\le i\le r}^{\curvearrowright}
  \left( \frac{1}{1-g_{1}X}\, g_{1}\left(\frac{1}{1-g_{1}X}a \right)^{k_{i}-1} \right)
\end{align*}
modulo $\mathfrak{n}[[X]]$.
This completes the proof of Lemma \ref{lem:comparison}.

\subsection{Proof of Proposition \ref{prop:E-close}}\label{sec:app-E-close}

We set
\begin{align*}
  e(X)=\sum_{k \ge 1}e_{k}X^{k-1}=e_{1}\frac{1}{1-aX}, \quad
  e^{0}(X)=\sum_{k \ge 2}e_{k}X^{k-2}=e_{2}\frac{1}{1-aX}=e(X)a.
\end{align*}
For $w, w' \in \mathfrak{H}$, we set
$\Xi(w, w')$
\begin{align*}
  \Xi(w, w')=w\,\shp_{\hbar}\,w'E_{1}+wE_{1}\,\shp_{\hbar}\,w',
\end{align*}
where $E_{1}=(e_{1}+g_{1})/2$.
Note that $\Xi(w, w')$ is symmetric with respect to $w$ and $w'$, and
\begin{align}
  we_{1}\,\shp_{\hbar}\,w'e_{1}=\Xi(w, w')\,e_{1}.
  \label{eq:e1e1Xi}
\end{align}
We also set
\begin{align*}
   & \partial(w, w')=wg_{1}\,\shp_{\hbar}\,w'-(w\,\shp_{\hbar}\,w')g_{1}=
  we_{1}\,\shp_{\hbar}\,w'-(w\,\shp_{\hbar}\,w')e_{1},                    \\
   &
  \Lambda_{Y}(w, w')=
  \left(w\frac{1}{1-R(Y)g_{1}}\,\shp_{\hbar}\,w'\right)\left(1-R(Y)g_{1}\right)=
  \left(wE(Y)\shp_{\hbar}\,w'\right)E(Y)^{-1}.
\end{align*}

\begin{prop}\label{prop:Xi-EE}
  Set
  \begin{align*}
    E(Y_{1}, Y_{2})=\frac{1}{Y_{1}Y_{2}}
    \sum_{j=1}^{2}Y_{j}\left(E(Y_{1}+Y_{2})-E(Y_{j})\right).
  \end{align*}
  It holds that
  \begin{align*}
    \Xi(wE(Y_{1}), w'E(Y_{2}))
     & =\partial(w'E(Y_{2}), w)E(Y_{1})+\partial(wE(Y_{1}), w')E(Y_{2})               \\
     & +\left(-w\,\shp_{\hbar}w'+\Lambda_{Y_{1}}(w, w')+\Lambda_{Y_{2}}(w', w)\right)
    E(Y_{1}, Y_{2}).
  \end{align*}
  for any $w, w' \in \mathfrak{H}$.
\end{prop}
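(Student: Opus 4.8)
The plan is to reduce everything to products of the shuffle exponentials $\tfrac{1}{1-R(X)g_{1}}$, for which Proposition~\ref{prop:U1} supplies the key product rule, and to carry along the $\shp_{\hbar}$-central factor $\phi(Y):=R(Y)/Y=\sum_{m\ge0}\tfrac{Y^{m}}{(m+1)!}(e_{1}-g_{1})^{m}$ until the very end. First I would unfold $\Xi$: since $E_{1}=g_{1}+\tfrac12(e_{1}-g_{1})$ and $e_{1}-g_{1}=\hbar b$ is $\shp_{\hbar}$-central by \eqref{eq:e1g1-central}, one gets at once
\[
\Xi(wE(Y_{1}),w'E(Y_{2}))=wE(Y_{1})\shp_{\hbar}w'E(Y_{2})g_{1}+wE(Y_{1})g_{1}\shp_{\hbar}w'E(Y_{2})+\bigl(wE(Y_{1})\shp_{\hbar}w'E(Y_{2})\bigr)\hbar b .
\]
Next, writing $E(Y)=\tfrac{1}{1-R(Y)g_{1}}\phi(Y)$ (Proposition~\ref{prop:E(X)}) and using $\phi(Y)g_{1}=\tfrac1Y R(Y)g_{1}$ together with $\tfrac{1}{1-u}u=\tfrac{1}{1-u}-1$ for $u=R(Y)g_{1}$, one obtains the elementary identity $wE(Y)g_{1}=\tfrac1Y\bigl(w\tfrac{1}{1-R(Y)g_{1}}-w\bigr)$. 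Combined with $w\tfrac{1}{1-R(Y)g_{1}}\shp_{\hbar}w'=\Lambda_{Y}(w,w')\tfrac{1}{1-R(Y)g_{1}}$ and $wE(Y)\shp_{\hbar}w'=\Lambda_{Y}(w,w')E(Y)$ (both merely the definition of $\Lambda_{Y}$), a one-line cancellation yields $\partial(wE(Y),w')=\tfrac1Y\bigl(-w\shp_{\hbar}w'+\Lambda_{Y}(w,w')\bigr)$.

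The core step is the product rule for two exponentials: apply Proposition~\ref{prop:U1} with $U(X)=bR(X)$ and $V(Y)=bR(Y)$. Since $\hbar b\,R(X)=e^{\hbar bX}-1$, one has $(e^{\hbar bX}-1)+(e^{\hbar bY}-1)+(e^{\hbar bX}-1)(e^{\hbar bY}-1)=e^{\hbar b(X+Y)}-1$, that is $R(X)+R(Y)+\hbar b\,R(X)R(Y)=R(X+Y)$; hence $\tfrac{1}{1-U(X)a}=\tfrac{1}{1-R(X)g_{1}}$ and the middle factor in Proposition~\ref{prop:U1} becomes $1-R(X+Y)g_{1}$. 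Recognizing the two correction terms as $\Lambda_{X}(w,w')$ and $\Lambda_{Y}(w',w)$ and multiplying on the right by $\tfrac{1}{1-R(X+Y)g_{1}}$ gives
\[
w\tfrac{1}{1-R(X)g_{1}}\shp_{\hbar}w'\tfrac{1}{1-R(Y)g_{1}}=\bigl(-w\shp_{\hbar}w'+\Lambda_{X}(w,w')+\Lambda_{Y}(w',w)\bigr)\tfrac{1}{1-R(X+Y)g_{1}} .
\]

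To assemble, set $Q=-w\shp_{\hbar}w'+\Lambda_{Y_{1}}(w,w')+\Lambda_{Y_{2}}(w',w)$ and $P=\tfrac{1}{1-R(Y_{1}+Y_{2})g_{1}}$, so $P\phi(Y_{1}+Y_{2})=E(Y_{1}+Y_{2})$. Feeding the product rule into the three summands above, applying the identity for $wE(Y)g_{1}$, and pulling the central factors $\phi(Y_{i})$ out of the shuffle products, the three summands become $\tfrac1{Y_{2}}\bigl(QP\phi(Y_{1})-\Lambda_{Y_{1}}(w,w')E(Y_{1})\bigr)$, $\tfrac1{Y_{1}}\bigl(QP\phi(Y_{2})-\Lambda_{Y_{2}}(w',w)E(Y_{2})\bigr)$ and $QP\phi(Y_{1})\phi(Y_{2})\hbar b$. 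Their sum is simplified by the scalar identity $\tfrac{\phi(Y_{1})}{Y_{2}}+\tfrac{\phi(Y_{2})}{Y_{1}}+\phi(Y_{1})\phi(Y_{2})\hbar b=\bigl(\tfrac1{Y_{1}}+\tfrac1{Y_{2}}\bigr)\phi(Y_{1}+Y_{2})$ (the same $e^{\hbar bY_{i}}$-computation), giving $Q\bigl(\tfrac1{Y_{1}}+\tfrac1{Y_{2}}\bigr)E(Y_{1}+Y_{2})-\tfrac1{Y_{2}}\Lambda_{Y_{1}}(w,w')E(Y_{1})-\tfrac1{Y_{1}}\Lambda_{Y_{2}}(w',w)E(Y_{2})$. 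Now substitute the definition of $E(Y_{1},Y_{2})$ to rewrite $\bigl(\tfrac1{Y_{1}}+\tfrac1{Y_{2}}\bigr)E(Y_{1}+Y_{2})=E(Y_{1},Y_{2})+\tfrac{E(Y_{1})}{Y_{2}}+\tfrac{E(Y_{2})}{Y_{1}}$; grouping and using $Q-\Lambda_{Y_{1}}(w,w')=-w\shp_{\hbar}w'+\Lambda_{Y_{2}}(w',w)$, $Q-\Lambda_{Y_{2}}(w',w)=-w\shp_{\hbar}w'+\Lambda_{Y_{1}}(w,w')$ together with the formula for $\partial$ obtained above turns the leftover terms into $\partial(w'E(Y_{2}),w)E(Y_{1})+\partial(wE(Y_{1}),w')E(Y_{2})$, which is the assertion.

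The conceptual content is confined to the product rule; the main obstacle is the scalar bookkeeping in the assembly. One must keep the non-commuting $g_{1}$ inside $\tfrac{1}{1-R(Y)g_{1}}$ strictly separated from the central series $\phi(Y_{i})$ until after the scalar identity has been applied, and notice that the three $\phi$-coefficients collapse to the single coefficient $\bigl(\tfrac1{Y_{1}}+\tfrac1{Y_{2}}\bigr)\phi(Y_{1}+Y_{2})$ — this is exactly what produces the divided difference $E(Y_{1},Y_{2})$. All intermediate expressions written with $1/Y_{i}$ are in fact genuine power series, since $\tfrac{1}{1-R(Y)g_{1}}-1$ is divisible by $Y$.
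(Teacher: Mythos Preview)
Your proof is correct and rests on the same two pillars as the paper's: the product rule of Proposition~\ref{prop:U1} applied with $U=bR(Y_{1})$, $V=bR(Y_{2})$, together with the addition law $R(Y_{1})+R(Y_{2})+\hbar b\,R(Y_{1})R(Y_{2})=R(Y_{1}+Y_{2})$. The organization differs slightly. The paper splits $E_{1}=\tfrac{\hbar b}{2}+ba$ and handles the $ba$-part via Proposition~\ref{prop:U2}, arriving at $\Xi=\partial(\cdots)E(Y_{1})+\partial(\cdots)E(Y_{2})+\tfrac{1}{Y_{1}Y_{2}}P(Y_{1},Y_{2})Q(Y_{1},Y_{2})$ and then factoring $Q$. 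You instead split $E_{1}=g_{1}+\tfrac{\hbar b}{2}$, bypass Proposition~\ref{prop:U2} entirely by means of the elementary identity $wE(Y)g_{1}=\tfrac{1}{Y}\bigl(w\tfrac{1}{1-R(Y)g_{1}}-w\bigr)$, and extract the closed form $\partial(wE(Y),w')=\tfrac{1}{Y}\bigl(-w\,\shp_{\hbar}w'+\Lambda_{Y}(w,w')\bigr)$ up front; your ``scalar identity'' for the $\phi$'s is exactly the computation that in the paper is packaged as the factorization of $Q(Y_{1},Y_{2})$. Both routes are equally short; yours has the small advantage of isolating the $\partial$-formula as a reusable lemma and needing only Proposition~\ref{prop:U1}.
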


\begin{proof}
  Set
  \begin{align*}
    P(Y_{1}, Y_{2})=w\frac{1}{1-R(Y_{1})g_{1}}\,\shp_{\hbar}\, w'\frac{1}{1-R(Y_{2})g_{1}}.
  \end{align*}
  Since $E_{1}=\hbar b/2+ba$, we have
  \begin{align*}
    wE(Y_{1})\,\shp_{\hbar}\,w'E(Y_{2})E_{1}
     & =P(Y_{1}, Y_{2})\frac{1}{2Y_{1}Y_{2}}\hbar b R(Y_{1})R(Y_{2})            \\
     & +    \left(w\frac{1}{1-R(Y_{1})g_{1}}\,\shp_{\hbar}\,w'E(Y_{2})ba\right)
    (1-R(Y_{1})g_{1})E(Y_{1}).
  \end{align*}
  We calculate the second term of the right hand side
  by using Proposition \ref{prop:U2} and
  \begin{align}
    R(Y_{1})+R(Y_{2})+\hbar b R(Y_{1})R(Y_{2})=R(Y_{1}+Y_{2}).
    \label{eq:R-rel}
  \end{align}
  Then we obtain
  \begin{align*}
    \left\{\partial(w'E(Y_{2}), w)+
    P(Y_{1}, Y_{2})\frac{1}{Y_{2}}(R(Y_{1}+Y_{2})-R(Y_{1}))g_{1}
    \right\}E(Y_{1}).
  \end{align*}
  By changing $w \leftrightarrow w'$ and $Y_{1} \leftrightarrow Y_{2}$,
  we obtain a similar formula for
  $wE(Y_{1})E_{1}\,\shp_{\hbar}\,w'E(Y_{2})$.
  Thus we get
  \begin{align*}
    \Xi(wE(Y_{1}), w'E(Y_{2}))=
    \partial(w'E(Y_{2}), w)E(Y_{1})+\partial(wE(Y_{1}), w')E(Y_{2})+
    \frac{1}{Y_{1}Y_{2}}P(Y_{1}, Y_{2})Q(Y_{1}, Y_{2}),
  \end{align*}
  where
  \begin{align*}
    Q(Y_{1}, Y_{2})=
    \hbar b R(Y_{1})R(Y_{2})+\sum_{j=1}^{2}Y_{j}(R(Y_{1}+Y_{2})-R(Y_{j}))g_{1}E(Y_{j}).
  \end{align*}
  {}From Proposition \ref{prop:U1} and \eqref{eq:R-rel}, we see that
  \begin{align*}
    P(Y_{1}, Y_{2})=\left(-w\,\shp_{\hbar}w'+\Lambda_{Y_{1}}(w, w')+
    \Lambda_{Y_{2}}(w', w)\right)
    \frac{1}{1-R(Y_{1}+Y_{2})g_{1}}.
  \end{align*}
  Using \eqref{eq:R-rel} we see that
  \begin{align*}
    Q(Y_{1}, Y_{2})=R(Y_{1}+Y_{2})\left(1+g_{1}\sum_{j=1}^{2}Y_{j}E(Y_{j})\right)-
    \sum_{j=1}^{2}R(Y_{j})(1+Y_{j}g_{1}E(Y_{j})).
  \end{align*}
  Because $R(Y_{j})(1+Y_{j}g_{1}E(Y_{j}))=Y_{j}E(Y_{j})$, we have
  \begin{align*}
    Q(Y_{1}, Y_{2})=R(Y_{1}+Y_{2})+(R(Y_{1}+Y_{2})g_{1}-1)\sum_{j=1}^{2}Y_{j}E(Y_{j})=
    (1-R(Y_{1}+Y_{2})g_{1})Y_{1}Y_{2}E(Y_{1}, Y_{2}).
  \end{align*}
  Thus we get the desired formula.
\end{proof}

\begin{lem}\label{lem:partial-lambda}
  For any $w, w' \in \mathfrak{H}$, the following formulas hold.
  \begin{align}
     &
    \partial(w, w'e^{0}(X))=
    \left(\Xi(w, w')+Xw\,\shp_{\hbar}\,w'e^{0}(X)\right) e^{0}(X),
    \label{eq:LemA-2} \\
     &
    \Lambda_{Y}(w, w'e^{0}(X))=w\,\shp_{\hbar}\,w'e^{0}(X)+Y\partial(wE(Y), w'e^{0}(X)).
    \label{eq:LemA-3}
  \end{align}
\end{lem}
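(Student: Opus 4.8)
The plan is to prove both identities by unwinding the defining recursions of the shuffle product $\shp_{\hbar}$, exploiting the factorizations $e^{0}(X)=e(X)a$, $e(X)=e_{1}+Xe^{0}(X)$, $e_{1}a=e_{2}$, and $e^{0}(X)=e_{2}(1-aX)^{-1}$, together with the fact that $e_{1}-g_{1}=\hbar b$, and more generally any power series in $\hbar b$, is central for $\shp_{\hbar}$ by \eqref{eq:e1g1-central}. Identity \eqref{eq:LemA-3} turns out to be the quicker of the two, so I would dispatch it first.

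For \eqref{eq:LemA-3} I would actually establish the slightly stronger statement that $\Lambda_{Y}(w,w')=w\shp_{\hbar}w'+Y\partial(wE(Y),w')$ for \emph{all} $w,w'\in\mathfrak{H}$, the asserted formula being the case where $w'$ is replaced by $w'e^{0}(X)$. Since $R(Y)$ is a power series in $\hbar b$ it is $\shp_{\hbar}$-central, so $(uR(Y))\shp_{\hbar}v=(u\shp_{\hbar}v)R(Y)$ for all $u,v$. By Proposition \ref{prop:E(X)} one has $YE(Y)=\tfrac{1}{1-R(Y)g_{1}}R(Y)$, hence $YwE(Y)g_{1}=w\bigl(\tfrac{1}{1-R(Y)g_{1}}-1\bigr)$ by the geometric series identity $\tfrac{1}{1-z}-1=\tfrac{1}{1-z}z$, and $YwE(Y)=w\tfrac{1}{1-R(Y)g_{1}}R(Y)$. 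Substituting these into $Y\partial(wE(Y),w')=\bigl(YwE(Y)g_{1}\bigr)\shp_{\hbar}w'-\bigl(YwE(Y)\shp_{\hbar}w'\bigr)g_{1}$ and moving the central factor $R(Y)$ to the outside of the second shuffle product collapses the right-hand side to $\bigl(w\tfrac{1}{1-R(Y)g_{1}}\shp_{\hbar}w'\bigr)(1-R(Y)g_{1})-w\shp_{\hbar}w'=\Lambda_{Y}(w,w')-w\shp_{\hbar}w'$, as desired.

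For \eqref{eq:LemA-2} I would start from $\partial(w,w'e^{0}(X))=wg_{1}\shp_{\hbar}w'e^{0}(X)-(w\shp_{\hbar}w'e^{0}(X))g_{1}$ and observe that $wg_{1}=(wb)a$ and $w'e^{0}(X)=(w'e(X))a$ both end in the letter $a$; applying the $a$-recursion once and then pulling the trailing $b$'s out via $wb\shp_{\hbar}v=(w\shp_{\hbar}v)b$ (reabsorbing them into $g_{1}=ba$) gives $\partial(w,w'e^{0}(X))=(wg_{1}\shp_{\hbar}w'e(X))a+\hbar(w\shp_{\hbar}w'e(X))g_{1}$. Rewriting $\hbar(w\shp_{\hbar}w'e(X))b=(w(e_{1}-g_{1}))\shp_{\hbar}w'e(X)$ by \eqref{eq:e1g1-central} merges the two terms into $(we_{1}\shp_{\hbar}w'e(X))a$. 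Now put $e(X)=e_{1}+Xe^{0}(X)$: the $e_{1}$-part equals $(we_{1}\shp_{\hbar}w'e_{1})a=\Xi(w,w')e_{1}a=\Xi(w,w')e_{2}$ by \eqref{eq:e1e1Xi} and $e_{1}a=e_{2}$, while in the $X$-part I rewrite $we_{1}\shp_{\hbar}w'e^{0}(X)=\partial(w,w'e^{0}(X))+(w\shp_{\hbar}w'e^{0}(X))e_{1}$ using the second form of the definition of $\partial$. Setting $F(X)=\partial(w,w'e^{0}(X))$, this yields the fixed-point relation $F(X)-XF(X)a=\bigl(\Xi(w,w')+Xw\shp_{\hbar}w'e^{0}(X)\bigr)e_{2}$; multiplying on the right by $(1-aX)^{-1}$ and using $e_{2}(1-aX)^{-1}=e^{0}(X)$ produces \eqref{eq:LemA-2}.

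The main obstacle is organizing the proof of \eqref{eq:LemA-2}: the single application of the $a$-recursion must be arranged so that every trailing letter lands in the right place (this is precisely where the centrality of $\hbar b$ and the identity $e_{1}a=e_{2}$ are needed), and one has to recognize that the resulting relation is not a closed form but a fixed-point equation for $F(X)$ in the variable $X$, to be solved by inverting $1-aX$. By comparison \eqref{eq:LemA-3} is essentially immediate once one notices that $R(Y)$ is $\shp_{\hbar}$-central and uses $\tfrac{1}{1-R(Y)g_{1}}-1=\tfrac{1}{1-R(Y)g_{1}}R(Y)g_{1}$.
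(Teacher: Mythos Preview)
Your proof is correct. For \eqref{eq:LemA-2} your argument coincides with the paper's: both first establish $\partial(w,w'e^{0}(X))=(we_{1}\,\shp_{\hbar}\,w'e(X))a$ (the paper phrases this as the general fact $\partial(w,w'a)=(we_{1}\,\shp_{\hbar}\,w')a$, while you derive it by one explicit application of the $a$-recursion plus centrality of $\hbar b$), then substitute $e(X)=e_{1}+Xe^{0}(X)$, use \eqref{eq:e1e1Xi}, and solve the resulting fixed-point equation by right-multiplying by $(1-aX)^{-1}$. The only cosmetic difference is that the paper solves for $we_{1}\,\shp_{\hbar}\,w'e(X)$ and then post-multiplies by $a$, whereas you solve directly for $F(X)=\partial(w,w'e^{0}(X))$.

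For \eqref{eq:LemA-3} you take a genuinely different and shorter route. The paper applies Proposition~\ref{prop:U2} to $\Lambda_{Y}(w,w'e(X)a)$, which produces three terms that are then recombined using $\frac{1}{1-R(Y)g_{1}}(1+\hbar bR(Y))-1=\frac{1}{1-R(Y)g_{1}}R(Y)e_{1}$ and the identity $\frac{1}{1-R(Y)g_{1}}R(Y)=YE(Y)$; this argument genuinely uses that $w'e^{0}(X)$ ends in $a$. Your approach bypasses Proposition~\ref{prop:U2} entirely: by writing $YwE(Y)g_{1}=w\bigl(\frac{1}{1-R(Y)g_{1}}-1\bigr)$ and $YwE(Y)=w\frac{1}{1-R(Y)g_{1}}R(Y)$ and pulling the $\shp_{\hbar}$-central factor $R(Y)$ outside, you obtain $\Lambda_{Y}(w,w')=w\,\shp_{\hbar}\,w'+Y\partial(wE(Y),w')$ for \emph{arbitrary} $w'$, of which the stated lemma is a special case. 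This is both more elementary (no appeal to Proposition~\ref{prop:U2}) and more general.
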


\begin{proof}
  Note that $e^{0}(X)=e(X)a$ and $\partial(w, w'a)=(we_{1}\,\shp_{\hbar}w')a$
  for any $w, w' \in \mathfrak{H}$.
  Hence
  \begin{align}
    \partial(w, w'e^{0}(X))=\partial(w, w'e(X)a)=(we_{1}\,\shp_{\hbar}w'e(X))a.
    \label{eq:LemA-pf}
  \end{align}
  Since $e(X)=e_{1}+e(X)aX$, we see that
  \begin{align*}
    we_{1}\,\shp_{\hbar}\,w'e(X)
     & =\Xi(w, w')e_{1}+X we_{1}\,\shp_{\hbar}w'e(X)a                   \\
     & =\Xi(w, w')e_{1}+X\left\{(w\,\shp_{\hbar}\,w'e(X)a)e_{1}+
    (we_{1}\,\shp_{\hbar}\,w'e(X))a \right\}                            \\
     & =\left(\Xi(w, w')e_{1}+Xw\,\shp_{\hbar}\,w'e^{0}(X)\right)e_{1}+
    (we_{1}\,\shp_{\hbar}\,w'e(X))aX.
  \end{align*}
  Therefore we have
  \begin{align*}
    we_{1}\,\shp_{\hbar}\,w'e(X)=
    \left(\Xi(w, w')e_{1}+Xw\,\shp_{\hbar}\,w'e^{0}(X)\right)e(X)
  \end{align*}
  since $e_{1}(1-aX)^{-1}=e(X)$.
  Thus we get \eqref{eq:LemA-2}.

  Next we prove \eqref{eq:LemA-3}.
  Using $e^{0}(X)=e(X)a$ and Proposition \ref{prop:U2}, we see that
  \begin{align*}
    \Lambda_{Y}(w, w'e^{0}(X))
     & =w\,\shp_{\hbar}w'e^{0}(X)-(w\,\shp_{\hbar}^,w'e(X))a+
    \left(w\frac{1}{1-R(Y)g_{1}}(1+\hbar b R(Y))\,\shp_{\hbar}\,w'e(X)\right)a \\
     & =w\,\shp_{\hbar}w'e^{0}(X)+
    \left(w\frac{1}{1-R(Y)g_{1}}R(Y)e_{1}\,\shp_{\hbar}\,w'e(X)\right)a.
  \end{align*}
  The second term of the right hand side is equal to
  \begin{align*}
    Y\left(w E(Y)e_{1}\,\shp_{\hbar}\,w'e(X) \right)a=
    Y \partial(wE(Y), w'e^{0}(X))
  \end{align*}
  because of \eqref{eq:LemA-pf}.
  Thus we get \eqref{eq:LemA-3}.
\end{proof}

\begin{prop}\label{prop:Ee0}
  For any $w, w' \in \mathfrak{H}$, it holds that
  \begin{align*}
     & wE(Y) \,\shp_{\hbar}\, w'e^{0}(X)                                         \\
     & =\left\{ w\,\shp_{\hbar}\,w'e^{0}(X)+Y\Xi(wE(Y), w')e^{0}(X)\right\} E(Y)
    \frac{1}{1-XYe^{0}(X)E(Y)}.
  \end{align*}
\end{prop}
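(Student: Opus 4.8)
The plan is to introduce the generating series $\Phi=\Phi(X,Y)=wE(Y)\,\shp_{\hbar}\,w'e^{0}(X)$ and to extract from Lemma~\ref{lem:partial-lambda} a single linear identity that $\Phi$ satisfies over $\mathfrak{H}[[X,Y]]$, which can then be solved directly for $\Phi$.

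First I would record that, by the definition of $\Lambda_{Y}$, one has $\Phi=\Lambda_{Y}(w,w'e^{0}(X))\,E(Y)$, since $\Lambda_{Y}(w,w_{2})=(wE(Y)\,\shp_{\hbar}\,w_{2})E(Y)^{-1}$ and $E(Y)=1+O(Y)$ is invertible in $\mathfrak{H}[[Y]]$ for the concatenation product. Next I would apply \eqref{eq:LemA-3} to rewrite $\Lambda_{Y}(w,w'e^{0}(X))=w\,\shp_{\hbar}\,w'e^{0}(X)+Y\,\partial(wE(Y),w'e^{0}(X))$, and then apply \eqref{eq:LemA-2} with the first argument $w$ replaced by $wE(Y)$ — this is legitimate because \eqref{eq:LemA-2} holds for every element of $\mathfrak{H}$ — to obtain $\partial(wE(Y),w'e^{0}(X))=\bigl(\Xi(wE(Y),w')+X\,(wE(Y)\,\shp_{\hbar}\,w'e^{0}(X))\bigr)e^{0}(X)=\bigl(\Xi(wE(Y),w')+X\Phi\bigr)e^{0}(X)$. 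This substitution is precisely the mechanism that produces the factor $\Xi(wE(Y),w')$ appearing in the statement.

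Combining these gives $\Phi\,E(Y)^{-1}=w\,\shp_{\hbar}\,w'e^{0}(X)+Y\,\Xi(wE(Y),w')\,e^{0}(X)+XY\,\Phi\,e^{0}(X)$, where crucially every occurrence of $\Phi$ sits to the left of the remaining factors; hence $\Phi\bigl(E(Y)^{-1}-XY\,e^{0}(X)\bigr)=w\,\shp_{\hbar}\,w'e^{0}(X)+Y\,\Xi(wE(Y),w')\,e^{0}(X)$. Finally I would invert the factor on the left using $E(Y)^{-1}-XY\,e^{0}(X)=E(Y)^{-1}\bigl(1-XY\,E(Y)e^{0}(X)\bigr)$ together with the identity $(1-AB)^{-1}A=A(1-BA)^{-1}$ (with $A=E(Y)$, $B=XY\,e^{0}(X)$, all inverses being legitimate formal power series since $XY\,e^{0}(X)$ has positive order in $Y$), which gives $\bigl(E(Y)^{-1}-XY\,e^{0}(X)\bigr)^{-1}=E(Y)\,\dfrac{1}{1-XY\,e^{0}(X)E(Y)}$ and hence the claimed formula.

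The only point requiring care is the noncommutative bookkeeping: $\Phi$ must be kept on the left throughout so that it can be factored out, and the concluding inversion must use $(1-AB)^{-1}A=A(1-BA)^{-1}$ rather than a naive cancellation. Beyond this I expect no real obstacle, since all of the substantive computation has already been isolated in Lemma~\ref{lem:partial-lambda}.
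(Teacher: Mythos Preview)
Your proposal is correct and follows essentially the same approach as the paper: the paper also writes $J(X,Y)=\Lambda_{Y}(w,w'e^{0}(X))E(Y)$, applies \eqref{eq:LemA-3} and then \eqref{eq:LemA-2} to obtain the self-referential identity $J=\{w\,\shp_{\hbar}\,w'e^{0}(X)+Y(\Xi(wE(Y),w')+XJ)e^{0}(X)\}E(Y)$, and simply says ``it implies the desired equality.'' Your version spells out the noncommutative inversion step (including the use of $(1-AB)^{-1}A=A(1-BA)^{-1}$) that the paper leaves to the reader, but the argument is the same.
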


\begin{proof}
  We denote the left hand side by $J(X, Y)$. We see that
  \begin{align*}
    J(X, Y) & =\Lambda_{Y}(w, w'e^{0}(X))E(Y)        \\
            & =\left\{ w\,\shp_{\hbar}\,w'e^{0}(X)+Y
    \left(\Xi(wE(Y), w')+XJ(X, Y)\right)e^{0}(X)
    \right\}E(Y)
  \end{align*}
  using \eqref{eq:LemA-2} and \eqref{eq:LemA-3}.
  It implies the desired equality.
\end{proof}

\begin{prop}\label{prop:e0e0}
  For any $w, w' \in \mathfrak{H}$, it holds that
  \begin{align*}
     &
    we^{0}(X_{1})\,\shp_{\hbar}\,w'e^{0}(X_{2})              \\
     & =\frac{1}{X_{1}X_{2}}
    \biggl\{-(X_{1}+X_{2}+\hbar X_{1}X_{2})\Xi(w, w')e_{2}   \\
     & \qquad \qquad \qquad {}+
    X_{1}(1+\hbar X_{2})\, \partial(w, w'e^{0}(X_{2}))+
    X_{2}(1+\hbar X_{1})\, \partial(w', we^{0}(X_{1}))
    \biggr\}                                                 \\
     & {}\times \frac{1}{1-(X_{1}+X_{2}+\hbar X_{1}X_{2})a}.
  \end{align*}
\end{prop}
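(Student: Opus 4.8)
The plan is to pull a trailing $a$ out of the shuffle product via the defining relation of $\shp_{\hbar}$, to solve the resulting linear recursion for the generating function by a geometric-series argument, and to identify the leftover pieces using the identity $\partial(w,w'a)=(we_{1}\shp_{\hbar}w')a$ (immediate from the definition of $\partial$ and the $a$-rule, and recorded in the proof of Lemma~\ref{lem:partial-lambda}).

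Concretely, I would first write $we^{0}(X)=we(X)a$ and record the elementary facts $e(X)=e_{1}(1-aX)^{-1}$ and $e(X)=e_{1}+Xe^{0}(X)$, the latter coming from $(1-aX)^{-1}=1+aX(1-aX)^{-1}$ together with the commutation of $a$ with $(1-aX)^{-1}$. Set $J=J(X_{1},X_{2}):=we^{0}(X_{1})\shp_{\hbar}w'e^{0}(X_{2})=we(X_{1})a\shp_{\hbar}w'e(X_{2})a$. The relation $wa\shp_{\hbar}w'a=(wa\shp_{\hbar}w'+w\shp_{\hbar}w'a+\hbar\,w\shp_{\hbar}w')a$ gives
\[
J=\bigl(we^{0}(X_{1})\shp_{\hbar}w'e(X_{2})+we(X_{1})\shp_{\hbar}w'e^{0}(X_{2})+\hbar\,we(X_{1})\shp_{\hbar}w'e(X_{2})\bigr)a.
\]
Substituting $e(X_{i})=e_{1}+X_{i}e^{0}(X_{i})$ into each of the three products and using $we_{1}\shp_{\hbar}w'e_{1}=\Xi(w,w')e_{1}$ from \eqref{eq:e1e1Xi}, and writing $P:=we^{0}(X_{1})\shp_{\hbar}w'e_{1}$, $Q:=we_{1}\shp_{\hbar}w'e^{0}(X_{2})$ and $C:=X_{1}+X_{2}+\hbar X_{1}X_{2}$, all occurrences of $J$ collect on the right and one is left with the recursion $J=R+C(Ja)$, where $R:=(1+\hbar X_{1})Pa+(1+\hbar X_{2})Qa+\hbar\,\Xi(w,w')e_{2}$ (using $e_{1}a=e_{2}$).

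Since $C$ has vanishing constant term and is a scalar (hence commutes with $a$), iterating $J=R+C(Ja)$ yields $J=R(1-Ca)^{-1}$. To finish I would compute $Pa$ and $Qa$: applying $\partial(u,u'a)=(ue_{1}\shp_{\hbar}u')a$ with $u=w'$, $u'=we(X_{1})$ gives $\partial(w',we^{0}(X_{1}))=(w'e_{1}\shp_{\hbar}we(X_{1}))a$, and then expanding $e(X_{1})=e_{1}+X_{1}e^{0}(X_{1})$ and using \eqref{eq:e1e1Xi} again gives $\partial(w',we^{0}(X_{1}))=\Xi(w,w')e_{2}+X_{1}(Pa)$, hence $X_{1}(Pa)=\partial(w',we^{0}(X_{1}))-\Xi(w,w')e_{2}$; symmetrically $X_{2}(Qa)=\partial(w,w'e^{0}(X_{2}))-\Xi(w,w')e_{2}$. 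Feeding these into $R$, the coefficient of $\Xi(w,w')e_{2}$ telescopes to $-\tfrac{1+\hbar X_{1}}{X_{1}}-\tfrac{1+\hbar X_{2}}{X_{2}}+\hbar=-\tfrac{C}{X_{1}X_{2}}$, and the surviving terms are precisely $\tfrac{1}{X_{1}X_{2}}\bigl(X_{1}(1+\hbar X_{2})\partial(w,w'e^{0}(X_{2}))+X_{2}(1+\hbar X_{1})\partial(w',we^{0}(X_{1}))\bigr)$, which is the asserted identity (with $a$ commuted past the scalar series $(1-Ca)^{-1}$ to match the stated form).

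The argument is essentially bookkeeping, and there is no single serious obstacle; the steps that require care are the collection of the $J$-terms after the substitution $e(X_{i})=e_{1}+X_{i}e^{0}(X_{i})$ and the telescoping of the $\Xi(w,w')e_{2}$-coefficient, the fact that the transient denominators $1/X_{1},1/X_{2}$ are harmless (the numerators $\partial(w',we^{0}(X_{1}))-\Xi(w,w')e_{2}$ and $\partial(w,w'e^{0}(X_{2}))-\Xi(w,w')e_{2}$ are divisible by $X_{1}$ and $X_{2}$ by the very relations just derived), and the commutation of $a$ past $(1-Ca)^{-1}$ that legitimizes the geometric-series solution of $J=R+C(Ja)$.
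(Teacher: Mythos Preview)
Your proof is correct and follows essentially the same route as the paper's. The paper writes $X_{1}X_{2}\,we^{0}(X_{1})\shp_{\hbar}w'e^{0}(X_{2})=w(e(X_{1})-e_{1})\shp_{\hbar}w'(e(X_{2})-e_{1})$ and invokes the prepackaged Proposition~\ref{prop:U1} to obtain the relation $J(1-Ca)=R$, whereas you start from $e^{0}(X)=e(X)a$, apply the $a$-rule once, and substitute $e(X_{i})=e_{1}+X_{i}e^{0}(X_{i})$ to derive the same recursion by hand; both arguments then finish with the identities \eqref{eq:e1e1Xi} and \eqref{eq:LemA-pf} in the same way.
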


\begin{proof}
  Since $e^{0}(X)=(e(X)-e_{1})/X$, we have
  \begin{align*}
    X_{1}X_{2} \, we^{0}(X_{1})\,\shp_{\hbar}\,w'e^{0}(X_{2})=
    w(e(X_{1})-e_{1})\,\shp_{\hbar}\,w'(e(X_{2})-e_{1}).
  \end{align*}
  Calculate $we(X_{1})\,\shp_{\hbar}\,w'e(X_{2})$
  using Proposition \ref{prop:U1}, and we see that
  \begin{align*}
     & X_{1}X_{2} \, (we^{0}(X_{1})\,\shp_{\hbar}\,w'e^{0}(X_{2}))
    (1-(X_{1}+X_{2}+\hbar X_{1}X_{2})a)                                 \\
     & =-(X_{1}+X_{2}+\hbar X_{1}X_{2})(we_{1}\,\shp_{\hbar}\,w'e_{1})a \\
     & +    X_{1}(1+\hbar X_{2})(we_{1}\,\shp_{\hbar}\,w'e(X_{2}))a+
    X_{2}(1+\hbar X_{1})(we(X_{1})\,\shp_{\hbar}\,w'e_{1})a.
  \end{align*}
  Now the desired equality follows from \eqref{eq:e1e1Xi} and \eqref{eq:LemA-pf}.
\end{proof}

Now we prove Proposition \ref{prop:E-close}.
For a subset $\mathfrak{a}$ of $\mathfrak{H}$ and
a formal power series $f(X_{1}, \ldots , X_{n}) \in \mathfrak{H}[[X_{1}, \ldots, X_{n}]]$,
we say that $f(X_{1}, \ldots , X_{n})$ belongs to $\mathfrak{a}$ if
all the coefficients of  $f(X_{1}, \ldots , X_{n})$ belong to $\mathfrak{a}$.
We show that
\begin{enumerate}
  \item $\Xi(w, w')$ belongs to $\mathfrak{e}$,
  \item $w \,\shp_{\hbar}\, w' e^{0}(X)$ and $we^{0}(X)\,\shp_{\hbar}\,w'$
        belong to $\mathfrak{e}$,
  \item $we^{0}(X_{1})\,\shp_{\hbar}\,w'e^{0}(X_{2})$ belongs to $\mathfrak{e}^{0}$
\end{enumerate}
for any $w, w' \in \mathfrak{e}$.
Proposition \ref{prop:E-close} follows from (ii) and (iii).
The third property (iii) follows from (i) and (ii)
because of Proposition \ref{prop:e0e0} and \eqref{eq:LemA-2}.
Hence it suffices to prove (i) and (ii).
Note that the $\mathcal{C}$-module $\mathfrak{e}$ is generated
by the coefficients of the formal power series
\begin{align*}
  C_{r}=C_{r}(X_{1}, \ldots , X_{r}; Y_{0}, \ldots , Y_{r})=
  E(Y_{0})\prod_{1\le j \le r}^{\curvearrowright}\left(e^{0}(X_{j})E(Y_{j})\right)
\end{align*}
with $r \ge 0$.
Hence we may assume that $w=C_{r}$ and $w'=C_{s}'$ for some $r, s\ge 0$,
where the prime symbol indicates that
$w'$ contains a different family of variables from $w$.
We proceed the proof of (i) and (ii) by induction on $r+s$.

First we consider the case of $r=0$.
  {}From Proposition \ref{prop:Xi-EE}, we see that
$\Xi(E(Y_{1}), E(Y_{2}))=E(Y_{1}, Y_{2})$ belongs to $\mathfrak{e}$,
and it implies that
$E(Y_{1})\,\shp_{\hbar}\,E(Y_{2})e^{0}(X)$ also belongs to $\mathfrak{e}$
because of Proposition \ref{prop:Ee0}.
Hence (i) and (ii) are true in the case of $r=s=0$.
Now we consider the case where $r=0$ and $s \ge 1$.
Set $w=C_{0}=E(Y_{1})$ and $w'=C_{s}'=C_{s-1}'e^{0}(X_{2})E(Y_{2})$.
Proposition \ref{prop:Xi-EE} and Proposition \ref{prop:Ee0} imply that
\begin{align*}
  \Xi(C_{0}, C_{s}')
   & =  \partial(C_{0}, C_{s-1}'e^{0}(X_{2}))E(Y_{2})+
  \Lambda_{Y_{1}}(1, C_{s-1}'e^{0}(X_{2}))E(Y_{1}, Y_{2}),              \\
  C_{0}\,\shp_{\hbar}\,C_{s}'e^{0}(X)
   & =  \left\{C_{s}'+Y_{1}\,\Xi(C_{0}, C_{s}')\right\}e^{0}(X)E(Y_{1})
  \frac{1}{1-XY_{1}e^{0}(X)E(Y_{1})},                                   \\
  C_{0}e^{0}(X)\,\shp_{\hbar}\,C_{s}'
   & =
  \left\{C_{0}e^{0}(X)\,\shp_{\hbar}\,C_{s-1}'e^{0}(X_{2})+
  Y_{2}\,\Xi(C_{0}, C_{s}')e^{0}(X)\right\}E(Y_{2})                     \\
   & {}\times \frac{1}{1-XY_{2}e^{0}(X)E(Y_{2})}.
\end{align*}
{}From Lemma \ref{lem:partial-lambda} and the induction hypothesis,
we see that $\partial(C_{0}, C_{s-1}'e^{0}(X_{2}))$ and
$\Lambda_{Y_{1}}(1, C_{s-1}'e^{0}(X_{2}))$ belong to $\mathfrak{e}^{0}$.
Hence $\Xi(C_{0}, C_{s}')$ belongs to $\mathfrak{e}$, and
it implies that $C_{0}\,\shp_{\hbar}\,C_{s}'e^{0}(X)$ belongs to $\mathfrak{e}$.
Moreover, since $C_{0}e^{0}(X)\,\shp_{\hbar}\,C_{s-1}'e^{0}(X_{2})$
belongs to $\mathfrak{e}^{0}$ because of the induction hypothesis (iii),
we see that $C_{0}e^{0}(X)\,\shp_{\hbar}\,C_{s}'$ belongs to $\mathfrak{e}$.
Thus we obtain (i), (ii) with $w=C_{0}$ and $w'=C_{s}'$ for any $s\ge 0$.
Since $\Xi(w, w')$ is symmetric with respect to $w$ and $w'$,
they are also true in the case where $w=C_{r}$ with $r \ge 0$ and $w'=C_{0}'$.

Next we consider the case where $r, s\ge 1$.
Set $w=C_{r}=C_{r-1}e^{0}(X_{1})E(Y_{1})$ and $w'=C_{s}'=C_{s-1}' e^{0}(X_{2})E(Y_{2})$.
  {}From Proposition \ref{prop:Xi-EE},
Proposition \ref{prop:Ee0} and \eqref{eq:LemA-3}, we see that
\begin{align*}
  \Xi(C_{r}, C_{s}')
   & =  \partial(C_{r}, C_{s-1}' e^{0}(X_{2}))\left\{
  E(Y_{1})+Y_{2}E(Y_{1}, Y_{2})\right\}                                         \\
   & +  \partial(C_{s}', C_{r-1} e^{0}(X_{1}))\left\{
  E(Y_{2})+Y_{1}E(Y_{1}, Y_{2}) \right\}                                        \\
   & +\left(C_{r-1}e^{0}(X_{1})\, \shp_{\hbar} \, C_{s-1}' e^{0}(X_{2}) \right)
  E(Y_{1}, Y_{2})
\end{align*}
and
\begin{align*}
  C_{r} \,\shp_{\hbar}\, C_{s}'e^{0}(X)=
  \left\{ C_{r-1}e^{0}(X_{1})\,\shp_{\hbar}\,C_{s}'e^{0}(X)+
  Y_{1}\,\Xi(C_{r}, C_{s}')e^{0}(X)\right\} E(Y_{1})
  \frac{1}{1-XY_{1}e^{0}(X)E(Y_{1})}.
\end{align*}
{}From \eqref{eq:LemA-2} and the induction hypothesis,
we see that $\partial(C_{r}, C_{s-1}' e^{0}(X_{2})), \partial(C_{s}', C_{r-1} e^{0}(X_{1}))$
and $C_{r-1}e^{0}(X_{1})\, \shp_{\hbar} \, C_{s-1}' e^{0}(X_{2})$ belong to
$\mathfrak{e}^{0}$.
Hence $\Xi(C_{r}, C_{s}')$ belongs to $\mathfrak{e}$.
The induction hypothesis says that
$C_{r-1}e^{0}(X_{1})\,\shp_{\hbar}\,C_{s}'e^{0}(X)$ belongs to $\mathfrak{e}^{0}$,
and hence $C_{r} \,\shp_{\hbar}\, C_{s}'e^{0}(X)$ belongs to $\mathfrak{e}$.
Similarly we find that
$C_{r}e^{0}(X) \,\shp_{\hbar}\, C_{s}'$ belongs to $\mathfrak{e}$.
Therefore (i) and (ii) hold for $w=C_{r}$ and $w'=C_{s}$,
and this completes the proof of Proposition \ref{prop:E-close}.



\begin{thebibliography}{000}

  \bibitem{Cartier}
  Cartier, P.,
  On the double zeta values,
  in \textit{Galois-Teichm\"uller theory and arithmetic geometry}, 91--119,
  \textit{Adv. Stud. Pure Math.}, {\bf 63}, Math. Soc. Japan, Tokyo, 2012.

  \bibitem{Hoffman}
  Hoffman, E.,
  The algebra of multiple harmonic series,
  \textit{J. Algebra} {\bf 194} (1997), no. 2, 477--495.

  \bibitem{IKZ}
  Ihara, K., Kaneko, M. and Zagier, D.,
  Derivation and double shuffle relations for multiple zeta values,
  \textit{Compos. Math.} {\bf 142} (2006), no. 2, 307--338.

  \bibitem{Jar}
  Jarossay, D.,
  Adjoint cyclotomic multiple zeta values and cyclotomic multiple harmonic values,
  preprint (2019), arXiv:1412.5099v5.

  \bibitem{Kaneko}
  Kaneko, M.,
  An introduction to classical and finite multiple zeta values,
  \textit{Publications math\'ematiques de Besan\c{c}on.
    Alg\'ebre et th\'eorie des nombres}. 2019/1, 103--129,
  Publ. Math. Besan\c{c}on Alg\'ebre Th\'eorie Nr., 2019/1,
  Presses Univ. Franche-Comt\'e, Besan\c{c}on, 2020.

  \bibitem{OSY}
  Ono, M., Seki, S. and Yamamoto, S.,
  Truncated $t$-adic symmetric multiple zeta values and double shuffle relations,
  \textit{Res. Number Theory} {\bf 7} (2021), no. 1, Paper No. 15, 28 pp.
  DOI: 10.1007/s40993-021-00241-5.

  \bibitem{Oyama}
  Oyama, K.,
  Ohno-type relation for finite multiple zeta values,
  \textit{Kyushu J. Math.} {\bf 72} (2018), no. 2, 277--285.

  \bibitem{FreeLie}
  Reutenauer, C.,
  Free Lie algebras,
  Oxford Science Publications, 1993.

  \bibitem{T1}
  Takeyama, Y.,
  The algebra of a $q$-analogue of multiple harmonic series,
  \textit{Symmetry, Integrability and Geometry: Methods and Applications (SIGMA)}
  {\bf 9} (2013), 061, 15 pages,
  DOI: 10.3842/SIGMA.2013.061.

  \bibitem{Zagier23}
  Zagier, D.,
  Evaluation of the multiple zeta values
  $\zeta(2, \ldots , 2, 3, 2, \ldots , 2)$,
  \textit{Ann. of Math.} (2) {\bf 175} (2012), no. 2, 977--1000.

  \bibitem{ZhaoBook}
  Zhao, J.,
  Multiple zeta functions, multiple polylogarithms and their special values,
  World Scientific Publishing Co. Pte. Ltd., Hackensack, NJ, 2016.

\end{thebibliography}
\end{document}